\documentclass[english]{smfart}
\usepackage[utf8]{inputenc}
\usepackage[english,french]{babel}
\usepackage{smfthm}
\usepackage{mathrsfs}
\usepackage{graphics}
\usepackage{hyperref} 
\usepackage{amssymb, amsmath, mathabx}
\usepackage{lmodern}
\usepackage[all]{xy}
\usepackage{multicol}
\usepackage{color}
\usepackage{tikz-cd}
\usepackage[shortlabels]{enumitem}
\usepackage{scalerel}
\makeindex
\DeclareUnicodeCharacter{00A0}{\relax}

\author{Antoine Ducros}
\address{Sorbonne Université, Université Paris-Diderot, CNRS, Institut de Mathématiques de Jussieu-Paris
Rive Gauche, IMJ-PRG,
F-75005, Paris, France}
\email{antoine.ducros\at imj-prg.fr}
\urladdr{https://webusers.imj-prg.fr/$\sim$antoine.ducros/}

\author{François Loeser}
\address{Sorbonne Université, Institut de Mathématiques de Jussieu-Paris
Rive Gauche, CNRS. Campus Pierre et Marie Curie, case 247, 4 place Jussieu, 75252 Paris cedex 5, France.
}
\email{francois.loeser@imj-prg.fr}
\urladdr{https://webusers.imj-prg.fr/$\sim$francois.loeser/}

\title[Nash structure of curves and tame henselian rationality]{Nash structure of curves over a valued field and
tame henselian rationality}
\dedicatory{To the memory of Zoé Chatzidakis}
\NumberTheoremsAs{subsection}
\SwapTheoremNumbers

\newcommand{\eg}{e.\@g.\@}

\newcommand{\ie}{i.\@e.\@}

\newcommand{\loccit}{loc.\@~cit.\@}
\newcommand{\opcit}{op.\@~cit.\@}
\newcommand{\resp}{resp.\@~}

\newcommand{\rr}{\mathfrak r}
\newcommand{\wrr}{\widehat \rr}
\newcommand{\trr}{\widetilde \rr}
\newcommand{\Tau}{\mathrm T}
\newcommand{\nash}[2]{#1\langle #2\rangle}
\newcommand{\rednash}[2]{\RV(#1)\langle #2\rangle}
\newcommand{\redgrad}[2]{\RV(#1)[#2]}

\renewcommand{\k}{\mathsf k}
\newcommand{\res}{\mathsf {res}}
\newcommand{\RV}{\mathsf{RV}}
\newcommand{\rv}{\mathsf {rv}}
\newcommand{\acl}{\mathsf{acl}}

\newcommand{\abs}[1]{\mathopen|#1\mathclose|}
\newcommand{\an}{^{\mathrm{an}}}
\newcommand{\h}{^{\mathrm h}}

\newcommand{\gm}{\mathbf G_{\mathrm m}}
\newcommand{\gpm}{^{\times}}
\newcommand{\hr}[1]{\mathscr H(#1)}

\def\Hom{\operatorname{Hom}}
\def\spec{\operatorname{Spec}}
\def\colim{\operatorname{colim}}

\def\spf{\operatorname{Spf}}
\def\td{\operatorname{trdeg}}

\def\GL{\operatorname{\mathrm{GL}}}
\newcommand{\acvf}{\mathrm{ACVF}}

\newcommand{\A}{\mathbf A}

\renewcommand{\P}{\mathbf P}
\newcommand{\Q}{\mathbf Q}

\newcommand{\Z}{\mathbf Z}

\renewcommand{\phi}{\varphi}
\renewcommand{\epsilon}{\varepsilon}
\renewcommand{\leq}{\leqslant}
\renewcommand{\geq}{\geqslant}

\renewcommand{\labelitemi}{$\bullet$}

\renewcommand{\theequation}{\alph{equation}}
\SetEnumerateShortLabel{1}{\textnormal{(\arabic{enumi})}}
\SetEnumerateShortLabel{i}{\textnormal{(\roman{enumi})}}
\SetEnumerateShortLabel{a}{\textnormal{(\alph{enumi})}}
\SetEnumerateShortLabel{A}{\textnormal{(\Alph{enumi})}}
\SetEnumerateShortLabel{2}{\textnormal{(\arabic{enumii})}}
\SetEnumerateShortLabel{j}{\textnormal{(\roman{enumii})}}
\SetEnumerateShortLabel{b}{\textnormal{(\arabic{enumi}\alph{enumii})}}
\SetEnumerateShortLabel{c}{\textnormal{(\alph{enumii})}}
\SetEnumerateShortLabel{B}{\textnormal{(\Alph{enumii})}}

\begin{document}
\begin{abstract}
We provide a new proof, model-theoretic and geometric in nature,
of  the tame henselian
rationality theorem of Kuhlmann. The proof takes place within the framework of stable completions of algebraic varieties over a valued field.
It relies on two main results of independent interest:
a Nash structure theorem for the stable completion of an 
algebraic curve 
and a tame descent theorem for abstract open polydiscs.
\end{abstract}


\maketitle

\tableofcontents
\setcounter{section}{-1}
\section{Introduction}
\subsection*{Motivation}
The initial goal of this work was to answer a question asked by Franziska Jahnke and Franz-Viktor Kuhlmann
to the second author after a series of talks he gave in Singapore during Summer 2025. Among various results, 
he had presented a new proof, written down in 
\cite[Appendix]{ducros-h-l-y2024}, of a classical result of valuation theory due to Kuhlmann 
(\cite{kuhlmann2010}): \emph{every Abhyankar extension of a
defectless valued field is still defectless}. This new proof relies on model theory, and more precisely
on the theory
of \emph{stable completions} (the  model-theoretic version of Berkovich geometry)
developped by Hrushovski and the second author 
\cite{hrushovski-l2016} for studying the homotopy type of algebraic Berkovich spaces. 
The question of Jahnke and Kuhlmann was whether one can use the framework of
stable completions 
for getting an alternative proof of
another important result of valuation theory, namely the \emph{tame henselian
rationality theorem} of Kuhlmann
\cite{kuhlmann2019}.

Let us first say a few words about this theorem. A valued field is called tame if it is henselian and
all its finite extensions are tamely ramified; this amounts to requiring that it be
 defectless with perfect residue field
and
$p$-divisible value group, for $p$ the residue characteristic exponent; every henselian valued field with residue
characteristic zero is tame. 
Tame henselian fields were first introduced and studied by  Kuhlmann in \cite{fvk2016}
where he initiated the investigation
 of their algebraic and model theoretic properties, 
showing in particular an
 Ax-Kochen-Ershov principle for this class of fields. 
More recently, 
tame fields have played a crucial role in the groundbreaking work of Jahnke and Kartas on the model theory of perfectoid fields
and tilting equivalence
\cite{jahnke-kartas}, see also
\cite{anscombe2026}, the reason for this being that
a continuous
non-principal ultraproduct of perfectoid fields is 	 a tame field. 
Both \cite{fvk2016} and \cite{jahnke-kartas} use in a crucial way a certain
relative embedding property, which in turn relies on the following tame henselian rationality theorem from Kuhlmann
\cite{kuhlmann2019}, that can be stated as follows:

\emph{Let $L$ be a finitely generated extension of transcendence degree 1
of a tame valued field $K$, equipped with 
an immediate extension $v$ of the structure valuation of $K$. Then there exists an element $t$ of $L$
transcendental over $K$
such that the henselizations of $K(t)$ and $L$ with respect to $v$
are equal.} 

Let us mention that even the case where $K$ is algebraically closed is difficult (and it was handled by Kuhlmann as a first step toward
the general statement). 

\subsection*{Our results}
In this paper, we are indeed able to use the formalism of stable
completions to provide a new proof 
of the tame henselian rationality theorem (Theorem \ref{henselian-rationality}) with a model-theoretic and geometric flavour. 
In our approach, this theorem essentially relies on two results:

\begin{itemize}[label=$\diamond$]
\item A ``Nash structure theorem" for the stable completion $\widehat X$
of a $K$-algebraic curve $X$
(Theorem \ref{structure-theo}). This structure theorem is an analogue 
of a structure theorem for Berkovich analytic curves, which is 
equivalent to the semi-stable
reduction theorem and can be stated and proved
in several ways; the presentation
we mimic here is the one provided by
the first author
in \cite{ducros2024}. 

\item A tame descent theorem for ``abstract
open polydiscs"
(Theorem \ref{theo-tame-descent}), which we use only in dimension 1
and which enables us to deduce the tame case from the algebraically closed one. 
This descent theorem is an analogue of the  tame descent theorem for analytic
open polydiscs
proved by the first author
in \cite{ducros2013b}.
\end{itemize}

Let us say a few words about the reference to Nash. The theory of stable completions provides a very robust model-theoretic
avatar
of the underlying topological space of a Berkovich space, but it lacks a substitute for
the analytic structure. And we definitely needed such a substitute: it
was both
technically essential for adapting the proof of
the descent theorem of \cite{ducros2013b} to our setting
and
conceptually helpful 
for stating,  proving and understanding our results on the structure of stable completions of curves. 
This substitute is introduced in section \ref{section-nash}, where we define 
\emph{Nash isomorphisms} (Definition \ref{defi-nash-iso}) and \emph{Nash functions} (\ref{nash-functions}); 
we have chosen this terminology since our definitions are reminiscent of
those of Nash isomorphisms and Nash functions in 
the theory of real closed fields, in which they also are a good substitute for real-analytic isomorphisms and real-analytic functions.

This notion of a Nash isomorphism is closely related, 
if not equivalent in appropriate settings,  to that of an 
\emph{$h$-isomorphism}, which was introduced by Hübner
and Temkin in their work \cite{hubner-temkin2024} in the adic setting.

\subsection*{Structure of this manuscript}
Section 1 is devoted to stating the  conventions we use and to providing some basic reminders on the model theory of valued fields, 
with an emphasis  on the sort $\RV$
which is used systematically throughout this article.  We have chosen to use the denomination $\RV$
(which is standard in model theory) for the sake of consistency with the general viewpoint of this text, but we could have used equivalently the more
algebraic language of \emph{graded reduction} in the sense of Temkin \cite{temkin2004}; for  similar reasons we borrow the terminology 
of \emph{generalized varieties} from \cite{hrushovski-k2006} but we could have called them \emph{graded schemes} instead.

In Sections \ref{topology-curve} and \ref{topology-curve-complement} we describe and investigate the structure of the 
stable completions of curves and of related spaces: the stable completion $\widehat X$ of a curve $X$ over a valued field is the analogue
of its Berkovich analytification, and we also consider the analogue $\widetilde X$ of its Huber analytification (mainly as an auxiliary tool
for understanding $\widehat X$). Section \ref{topology-curve} is essentially a recollection of statements from \cite{hrushovski-l2016}, the main
one being the existence for any curve $X$ of an \emph{admissible $K$-skeleton} $\Sigma$ on $\widehat X$, that is, a
$K$-skeleton (\ie, a $K$-definable finite graph) which is the 
target of a $K$-definable deformation retraction of
$\widehat X$. In Section \ref{topology-curve-complement} some new results are established
concerning the complement in $\widehat X$ of a
closed skeleton $\Sigma$: we prove 
that
$\pi_0(\widehat X\setminus \Sigma)$ is definable 
and that each connected component of $\widehat X\setminus \Sigma$
is the stable completion of a definable subset of $X$
(Proposition \ref{prop-connected-comp-definable}). 
The most important case, to which the
general one reduces, and for which one has a very nice description 
of the set of components and of the components themselves
(using $\widetilde X$)
is when $\Sigma$ is
an admissible skeleton containing all branches at its simple points
(Definition \ref{defi-allbranches},
Lemma \ref{lemm-mathfrakr-fibers} and \ref{components-are-stable-completions}).

In Section 4 we introduce the notion of Nash isomorphisms and Nash functions. 
Let $X$ be a $K$-algebraic variety and let $U$ be a $k$-definable subset of
$X$. If $Y$ is another $K$-algebraic variety and 
$V$ is a $K$-definable subset of $Y$, 
a $K$-algebraic map $f\colon Y\to X$
is said to induce a Nash isomorphism between 
$V$ and $U$ it $V$ is contained in the étale locus of $f$, if $f$ induces a bijection $V\simeq U$, and if the induced map $\widehat V
\to \widehat U$ is a homeomorphism. 
If this is the case any $K$-regular function on $Y$ gives rise via $f$ to a $K$-definable function 
on $U$, and $K$-definable functions on $U$ arising this way for some $(Y,V,f)$ as above are called Nash functions; we denote by
$\nash KU$ the algebra of such functions. 
Under 
a boundedness assumption on $U$ we can also define an $\RV$-reduction $\rednash KU$ of the algebra of $K$-definable Nash function on $U$; this
is a (generalized) algebra over $\RV(K)$. 
We show that when the ground field $K$
is henselian
 $\nash LU=L\otimes_K \nash KU$ for every finite extension $L$
of $K$ (Lemma \ref{cofinality}) and that $\rednash LU=\RV(L)\otimes_{\RV(K)} \rednash KU$ if $L$
is moreover assumed to be tamely ramified over $K$
(Corollary \ref{coro-tame-reduction}); this
latter fact relies heavily on the characterization of tame ramification in terms
of $\RV$ \cite{ducros2013b}. 

In section \ref{abstract-polydiscs} we assume
that $K$ is henselian 
and we study $\nash KU$ for $U$  a $K$-definable abstract polydisc, that is, a
$K$-definable subset of a variety Nash-isomorphic through some 
$n$-uple of $K$-regular functions $(f_1,\ldots, f_n)$ 
to an open polydisc centered at the origin, say of polyradius $r$. Any Nash function on $U$
has a well-defined \emph{formal}
Taylor development $\sum a_I f^I$ at the unique point $u$ of $U$ at
which all $f_i$ vanish (because Nash isomorphisms preserve the henselizations of local rings, and a fortiori
their completions), enabling us to see elements of $\nash KU$ as formal power series in the $f_i$. 
But be aware that the valued fields we deal with are not assumed to be complete in any sense, and can be of arbitrary height, 
so that there is no general theory here allowing for the evaluation of arbitrary power series in $f$ at points of $U$, even under suitable growth
conditions on the coefficients. We are nonetheless able to prove 
that Nash functions on $U$ viewed as power series behave as expected
(but this requires
some work!).
For instance we prove that if $g=\sum a_I f^I$ 
is such a function, if $s$ is some polyradius $<r$ and if 
and if $u_s$ denotes the unique pre-image under $f$ of the Gauss point $\eta_s$, then 
\[\sup_{\{\abs f\leq s\}}\abs g=\abs{g(u_s)}=\max_I \abs{a_I}\cdot s^I,\]
this maximum being achieved only for finitely many indices
(Theorem \ref{theo-norme-gauss}); Corollary \ref{gauss-norm-open-disc}
of this theorem then asserts that
$\sup_U \abs g=\max_I  \abs{a_I}\cdot r^I$ (but here the maximum can be achieved for
infinitely many indices). Theorem \ref{theo-norme-gauss} is proved by induction on dimension, 
so the core is the proof in dimension 1, which uses the topology of curves investigated in Sections
\ref{topology-curve} and \ref{topology-curve-complement}. And we also prove that if $n\geq 1$
and $J$ denotes
the set of multi-expoents $I$ with last coordinate $0$ (otherwise said the set of those $I$ such that $f_n$ does not occur
in the monomial $f^I$) then $\sum_{I\in J}a_I f^I$ is (the Taylor developement of) a Nash function $h$ on $U$ 
which can be described as the composition of a ``projection" onto the locus
$\{f_n=0\}$ and of a Nash function on the latter
(Proposition \ref{restriction-nash-functions}). We warn the reader
of the following: by definition of a Nash function, $g$ is defined on some 
variety $Y$ equipped with an étale map to $X$, 
but for defining $h$ a further étale refinement $Z\to Y$ is likely unavoidable 
in general, see Remark \ref{etale-unavoidable}.

In Section \ref{tame-descent} we  still assume that $K$
is henselian and
describe $\rednash KU$ for $U$ a $K$-definable abstract polydisc, relying upon
the results on $\nash KU$ proved in Section \ref{abstract-polydiscs}. 
If $(f_1,\ldots, f_n)$ are
as above, we show that $\rednash KU$ is a local (generalized) sub-algebra of the (generalized)
ring of (generalized) formal power series
$\RV[\![\rv(f_1),\ldots, \rv(f_n)]\!]$ and that a family $(g_1,\ldots, g_n)$ of Nash functions on $U$ induces a Nash isomorphism with
an open polydisc if and only if the $\rv(g_i)$ generate the maximal ideal of $\rednash KU$ (Theorem
\ref{reduction-abstract-disc}). We can then prove our tame descent theorem for open polydiscs (Theorem \ref{theo-tame-descent}): 
if $L$ is a tamely ramified finite extension of $K$ and if $U$ is a $K$-definable subset of a $K$-variety $X$ such that there exists
$(g_1,\ldots, g_n)\in \nash LU$ inducing a Nash isomorphism between $U$ and an open polydisc, then there exist
$(f_1,\ldots, f_n)\in \nash KU$ inducing a Nash isomorphism between $U$ and an open polydisc; the proof
of this Theorem is completely parallel to that of \cite[Théorème 3.5]{ducros2013b}. 
Let us mention that if
$n=1$ one can do slightly better: in this case
if $g_1$ is an $L$-regular function on some $L$-Zariski open neighborhood of $U$
in $X$, one can choose $f_1$ in the ring of $K$-regular functions on some $K$-Zariski open neighborhood of $U$ in $X$
(no serious étale refinement is needed in this case). 

The purpose of Section 7 is to  construct  ``generic" abstract open polydiscs. More precisely we 
assume that the ground field $K$ is algebraically 
closed. We start from a 
$K$-variety (say, integral)
of dimension $n$
and a $K$-Zariski generic point $x$ of $X$ such that the valued field $K(x)$ is Abhyankar over $K$. Let $f_1,\ldots, f_m$ be 
non-zero $K$-rational functions on $X$ such that the $\rv(f_i(x))$ generate $\RV(K(x))$. 
By generic smoothness it is of course very easy to exhibit a $K(x)$-definable
abstract open polydisc in $X$ containing $x$, but we get a better and more useful result:  indeed, we prove
(Theorem \ref{maintheo-balls})
that
there exists an abstract 
open polydisc $U\subset X$ containing $x$ which is definable over $K\cup\{\rv(f(x))\}$ and which is moreover
\emph{atomic} (\ie, it has no proper non-empty definable subsets) over this set of parameters. 
The proof relies on generic smoothness but only at the level of the residue field, and makes a crucial use
of the fact that if $c$ is a point of $\RV^n$ with at least one coordinate transcendental over $\RV(K)$,
the open polydisc  $\rv^{-1}(c)\subset \A^n$ is atomic over $K\cup\{c\}$ and even over its model-theoretic
algebraic closure (\cite{hrushovski-k2006}; see the comments in \ref{comments-atomic}). 

Sections \ref{nash-structure-coverings} and \ref{nash-structure-triangulations}
are devoted to the Nash structure of the stable completion $\widehat X$ 
of a $K$-algebraic curve $X$. We prove 
more precisely (Theorem \ref{structure-theo})
that 
$\widehat X$ admits a \emph{$K$-triangulation}. This is essentially a finite $K$-definable subset $\mathscr V$ such that 
there exists a finite set $E$ of $\overline K$-regular functions (each of them being defined on a
$\overline K$-Zariski open subset of $X$) fulfilling the following property: 
every connected component 
of $\widehat X\setminus \mathscr V$ is of the form $\widehat U$ for some definable subset $U$ of $X$
which is Nash-isomorphic
via some $f\in E$
to an open disc or an open annulus, the second case occuring only for finitely many
components (if the component $U$ is Nash-isomorphic to an annulus, we also require 
that its two orientations be individually inter-definable with $U$; otherwise said if $U$ is defined over
some finite extension $L$ of $K$, so are its two orientations). 
Such a triangulation gives rise to an admissible $K$-skeleton $\Sigma$, 
obtained by taking the union of $\mathscr
V$ and of the skeletons of the finitely many connected components of
$\widehat  X\setminus \Sigma$ which are abstract annuli; then every connected component of 
$\widehat X\setminus \Sigma$ is of the form $\widehat U$ for $U$
a definable subset of $X$ which is Nash-isomorphic to an open disc
via some $f\in E$ (we call such skeletons \emph{Nash-admissible}). 
The proof of  
Theorem \ref{structure-theo} reduces through 
easy Galois descent
to the case where $K$ is algebraically closed, in which case 
it is deduced from
a first structure theorem (Theorem \ref{theo-core}) asserting that $\widehat X$ can be covered by the stable completions
of finitely
many \emph{nice charts} (Definition \ref{defi-nice-charts}), which itself relies in an essential way on the existence of 
atomic open (poly)discs proved in Section \ref{atomic-polydiscs}.
Let us mention that the existence of triangulations in this sense and in this
setting is the exact analogue of the existence of triangulations on Berkovich analytic curves 
\cite[Théorème 5.1.14]{ducros2024}. 

Section \ref{applications} is devoted to some applications of our former results. The first one 
(Theorem \ref{definability}) still concerns the structure of curves: 
we deduce from the existence of triangulations that
the pro-definable set $\widetilde X$ is actually definable, 
and also that if $\Sigma$ is a closed skeleton of $\widehat X$, the definable set
$\pi_0(\widehat X\setminus \Sigma)$ is $\RV$-internal.
The second 
one explains how our theorem on the existence 
of triangulations implies the corresponding theorem in the Berkovich setting, and the
semi-stable reduction theorem over arbitrary height 1 valuation rings (Theorem \ref{semi-stable}). 
The third one is
the tame henselian rationality
Theorem (Theorem \ref{henselian-rationality}), which is proved as follows. If $L$ is an immediate
one-dimensional finitely generated extension of a tame valued field $K$, we write $L=K(x)$ where $x$
is some $K$-Zariski generic point of a
$K$-curve $X$. Choose a
$K$-triangulation on $\widehat X$
and denote by $\Sigma$ the corresponding Nash-admissible skeleton. We view $x$ as a simple point of $\widehat X$; it then belongs
to a connected component of $\widehat X\setminus \Sigma$, which is of the form $\widehat U$ for some 
definable subset $U$ of $X$. As the set of connected components of $\widehat X\setminus \Sigma$ is $\RV$-internal, 
$U$ is definable over $\overline K\RV(K(x)))=\overline K$. Now the fact that $K$
is tame and that generalized residue fields behave well with respect to
tamely ramified
extensions implies that $L\otimes_K \overline K$ is a field to which
$v$ extends uniquely, which means that
$\mathrm{tp}(x/\overline K)$ is
Galois-invariant
and implies that $U$ is
actually
$K$-definable. 
 Since $\Sigma$ is Nash-admissible there exists a Nash isomorphism
between the $K$-definable set
$U$ and an open disc which is induced by an $F$-regular function on an
$F$-Zariski
open neigborhood of $U$ for some finite extension $F$ of $K$.  As $K$ is tame, $F$ is tamely ramified
over $K$, so by our tame descent theorem for abstract open discs (and more precisely by its stronger version specific
to dimension 1)
we can assume that $F=K$; then $f$ induces an isomorphism between the henselizations
of $K(f(x))$ and $K(x)$, and we are done. 

Let us end this description of our work by a remark: Sections \ref{section-nash} to \ref{atomic-polydiscs}
go far beyond what was strictly needed for the structure of curves (Sections \ref{nash-structure-coverings} and
\ref{nash-structure-triangulations}) and tame henselian rationality (Section \ref{henselian-rationality}), if only because
they apply to varieties of arbitrary dimension. But we hope that the resuts therein are intrinsically interesting, and that they
will be useful for future works in geometry over valued fields.

\subsection*{Acknowledgements}
We are extremely
grateful to Franziska Jahnke and Franz-Viktor Kuhlmann for their interest in our former work
\cite{ducros-h-l-y2024} (joint with E. Hrushovski and J. Ye) and for having asked the question that eventually
gave rise
to this article. F.L. would like to thank the organizers of the period ``Recent Applications of Model Theory'' that took place at the Institute for Mathematical Sciences of the National University of Singapore
in Summer 2025 for their kind invitation that made this work possible. 
During the preparation of this article, A.D. 
was supported by the project AdAnAr (ANR-24-CE40-6184) of the Agence nationale de la recherche
and F.L. was partially supported by the Institut Universitaire de France.

\medskip
\emph{We dedicate this paper to the memory of our friend and colleague Zoé Chatzidakis,
who passed away on January 22, 2025.
She was always keen on seeing new applications of model theory to algebra and geometry.
We hope that this paper will serve as a fitting, if modest, tribute to her memory.}

\section{Model theory of valued fields: reminders and basic facts}\label{reminders}

\subsection{}
Throughout this paper we shall use the multiplicative notation for valued fields: a valuation on a field $F$ is a map
$\abs \cdot \colon F\to G_0$ where $G$ is a multiplicative ordered abelian group (with unit element $1$) and $G_0=G\cup\{0\}$
is the monoid obtained by formally adjoining to $G$ a smallest absorbing element.

\subsection{}
We will use the multi-sorted language of valued fields introduced in 
\cite{haskell-h-m2006}. Besides the three standard sorts, namely, the valued field itself, the residue field and the value group, it
encompasses a countable family of so-called \textit{geometric sorts}. We will not use the latter explicitly
but adjoining these geometric sorts ensures that the theory $\acvf$ of algebraically closed
(non-trivially) valued fields eliminates imaginaries \cite[Theorem 3.4.10]{haskell-h-m2006}, so that 
objects which are merely interpretable in the classical setting will be definable, like $\RV$ (see below)
or the \textit{stable completion} of an algebraic curve
defined by Hrushovski and the second author  in their work \cite{hrushovski-l2016}.

\subsection{}
In this text, definable ``sets" will be considered as functors on the category whose objects are models of $\acvf$ and whose arrows are isometric
embeddings, and definable ``maps" as natural transformations. Basic examples will be the value group
$\Gamma$ that maps $F$ to $\abs{F\gpm}$, its variant $\Gamma_0=\Gamma\cup\{0\}$ that maps $F$ to $\abs F$, and 
the residue field $\k$ that maps $F$  to $\{z\in F, \abs z\leq 1\}/\{z\in F, \abs F<1\}$. 

We shall also need the functor 
\[\RV\colon F\longmapsto \coprod_{\gamma \in \abs{F\gpm }}
\{z\in F,\abs z\leq \gamma\}/\{z\in F,\abs z<\gamma\}.\]
Note that $\RV(F)$ inherits a multiplication, as well as a partially defined addition: one can only add up two terms
belonging to the same summand,  and each summand is an abelian group (with its own zero-element); the summand
of degree one is nothing but the residue field. 
Therefore
$\RV(F)$ appears as kind of a a generalized ring, and even as a generalized field since 
$1$ is non-zero and its group $\RV\gpm(F)$ of invertible elements coincides with its subset of non-zero
elements. 

The image of an element $x\in F\gpm$ in the group $\RV\gpm (F)$ will be denoted by $\rv(x)$. One has a natural exact sequence
\[\begin{tikzcd}
1\ar[r] &\k\gpm\ar[r]&\RV\gpm\ar[rr,"\abs \cdot"]&&\Gamma\ar[r]& 1\end{tikzcd}.\]

Most notions of commutative algebra have straighforward analogues to this context of generalized rings (or fields). 
We will use them freely, and refer the reader to the first section of \cite{ducros-2021a} for more details. 
For instance there is a notion of a generalized polynomial ring over $\RV(F)$, but the indeterminates have to be assigned a degree. 
This gives rise to a theory of generalized algebraic extensions, transcendence degree, etc. 

\subsection{}
Let $K$ be a valued field. A  $K$-variety $X$ (\ie, a separated $K$-scheme of finite type) will be here mainly considered as a functor
on the category of  models of $\acvf$ ``containing" $K$ (\ie, equipped with an isometric embedding from
$K$), or sometimes even implicitly containing a given valued extension of $K$. 
By a point of $X$ we will mean 
an element of $X(F)$ for such a model $F$, which will in general not be specified, and could be enlarged if needed. 
(The reader who likes the model-theoretic version of Weil's viewpoint might also consider that one has fixed a huge ``monster model"
$\mathscr U$ of $\acvf$, that all objects considered in this paper are embedded in $\mathscr U$, and that points of $X$ are simply
elements of $X(\mathscr U)$.) If we need to consider scheme-theoretic points or types in $\acvf$ rather than 
``naive" points living on some model, 
we will say so explicitly. 

The ring of regular functions on the $K$-scheme $X$ will usually be denoted by $K[X]$; if the scheme
$X$ is integral, its field of rational functions will be denoted by $K(X)$.

\subsection{}
For seing definable sets as functors one needs in general to restrict the evaluation to models of $\acvf$, to ensure that one only 
deals with elementary embedding of valued fields. But definable sets coming with a canonical quantifier-free description
can be seen as functors on the category of all valued fields, or valued extensions of a given ground field. For instance 
$\k, \Gamma$ and $\RV$ extend to the category of all
valued fields, and if $X$ is a $K$-variety, 
it can be seen as a functor on the category of all valued extensions of $K$. 

\subsection{}
In general, our sets of parameters for definable subsets of a $K$-variety $X$
will consist of elements of one of the following sorts: the ground field, the value group or $\RV$ (remind that the latter
contains the residue field). 
This means that such a set $A$
will be contained in $F\coprod \abs F\coprod\RV(F)$ for some (not necessarily specified) valued extension 
$F$ of $K$, and we shall say for short that a valued extension $M$ of $K$ contains $A$
if it contains the smallest subfield $E$ of $F$ containing $K$
and such that 
$A\subset E\coprod\abs  E\coprod \RV(E)$.
For such an $A$, an $A$-definable subset $D$ of $X$ will be understood as a functor
on the category of algebraically closed, non-trivially valued extensions of $K$  containing $A$ -- or on all
valued extensions of $K$ containing $A$ if $D$ admits a canonical quantifier-free description. 

An $A$-definable subset $E$ of $X$ will be called \textit{$A$-atomic} if the only $A$-definable subsets of $E$
are $\emptyset$ and $E$. 

\subsection{}
Let $K$ be a valued field and let $F$ be 
a valued extension of $K$. Then $\RV(F)$ is a (generalized) 
field extension of $\RV(K)$
and 
\[[\RV(F):\RV(K)]=(\abs{F\gpm}:\abs{K\gpm})\cdot[\k(F):\k(K)],\]
and $F$ is an immediate extension of $K$ if and only if 
$\RV(F)=\RV(K)$. 
If $(a_i)$ is a family of elements of $F\gpm$, the $\rv(a_i)$ are linearly independent over $\RV(K)$ if and only if 
$\abs{\sum \lambda_ia_i}=\sup_i \abs{\lambda_i}\cdot \abs{a_i}$ for all (finitely supported) families $(\lambda_i)$ of elements of 
$K$. If this is the case, the $a_i$ are linearly independent over $K$. We thus see that $[\RV(F):\RV(K)]\leq [F:K]$. 

Now denote by $L$ a finite extension of $K$ and by $L_1,\ldots, L_r$ all the valued extensions of $K$ with 
underlying field $L$. By the above we have for all $i$ the inequality 
\[[\RV(L_i):\RV(K)]=[\RV(L_i\h):\RV(K\h)]\leq [L_i\h:K\h]\]
where $(\cdot)\h$ stands for the henselization. It follows that 
\[\sum_i [\RV(L_i):\RV(K)]\leq \sum_i [L_i\h:K\h]=[(L\otimes_K K\h):K\h]
=[L:K].\]
The extension $K\hookrightarrow L$ is called \textit{defectless} if $\sum_i [\RV(L_i):\RV(K)]=[L:K]$, and a valued field
is called defectless\footnote{Such valued fields are also often called \emph{stable}, but to avoid any confusion with the model-theoretic notion
of stability, we prefer to use ``defectless" in this article.}
if all its finite extensions are defectless. Algebraically closed
valued fields, valued fields with residue characteristic zero, complete discretely valued
fields, function fields of normal irreducible varieties equipped with the discrete valuation associated to a divisor, 
are defectless. A valued field is defectless if and only if its henselization is defectless. 

\subsection{}
Let $K\hookrightarrow F$ be an extension of valued fields. If $(a_i)$ is 
a family of elements of $F^\times$, the $\rv(a_i)$ are algebraically 
independent over $\RV(K)$ if and only if 
$\abs{\sum \lambda_Ia^I}=\sup_i \abs{\lambda_I}\cdot \abs a^I$ for all polynomials $\sum \lambda_I T^I$
in the multi-variable $T=(T_i)$. If this is the case, the $a_i$ are algebraically
independent over $K$. We thus see that $\td(\RV(F)/\RV(K))
\leq \td(F/K)$. 
We say that $F$ is an \textit{Abhyankar extension} of $K$ if it is finitely generated over $K$ and 
 $\td(\RV(F)/\RV(K))
=\td(F/K)$. If this is the case, 
an \textit{Abhyankar basis} of $F$ over $K$ will be any family $(a_i)$ of elements of $F\gpm$ such that 
the $\rv(a_i)$ are a transcendence basis of $\RV(F)$ over $\RV(K)$. Otherwise said, this is a family $(a_i)$ of elements of 
$F\gpm$ whose cardinality is equal to $\td(F/K)$ and such that $\abs{\sum \lambda_Ia^I}
=\sup_i \abs{\lambda_I}\cdot \abs a^I$ for all polynomials $\sum \lambda_I T^I$. 

An important result of valuation theory, which plays a key role in this manuscript,  asserts that if $K$ is a defectless field,
every Abhyankar valued extension
of $K$ is still defectless. Its first proof was given by Franz-Viktor Kuhlmann \cite{kuhlmann2010};
a purely model-theoretic proof can be found in the Appendix of \cite{ducros-h-l-y2024}.

\begin{exem}
Let $K$ be a valued field, let $G$ be an ordered abelian group containing $\abs{K^\times}$, and let 
$g=(g_1,\ldots, g_n)\in G^n$. Let $\eta_g$ be the valuation 
on $K(T_1,\ldots, T_n)$ that maps $\sum a_I T^I$ to $\max \abs{a_I} g^I$. 
Then $(K(T),\eta_g)$ is an Abhyankar extension of $K$, and 
$\RV(K(T),\eta_g)$ is the rational function field over $\RV(K)$ generated by 
he $\rv(T_i)$, which are algebraically independent
over $K$. We call $\eta_g$ the \textit{Gauss valuation} 
with parameter $g$.  
\end{exem}

\subsection{Generalized varieties}\label{generalized-variety}
We shall need the notion of 
a \textit{generalized algebraic variety}, see \cite[\S 4.1]{hrushovski-k2006}; this is essentially the $\RV$ (or graded)
version of a classical variery over the residue field.
In particular for every finite tuple $\gamma=(\gamma_1,\ldots, \gamma_e)$ of elements of $\Gamma$
one can define the generalized affine space $\A^\gamma_\RV $; this is simply the 
$\gamma$-definable subset of $\RV^e$ consisting 
of those elements $(x_1,\ldots, x_e)$ with each $x_i$ of degree $\gamma_i$.
There is a projective version of this construction: $\P^\gamma_\RV$ is the set of 
tuples $(x_0,\ldots, x_e)$ of elements of $\RV$ with $x_0$ of degre $1$ and $x_i$ of degree $\gamma_i$
for $i>0$, modulo multiplication by $\k(K)^\times$. 
Note that if $\gamma=(1,\ldots, 1)$ then $\A^\gamma_\RV$ is nothing but the classical affine 
space $\A^e_\k$ over the residue field;
more generally if $\gamma_i=\abs{\lambda_i}$ for some family
$\lambda_i$ of elements of a valued field $K$ then 
$\A^\gamma_\RV$ is $K$-definably isomorphic to $\A^e_\k$ using the multiplication by $(\rv(\lambda_i^{-1})_i$.

Using the notion of a generalized polynomial we can define
generalized
algebraic  functions from a generalized algebraic variety to 
$\RV$. 

\subsection{Stable completions}
We will use freely the theory of \textit{stable completions}, developped by Hrushovski and
the second author in \cite{hrushovski-l2016}. We will need it only for curves, for which it
is far simpler than for higher dimensional varieties: indeed if $K$
is a valued field and $V$ is a $K$-definable 
subset of some algebraic $K$-variety, 
the stable completion $\widehat V$ (which is a functor on the category of all models
of $\acvf$ containing $K$) is $K$-definable as soon as $\dim V\leq 1$, while it is merely
pro-definable if $\dim V>1$. More precisely, the references inside
\cite{hrushovski-l2016} are  \S3.1 and Theorem 3.1.1
for the definition and the (strict) pro-definability of $\widehat V$, Theorem 7.1.1 for the
definability in dimension $\leq 1$ (which ultimately
relies on Riemann-Roch Theorem), and Remark 7.1.3 
for the fact that one cannot hope for definability in dimension $>1$. 

In fact $\widehat V$ is not only a pro-definable set but a pro-definable topological space: for every model 
$M$ of $\acvf$ containing $K$ the set $\widehat V(M)$ is equipped with a natural topology admitting a basis of relatively
$M$-definable open subsets, inducing the valuative topology 
on $V(M)$, and for which strict inequalities give rise to open subsets. 
It therefore makes sense to say that some pro-definable subset of
$\widehat V$ is definably closed (\resp open, \resp compact, \resp connected).
We shall also use the relative version of definable compactness, that of definable properness: a pro-definable 
map between pro-definable topological spaces will be called definably proper 
if the pre-image of every definably compact subset is definably compact.

Since we shall always use
the definable variants of the usual topological notions in this setting, we will allow ourselves to omit ``definably"
when refering to them, so we will say compact for definably compact, proper for definably proper, etc.

\section{Topology of curves: admissible skeletons}\label{topology-curve}
%
We fix a valued field $K$ and an algebraic closure $\overline K$ of $K$, endowed with an extension of the valuation
of $K$.

\subsection{}
Let $X$ be an algebraic $K$-variety. We shall denote by $\breve X$ the functor that maps a model $M$ 
of $\acvf$ containing $\overline K$ to the set of $M$-definable types over $M$ on $X$. Note that $\breve X$ contains $\widehat X$
by definition. More precisely, 
$\widehat X(M)$ is the set of $x\in \breve X(M)$ such that for every $M$-Zariski open subset $U$ of $X$ with $x\in \breve U(M)$ and every 
$M$-regular function $f$ on $U$ there exists $\lambda\in M$ with $\abs{f(x)}=\abs \lambda$. 
There is a natural intermediate set $\widetilde X$ between $\widehat X$ and $\breve X$, that of \textit{bounded} definable types: $\widetilde
X(M)$ is the set of $x\in \breve X(M)$ such that for every $M$-Zariski open subset $U$ of $X$ with $x\in \breve U(M)$ and every 
$M$-regular function $f$ on $U$ there exists $\lambda\in M$ with $\abs{f(x)}\leq \abs \lambda$.
While $X\mapsto \widehat X$ is a model-theoretic avatar of Berkovich's analytification, $X\mapsto \widetilde X$ is that of Huber's.
It follows from \cite[Theorem 7.4.3]{cubides-h-y2021}
that $\breve X$ and $\widetilde X$ are strictly pro-definable.

\subsubsection{}\label{defsigma}
The embedding $\widehat X\subset \widetilde X$ admits a retraction
$\varpi$. If one sees a point of $\breve X(M)$ as a scheme-theoretic
point $\xi$ on the $M$-scheme $X_M$ together with an $M$-valuation $v$ on the field $M(\xi)$, then $\varpi$
maps
$(\xi,v)$ to $(\xi,v')$ where $v'$ is the coarsening of $v$ obtained by modding out $v(M(\xi)^\times)$ by its convex subgroup
consisting of the elements infinitesimally close to $1$ with respect to $\abs{M^\times}$. 

\subsubsection{}
The map $\varpi$ is pro-definable. Indeed, this follows from the general ``explicit" description of a set of uniformly definable types
as a pro-definable set \cite[Prop. 4.1]{cubides-y2021}, and the following
facts (for $M$ a model of $\acvf$ containing $K$): 

\begin{itemize}
\item [$\diamond$] A type $x$ on $X$  over $M$ (in $\acvf$) is entirely determined once one knows the set of all $M$-Zariski open subsets $U$ of $X$ such that $x$ lies on $U$ and, for every such $U$, the set of all $M$-regular functions $f$ on $U$ such that $\abs{f(x)}\leq 1$
and that of all  $M$-regular functions $f$ on $U$ such that $\abs{f(x)}<1$. 
\item[$\diamond$] Let $U$ be an $M$-Zariski open subset of $X$ and let $x$ be a point of $\widetilde X(M)$ with image $x'$ on $\widehat X(M)$. 
The point $x$ lies on $U$ if and only if $x'$ lies on $U$. If this is the case then for every $M$-regular function $f$ on $U$
one has: 

\begin{itemize}
\item[$\bullet$] $\abs{f(x')}\leq 1$ if and only if $\abs{\lambda f(x)}<1$ for all $\lambda \in M$ with $\abs \lambda <1$; 
\item[$\bullet$]  $\abs{f(x')}<1$ if and only if there exists $\lambda \in M$ with $\abs \lambda >1$ and $\abs{\lambda f(x)}<1$.
\end{itemize}
\end{itemize}

\subsubsection{}
If $U$ is a definable subset of $X$, we shall denote by $\breve U, \widetilde U$ and $\widehat U$ the subsets of
$\breve X, \widehat X$ respectively consisting of those types that lie on $U$. 
We warn the reader that there is no general inclusion relation between $\widetilde U$ and 
$\varpi^{-1}(\widehat U)$
in $\widetilde X$. Indeed, let us take for $X$ the affine line, for $U$ the open unit ball $\abs T<1$ and for $V$ 
its complement $\abs T\geq 1$. 
Then using the notation introduced at the beginning of \ref{projective-line-explicit} below 
$\eta_{0,1^-}$ belongs to $\widetilde U$ but $\varpi(\eta_{0,1^-})=\eta_{0,1}$ does not lie on $\widehat U$; 
so $\eta_{0,1^-}$ does not belong to $\widetilde V$ but $\varpi(\eta_{0,1^-})$ belongs to $\widehat V$.

\subsection{The case of the
projective line}\label{projective-line-explicit}
Our purpose is to show explicitly that
the pro-definable sets $\breve {\P^1}$ and $\widetilde {\P^1}$ are actually definable, like $\widehat {\P^1}$. 
We fix a model
$M$ of $\acvf$ containing $K$, we denote by $R$ the valuation ring of $M$ and by $\mathfrak m$
its maximal ideal,  and
we set $E=\{a+bT\}_{a,b\in M}\subset M[T]$.  

Let $a$ be an element of $M$. For every $r$ in an ordered abelian group containing
$\abs{M^\times}$ 
we denote by $\eta_{a,r}$ the type over $M$ lying on $\P^1$ and given by the Gauss norm
$\sum a_i(T-a)^i\mapsto \max \abs{a_i}r^i$ on $M[T]$. One has $\eta_{a,r}=\eta_{a,s}$ if and
only if $r$ and $s$ have the same type over $\abs{M^\times}$
and $\abs{a-b}\leq r$. The type $\eta_{a,r}$ 
is $M$-definable
if and only if the type of $r$ over $\abs{M^\times}$ is $\abs{M^\times}$-definable. 
This occurs if and only if one (and only one) of the following hold: 
\begin{itemize}[label=$\diamond$]
\item $r\in \abs{M^\times}$; 
\item $r$ is infinitesimally lower (with respect to
$\abs{M^\times}$) than some $\gamma\in \abs{M^\times}$; then we also write $\eta_{a,\gamma^-}$ for the type
$\eta_{a,r}$; 
\item $r$ is infinitesimally larger than some $\gamma\in \abs{M^\times}$; then we also write $\eta_{a,\gamma^+}$ for the type
$\eta_{a,r}$; 
\item $r$ is infinitely small; then we also write $\eta_{a,0^+}$ for the type $\eta_{a,r}$; 
\item $r$ is infinitely large; then we also write  $\eta_{a,\infty^-}$ for the type $\eta_{a,r}$
(note that $\eta_{a,\infty^-}$ is
equal to $\eta_{0,\infty^-}$). 
\end{itemize}

\subsubsection{Reminders on $\widehat {\P^1}$}
\label{reminders-xhat}
Let us first remind the explicit description of $\widehat {\P^1}$ as a definable set: there is 
a natural bijection from $\widehat \P^1(M)$ to 
the definable set $\Lambda$ of $M$-definable sub-modules
$E$ of the form $R\oplus S(T-a)/\lambda$ with $S=0,R$ or $M$, with
$a\in M$ and with $\lambda \in M^\times$ (for the definability of $\Lambda$ to
make sense, one needs to use the so-called
geometric sorts, at least $S_2$). This natural bijection maps $x\in \widehat {\P^1}(M)$ to $L_x:=\{f\in E, \abs{f(x)}\leq 1\}$; the converse bijection maps 
$L_x$ to $\eta_{a,\abs \lambda }$ if $L_x$ is a lattice
 and $(1,(T-a)/\lambda)$ is a basis of $L_x$,
 to the simple point given by the equation $T=a$ if $M(T-a)\subset L_x$ (this determines $a$ uniquely) and to $\infty$ if $L_x=R$. 

\subsubsection{First description of $\breve {\P^1}$ and 
$\widetilde {\P^1}$}
An $M$-type $x$ lying on $\P^1$ which does not belong to $\widehat \P^1(M)$ is either realized by some element of $M'\setminus M$ for $M'$
an immediate extension of $M$, either of the form $\eta_{a,r}$ for some $r$ in $G\setminus \abs{M^\times}$
for some ordered group $G\supset \abs{M^\times}$; but in the first
case the type $x$ is not definable (for the set of $f\in E$ such that $\abs{f(x)}\leq 1$ is not definable). We thus see that 
$\breve {\P^1}(M)\setminus \widehat {\P^1}(M)$ consists of the types of the form
$\eta_{a,\gamma^-}, \eta_{a,\gamma^+}, \eta_{a,0^+}$ and $\eta_{0,\infty^-}$ for $a\in M$ and $\gamma\in \abs{M^\times}$; 
among these types, only those of the form $\eta_{a,\gamma^-}$ and $\eta_{a,\gamma^+}$ are bounded, 
so $\widetilde {\P^1}(M)\setminus \widehat {\P^1}(M)$ consists of all types of the form $\eta_{a,\gamma^-}$ and $\eta_{a,\gamma^+}$.

\subsubsection{Definability of $\breve {\P^1}$ and $\widetilde {\P^1}$}
Let $x\in \breve {\P^1}(M)$. Let $L_x$ be the sub-$R$-module $\{f\in E, \abs{\lambda f(x)}<1$ for all $\lambda\in M$ with $\abs \lambda <1\}$
of $E$.
Let us describe $L_x$ more explicitly. 

\begin{itemize}
\item[$\diamond$] Assume that $x\in \widehat {\P^1}(M)$. Then $L_x$ coincides with the module
$L_x$ defined in \ref{reminders-xhat}. 

\item[$\diamond$] If $x=\eta_{a,\gamma^-}$ or $\eta_{a,\gamma^+}$ 
then $L_x=R\oplus R (T-a)/\lambda$ where $\lambda$ is any element of $M$ with $\abs \lambda=\gamma$
(this is also the lattice associated to
$\eta_{a,\gamma}$). 
In this case denote by $L'_x$ the subset of $L_x$ consisting of those $f$ such that $\abs{f(x)}<1$ and $\abs{\lambda f(x)}>1$ for every
$\lambda\in M$ with $\abs \lambda >1$. Then $L'_x=R(T-a)/\lambda+\mathfrak m_xL_x$ if $x=\eta_{a,\gamma^-}$, and $L'_x=\emptyset$
if $r=\eta_{a,\gamma^+}$. 

\item[$\diamond$] If $x=\eta_{a,0^+}$ then $L_x=R\oplus M(T-a)$
(this is also the lattice associated to the simple point $T=a$);

\item[$\diamond$] If $x=\eta_{0,\infty^-}$ then $L_x=R$ (this is also 
the lattice associated to $\infty$). 
\end{itemize}

The discussion above furnishes a pro-definable partition 
$\breve {\P^1}=\widehat {\P^1}\coprod \Omega\coprod \Omega'$ where: 

\begin{itemize}
\item[$\diamond$] $\Omega(M)$ is the set of types of the form $\eta_{a,\gamma^-}$ or $\eta_{a,\gamma^+}$. It
is in pro-definable 
bijection with the definable set of pairs $(L, D)$ where $L$ is a lattice of $E$ admitting $R$ as a direct summand  and $D$ 
is an element of $\mathbb P(L/\mathfrak mL)$ (note that for this set of pairs to be definable, one needs the geometric sorts $S_2$ and $T_2$). 
The bijection maps $x$ to $(L_x, L'_x/\mathfrak mL_x)$ if $L'_x\neq \emptyset$ and
to $(L_x, (R+\mathfrak mL_x)/\mathfrak mL_x)$ otherwise. The converse bijection 
maps a pair $(L,D)$ to $\eta_{a,\abs \lambda ^-}$ if $D\neq (R+\mathfrak mL_x)/\mathfrak mL_x$ and $(T-a)/\lambda$
lifts a generator of $D$,
and to $\eta_{a,\abs \lambda ^+}$ if $D=( R+\mathfrak m L_x)\mathfrak mL_x$ and $(1,(T-a)/\lambda)$ is a basis of $L_x$.

\item[$\diamond$] $\Omega'(M)$ is the set of types of the form $\eta_{a,0^+}$ or $\eta_{0,\infty^-}$. It is in pro-definable
bijection with the definable set $\P^1(M)$ by the map 
$\eta_{a,0^+}\mapsto a$ and $\eta_{0,\infty^-}\mapsto
\infty$. 
\end{itemize}

It follows that the strictly pro-definable set $\breve {\P^1}$ is actually definable, and so is $\widetilde {\P^1}=\widehat {\P^1}\coprod \Omega$. 
The natural retraction from $\widetilde {\P^1}$ to $\widehat {\P^1}$ is then definable. Explicitly if $x\in \widehat {\P^1}(M)$ it maps $x$ to
$x$; and if $x\in \Omega(M)$ it maps $x$ to the unique point $x'$ of $\widehat {\P^1(M)}$ such that $L_{x'}=L_x$, which can be rephrased by saying 
that it maps $\eta_{a,\gamma^\pm}$ to $\eta_{a,\gamma}$.

\subsubsection{The $\RV$-skeleton of $\widetilde  {\P^1}$}\label{rv-internal-A1}
We define $\Sigma(M)$ as the subset of  $\widehat {\P^1}(M)$
consisting of the origin and all points of the form $\eta_{0,\gamma}$ with $\gamma\in 
\abs{M^\times}$. The ``skeleton" $\Sigma$
is a definable subfunctor of $\widehat {\P^1}$ which is $\Gamma_0$-internal; in fact $\gamma\mapsto
\eta_{0,\gamma}$ (with the convention that $\eta_{0,0}=0$ and
$\eta_{0,\infty=}\infty$) establishes a definable homeomorphism $[0,\infty]\simeq
\Sigma$.

Let $\Sigma_\RV$ be the subset of $\widetilde {\P^1}$ consisting of 
points 
of the form $\eta_{a,\abs{a}^-}$
where $a$
lies in $\gm=\P^1\setminus \{0,\infty\}$. Note that for $a$ and $b$ in
$\gm$ we have $\eta_{a,\abs a^-}=\eta_{b,\abs b^-}$
if and only if $\abs a=\abs b$ and $\abs{a-b}<\abs a$, so we have a natural
$K$-definable bijection 
$\Sigma_\RV\simeq \RV^\times$. Note also that if $s\in \Sigma_\RV$ then
$s$ does not belong to
$\widehat {\P^1}$ and $\varpi(s)$ belongs to $\Sigma$: if $s=\eta_{a,\abs {a^-}}$ 
then $\varpi(s)=\eta_{a,\abs a}$.

Let $I$ be the interval $[0,1]$. 
Let $\breve I$ be the set of all definable types on $I$
(it contains $I$, $0^+$, $1^-$, and $r^+$ and $r^-$ for all
$0<r<1$)
and $\widetilde I$ that of ``bounded" such types, which is merely
$\breve I\setminus \{0^+\}$. 

Let $h\colon \P^1\times [0,1]\to \widehat {\P^1}$ be the $K$-definable map
$(a,t)\mapsto=\eta_{a,t\abs a}$. 
By applying $h$ on realizations of types we obtain maps 
\[\widehat h\colon \widehat {\P^1}\times I\to \widehat {\P^1},\; 
\widetilde h\colon \widetilde {\P^1}\times \widetilde I\to \widetilde {\P^1}\; 
\;\text{and}\;\breve h\colon \breve {\P^1}\times \breve I\to \breve {\P^1}.\]

The definable map $\widehat h$
is the unique continuous pro-definable extension 
of $h$ to $\widehat {\P^1}\times I$
(see \cite{hrushovski-l2016}, Lemma 3.8.5) 
and this is by a direct computation a strong deformation retraction from $\widehat {\P^1}$
to $\Sigma$. 
We set $\rho(a)=h(a,1)$ for every $a\in \P^1$, so $\rho(0)=0,\rho(\infty)=\infty$
and $\rho(a)=\eta_{a,\abs a}$ for $a
\in \gm$. By applying $\rho$ on realization of types we get maps
$\widehat \rho$ (\resp $\widetilde \rho$, \resp $\breve \rho$) from $\widehat {\P^1}$
(\resp $\widetilde {\P^1}$, \resp $\breve {\P^1}$) to itself; one has
$\widehat \rho(a)=\widehat h(a,1)$ for all $a\in \widehat {\P^1}$, and the analogous formulas
for $\widetilde {\P^1}$ and $\breve {\P^1}$. The image of $\widehat \rho$
is  still equal to $\Sigma$; the image of $\tilde{\rho}$ is
equal to $\widetilde \Sigma:=\Sigma \cup\{\eta_{0,\gamma^-}, \eta_{0,\gamma^+}\}_{\gamma \in \Gamma}$
and thet of $\breve \rho$ is equal to
$\breve \Sigma:=\Sigma\cup \{\eta_{0,\gamma^-}, \eta_{0,\gamma^+}\}_{\gamma \in \Gamma}\cup\{\eta_{0,0^+}, \eta_{0,\infty^-}\}$. 

There is another way to describe $\widehat \rho$: for every 
$x\in \widehat {\P^1}$ there is a unique (up to definable homeomorphism)
continuous injective path $p\colon I\to \widehat {\P^1}$ where $I=[o,e]$ is a generalized
interval in the sense of \cite[3.9]{hrushovski-l2016} such that $p(o)=x, p(e)\in S$, and $p(t)\notin \Sigma$
for $t\neq e$; then $\widehat\rho(x)=e$. 

Now let $\rr$ be the definable map from ${\P^1}$ to $\widetilde {\P^1}$
that maps $a$ to $h(a,1^-)$. 
Then $\rr(0)=0$ and $\rr(a)=\eta_{a,\abs a^-}$ for every $a\in \gm$. 
The image $\rr(\gm)=\rr({\P^1}\setminus \Sigma)$
is thus equal to $\Sigma_\RV$; 
observe that modulo the aforementionned 
bijection $\Sigma_\RV\simeq \RV^\times$ the map $\rr$ 
maps any $a\in \gm$
to $\rv(a)$. 
If $s\in \Sigma_\RV\setminus \{0,\infty\}$ the fiber $\rr^{-1}(s)$
is an open disc; more precisely if $s=\eta_{a,\abs a^-}$ then $\rr^{-1}(s)$
is the open disc with center $a$ and radius
$\abs a$; this is also $\rv^{-1}(\rv(a))$. 

By applying $\rr$ to realizations of types, 
we get maps 
\[\widehat \rr\colon \widehat {\P^1}\to \widetilde {\P^1}, \widetilde \rr\colon \widetilde
{\P^1}\to \widetilde {\P^1},\;\text{and}\;\breve \rr \colon \breve {\P^1}\to \breve {\P^1}.\]
Note that 
 $\wrr(\widehat {\P^1}\setminus \Sigma)$
 is  still 
equal to $\Sigma_\RV$, but $\wrr(\widehat {\P^1})$ 
is equal to $\Sigma_\RV\cup \Sigma$, while $\widetilde \rr(\widetilde {\P^1})$
and $\breve \rr(\breve {\P^1})$ are respectively equal to
$\Sigma_\RV\cup \widetilde \Sigma$ and $\Sigma_\RV\cup \breve \Sigma$.

For defining $\rr$ we have used an explicit formula involving a specific
deformation retraction from $\widehat {\P^1}$ to $\Sigma$, 
but it could have been done in a more intrinsic way
(though less convenient for seeing that $\rr$ is definable)
analogous
to the direct definition of $\rho$ given above. Indeed let
$x\in {\P^1}$ and  let $p\colon [o,e]\to \widehat {\P^1}$ be the unique injective path (up to re-parametrization)
such that $p(o)=x, p(e)\in \Sigma$, and $p(t)\notin \Sigma$
for $t\neq e$. Then $\rr(x)=x$ if $o=e$, \ie , if $x\in \Sigma$, and $\rr(x)=p(e^-)$ otherwise.

\begin{defi}\label{defi-admissible-skeleton}
Let $X$ be a $K$-algebraic curve 
(\ie, a $K$-algebraic variety of pure dimension 1)
and let $U$ be a $K$-definable subset of $X$. 
A \emph{skeleton} of $\widehat U$ is
 a $\Gamma_0$-internal 
subset of $\widehat U$; a $K$-skeleton
is a $K$-definable skeleton. 
A $K$-skeleton $\Sigma$ of $\widehat U$ is called
\emph{admissible}
if there exists a $K$-definable
strong deformation retraction $\widehat h\colon \widehat U\times I\to \widehat U$
with $\widehat h(\widehat U,e)=\Sigma$ for $e$ the right endpoint of $I$, induced (through the realization of types)
by its restriction 
$h\colon U\times I\to \widehat U$ to simple points
(we will call for short such an $\widehat h$ an admissible $K$-homotopy with final image $\Sigma$). 
\end{defi}

\begin{rema}
Admissibility is a property of $K$-skeletons, and its definition involves $K$-definability. 
So if $L$ is a valued extension of $K$, the property for a $K$-skeleton of
being admissible
as an $L$-skeleton is a priori weaker than being admissible as an $K$-skeleton; but it turns out
that both are in fact equivalent, see Remark \ref{admissibility-K-L} below. 
\end{rema}

\subsection{Existence of admissible skeletons and consequences}
Let $X$
and $U$ be as in the above definition. The following facts 
follow from \cite[Chapter 7]{hrushovski-l2016}
(they are extended to higher dimensional varieties in \opcit, Theorem 11.1.1): 
every $K$-skeleton $\Tau$ of $\widehat U$ is contained in some admissible
$K$-skeleton
$\Sigma$; one can morever require that $\Sigma$ be purely $1$-dimensional, 
that it be the target of a \emph{topologically proper} admissible
$K$-homotopy $h$, 
and that finitely many given $K$-definable maps from $U$ to $\Gamma_0$ be constant along
the trajectories of $h$.

\subsubsection{}\label{basic-topology-uhat}
The existence of admissible $K$-skeletons ensures that $\widehat U$ is locally path-connected, and
even locally contractible. It also ensures that $\widehat U$ has finitely many
(path-)connected components, each of which is
the stable completion of some
$\overline K$-definable subset of $U$; 
more precisely, 
if $\widehat h\colon \widehat U\times I\to \widehat U$ is an admissible $K$-homotopy 
with target $\Sigma$, and if $e$ is the right endpoint of $I$, the connected components of $\widehat U$
are exactly the sets fo the form $\widehat h(\cdot, e)^{-1}(\Tau)$ for $\Tau$ a connected
component of $\Sigma$, and we conclude by observing that 
$\widehat h(\cdot, e)^{-1}(\Tau)$ is the stable completion of $h(\cdot, e)^{-1}(\Tau)$
(and as $\pi_0(\Sigma)$ is finite, each connected component of $\Sigma$ is $\overline K$-definable). 

\subsubsection{}\label{admissible-loops}
Every skeleton on $\widehat U$ is of dimension $\leq 1$; this prevents any (definable)
loop on $\widehat U$ from moving along any homotopy, so that an admissible $K$-skeleton necessarily contains all
loops of $\widehat U$ (there are therefore only finitey many such loops). 
Also note that any admissible $K$-skeleton of $\widehat U$ is closed in $\widehat U$ (as the set
of points that are fixed at every time under a 
$K$-admissible retraction) and that its intersection with every connected component of
$\widehat U$
is non-empty and connected.

Conversely, let $\Sigma$ be
a closed $K$-skeleton of $\widehat U$ 
containing all loops on $\widehat U$ and whose intersection with every connected component of $\widehat U$ 
is non-empty and connected. Choose  an
admissible $K$-skeleton $\Tau$ of $\widehat U$
containing $\Sigma$. Then $\Sigma$ is closed
in $\Tau$, it contains all the loops of $\Tau$, 
and the intersection of $\Sigma$ with every connected component of $\Tau$ is non-empty and connected. 
This implies that there exists a $K$-definable strong deformation retraction from $\Tau$ to $\Sigma$
(first build a $\overline K$-definable such retraction $h(\cdot, \cdot)$ and then 
define $h'(x,t)$ as the the closest point to $x$ among all $\sigma^{-1}(h(\sigma(x),t))$ for
$\sigma\in \mathrm{Aut}(\overline K/K)$); it follows that 
$\Sigma$ itself is admissible (just retract $\widehat X$ to $\Tau$, and then $\Tau$ to $\Sigma$). 

It follows that if $\Sigma$ is an admissible $K$-skeleton of $\widehat U$, a $K$-definable skeleton
$\Tau$ of $\widehat U$ 
containing $\Sigma$ is admissible if and only if the natural map form $\pi_0(\Sigma)$ to $\pi_0(\Tau)$
is bijective. As $\Sigma$ contains all loops of $\widehat U$ this amounts to requiring that $\Tau$ be built
from $\Sigma$ by successive applications of the operation consisting in adding to a $\overline K$-definable
skeleton $\Upsilon$ a $\overline K$-definable interval $[x,y]$ with $[x,y]\cap \Upsilon=\{y\}$ (we
insist that even if $\Sigma$
and $\Tau$ are $K$-definable, the successive edges one adds might be merely $\overline K$-definable).

\subsubsection{}
An admissible $K$-skeleton $\Sigma$ of $\widehat U$ has a strong convexity
property:  for all $(x,y)\in \Sigma^2$ and all compact intervals $I$ on $\widehat U$
with endpoints $x$ and $y$ one has $I\subset \Sigma$. Indeed, if such an interval exists then $x$ and $y$ are on the 
same connected component of $\widehat U$, and therefore of $\Sigma$, so there is at least one such interval entirely
contained in $\Sigma$; but then all of them are conained in $\Sigma$, since $\Sigma$ contain all the loops drawn on $\widehat U$. 

\subsubsection{}
Let us also mention that some GAGA principle holds here: if $X$ is geometrically
connected as an algebraic $K$-curve, 
$\widehat X$ is connected (\cite{hrushovski-l2016}, Theorem 10/4.2; this is stated and shown for an
arbitrary $X$, but the proof almost immediately reduces through Bertini's theorem to the case of a curve). 

\begin{rema}\label{admissibility-K-L}
Let $\Sigma$ be a $K$-skeleton and let $L$ be an arbitrary valued extension of $K$. It follows from the characterization 
of admissibility given in \ref{admissible-loops}
that $\Sigma$ is admissible as a $K$-skeleton if and only if it is admissible as an $L$-skeleton (though the latter
condition seems weaker, for it involves merely an $L$-definable retraction). 
\end{rema}

\subsection{The canonical map to a $K$-admissible skeleton}
\label{rho}
Let $\Sigma$ be an admissible $K$-skeleton on $\widehat U$.

\subsubsection{}
Let $x\in \widehat U$. 
The existence of an admissible $K$-homotopy with final image $\Sigma$ ensures the existence of 
a generalized interval $[x,y]$ on $\widehat U$ 
with $[x,y]\cap \Sigma=\{y\}$, and this interval is unique because the intersection of $\Sigma$
with the connected component of $x$ is connected and because all loops of $\widehat U$ are contained
in $\Sigma$.

The latter property in fact characterizes the admissibility.
Indeed, let $\Tau$ be a closed
$K$-skeleton of $\widehat U$ such that for every $x\in \widehat U$ there exists 
a unique interval $[x,y]$ on $\widehat U$ with $[x,y]\cap \Tau=\{y\}$ it is easily seen that
the intersection of $\Tau$ with every connected component of $\widehat U$ is connected and non-empty,
and that $\Tau$ contains 
all loops drawn on $\widehat U$, so $\Tau$ is admissible.

\subsubsection{}\label{rho-open-fibers}
By the above there exists for
every $x\in \widehat U$ 
a unique segment $[x,\widehat\rho_{U,\Sigma}(x)]$ on $\widehat U$ such that 
$[x,\widehat \rho_{U,\Sigma}(x)]\cap \Sigma=\{\widehat \rho_{U,\Sigma}(x)\}$. 
The map $\widehat \rho_{U,\Sigma}$ is $K$-definable: indeed, 
this is the final map associated to any admissible $K$-homotopy with
final image $\Sigma$; this also implies that 
$\widehat \rho_{U,\Sigma}$ is induced by
its restriction $\rho_{U,\Sigma}$ to $U$. As a consequence, for every $K$-definable subset
$D$ of $\Sigma$, the pre-image
$\widehat \rho^{-1}_{U,\Sigma}(\Sigma)$ is the stable completion of the 
$K$-definable set $\rho_{U,\Sigma}^{-1}(D)$. 

\paragraph{}\label{puncutred-preimage-open}
Let $s\in \Sigma$ and let $x$ be a point of $\widehat \rho_{U,\Sigma}^{-1}(s)
\setminus \Sigma=\widehat \rho_{U,\Sigma}^{-1}(s)
\setminus \{s\}$. Let $\Omega$ be a path-connected neighborhood of $x$ in 
$\widehat U$ that does not meet $\Sigma$. Let $y\in \Omega$. There is an interval
$[x,y]$ on $\Omega$ (which is unique since $\Sigma$ contains all loops on $\widehat U$). 
Then $[x,y]\cup[x,s]$ is connected and meets $\Sigma$ only on $s$. It follows that 
$\widehat \rho_{U,\Sigma}(y)=s$; hence
$\widehat \rho_{U,\Sigma}^{-1}(s)\setminus \{s\}$ is open in 
$\widehat U$. 

\paragraph{}
Let $\Tau$ be an admissible $K$-skeleton of
$\widehat U$ containing $\Sigma$ and let $x\in\widehat U$. It follows from the 
construction that $\widehat \rho_{U,\Tau}(x)\in [x,\widehat \rho_{U,\Sigma}(x)]$, and 
$[\widehat \rho_{U,\Tau}(x),\rho_{U,\Sigma}(x)]$
is contained in $\Tau$ since $\Tau$ is admissible. So
$\widehat \rho_{U,\Sigma}=\widehat \rho_{U,\Sigma}|_\Tau\circ \widehat \rho_{U,\Tau}$; note that 
$\widehat \rho_{U,\Sigma}|_\Tau$ can be described purely in terms of $\Tau$ and $\Sigma$, 
without any reference to $U$: for each $t\in \Tau$
the interval $[t,\widehat \rho_{U,\Sigma}(t)]$ is contained in $\Tau$,  its intersection
with $\Sigma$ is equal to $\{\widehat \rho_{U,\Sigma}(t)\}$, and this determines uniquely 
 $\widehat \rho_{U,\Sigma}(t)$.

\begin{defi}\label{skeleton-adapted}
Let $V$ be a $K$-definable
subset of $U$. 
An admissible $K$-skeleton $\Sigma$
of $\widehat U$ is 
\emph{adapted to $V$} if there exists a $K$-definable subset $D$
of 
the skeleton $\Sigma$ such that $V=\rho_{U,\Sigma}^{-1}(D)$, which amounts
to saying that $\widehat V=\widehat \rho_{U,\Sigma}^{-1}(D)$. 
\end{defi}

\begin{rema}
Assume
that $\Sigma$
is adapted to $V$, and let $D$ be as in the definition. 
The set $D$ is uniquely determined: it is necessarily equal
to $\rho_{U,\Sigma}(V)=\widehat\rho_{U,\Sigma}(\widehat V)$, and 
$\widehat V$ is stable under any admissible $K$-homotopy of 
$\widehat U$ with final image $\Sigma$, so $D$ is an admissible 
$K$-skeleton of $\widehat V$. 

Note also that any 
admissible $K$-skeleton $\Tau$ of $\widehat U$ containing
$\Sigma$ is still adapted to $V$: indeed, for such a $\Tau$ one has
$\rho_{U,\Sigma}=\widehat \rho_{U,\Sigma}|_\Tau\circ \rho_{U,\Tau}$ 
(by \ref{puncutred-preimage-open}), so if $V=\rho_{U,\Sigma}^{-1}(D)$
then $V=\rho_{U,\Tau}^{-1}(\widehat \rho_{U,\Sigma}^{-1}(D)\cap \Tau)$. 
\end{rema}

\begin{lemm}\label{exist-adapted}
Let $(V_i)$ be a finite family of $K$-definable subsets of $U$ and
let
$\Upsilon$ be $K$-skeleton of $\widehat U$. 
There exists 
an admissible $K$-skeleton $\Sigma$ of $\widehat U$ containing $\Upsilon$
and adapted to each $V_i$.
\end{lemm}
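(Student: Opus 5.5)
We will use the existence result recalled from \cite[Chapter 7]{hrushovski-l2016}: every $K$-skeleton $\Tau$ of $\widehat U$ lies in an admissible $K$-skeleton, which can moreover be taken as the final image of an admissible $K$-homotopy $\widehat h\colon \widehat U\times I\to \widehat U$ such that finitely many prescribed $K$-definable maps $U\to \Gamma_0$ are constant along the trajectories of $h$. The plan is to pick these maps so that each $V_i$ is a union of trajectories.

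First step: encode the $V_i$ by $K$-definable functions to $\Gamma_0$. The natural choice is the characteristic functions $\chi_i\colon U\to \Gamma_0$, with $\chi_i=1$ on $V_i$ and $0$ on $U\setminus V_i$; these are $K$-definable since the $V_i$ are. If the cited statement really demands something like continuous or valuative maps (e.g. of the form $\abs{g}$), we instead use the quantifier elimination description of $V_i$. Each $V_i$ is then a finite Boolean combination of sets $\{\abs{g}\mathrel{\square}\abs{\lambda}\abs{g'}\}$ with $g,g'$ regular on a Zariski open cover. We then require the finitely many maps $\abs g,\abs{g'}$, together with the indicator functions of the finitely many Zariski pieces, to be constant along trajectories. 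Constancy of these maps forces constancy of membership in $V_i$.

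Second step: apply the existence theorem to $\Tau=\Upsilon$ with these maps. This gives an admissible $K$-homotopy $\widehat h$ with final image an admissible $K$-skeleton $\Sigma\supset\Upsilon$, and $\chi_i(h(u,t))$ is independent of $t$ for every simple point $u$. Since the homotopy is induced by its restriction $h$ to simple points, and $\rho_{U,\Sigma}=h(\cdot,e)$ (see \ref{rho-open-fibers}), we get $\chi_i(u)=\chi_i(\rho_{U,\Sigma}(u))$, where $\chi_i$ is extended to $\widehat U$ via realizations of types. Because $\widehat\rho_{U,\Sigma}$ fixes $\Sigma$, setting $D_i=\widehat V_i\cap\Sigma$ (a $K$-definable subset of $\Sigma$) gives $V_i=\rho_{U,\Sigma}^{-1}(D_i)$, which is exactly adaptedness in the sense of Definition \ref{skeleton-adapted}.

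The main obstacle is the first step. We must make sure that "constant along trajectories" makes sense for maps on $U$ evaluated at points of $\widehat U$ that are not simple. This matters because $h(u,t)$ is a type, and the skeleton points of $\Sigma$ are not simple points. The careful form is to ask that $\widehat V_i$ be stable under $\widehat h$, i.e.\ that $h(u,t)\in\widehat V_i$ if and only if $u\in V_i$, for all $t$. If characteristic functions are not directly allowed in the theorem, we reduce to functions $\abs g$, which extend canonically to $\widehat U$. For these, constancy along trajectories gives $\abs{g(\rho(u))}=\abs{g(u)}$, and hence membership of $u$ in sets like $\{\abs g\le\abs\lambda\abs{g'}\}$ can be read off on $\rho(u)$. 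Handling strict versus non-strict inequalities at non-simple points is the delicate part, but it is resolved because the values $\abs{g}$ themselves, not just the inequalities, are preserved.
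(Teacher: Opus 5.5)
Your proposal is correct and follows the paper's argument. The paper's proof takes the characteristic functions $\chi_i\colon U\to\Gamma_0$ of the $V_i$ as the prescribed $K$-definable maps that must be constant along the trajectories of an admissible $K$-homotopy whose final image is an admissible skeleton $\Sigma\supset\Upsilon$, and adaptedness follows because $\rho_{U,\Sigma}=h(\cdot,e)$ gives $V_i=\rho_{U,\Sigma}^{-1}(\widehat V_i\cap\Sigma)$. Your fallback through functions $\abs g$ is not needed, since the cited Hrushovski--Loeser existence result accepts arbitrary definable maps to $\Gamma_0$, characteristic functions included.
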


\begin{proof}
For each $i$ denote by $\chi_i$ the characteristic
function of $V_i$ in $U$, viewed as a map from $U$ to $\Gamma_0$. 
 Then there exists an admissible $K$-skeleton $\Sigma$
of $\widehat U$ containing 
$\Upsilon$ and an admissible $K$-homotpy $h$
from $\widehat U$ to $\Sigma$
such that each $\chi_i$ is constant along the trajectories of $h$; then 
$\Sigma$ is adapted by design to every $V_i$. 
\end{proof}

\begin{coro}\label{coro-boundary-uhat}
Let $V$ be a $K$-definable
subset of $U$. 
The topological boundary of $\widehat V$ in
$\widehat U$ is finite; it coincides with the topological 
boundary of $\rho_{U,\Sigma}(V)$ 
in $\Sigma$ for any
admissible $K$-skeleton $\Sigma$ of $\widehat U$ adapted to $V$.
\end{coro}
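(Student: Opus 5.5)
By Lemma \ref{exist-adapted} there is an admissible $K$-skeleton $\Sigma$ of $\widehat U$ adapted to $V$. So it suffices to prove that for every such $\Sigma$, writing $D=\rho_{U,\Sigma}(V)$, the boundary $\partial_{\widehat U}\widehat V$ equals $\partial_\Sigma D$. Finiteness then follows, since $D$ is a $K$-definable subset of the $\Gamma_0$-internal finite graph $\Sigma$. Such a subset is a finite union of points and (generalized) intervals, so its boundary in $\Sigma$ is finite.

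Write $\rho=\widehat\rho_{U,\Sigma}$; we have $\widehat V=\rho^{-1}(D)$. The first step is to check that $\rho$ is continuous. This holds because $\rho$ is the final map of an admissible homotopy $\widehat h$, namely $\rho=\widehat h(\cdot,e)$, and $\widehat h$ is continuous. Continuity gives $\rho^{-1}(\mathrm{int}_\Sigma D)\subset \mathrm{int}\,\widehat V$ and $\overline{\widehat V}\subset\rho^{-1}(\overline D)$. Hence
\[\partial\widehat V\subset \rho^{-1}(\partial_\Sigma D).\]
Next we show that no point outside $\Sigma$ lies on $\partial\widehat V$. Let $x\in\rho^{-1}(s)\setminus\Sigma$. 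By \ref{puncutred-preimage-open}, $\rho^{-1}(s)\setminus\{s\}$ is an open neighbourhood of $x$. This neighbourhood lies entirely in $\widehat V$ if $s\in D$, and entirely in its complement if $s\notin D$. So $x\notin\partial\widehat V$. Combining, $\partial\widehat V\subset\partial_\Sigma D$, because a point $s\in\Sigma$ satisfies $\rho(s)=s$.

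For the reverse inclusion, take $s\in\partial_\Sigma D$. Every neighbourhood of $s$ in $\widehat U$ contains a neighbourhood of $s$ in $\Sigma$. Such a neighbourhood meets both $D$ and $\Sigma\setminus D$. Since $D=\widehat V\cap\Sigma$ (again because $\rho|_\Sigma=\mathrm{id}$), it therefore meets both $\widehat V$ and its complement. Hence $s\in\partial\widehat V$.

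The only delicate points are the continuity of $\rho$ and the fact that $\Sigma$ carries the subspace topology. Both follow from the definition of an admissible homotopy as a continuous pro-definable map, so I expect no serious obstacle. The main care needed is in the finiteness of $\partial_\Sigma D$, which uses o-minimality of $\Gamma_0$ transported to the finite graph $\Sigma$ through its $\Gamma_0$-internality.
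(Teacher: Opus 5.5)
Your proof is correct and is essentially the paper's argument. The paper likewise uses $D=\widehat V\cap\Sigma$ to get $\partial D\subset\partial\widehat V$ tautologically, and gets the reverse inclusion from two facts: $\widehat\rho_{U,\Sigma}^{-1}(\mathring D)$ lies in the interior of $\widehat V$ by continuity, and each punctured fiber $\widehat\rho_{U,\Sigma}^{-1}(d)\setminus\{d\}$ does too by \ref{puncutred-preimage-open}. Your version is slightly more explicit than the paper's, which leaves implicit both the closure bound $\overline{\widehat V}\subset\widehat\rho_{U,\Sigma}^{-1}(\overline D)$ and the punctured fibers over points of $\Sigma\setminus D$.
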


\begin{proof}
By the lemma above there exists an admissible
$K$-skeleton of $\widehat U$ adapted to $V$.
Set 
$D=\rho_{U,\Sigma}(V)$. 
We then have $V=\rho_{U,\Sigma}^{-1}(D)$ 
and $\widehat V=\widehat \rho_{U,\Sigma}^{-1}(D)$ (so in particular
$D=\widehat V\cap \Sigma$). 

Let $\partial D$ and $\mathring D$ denote the topological boundary of $D$ 
and the topological interior of $D$ in $\Sigma$; note
that $\partial D$ is finite. 
It is tautological 
that $\partial D$ is contained in the topological boundary $\partial 
\widehat V$ of $\widehat V$
in $\widehat U$; 
moreover $\widehat \rho_{U,\Sigma}^{-1}(\mathring D)$ is contained
in the topological interior of $\widehat U$ in $\widehat V$, and so is
$\widehat \rho_{U,\Sigma}^{-1}(d)\setminus \{d\}$ for every $d\in D$
by \ref{puncutred-preimage-open} above, so
$\partial \widehat V=\partial D$.\end{proof}

\section{Topology of curves: the complement
of an admissible skeleton}\label{topology-curve-complement}
%

\subsection{Germs of paths and residue curves}
Let $X$ be a $K$-algebraic curve and let $U$
be a $K$-definable open subset of $X$.

\subsubsection{}\label{type-of-types}
Let $M$ be a model of $\acvf$ containing $K$. 
Let $f\colon (a,b)\to \widehat U$ be a
continuous $M$-definable map 
where $a<b$ are two different elements of 
$\abs M$ with $f(a)=x$. 
Let $a^+$ be the $M$-definable type on $(a,b)$ that lies
on $(a,c)$ for all $c\in \abs M$ with $a<c<b$. 
Then $f(a^+)$ is an element of $\breve U(M)$ that only depends
on the ``left germ" of the parametrized path $f$, by which we refer
to the relation identifying $f\colon (a,b)\to \widehat U$ and $f'\colon (a',b')
\to \widehat U$ if  there are $a<c\leq b$ and $a'<c'\leq b'$ such that $f|_{(a,c)}$
and $f'|_{(a',c')}$ either are
constant with the same value or agree up to $M$-definable homeomorphic reparametrization. 
It follows from \cite[Theorem 2.1.15]{hrushovski-l2016}
that every element of $\breve U(M)$ arises in this way. 

\subsubsection{}\label{interval-type}
If $M$ and $f$ are as above, one is necessarily in one the
following three cases. 

\paragraph{The constant case} If the left-germ of $f$ is constant (\ie, it admits
a constant representative, allowing to assume that $f$ itself is constant), 
then $f(a^+)$ belongs to $\widehat U(M)$; it is equal to the constant value 
of $f$. 

\paragraph{The unbounded case}
If the left-germ of $f$ is not constant (allowing ourselves to assume that
$f$ is injective) and $a=0$ then $f(a^+)$ is unbounded, \ie, it belongs
to $\breve U(M)\setminus \widetilde U(M)$; if $\overline X$ denotes a projective
compactification of $X$ then $f(a^+)$ is the composition of the discrete valuation associated
to an $M$-point $\xi$ of the normalizaiton of $\overline X$ with the valuation 
of $M$. Let $x$ be the image of
$\xi$ in $\overline X(M)$; the type $f(a^+)$ has limit
$x$ in $\overline X(M)$; otherwise said, $f$ has a continuous extension 
to $[a,b)$ with $f(a)=x$. 

\paragraph{The bounded case}\label{interval-type-bounded}
If the  left-germ of $f$ is  not constant and $a\neq 0$ then 
$f(a^+)$ is an element of $\widetilde U$; if we set $x=\varpi(f(a^+))$ 
(the map $\varpi$ has been defined in \ref{defsigma}) the point $x$ 
is not simple and 
$f(a^+)$ has limit $x$; otherwise said, $f$ has a continuous extension to 
$[a,b)$ with $f(a)=x$. 

\subsubsection{The residue curve: the classical and the generalized viewpoints}
We still denote by $M$ a model of $\acvf$ containing
$K$. Let $x$ be a non-simple point of $\widehat X(M)$. 
The type $x$ is induced by a valuation $v$ (extending that of $M$) 
on the function field of some (reduced) irreducible component of 
the scheme $X_M$. 

\paragraph{}
The value group of $v$ is $\abs{M^\times}$
and it residue field is 
is $\k(M)(C_x)$ for some canonical smooth projective $C_x$ defined over
$\k(M)$, which we call the
\textit{residue curve} at $x$.

\paragraph{}
As the value group of $v$ is equal to that of $M$, the generalized residue field
of $v$ is then 
merely $\RV(M)\otimes_{\k(M)}\k(M)(C_x)$. We can 
also write it $\RV(M)(C_x)$, 
if we 
to implicitly interpret
$C_x$ as a generalized variety over $\RV(M)$
(\ref{generalized-variety}). This is harmless since this generalized
variety coincides
with $C_x$ as a geometric object, and this is often more natural
(see already Example \ref{generalized-residue-curve}
just below), especially if one wants to deal with families $(C_x)_x$ of residue curves
when $x$ runs through a skeleton. This viewpoint will also be convenient 
for the followig reason: if $f$ is an $M$-regular function invertible at $x$ then
$\abs{f(x)}$ belongs to $\abs{M^\times}$, so we can choose $a\in M^\times$ such that 
$\abs{f(x)}=\abs a$ and then consider the element 
$\res(f(x)/a)$ of $\k(M)(C_x)^\times$, interpreting it as a rational function on the 
classical algebraic curve $C_x$. But it depends on the choice of $a$, so this is more natural 
to deal with $\rv(f(x))\in \RV(M)(C_x)$ and interpret it as a generalized rational function 
on $C_x$ viewed as a generalized algebraic curve.

\begin{exem}\label{generalized-residue-curve}
Let $r\in \abs {M^\times}$. 
If $a$ denotes any element of $M$ with $\abs a=r$, the classical residue field
at $\eta_{0,r}$ (over $M$) can be identified to the rational function field
$\k(M)(\theta)$ with $\theta=\res((T/a)(\eta_{0,r}))$, so 
the residue curve $C$ at $\eta_{0,r}$  is isomorphic to
the projective
line over $\k(M)$ and one has 
\[C(M)=C(\k(M))\simeq \{0\}\cup \{\res(\lambda)\}_{\lambda \in M, \abs \lambda=1}\cup\{\infty\}\;;\]
but this isomorphism depends on the choice of the rescaling factor $a$.  
It is therefore more natural to deal with the generalized residue field at $\eta_{0,r}$ which 
is equal to $\RV(M)(\tau)$ with $\tau=\rv(T(\eta_{0,r}))$, so
the residue curve $C$ at $\eta_{0,r}$  seen as a generalized variety 
is isomorphic to
the generalized projective
line $\P_\RV^r$ (see \ref{generalized-variety}
for a definition) and one has
\[C(M)=C(\RV(M)))\simeq \{0\}\cup \{\rv(\lambda)\}_{\lambda \in M, \abs \lambda=r}\cup\{\infty\}\;;\]
of course one goes from one description of $C(M)$
to the other one by using $\mu \mapsto \mu/\rv(a)$. 
\end{exem}

\subsection{}
We fix for the remaining part of 
section \ref{topology-curve-complement}
a $K$-algebraic curve and 
a $K$-definable open subset $U$
of $X$.

\subsubsection{}
Let $x$ be a non-simple point of 
$\widehat  U$. By the above there is a natural bijection
between the set $\mathsf P_{x,\widehat X}(M)$ of germs of $M$-definable
paths on $\widehat X$ emanating from $x$ and 
$\varpi^{-1}(x)$. The latter is itself in 
bijection with the set of $M$-refinements
of $v$. Such a refinement is given by a valuation on the classical residue field
$\k(M)(C_x)$ trivial on $\k(M)$, and there are two kind of such valuations: the
trivial one (giving rise to the trivial refinement, \ie, $v$ 
itself) and the discrete valuations associated to 
the $\k(M)$-point of $C_x$ (viewed as a classical algebraic curve), that corresponds
to the proper refinements of $v$. Of course for handling $C_x$ as a generalized curve 
we will rather write $C_x(M)$ than $C_x(\k(M))$.

We thus eventually get a bijection between 
$\mathsf P_{x,\widehat X}(M)$ and
a set obtained by adjoining to $C_x(M)$
a single element $\eta$; the most natural way to see 
$C_x(M\cup\{\eta\}$ in this setting is to identify it with $\widehat C_x(M)$ where
$\widehat C_x$ is the set of types on $C_x$ over the 
generalized residue field. Under this bijection, the constant path is mapped to the generic type $\eta$,  
and every (germ of) injective path to a simple point belonging to $C_x(M)$. 

\subsubsection{}
By unraveling the constructions, we see that the bijection can be described as follows. 
Let $p\colon [a,b)\to \widehat X$ be an
element of $\mathsf P_{x,\widehat X}(M)$ (with $p(a)=x$)
and let $f$ be an $M$-regular function defined on an
$M$-Zariski neighborhood of $x$ and such that $f(x)\neq 0$. Let $\xi \in \widehat {C_x}(M)$ the
point corresponding to $I$. Then $\abs f\circ p$ is $<\abs {f(x)}$, 
\resp equal to $\abs{f(x)}$, \resp $>\abs{f(x)}$
on some sub-interval $(a,c)$ of $(a,b)$ 
if and only if 
the generalized rational function 
$\rv(f(x))$ on $C_x$
vanishes at $\xi$, \resp is well-defined and invertible at $\xi$, \resp has a pole at $\xi$. 

\subsubsection{}
Now assume that the point $x$ above is contained in $\widehat U$; choose an admissible
$K$-skeleton $\Sigma$ of $\widehat X$ adapted to $U$ and containing $x$ (Definition \ref{skeleton-adapted}), 
Lemma \ref{exist-adapted}). 
Set $D=\rho_{X,\Sigma}(U)$; this is an 
$M$-definable subset of $\Sigma$ and $\widehat U=\widehat \rho_{U,\Sigma}^{-1}(D)$. 
Let $F$ be the finite set of $M$-points of $C_x$ corresponding to the 
elements of $\mathsf P_{x,\widehat X}$ which ``are" (\ie, have a representative which is)
contained in $\Sigma$ and not contained in $D$. 
Let $I$ be any non-constant element of
$\mathsf P_{x,\widehat X}(M)$  and let $\zeta$ be the corresponding
$M$-point of $C_x$. 
Suppose that $I$ is contained in $\widehat U$. Then if $I$ is contained in  $\Sigma$ it is contained in $D$, since
$\widehat U\cap \Sigma=D$ by construction; as a consequence, $\zeta\notin F$.  
Conversely, assume that $\zeta\notin F$. Then if $I$ is contained in $\Sigma$, it is contained in $D$, hence
in $\widehat U$. And if $I$ is not contained in $\Sigma$ there exists some $y\in \widehat X\setminus \Sigma$ such that 
$I=(y,x)$ (as a germ) and $(y,x)\cap \Sigma=\emptyset$. Then $x=\widehat \rho{(U,\Sigma)}(z)$
for every 
$z\in (y,x)$, so $(y,x)\in 
\widehat \rho{(U,\Sigma)}^{-1}(x)$; and since $x$ belongs to $\widehat U\cap \Sigma$ it belongs to $D$, so
$(y,x)\in \widehat U$. 
Otherwise said, if we set $C_{x,\widehat U}=C_x\setminus F$, the natural bijection between 
$\mathsf P_{x,\widehat X}$ and $\widehat C_x$ induces a bijection between 
$\mathsf P_{x,\widehat U}$ and $\widehat C_{x,\widehat U}$. Note that the latter property shows that 
the dense $M$-Zariski open subset $C_{x,\widehat U}$ of $C_x$ actually only depends on $U$, and not on the choice
of a particular skeleton adapted to $U$.  It follows from 
Corollary
\ref{coro-boundary-uhat} that 
the curve $C_{x,\widehat U}$ is equal to the whole of $C_x$ if and only if $x$
belongs to the topological interior of $x$ in $\widehat U$. More generally, if $V$ is a $K$-definable
subset of $U$ with $x\in \widehat V$, then $C_{x,\widehat V}\subset C_{x,\widehat U}$ with equality if and only if
$x$ belongs to the topological interior of $\widehat V$ in $\widehat U$:
one sees this by considering an 
admissible $K$-skeleton adapted to both $U$ and $V$.

\begin{enonce}[remark]{Warning}\label{field-definition-Cx}
If the point $x$ is $K$-definable so is the curve $C_x$, 
thus $C_x$ is defined over the perfect closure of $\RV(K)$
as a generalized algebraic variety, but not necessarily over
$\RV(K)$ itself. This is related to the fact that the 
formation of the generalized residue field
does not commute in general with ground field extension 
(but it does as soon as the ground field is algebraically closed). 

\end{enonce}

\begin{defi}\label{defi-allbranches}
Let
$\Sigma$ be an admissible $K$-skeleton of $\widehat U$. 
We shall say that $\Sigma$ \emph{contains all $U$-branches at its
simple points} if for every simple point $x$ of $\Sigma$
the two following equivalent condition are fulfilled: 
\begin{enumerate}[i]
\item every germ of interval  on $\widehat U$ emanating from $x$ is contained in $\Sigma$; 
\item $\widehat \rho_{U,\Sigma}^{-1}(x)=\{x\}$. 
\end{enumerate}
\end{defi}

\begin{rema}\label{normal-unique-branch}
Let $x$ be a point of $X$. 
it follows from \ref{type-of-types} that the set of germs of intervals
on $\widehat X$ emanating from $x$ is in bijection with the set of elements 
of $\breve X$ that belong to $\breve U$ for every definable subset $U$ of $X$ 
such that $\widehat U$ is an open neighborhood of $x$ in $\widehat X$, which is itself 
in bijection with the set of pre-images of $x$ on the (geometric) normalization of $X$
(to such a pre-image, associate the type obtained by composing the corresponding
discrete valuation with the valuation of the ground field). 
In particular this set is always finite, and is a singleton as soon as $x$ is a smooth point. 
\end{rema}

  \begin{lemm}\label{lemm-u-branches}
  Let $\Sigma$ be a $K$-skeleton 
  of $\widehat U$. 
  There exists an admissible $K$-skeleton of $\widehat U$ containing
  all $U$-branches at its simple points and containing $\Sigma$.
  \end{lemm}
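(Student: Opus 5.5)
We first enlarge $\Sigma$ to an admissible $K$-skeleton, and then add to it the finitely many branches at its simple points. By the facts recalled after Definition \ref{defi-admissible-skeleton}, there is an admissible $K$-skeleton $\Sigma_0$ of $\widehat U$ containing $\Sigma$. Being $\Gamma_0$-internal and of dimension $\leq 1$, $\Sigma_0$ is a finite graph, so its set $S$ of simple points is finite. Each $x\in S$ is then algebraic over $K$, and $S$ is $K$-definable.

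Fix $x\in S$. By Remark \ref{normal-unique-branch}, the germs of intervals on $\widehat X$ emanating from $x$ correspond to the pre-images of $x$ on the normalization of $X$, so there are finitely many. Since $U$ is open, the germs on $\widehat U$ emanating from $x$ are exactly these. For each germ $\beta$ not contained in $\Sigma_0$ we choose a representative: a small closed interval $[x,y_\beta]$ on $\widehat U$ with $(x,y_\beta]\cap\Sigma_0=\emptyset$. Such a representative exists by \ref{puncutred-preimage-open}, since $\widehat\rho_{U,\Sigma_0}^{-1}(x)\setminus\{x\}$ is open and a germ leaving $\Sigma_0$ enters it.

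To make this choice $K$-definable, we proceed as follows.
\begin{itemize}
\item The finite set of pairs $(x,\beta)$ is $K$-definable, since it is permuted by $\mathrm{Aut}(\overline K/K)$ in the model-theoretic sense. So it suffices to choose the $y_\beta$ Galois-equivariantly.
\item We apply Lemma \ref{exist-adapted} to get a $K$-definable $\Gamma_0$-valued function recording the distance to $x$ along the branch. Concretely, we pick a $\overline K$-regular function $f$ with a simple zero at the normalization point corresponding to $\beta$, and take the segment of $\beta$ on which $\abs f$ decreases from $\abs{f(y)}$ down to $0$.
\item The simplest uniform choice is to take $[x,y_\beta]$ to be the image under an admissible $K$-homotopy $h$ of $x$ itself: the trajectory $t\mapsto h(x,t)$ is a path from $x$ into $\Sigma_0$. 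Its germ at $x$ is one branch. This handles the case of a single branch.
\item For several branches we instead use the normalization, which is Galois-compatible. The germs at $x$ correspond to the points $x_1,\ldots,x_m$ of the normalization $X'$ lying over $x$. For each such point $x_i$ we pick a small $K$-definable (or conjugation-compatible) closed disc around it in $X'$, and take the segment from $x_i$ to its boundary point, pushed forward to $\widehat X$. Since $x$ is $K$-algebraic, the collection $\{x_i\}$ is $K$-definable, so the union of these segments is $K$-definable.
\end{itemize}
We choose the radii small enough that the segments avoid $\Sigma_0$ except at $x$ and are pairwise disjoint away from $x$.

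Let $\Sigma_1$ be $\Sigma_0$ together with all these segments. Then $\Sigma_1$ is a $K$-skeleton and is closed. It is obtained from $\Sigma_0$ by attaching intervals $[x,y_\beta]$ meeting the previous skeleton only at $x$. By \ref{admissible-loops}, attaching such intervals keeps $\pi_0$ unchanged and creates no loops, so $\Sigma_1$ is admissible.

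The main point to check, which is the main obstacle, is that $\Sigma_1$ has no new simple points with missing branches. The simple points of $\Sigma_1$ are those of $\Sigma_0$, because we added only non-simple points except the $x$'s. This holds if each $y_\beta$ is chosen non-simple, for instance a Gauss-type point of a small disc; the open part of each segment then consists of non-simple points. At every $x\in S$, all germs in $\widehat U$ are now contained in $\Sigma_1$, so condition (i) of Definition \ref{defi-allbranches} holds.

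The genuinely delicate step is the $K$-definable choice of the segments when $x$ is only $\overline K$-definable, and we handle it by the Galois-orbit argument above. For Galois-conjugate branches we take conjugate segments. For branches fixed by the stabilizer of $x$ we need canonical radii, and we obtain them by taking the segment $[x,\widehat\rho_{X',\Tau}(x_i)]$ relative to a $K$-admissible skeleton $\Tau$ of the normalization, which we choose through Lemma \ref{exist-adapted}.
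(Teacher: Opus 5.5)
\paragraph{The gap: the $K$-definable choice of whiskers.}
Your overall strategy is viable: enlarge $\Sigma$ to an admissible $K$-skeleton $\Sigma_0$, attach short whiskers along the finitely many missing germs at the simple points of $\Sigma_0$, and check admissibility with the criterion of \ref{admissible-loops}. However, the step you yourself single out as delicate, making the family of whiskers $K$-definable, is not actually carried out, and the devices you offer for it do not work as stated.
\begin{itemize}
\item The third bullet fails outright. Since $x\in\Sigma_0$ and an admissible homotopy is a \emph{strong} deformation retraction onto $\Sigma_0$, the path $t\mapsto h(x,t)$ is constant.
\item The segment $[x,\widehat\rho_{X',\Tau}(x_i)]$ of your last paragraph is degenerate as soon as $\Tau$ contains $x_i$, e.g.\ whenever $\Tau$ contains the pre-image of $\Sigma_0$. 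You give no reason why a $K$-admissible $\Tau$ avoiding $x_i$ should exist.
\item The sentence ``$\{x_i\}$ is $K$-definable, so the union of segments is $K$-definable'' is a non sequitur without equivariance.
\end{itemize}

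\paragraph{What works, and where it breaks.}
First replace $K$ by its perfection, so that the normalization $X'$ is smooth. For one $x_i$ in each $G$-orbit, pick a uniformizer $g_i$ at $x_i$ defined over $K(x_i)$. Consider the set of radii $s$ such that the pre-image under $g_i$, near $x_i$, of $[0,\eta_{0,s}]$ is a segment of $\widehat{U'}$ whose image meets $\Sigma_0$ only at $x$. This set is $K(x_i)$-definable and contains an initial interval, so it contains some $\epsilon\in\abs{K^\times}^{\Q}$ when the valuation is non-trivial. The resulting whisker is fixed by the stabilizer of $x_i$, so its $G$-conjugates are well defined and their union is $K$-definable; no ``canonical radius'' is needed. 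This disc construction is also what guarantees that the whisker contains no simple point other than $x$. Choosing $y_\beta$ non-simple is not enough, since a segment may cross a singular point of $X$.

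This construction breaks down when $K$ is trivially valued, which the paper allows. Then $\abs{K(x_i)^\times}=\{1\}$ and there is no small definable radius. The only available whiskers run all the way toward the Gauss point, so admissibility must be checked directly rather than by ``attaching an interval meeting $\Sigma_0$ in one point''.

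\paragraph{How the paper avoids the issue.}
The paper works on the normalization $X'$ (after perfecting $K$). It takes an admissible $K$-skeleton $\Tau$ of $\widehat{U'}$ containing the pre-image of $\Sigma$ and the set $E'\cap U'$ of pre-images of singular points, and pushes it down to $\Upsilon\subset\widehat U$.
\begin{itemize}
\item Each germ at a simple point $x$ of $\Upsilon$ comes from a point $x'$ of $X'$ over $x$ lying in $\Tau$. This holds by uniqueness of the pre-image if $x$ is smooth, and because $E'\cap U'\subset\Tau$ otherwise.
\item Since $X'$ is normal, there is a unique germ at $x'$, which therefore lies in $\Tau$ once $x'$ is not isolated in $\Tau$.
\item Admissibility of $\Upsilon$ comes from descending the homotopy, which fixes $E'\cap U'$.
\end{itemize}
Thus $K$-definability is inherited from the existence theorem for admissible skeleta instead of being built by hand. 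Your route is more explicit, but it needs the equivariant construction above, and a separate treatment of the trivially valued case, to be complete.
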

  
 \begin{proof} We can replace $K$ by its perfection, and $X$ by the associated reduced (hence generically smooth)  curve. 
 Let $X'$ be the normalization of $X$,  let $U'$ be the pre-image of $U$ on $X'$ and $\Sigma'$ be the pre-image of
 $\Sigma$ on $\widehat {U'}$. Let $E$ be the set of singular points of $X$ and let $E'$ be its pre-image on $X'$. 
 
 Let $\Tau$ be an admissible $K$-skeleton of $\widehat {U'}$ containing $\Sigma'$
 and $E'\cap U'$. The image $\Upsilon$ of $\Tau$ on $\widehat U$ 
 is a $K$-skeleton. Let $x$ be a simple point of $\Upsilon$. 
 Let $I$ be a compact interval on $\widehat U$ having $x$ as an endpoint and which does not contain
 any simple point other than $x$. Let $I'$ be the pre-image of $I$ on $\widehat {U'}$. Then $I'$ consists
 of a connected
 component $J$ homeomorphic to $I$, containing a unique pre-image $x'$ of $x$, and 
of a finite set (possibly empty) of isolated pre-images of $x$. The existence of $J$ ensures that 
$x'$ is not isolated in $\widehat{U'}$. Moreover $x'$ belongs to $\Tau$ : indeed this is clear if $x$ is a smooth point, 
for $x'$ is then the unique pre-image of $x$ in $\widehat{X'}$, and $x$ has a pre-image in $\Tau$ by definition of $\Upsilon$; and
if $x$ is not smooth then $x'\in E'$, so $x'\in \Tau$. 
Now as $x'$ is not an isolated point of $\widehat{U'}$ and as $\Tau$ is admissible, $x'$ is not isolated in $\Tau$, so up to shrinking $I$
we can assume that $J\subset \Tau$ (as $X'$ is normal, there is a unique germ of interval
on $\widehat {X'}$ emanating from $x'$, see Remark \ref{normal-unique-branch}). Then $I\subset \Upsilon$. 

It remains to show that $\Upsilon$ (which obviously contain $\Sigma$)
 is admissible. But $\Tau$ is admisssible,
 so there exists an admissible $K$-homotopy
on $\widehat {U'}$ with target $\Tau$. As $\Tau$ contains $E'\cap U'$ and as $X'\to X$ is an isomorphism 
 outside $E$, this homotopy
  descends to $\widehat U$  on which
 it induces an admissible $K$-homotopy
 with target $\Upsilon$. 
\end{proof}
 
\subsection{The map $\wrr$}\label{construction-retraction-huber}
From now on we denote by $\Sigma$ an
admissible skeleton 
of $\widehat U$ containing all $U$-branches at its simple points. 

\subsubsection{Construction of $\wrr$}
Let $u$ be a point of $\widehat U$. 

\paragraph{}
If $u\in \Sigma$ we set $\wrr_{U,\Sigma}(u)=u$. 

\paragraph{}\label{wrr-not-sigma}
If $u\notin \Sigma$
then $[u,\widehat \rho_{U,\Sigma}(u))$ defines a germ of injective path emanating from
$\widehat \rho_{U,\Sigma}(u)$
and not contained in $\Sigma$, and in turn
defines a point $\wrr_{U,\Sigma}(u)$ of $\breve U$ by \ref{interval-type}. 
Since $\Sigma$ contains all $U$-branches at its simple points,
the point $\widehat \rho_{U,\Sigma}(u)$ is not simple, 
so one is in the bounded case described in 
\ref{interval-type-bounded}: the point
$\wrr_{U,\Sigma}(u)$ actually belongs to $\widetilde U$, and 
$\varpi(\wrr_{U,\Sigma}(u))=\widehat\rho_{U,\Sigma}(u)$
(we remind that $\varpi$ denotes the natural map from $\widetilde X$ to $\widehat X$, see 
\ref{defsigma}).

\subsubsection{}
We denote by $\rr_{U,\Sigma}$ the restriction of $\widehat \rr_{U,\Sigma}$ to $U$. The map $\widehat \rr_{U,\Sigma}$ can be
reconstructed from
$\rr_{U,\Sigma}$ by realizing types. 
The maps $\rr_{U,\Sigma}$
and $\widehat \rr_{U,\Sigma}$ are clearly
pro-$K$-definable. As $\widetilde U$ is strictly pro-$K$-definable, 
it follows that the images $\rr_{U,\Sigma}(U)$
and $\wrr_{U,\Sigma}(\widehat U)$ are $K$-definable. 

We set $\Sigma_{\RV,U}=\wrr_{U,\Sigma}(\widehat U\setminus \Sigma)$. 
It follows from \ref{wrr-not-sigma} that $\varpi(\Sigma_{\RV,U})$ is
contained in $\Sigma$ and only consists of non-simple points, 
and that $\widehat \rho_{U,\Sigma}=\varpi\circ \wrr_{U,\Sigma}$ on $\widehat U
\setminus \Sigma$.

\begin{lemm}\label{lemm-mathfrakr-fibers}
The fibers of $\wrr$ are exactly the connected components of
$\widehat U\setminus \Sigma$. 
\end{lemm}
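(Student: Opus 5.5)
The statement concerns the restriction of $\wrr=\wrr_{U,\Sigma}$ to $\widehat U\setminus\Sigma$; points of $\Sigma$ are fixed by $\wrr$ and are not in $\Sigma_{\RV,U}$. We must show two things: each fiber $\wrr^{-1}(\xi)$, for $\xi\in\Sigma_{\RV,U}$, is connected, and any two points of the same connected component of $\widehat U\setminus\Sigma$ have the same image. The basic object throughout is the unique segment $[u,\widehat\rho_{U,\Sigma}(u)]$ from \ref{rho}. By construction, $\wrr(u)$ is the point of $\widetilde U$ attached by \ref{interval-type} to the germ at $s=\widehat\rho_{U,\Sigma}(u)$ of the path $[u,s)$. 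Thus $\wrr(u)=\wrr(u')$ if and only if $\widehat\rho_{U,\Sigma}(u)=\widehat\rho_{U,\Sigma}(u')=s$ and the two paths $[u,s)$ and $[u',s)$ define the same germ at $s$, that is, they share a common initial segment $(t,s)$. Here we use that $\varpi(\wrr(u))=s$ together with the bijection between germs emanating from $s$ and $\varpi^{-1}(s)$.

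\emph{Fibers are connected.} Suppose $\wrr(u)=\wrr(u')$, and let $t$ be a point of $(u,s)\cap(u',s)$ close to $s$ on the common initial segment. Then $[u,t]\cup[t,u']$ is a connected set containing $u$ and $u'$. It avoids $\Sigma$, because $[u,s]\cap\Sigma=\{s\}$ and likewise for $u'$. Every point $w$ of $[u,t]$ satisfies $[w,s]\subset[u,s]$, which meets $\Sigma$ only at $s$. By the uniqueness in \ref{rho}, this gives $\widehat\rho_{U,\Sigma}(w)=s$, and the germ of $[w,s)$ at $s$ equals that of $[u,s)$. Hence $\wrr(w)=\wrr(u)$, and the same holds on $[t,u']$. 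So $u$ and $u'$ are joined by a path inside the fiber, which shows the fiber is path-connected, hence connected.

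\emph{Fibers are open and closed in $\widehat U\setminus\Sigma$.} Since $\widehat U\setminus\Sigma$ is locally path-connected (\ref{basic-topology-uhat}), it suffices to show that $\wrr$ is locally constant on $\widehat U\setminus\Sigma$. Take $u\notin\Sigma$ and a path-connected neighbourhood $\Omega$ of $u$ not meeting $\Sigma$. For $y\in\Omega$, the argument of \ref{puncutred-preimage-open} shows that $[u,y]\subset\Omega$ and that $[y,u]\cup[u,s]$ is a connected set meeting $\Sigma$ only at $s$. Therefore $\widehat\rho_{U,\Sigma}(y)=s$, and the segment $[y,s]$ is contained in $[y,u]\cup[u,s]$. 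This inclusion holds because the union contains a path from $y$ to $s$ avoiding $\Sigma$ elsewhere, and such paths are unique: $\Sigma$ contains all loops of $\widehat U$, and $\widehat U$ has no loops outside $\Sigma$. Consequently $[y,s]$ and $[u,s]$ end along a common terminal segment near $s$, since both eventually follow $[u,s]$ into $s$. So $\wrr(y)=\wrr(u)$, and $\wrr$ is constant on $\Omega$.

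Combining the two steps, each fiber over $\Sigma_{\RV,U}$ is a connected, open and closed subset of $\widehat U\setminus\Sigma$, hence a connected component of it. Conversely, every component is a union of fibers and is connected, so it is a single fiber. The step needing the most care is the tree-like uniqueness of segments off $\Sigma$, namely that $[y,s]\subset[y,u]\cup[u,s]$. I would derive it from \ref{admissible-loops}: if two distinct injective paths joined $y$ to $s$, their union would contain a loop not contained in $\Sigma$, which is impossible. One may need to reparametrize so that the union $[y,u]\cup[u,s]$ contains a genuine injective path, which follows from the definable o-minimal structure of generalized intervals.
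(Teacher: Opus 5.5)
Your proof is correct and follows essentially the paper's argument. The paper also shows that equal images give a path $[x,t]\cup[t,y]$ avoiding $\Sigma$; for the converse, it concatenates a path in $\widehat U\setminus\Sigma$ with the segment $[y,\widehat\rho_{U,\Sigma}(y)]$ and invokes uniqueness of that segment. The only difference is packaging: the paper applies this to a global path $[x,y]$ inside a component (available by local path-connectedness), whereas you apply it locally to show $\wrr_{U,\Sigma}$ is locally constant, then conclude from the fibers being connected and open and closed.
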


\begin{proof}
Let $x$ and $y$ be two points of $\widehat U\setminus \Sigma$.

If $\wrr_{U,\Sigma}(x)=\wrr_{U,\Sigma}(y)$ then 
$x$ and $y$ have the same image, say $z$, under the map $\widehat \rho_{U,\Sigma}$;
moreover $[x,z]$ and $[y,z]$ define the same germ of interval emanating from $z$, so there is some $t\in [x,z)\cap[y,z)$
such that $[x,z]\cap [y,z]=[t,z]$. Then $[x,z)\cup [y,z)$ is connected, so there is an interval $[x,y]$ on $\widehat U
\setminus \Sigma$ (which is unique since $\Sigma$ contains all loops on $\widehat U$), so $x$ and $y$ lie in the same
connected component of $\widehat U\setminus \Sigma$. 

Conversely, assume that  $x$ and $y$ lie in the same
connected component of $\widehat U\setminus \Sigma$. 
Since $\widehat U$ is locally path-connected, there is some interval $[x,y]$ on $\widehat U\setminus \Sigma$. 
Then the union $[x,y]\cup[y,\widehat \rho_{U,\Sigma}(y)]$ is connected, and its intersection with $\Sigma$
is equal to $\{\widehat \rho_{U,\Sigma}(y)\}$. It follows that  $[x,y]\cup[y,\widehat \rho_{U,\Sigma}(y)]$
contains an interval $[x,\widehat \rho_{U,\Sigma}(y)]$ whose intersection with $\Sigma$ is equal
to  $\{\widehat \rho_{U,\Sigma}(y)\}$, and which coincides with
$[y,\widehat \rho_{U,\Sigma}(y)]$ near $\widehat \rho_{U,\Sigma}(y)$.  As a consequence, 
$\widehat \rho_{U,\Sigma}(x)=\widehat \rho_{U,\Sigma}(y)$
and $\wrr_{U,\Sigma}(x)=\wrr_{U,\Sigma}(y)$. 
\end{proof}

\begin{coro} One has $\Sigma_{\RV,U}=\rr(U\setminus \Sigma)$, so $\Sigma_{\RV,U}=\rr(U)$
if $\Sigma$ has no simple points. 
\end{coro}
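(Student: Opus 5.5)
The plan is to compare $\Sigma_{\RV,U}=\wrr_{U,\Sigma}(\widehat U\setminus \Sigma)$ with the image under $\rr=\rr_{U,\Sigma}$ of the simple points $U\setminus\Sigma$. One inclusion is immediate: $\rr$ is the restriction of $\wrr_{U,\Sigma}$ to $U$, and $U\setminus \Sigma\subset \widehat U\setminus\Sigma$. Hence $\rr(U\setminus\Sigma)\subset \Sigma_{\RV,U}$.

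For the reverse inclusion, let $s\in \Sigma_{\RV,U}$ and pick $u\in \widehat U\setminus\Sigma$ with $\wrr_{U,\Sigma}(u)=s$. By Lemma \ref{lemm-mathfrakr-fibers}, the fiber $\wrr_{U,\Sigma}^{-1}(s)$ is the connected component $\Omega$ of $\widehat U\setminus\Sigma$ containing $u$. So it suffices to show that $\Omega$ contains a simple point.

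Since $\Sigma$ is closed (\ref{admissible-loops}), $\Omega$ is a non-empty open subset of $\widehat U$. Here I use two facts. First, $\widehat U$ is locally connected (\ref{basic-topology-uhat}), so components of open sets are open. Second, $\widehat U$ is open in $\widehat X$ because $U$ is open. The simple points are dense in $\widehat X$, since every point of a stable completion is a limit of simple points, which is standard from \cite{hrushovski-l2016}. Therefore $\Omega$ meets $U$ in some point $a$. Since $a\in\Omega$, it is not in $\Sigma$, and $\rr(a)=\wrr_{U,\Sigma}(a)=s$. This gives $\Sigma_{\RV,U}\subset \rr(U\setminus\Sigma)$.

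For the second assertion, if $\Sigma$ has no simple points then $U\cap\Sigma=\emptyset$, so $U\setminus \Sigma=U$ and $\Sigma_{\RV,U}=\rr(U)$.

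The only step that is not purely formal is the density of simple points in the open set $\Omega$. I would cite it from \cite{hrushovski-l2016}. Alternatively, one can argue from the definability of types: a non-empty open pro-definable subset of $\widehat U$ contains a relatively definable open basic set, and such a set, being cut out by strict inequalities of valuations of regular functions, contains points of $U(M)$ in any model $M$ over which it is defined.
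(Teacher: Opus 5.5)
Your proof is correct and essentially matches the paper's. The paper takes an $M$-definable $V\subset U$ with $\widehat V$ a connected open neighbourhood of $x$ avoiding $\Sigma$, uses Lemma \ref{lemm-mathfrakr-fibers} to see that $\wrr_{U,\Sigma}$ is constant on $\widehat V$, and picks a point of $V(M)\neq\emptyset$; this is exactly your fallback argument run inside the component $\Omega$.

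The one weak step is the detour through openness of $\widehat U$ in $\widehat X$, which you do not need and which is not automatic: the closed unit disc is open for the valuative topology, but its stable completion is not open in $\widehat{\A^1}$. Density of $U(M)$ in $\widehat U(M)$ follows directly from your fallback argument, so you can drop that detour.
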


\begin{proof} The inclusion $\rr(U\setminus \Sigma)
\subset \Sigma_{\RV,U}$ is obvious, so it remains to show the converse inclusion. 
Let $x\in\widehat U\setminus \Sigma$, say $x\in \widehat U(M)$ for some model $M$ of $\acvf$.
By \ref{basic-topology-uhat}, there exists some $M$-definable
subset $V$ of $U$ such that $\widehat V$ is a connected open 
neighborhood of $x$ in $\widehat U$. By Lemma \ref{lemm-mathfrakr-fibers}
above 
one has then $\wrr_{U,\Sigma}(y)=\wrr_{U,\Sigma}(x)$ for every
$y\in \widehat V$ ; since $V(M)\neq \emptyset$ (as $\widehat V$ contains $x$, 
so $V$ is non-empty) we get the existence of 
$y\in V(M)$ with $\wrr_{U,\Sigma}(y)=\wrr_{U,\Sigma}(x)$. 
\end{proof}

\begin{rema}
Lemma \ref{lemm-mathfrakr-fibers}
shows that lying in the same connected
component of $\widehat U\setminus \Sigma$ is a
$K$-definable equivalence relation (with quotient $\Sigma_{\RV,U}$), 
which did not seem a priori obvious. 
\end{rema}

\subsection{The fibers of $\varpi \colon \Sigma_{\RV,U}\to \Sigma$}
Our purpose is now to investigate the fibers of the map $\varpi 
\colon \Sigma_{\RV,U}\to \Sigma$. 
In fact one has $\varpi(s)=s$ for every $s\in \Sigma$ (even for every $s$
in $\widehat U$) and it will be to some extent more natural to study the fibers of
$\varpi\colon  \Sigma_{\RV,U}\cup \Sigma\to \Sigma$.

\subsubsection{}
Let $s$ be a point of $\Sigma$. 

\paragraph{}
If $s$ is simple, its only pre-image
in $\Sigma_{\RV,U}\cup \Sigma$ under $\varpi$ is $s$ itself. 

\paragraph{}
Assume that $s$ is not simple. 
By the very construction of $\wrr_{U,\Sigma}$, the set of points
$\zeta$ of $\Sigma_{\RV,U}$ such that $\varpi(\zeta)=s$ can be identified
with the set of germs of injective paths emanating from $s$, contained
in $\widehat U$ and not contained in $\Sigma$; hence $\varpi^{-1}(s)
\cap \Sigma_{\RV,U}$ is in bijection with 
the dense Zariski-open subset $C_{s, \widehat U\setminus \Sigma}$
of $C_{s, \widehat U}$ obtained by removing the (finitely many) points corresponding 
to germs of edges of $\Sigma$ emanating from $s$. 
It follows that $\varpi^{-1}(s)
\cap (\Sigma_{\RV,U}\cup \Sigma)$ is in natural bijection with
$\widehat C_{s,\widehat U\setminus \Sigma}$ (with $s$ corresponding to the generic type).

\subsubsection{}\label{components-are-stable-completions}
Let $s$ be a non-simple point of $\Sigma$ and 
let $D$ be a Zariski-constructible subset of 
$ C_{s,\widehat U\setminus \Sigma}$. We observe that
the stable completion 
$\widehat D$ of $D$ is then equal to $D$ if $D$ is finite, and to $D\cup\{\eta\}$ otherwise
(with $\eta$ the generic type). 
Let us view $\widehat
C_{s,\widehat U\setminus \Sigma}$ as contained in $(\Sigma_{\RV,U}\cup \Sigma)\subset 
\widetilde U$. Since
$\wrr_{U,\Sigma}$ is induced by $\rr_{U,\Sigma}$ through realization of types, the pre-image
$\wrr_{U,\Sigma}^{-1}(\widehat D)$ is the stable completion of $\rr_{U,\Sigma}^{-1}(D)$. 
In particular, if $\zeta$ is a point of $C_{s,\widehat U\setminus \Sigma}$
then $\wrr_{U,\Sigma}^{-1}(\zeta)$, which is a connected component of 
$\widehat U\setminus \Sigma$ (Lemma \ref{lemm-mathfrakr-fibers}),
is equal to the
stable completion  of  $\rr_{U,\Sigma}^{-1}(\zeta)$.

\begin{exem}\label{sigmarv-projective-line}
A particular case of the construction above had already been carried out directly 
in \ref{projective-line-explicit} for $\P^1$ and 
its standard $K$-skeleton $\Sigma=[0,\infty]$
(what we denoted there by $\Sigma_\RV$ and $\rr$ should be denoted by $\Sigma_{\RV,\P^1}$ and $\rr_{\P^1,\Sigma}$
according to our
current notation, but we will keep the simpler notation of  \ref{projective-line-explicit} in what folllows). 

We saw in \ref{projective-line-explicit} that there is a $K$-definable isomorphism 
between $\Sigma_\RV$ and $\RV$, modulo which $\rr \colon (\P^1\setminus \Sigma)=\gm
\to \Sigma_\RV$ is simply $a\mapsto \rv(a)$, and modulo which the map $\varpi \colon 
\Sigma_\RV\to \Sigma$
is simply $\rv(a)\mapsto \eta_{0,\abs a}$. If $\zeta$ is an element of $\Sigma_\RV$
identified to $\rv(a)$ for some $a\neq 0$ then $\rr^{-1}(\zeta)$ 
is the open disc 
$\rv^{-1}(\rv(a))$ of center $a$ and radius $\abs a$ and the connected component $\wrr^{-1}(\zeta)$ 
of $\widehat {\P^1}\setminus \Sigma$ is the stable completion of this open disc. 

For any $r\in \Gamma$ the fiber $\varpi^{-1}(\eta_{0,r})\cap \Sigma_\RV$ is equal, modulo the bijection $\Sigma_\RV \simeq \RV$, 
to $\{\rv(a)\}_{\abs a=r}$, so this is precisely $C_{\eta_{0,r},\widehat {\P^1}\setminus \Sigma}=\P^r_\RV
\setminus \{0,\infty\}$ \emph{when 
$C_{\eta_{0,r}}$ is seen as the generalized curve $\P^r_\RV$}, see Example \ref{generalized-residue-curve}
(the two germs of intervals emanating from $\eta_{0,r}$ and contained in $\Sigma$ correspond precisely to the points
$0$ and $\infty$ of $\P^r_\RV$). 
It is thus definitely more natural in this setting to deal with generalized residue curves. 
\end{exem}

\subsection{A variant: the map $\trr$}
The map $\wrr_{U,\Sigma}$ can be deduced from $\rr_{U,\Sigma}$ through the realization of types. 
By the same procedure we get a map  $\trr_{U,\Sigma} \colon \widetilde U\to \widetilde U$.
But any point of $\widetilde U$ can be seen as a definable type on $\widehat U$ by the procedure
described in \ref{type-of-types}, and with this viewpoint $\trr_{U,\Sigma}$ is the extension of 
$\wrr_{U,\Sigma}$ through the
realization of types. 

\subsubsection{}
Let $M$ be a model of
$\acvf$ containing $K$ and let $u\in \widetilde U(M)$. Let us describe it like in \ref{type-of-types}
as $f(a^+)$ for some germ of path $f\colon (a,b)\to \widehat U$ with $a$ and $b$ in $\abs M.$ We can in 
fact always assume that $a\neq 0$: indeed, 
since $u$ belongs to $\widetilde U(M)$ it is bounded with respect to $M$, so if $a=0$ the germ
$f$ is constant near $a$ 
and thus can be replaced by any other constant germ of path with the same value. 
We can also assume that the image $I$ of $f$ either avoids $\Sigma$ or is entirely contained in $\Sigma$. 
We choose a model $N$ of $\acvf$ containing $M$ and such that the type $a^+$ is realized over $M$
by some $\alpha\in \abs N$. Set $v=f(\alpha)\in \widehat U(N)$, and we distinguish two cases.

\paragraph{}\label{rtilde-component}
Assume first that 
$I$ avoids $\Sigma$. Then $I$ is contained in some 
component of $\widehat U\setminus \Sigma$, so $\wrr_{U,\Sigma}$ is constant on $I$ and its value is an 
element of $\Sigma_{\RV,U}(M)$, \ie, it can be identified
to some $\zeta\in 
C_{s,\widehat U\setminus \Sigma}(M)$ for some non-simple point $s\in \Sigma(M)$. 
Then $\wrr_{U,\Sigma}(v)=\zeta$ as well, so $\trr_{U,\Sigma}(u)=\zeta$.

\paragraph{}
Assume now that $I\subset \Sigma$. This is the case where there is some
$s\in \Sigma(M)$ such that $u$ is either equal to $s$ or can be
identified to $\zeta$ for some
$\zeta\in C_{s,\widehat U}$ corresponding to an edge of $\Sigma$ emanating from $s$ (the one 
provided by $I$). Then as $f(\alpha)\in \Sigma$ we have $\wrr_{U,\Sigma}(v)=v$, so $\trr_{U,\Sigma}(u)=u$.

\subsubsection{}
Let $s$ be a non-simple point of  $\Sigma(M)$ and let $\zeta$ be a 
point of $C_{s,\widehat U\setminus
\Sigma}$. We can see $\zeta$ as a point of $\widetilde U$
satisfying $\varpi(\zeta)=s$. Then by considering a germ of interval
emanating from $s$ and defining $\zeta$ and applying \ref{rtilde-component}
we see that $\trr_{U,\Sigma}(\zeta)=\zeta$.

\subsubsection{Recapitulation}
The image $\trr_{U,\Sigma}(\widetilde U)$ is fibered through $\varpi$ over $\Sigma$; its fiber over a
simple point $s$ is $\{s\}$, and its fiber over a non-simple point $s$ 
can be identified to  $\widehat C_{s,\widehat U}$ (with $s$ corresponding to the generic type). 
Let us use this identification, and choose
$\zeta\in \widehat C_{s,\widehat U}$. 
Then $\trr_{U,\Sigma}^{-1}(\zeta)$ contains $\zeta$; 
it is reduced to $\{\zeta\}$ if $\zeta$ is the generic type or a point corresponding to a germ of interval
emanating from $s$ and contained in $\Sigma$, and
it is equal to
$\widetilde{\rr_{U,\Sigma}^{-1}(\zeta)}$
otherwise. 

\subsection{Sub-skeletons}\label{subskeletons}
Let $\Tau$ be a $K$-definable subset of $\Sigma$ and set
$V=\rho_{U,\Sigma}^{-1}(\Tau)$. Then $\widehat V=
\widehat\rho_{U,\Sigma}^{-1}(\Tau)$ and $\Tau$ is an 
admissible $K$-skeleton of $\widehat V$. It follows from
the construction that $\Tau$ 
contains all $V$-branches at its simple points, that for every
non-simple point $s$ of $\Tau$ one has
$C_{s,\widehat V\setminus \Tau}=C_{s,\widehat U\setminus \Sigma}$, 
that $\Tau_{\RV,V}=\Sigma_{\RV,U}\times_\Sigma \Tau$, and that
$\rr_{V,\Tau}$, \resp $\wrr_{V,\Tau}$, \resp  $\trr_{V,\Tau}$
is the restriction of $\rr_{U,\Sigma}$, \resp $\wrr_{U,\Sigma}$, \resp
$\trr_{U,\Sigma}$ to $V$, \resp $\widehat V$, \resp $\widetilde V$.

\subsection{More topology : branches at infinity, trees}
Let $U$ be a $K$-definable subset of a $K$-algebraic curve $X$. 

\subsubsection{}
A \textit{branch at infinity} of $\widehat U$ 
is an equivalence class of closed definable subsets of $\widehat U$
which are (definably) homeomorphic to a semi-open interval, for the relation 
identifying two such subsets if their intersection is still a semi-open interval. 

The definable set $\widehat U$ has only finitely many branches at infinity: if $\Sigma$
is an admissible $K$-skeleton of $\widehat U$ such that
$\rho_{U,\Sigma}$ is topologically proper, the branches at infinity of $\widehat U$
are exactly those of $\Sigma$.

\subsubsection{}
We shall say that
$\widehat U$ is a \textit{tree} if for all $x,y$ in $\widehat U$ there is a unique 
$K$-definable interval on $\widehat U$ with endpoints $x$ and $y$. If $U$ is non-empty then 
$\widehat U$ is a tree if and only if it is ($K$-definably) contractible, and also if and
only if it is connected and contains no loop. 
If $U$ is a Zariski open subset of $\P^1$, \resp an open or a closed disc, \resp an open, closed
or semi-open annulus, $\widehat U$ is a tree. 

If $\widehat U$ is a tree and if $b_1,\ldots, b_n$ are its branches at infinity, $\widehat U$
admits a canonical definable compactification, which is still a tree, and is obtained by taking
for each $j$ a representative $I_j$ of $b_j$ and by adding a point at the open end of the interval
$I_j$. 

\subsubsection{}\label{complement-of-skeleton}
Let $\Sigma$ be an admissible $K$-skeleton of $\widehat U$
containing all $U$-branches at its simple points. 
Let $\Omega$ 
be a connected component
of $\widehat U\setminus \Sigma$. It is equal to $\wrr_{U,\Sigma}^{-1}(\zeta)$ for some point 
$\zeta\in \Sigma_{\RV,U}$
and we have 
$\Omega=\widehat V$ where $V$ is the definable subset 
 $\rr_{U,\Sigma}^{-1}(\zeta)$ of $U$
 (Lemma \ref{lemm-mathfrakr-fibers}, \ref{components-are-stable-completions}). 
Set $s=\varpi(\zeta)$.
For every $\omega\in \Omega$
there is a unique interval of the form $[\omega,t]$ with $[\omega,t]\cap \Sigma=\{t\}$, 
and one has necessarily $t=s=\rho_{U,\Sigma}(\omega)$ ; moreover, the germ of the interval $[\omega,s]$
at $s$ does not depend on $\omega$. It follows that $\Omega$ is a tree, and 
that there exists a unique branch at infinity on $\Omega$
having a limit in $\widehat U$
(this branch is the class of $[\omega,s)$ for any $\omega\in \Omega$, and its limit is $s$); 
the topological boundary of $\Omega$ in $\widehat U$ is equal to $\{s\}$. 

The map $\rho_{U,\Sigma}$ is topologically proper if and only every connected component
$\Omega$ of $\widehat U \setminus \Sigma$ has only one branch at infinity; this amounts to
require that every such component have compact closure in $\widehat U$. 

\begin{prop}{}\label{prop-connected-comp-definable}
Let $\Sigma$ be any closed $K$-skeleton of $\widehat U$. 
Let $\Tau$ be an admissible $K$-skeleton that
contains all
$U$-branches at its simple points and  that contains $\Sigma$
(note that such a $\Tau$ exists by Lemma \ref{lemm-u-branches}). 
Let $\Pi$
be the finite set of connected components
of $\Tau\setminus \Sigma$. 

\begin{enumerate}[1]
\item The family of connected component of $\widehat U\setminus \Sigma$ 
is
\[(\widehat \rho_{U,\Tau}^{-1}(\Theta))_{\Theta \in \Pi}\coprod (\wrr_{U,\Tau}^{-1}(\zeta))_{s\in 
\Sigma\setminus U, \zeta \in C_{s,\widehat U\setminus \Tau}}.\]
\item Lying on the same connected component of $\widehat U\setminus \Sigma$ is a $K$-definable equivalence relation
with quotient set $\Pi\coprod ( \varpi^{-1}(\Sigma)\cap \Tau_{\RV,U})$.
\item Each connected component of $\widehat U\setminus \Sigma$ 
is the stable completion of a definable subset of $U$.
\end{enumerate}
\end{prop}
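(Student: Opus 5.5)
The approach is to compare the connected components of $\widehat U\setminus \Sigma$ with the fibres of $\widehat\rho_{U,\Tau}$ and of $\wrr_{U,\Tau}$, whose topology is controlled by \ref{rho-open-fibers} and Lemma \ref{lemm-mathfrakr-fibers}. I read the index set in (1) as $s$ running through the non-simple points of $\Sigma$; simple points of $\Sigma$ contribute nothing, since $\Tau$ contains all $U$-branches at its simple points.

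\emph{Step 1: a partition of $\widehat U\setminus\Sigma$.} Every $x\in \widehat U$ satisfies either $\widehat\rho_{U,\Tau}(x)\in \Tau\setminus\Sigma$ or $\widehat\rho_{U,\Tau}(x)\in\Sigma$. This gives
\[\widehat U\setminus \Sigma=\coprod_{\Theta\in\Pi}\widehat\rho_{U,\Tau}^{-1}(\Theta)\;\coprod\;\bigl(\widehat\rho_{U,\Tau}^{-1}(\Sigma)\setminus\Sigma\bigr).\]
For $s\in\Sigma$, the set $\widehat\rho_{U,\Tau}^{-1}(s)\setminus\{s\}$ is contained in $\widehat U\setminus\Tau$. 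By Lemma \ref{lemm-mathfrakr-fibers} it is therefore a union of fibres $\wrr_{U,\Tau}^{-1}(\zeta)$ with $\varpi(\zeta)=s$. These fibres are exactly indexed by $C_{s,\widehat U\setminus\Tau}$ when $s$ is not simple, and there are none when $s$ is simple.

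\emph{Step 2: each piece is open and connected.} Each $\wrr_{U,\Tau}^{-1}(\zeta)$ is a connected component of the open set $\widehat U\setminus\Tau$. Since $\widehat U$ is locally connected, it is open and connected in $\widehat U$.

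Since $\Sigma$ is closed in $\Tau$, each $\Theta\in\Pi$ is open in $\Tau$. As $\widehat\rho_{U,\Tau}$ is continuous (it is the final map of an admissible homotopy), $\widehat\rho_{U,\Tau}^{-1}(\Theta)$ is open in $\widehat U$. It is path-connected: $\Theta$ is connected, and every $x$ in the preimage is joined to $\Theta$ by the segment $[x,\widehat\rho_{U,\Tau}(x)]$, which lies in $\widehat\rho_{U,\Tau}^{-1}(\Theta)$.

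Steps 1 and 2 exhibit $\widehat U\setminus\Sigma$ as a disjoint union of non-empty open connected sets, so these sets are its connected components. This proves (1).

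\emph{Step 3: definability and stable completions.} For (2), I would consider the map on $\widehat U\setminus\Sigma$ sending $x$ to the component $\Theta$ containing $\widehat\rho_{U,\Tau}(x)$ if $\widehat\rho_{U,\Tau}(x)\notin\Sigma$, and to $\wrr_{U,\Tau}(x)$ otherwise. Its fibres are the components by (1).

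Definability of the map needs three inputs. The maps $\widehat\rho_{U,\Tau}$ and $\wrr_{U,\Tau}$ are $K$-definable. The set $\Tau\setminus\Sigma$ is $K$-definable, and its finitely many components are $\overline K$-definable (the argument of \ref{basic-topology-uhat}). Hence $\Pi$, a finite set of $\overline K$-definable sets permuted by $\mathrm{Aut}(\overline K/K)$, is $K$-definable as a set of codes, using elimination of imaginaries.

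For (3), \ref{rho-open-fibers} gives $\widehat\rho_{U,\Tau}^{-1}(\Theta)=\widehat{\rho_{U,\Tau}^{-1}(\Theta)}$. Also \ref{components-are-stable-completions} gives $\wrr_{U,\Tau}^{-1}(\zeta)=\widehat{\rr_{U,\Tau}^{-1}(\zeta)}$.

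\emph{Main obstacle.} I expect Step 2 to be the delicate point. One must check continuity of $\widehat\rho_{U,\Tau}$, and that the segment $[x,\widehat\rho_{U,\Tau}(x)]$ stays in the fibre over a single point, which holds by uniqueness of such intervals. One must also take care that the $\overline K$- versus $K$-definability of the pieces of $\Pi$ is handled correctly.
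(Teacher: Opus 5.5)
Your proposal is correct and follows the paper's own argument. Like the paper, you partition $\widehat U\setminus\Sigma$ according to whether $\widehat\rho_{U,\Tau}$ lands in $\Tau\setminus\Sigma$ or in $\Sigma$, show that each piece $\widehat\rho_{U,\Tau}^{-1}(\Theta)$ and $\wrr_{U,\Tau}^{-1}(\zeta)$ is non-empty, open, connected and a stable completion, and read off (2) from (1). You also supply details the paper leaves implicit (continuity of $\widehat\rho_{U,\Tau}$, connectedness via the segments $[x,\widehat\rho_{U,\Tau}(x)]$, coding of $\Pi$), and your reading of $s\in\Sigma\setminus U$ as the non-simple points of $\Sigma$ is the intended one.
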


\begin{proof} 
The set $\widehat U\setminus \Sigma$
is the disjoint union of 
its intersection
with $\widehat \rho^{-1}_{U,\Tau}(\Tau\setminus \Sigma)$
and 
$\widehat \rho_{U,\Tau}^{-1}(\Sigma)$, 
which can be respectively written 
$\coprod_{\Theta \in \Pi}\widehat \rho_{U,\Tau}^{-1}(\Theta)$
and $\coprod_{t\in \Sigma\setminus U, \zeta \in C_{t, \widehat U
\setminus \Tau}}\wrr_{U,\Tau}^{-1}(\zeta)
$.

Now for each $\Theta\in \Pi$ the summand
$\widehat \rho_{U,\Tau}^{-1}(\Theta)$ is the stable completion of
the $K$-definable subset $\rho_{U,\Tau}^{-1}(\Theta)$ of $U$, and it is open
in $\widehat U$, connected
and  non-empty; for each $t\in \Sigma \setminus U$ and each $\zeta \in 
C_{t,\widehat U\setminus \Tau}$, 
the pre-image $\wrr_{U,\Tau}^{-1}(\zeta)$ is the stable completion of the 
$K\zeta$-definable subset  $\rr_{U,\Tau}^{-1}(\zeta)$ of $U$, and it is open
in $\widehat U$
(and even in $\widehat X$), connected
and non-empty. This ends the proof of (1) and (3), and (2) is an obvious consequence of (1). 
\end{proof}


\begin{lemm}\label{skeleton-disc}
Let $X$ be an algebraic $K$-curve, let $U$
be a $K$-definable subset of $X$
and let 
$\Sigma$ be an admissible $K$-skeleton of $\widehat U$. 
Let $V$ be a $K$-definable subset of $U$ such that
$\widehat V$
is a non-empty connected open subset
of $\widehat U$ and such that $\Sigma$ contains
the topological boundary $\partial \widehat V$ of $\widehat V$ in
$ \widehat U$. Then the following holds:

\begin{enumerate}[1]
\item If $\widehat V\cap \Sigma=\emptyset$
then $\widehat V$ is a connected component of
$\widehat U\setminus \Sigma$. 
\item If $\Sigma \cap \widehat V\neq \emptyset$
then: 
\begin{itemize}[label=$\diamond$]
\item 
$\Sigma \cap \widehat V$ is 
an admissible $K$-skeleton of $\widehat V$;
\item $\widehat \rho_{V,\Sigma\cap \widehat V}=\widehat \rho_{U,\Sigma}|_{\widehat V}$;
\item $\widehat V=\widehat \rho_{U,\Sigma}^{-1}(\widehat V\cap \Sigma)$. 
\end{itemize}
\end{enumerate}
\end{lemm}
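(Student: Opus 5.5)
For (1), I would show that $\widehat V$ is open and closed in $\widehat U\setminus\Sigma$; since it is connected and non-empty, it is then a connected component. Openness is part of the hypothesis. For closedness, let $y\in\widehat U\setminus\Sigma$ lie in the closure of $\widehat V$. If $y\notin\widehat V$, then $y\in\partial\widehat V\subset\Sigma$, which is absurd. Hence $\widehat V$ is closed in $\widehat U\setminus \Sigma$, and (1) follows.

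For (2), the key step is the third bullet, $\widehat V=\widehat\rho_{U,\Sigma}^{-1}(\widehat V\cap\Sigma)$; the other two bullets follow from it.

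\emph{Inclusion $\subset$.} Let $x\in\widehat V$ and set $s=\widehat\rho_{U,\Sigma}(x)$. I claim that $[x,s]\subset \widehat V$, which gives $s\in\widehat V\cap\Sigma$. Suppose the path $[x,s]$ leaves $\widehat V$. Since $\widehat V$ is open, the first exit point $t$ lies in $\partial\widehat V$, hence in $\Sigma$. But $[x,s]\cap\Sigma=\{s\}$, so $t=s$. Then $s\in \partial\widehat V$, and $s\notin\widehat V$ because $\widehat V$ is open. Consequently $[x,s)\subset\widehat V\setminus\Sigma$.

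To rule this out, I will use connectedness of $\widehat V$ together with $\widehat V\cap\Sigma\neq\emptyset$. Pick $z\in\widehat V\cap\Sigma$. Since $\widehat V$ is locally path-connected, being open in $\widehat U$ (\ref{basic-topology-uhat}), there is an interval $[x,z]$ inside $\widehat V$. The union $[x,z]\cup[x,s]$ is connected. Moreover $[x,z]$ must meet $\Sigma$: its endpoint $z$ lies in $\Sigma$. By the uniqueness of the interval from $x$ to $\Sigma$ and the strong convexity of $\Sigma$, any path from $x$ to a point of $\Sigma$ passes through $s$. Concretely, follow $[x,z]$ until its first point $s'$ in $\Sigma$. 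Then $[x,s']\cap\Sigma=\{s'\}$, and uniqueness in \ref{rho} gives $s'=s$. Hence $s\in[x,z]\subset\widehat V$, which contradicts $s\notin\widehat V$. This argument also shows directly that $s\in\widehat V$.

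\emph{Inclusion $\supset$.} Let $x$ satisfy $s=\widehat\rho_{U,\Sigma}(x)\in\widehat V$. If $x=s$ there is nothing to prove. Otherwise $[x,s)\cap\Sigma=\emptyset$. If $[x,s]$ left $\widehat V$, the last exit point, going from $s$ towards $x$, would lie in $\partial\widehat V\subset\Sigma$ and differ from $s$, which is impossible. Hence $x\in\widehat V$.

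With the third bullet established, let $D=\widehat V\cap \Sigma$. Then $D$ is $K$-definable, and $\widehat V=\widehat\rho_{U,\Sigma}^{-1}(D)$ is stable under any admissible $K$-homotopy $\widehat h$ of $\widehat U$ with final image $\Sigma$: the trajectory of $x$ lies in $[x,\widehat\rho_{U,\Sigma}(x)]$, which by the argument above stays in $\widehat V$. Restricting $\widehat h$ to $\widehat V$ therefore gives an admissible $K$-homotopy of $\widehat V$ with final image $D$. This homotopy is still induced by its restriction to the simple points of $V$, since $V=\rho_{U,\Sigma}^{-1}(D)$. So $D$ is an admissible $K$-skeleton of $\widehat V$, and its final map is $\widehat\rho_{U,\Sigma}|_{\widehat V}$. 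This gives the first two bullets. Alternatively, the second bullet follows from the characterization of $\widehat\rho$ by the unique segment meeting the skeleton at one point, because the segment $[x,\widehat\rho_{U,\Sigma}(x)]$ lies in $\widehat V$ and meets $D$ only at its endpoint.

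I expect the main obstacle to be the inclusion $\subset$ in the third bullet. It requires the first-point-in-$\Sigma$ argument and the fact that $\Sigma$ contains all loops, so that intervals are unique.
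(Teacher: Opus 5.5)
\textbf{Parts that match the paper, and where your route differs.} Your proof of (1) and of the inclusion $\widehat\rho_{U,\Sigma}^{-1}(\widehat V\cap\Sigma)\subset\widehat V$ (the last-exit-point argument) are the paper's. For the other inclusion, namely $[x,\widehat\rho_{U,\Sigma}(x)]\subset\widehat V$ for $x\in\widehat V$, you take a different and more direct route. The paper first shows that $\Sigma\cap\widehat V$ contains every branch at infinity of $\widehat V$ having a limit in $\widehat U$. It uses $\partial\widehat V\subset\Sigma$, connectedness of $\Sigma$ on the relevant component, and the fact that $\Sigma$ contains all loops. It then argues by contradiction from a first exit point. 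You instead take an interval $[x,z]\subset\widehat V$ with $z\in\Sigma\cap\widehat V$, stop it at its first point in the closed set $\Sigma$, and invoke uniqueness of the retraction segment from \ref{rho}. This is correct. It also shows that this inclusion does not need $\partial\widehat V\subset\Sigma$ at all; that hypothesis is used only for the reverse inclusion. Your opening first-exit-point paragraph is therefore superfluous.

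\textbf{The gap: the first bullet.} You deduce that $D=\Sigma\cap\widehat V$ is an admissible skeleton of $\widehat V$ from stability of $\widehat V$ under an admissible homotopy $\widehat h$. That stability rests on the claim that the trajectory of $x$ lies in $[x,\widehat\rho_{U,\Sigma}(x)]$, and nothing in Definition \ref{defi-admissible-skeleton} gives this. The homotopy $\widehat h$ is only a definable strong deformation retraction onto $\Sigma$, with no monotonicity requirement. A trajectory may reach $\widehat\rho_{U,\Sigma}(x)$, run along $\Sigma$ out of $D$ (hence out of $\widehat V$), and come back; only its endpoint is controlled. Your fallback covers only the second bullet, and that bullet presupposes the first, since $\widehat\rho_{V,D}$ is defined only once $D$ is known to be admissible.

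\textbf{The repair.} Use the converse characterization in \ref{rho}, as the paper does: a closed $K$-skeleton such that every point has a unique interval meeting the skeleton exactly at its endpoint is admissible. Here $D$ is closed in $\widehat V$, since $\Sigma$ is closed in $\widehat U$. For $x\in\widehat V$, the interval $[x,\widehat\rho_{U,\Sigma}(x)]$ lies in $\widehat V$ and meets $D$ only at its endpoint. Any interval $J\subset\widehat V$ from $x$ meeting $D$ only at its endpoint satisfies $J\cap\Sigma=J\cap D$, so it equals $[x,\widehat\rho_{U,\Sigma}(x)]$ by uniqueness in $\widehat U$. This gives admissibility of $D$ and $\widehat\rho_{V,D}=\widehat\rho_{U,\Sigma}|_{\widehat V}$ at once. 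It is why the paper says it suffices to prove $[x,\widehat\rho_{U,\Sigma}(x)]\subset\widehat V$.
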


\begin{proof}
Assume that $\widehat V$ does not intersect $\Sigma$. Then $\widehat V$ is an open, non-empty
and connected subset of $\widehat U\setminus \Sigma$, which is moreover closed in
$\widehat U\setminus \Sigma$, for the boundary of $\widehat V$ in $\widehat U$ is contained
in $\Sigma$. So $\widehat V$ is a connected component of $\widehat U\setminus \Sigma$.

Assume now that $\Sigma\cap \widehat V$ is non-empty. Choose a point $v\in \Sigma
\cap \widehat V$. As $\widehat V$ is connected, its closure is contained in a connected
component $\Omega$ of $\widehat U$, and $\Sigma\cap \Omega$ is 
connected since $\Sigma$
is $K$-admissible. Let $w\in \partial \widehat V$; it belongs to $\Sigma$ by assumption, so
by connectedness of $\Sigma\cap \Omega$ 
there is a path joining $w$ to $v$ on $\Sigma$; and as $\Sigma$ contains all loops drawn on
$\widehat U$, every interval joining $w$ to $v$ is contained in $\Sigma$. By connectedness 
of $\widehat V$, every branch at infinity of $\widehat V$ with limit $w$ is contained in an 
interval joining $w$ to $v$ all of whose
points but $w$ belong to $\widehat V$. 
Therefore $\Sigma\cap \widehat V$ contains all branches at infinity of $\widehat V$ 
with limit $w$; as $w$ is arbitrary, $\Sigma\cap \widehat V$ 
contains all branches at infinity of $\widehat V$ 
having a limit in $\widehat U$. 

Let $x\in \widehat V$. In order to prove that $\Sigma \cap \widehat V$ is 
an admissible $K$-skeleton of $\widehat V$
and
$\widehat \rho_{V,\Sigma\cap \widehat V}=\widehat \rho_{U,\Sigma}|_{\widehat V}$, 
it suffices to show that $[x,\widehat \rho_{U,\Sigma}(x)]\subset  \widehat V$. 
We argue by contradiction, so we assume that this is not the case. 
Then $[x,\widehat 
\rho_{U,\Sigma}(x)]\cap \widehat V$ is a proper open subset of
$[x,\widehat \rho_{U,\Sigma}(x)]$, and the connected component of $x$ in the latter open subset
if of the form $[x,y)$ with $y\in \partial \widehat V$. But we have seen that any 
branch at infinity of $\widehat V$
having a limit in $\widehat U$
is contained in $\Sigma$, so there exists
$z\in [x,y)$ with $[z,y)\subset \Sigma$, contradicting the fact that $[x,
\widehat \rho_{U,\Sigma}
(x))$ does not intersect $\Sigma$. 

It remains to show that $\widehat V=\widehat \rho_{U,\Sigma}^{-1}(\widehat V\cap \Sigma)$. 
It follows from the above that $\widehat V$ is contained in
$\widehat \rho_{U,\Sigma}^{-1}(\widehat V\cap \Sigma)$. Conversely, let 
$x$ be a point of $\widehat U$ such that $\widehat \rho_{U,\Sigma}(x)
\in \widehat V$. If $x\notin \widehat V$ then there exists
$y\in [x,\widehat\rho_{U,\Sigma}(x))$
with $y\in \partial \widehat V$; but then $y\in \Sigma$, contradicting
the definition of $\widehat 
\rho_{U,\Sigma}$. 
\end{proof}

\section{Generalities on Nash functions}\label{section-nash}
We start this section 
 by proving that a flat map induces
 an open map at the level of stable completions. 
We shall need it only for étale maps, for which it could be deduced (through
base-change by the normalization of $X$) from  \cite[Cor. 9.7.2]{hrushovski-l2016}. 
But as its proof is quite short, it seems to us to be of independent interest to
handle the flat case.

\begin{prop}\label{flat-open}
Let $f\colon Y\to X$ be a flat map
between algebraic $K$-varieties. The induced map 
$\widehat Y\to \widehat X$ is open. 
\end{prop}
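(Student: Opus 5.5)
We will prove openness by a lifting criterion for points of the stable completion, in the spirit of the classical proof that flat morphisms are open (going-down). Since $\widehat Y(M)$ has a basis of relatively $M$-definable open sets, it suffices to prove the following. Let $y\in \widehat Y(M)$, let $x$ be its image, and let $W$ be an $M$-definable open neighbourhood of $y$ in $\widehat Y$. Then $\widehat f(W)$ contains an $M$-definable open neighbourhood of $x$. We argue by contradiction. Suppose no such neighbourhood exists. By compactness and the pro-definability of everything involved, we find a model $N\succ M$ and a point $x'\in \widehat X(N)$ that is "infinitely close" to $x$ and has no preimage in $W$. Here infinitely close means that $x'$ lies in every $M$-definable open neighbourhood of $x$.

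The key step is to lift such an infinitesimal specialization. The points of $\widehat X$ are generically stable types, and the topology is generated by sets of the form $\{\abs{g}<\abs{h}\}$. So the statement "$x'$ is infinitely close to $x$" can be rephrased valuation-theoretically. Consider the coarsening of the valuation of $N$ by the convex subgroup of elements infinitesimally close to $1$ relative to $\abs{M^\times}$. This coarsening makes $x'$ reduce to $x$, in the same way that $\varpi$ in \ref{defsigma} relates $\widetilde X$ to $\widehat X$.

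Concretely, let $\mathcal O$ be the valuation ring of this coarsening on $N$. The point $x'$ gives a point of $X$ over some valued extension of $N$. Its reduction under the coarse valuation is a point of $X$ over a residue field, and that residue field carries the induced valuation of type $x$. We then lift $y$. By flatness, $Y\times_X \spec(\mathcal O_{x'})$ is flat over the local ring of the specialization, so going-down holds. Hence the scheme-theoretic point of $Y$ under $y$, which lies over the special point, generalizes to a point of $Y$ lying over the generic point $x'$.

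The valuation lifting is the part we expect to be delicate. To handle it we extend valuations from the local ring of the generalization, using Chevalley's extension theorem on the composite place. This produces a type $y'$ on $Y$ over $N$ that maps to $x'$ and whose coarse reduction is $y$. We then have to check that $y'$ can be chosen in $\widehat Y$, that is, definable and with values in $\abs{N}$. We can arrange this by taking $N$ sufficiently saturated and maximally complete and then applying the generically stable base change of the type. Once $y'$ reduces to $y$, it is infinitely close to $y$, so $y'\in W$. This contradicts the choice of $x'$.

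The main obstacle is this compatibility between the composite valuation and going-down: we must produce a lift $y'$ whose reduction is exactly $y$, and not merely some point over $x$. If working with composite places becomes unwieldy, the fallback is to reduce to a local statement. Flat maps are locally, up to quasi-finite flat pieces, compositions of smooth maps and finite flat maps. Openness for finite flat maps then follows from continuity of roots of monic polynomials whose coefficients vary continuously. Openness for smooth maps reduces, via étale coordinates, to projections $\A^n\times X\to X$, where it is explicit using Gauss points.
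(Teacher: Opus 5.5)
Your overall strategy (reduce openness to lifting types infinitely close to $x$ to types infinitely close to $y$, with flatness entering through going-down) is the paper's. However, both steps that carry the content fail as written.

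\emph{The decomposition.} Coarsening by the group $\Delta$ of elements infinitesimally close to $1$ relative to $\abs{M^\times}$ is the mechanism behind $\varpi$. It never moves the scheme-theoretic support and only sees refinements. A type $p$ over $M$ infinitely close to $x$ is in general supported at a proper generization $\xi$ of the support $\theta$ of $x$. For example, take $X=\A^1$, $x=0$, and $p$ the type of an element whose non-zero absolute value lies below every element of $\abs{M^\times}$. The value group of $p$ contains no non-trivial element infinitesimally close to $1$, so your coarsening does nothing, yet your going-down step needs the specialization from $\xi$ to $\theta$. If you instead coarsen the valuation of $N$ itself, its residue field receives only $\k(K)$, not $K$, so ``a point of $X$ over it carrying the valuation $x$'' is meaningless. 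What works, and what the paper does, is to coarsen by the convex hull of $\abs{M^\times}$. Then the valuation of $p$ on $\kappa(\xi)$ is $\abs{\cdot}'\circ\nu$, where $\nu$ is trivial on $M$ and centred at $\theta$, and $\abs{\cdot}'$ is a valuation on $\kappa(\nu)$ whose restriction to $\kappa(\theta)$ refines that of $x$. Stating the criterion for arbitrary types $p$ over $M$, as the paper does, also removes your unjustified attempt to force the lift into $\widehat Y$ via saturation, maximal completeness and ``generically stable base change''. With $W=\widehat V$ one has $\widehat f(\widehat V)=\widehat{f(V)}$. Any type $q$ on $Y$ infinitely close to $y$ lies on $V$ because $\widehat V$ is open, so it suffices to find one such $q$ above $p$.

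\emph{The lifting.} This is the real missing idea, and you flag it without resolving it. Going-down for $Y\to X$ gives a point $\omega'$ over $\xi$ specializing to the support $\omega$ of $y$. But Chevalley's theorem on $\kappa(\omega')$ lets you prescribe either the restriction $\nu$ to $\kappa(\xi)$ or the centre $\omega$, not both. You need a valuative going-down, and flatness must be used for it. One way:
\begin{itemize}
\item[$\diamond$] $B=\mathscr O_{Y,\omega}\otimes_{\mathscr O_{X,\theta}}\mathcal O_\nu$ is flat over the valuation ring $\mathcal O_\nu$, and $B/(\mathfrak m_\omega,\mathfrak m_\nu)=\kappa(\omega)\otimes_{\kappa(\theta)}\kappa(\nu)\neq 0$.
\item[$\diamond$] Take a prime $P$ containing $(\mathfrak m_\omega,\mathfrak m_\nu)$ and go down to a prime $Q\subset P$ with $Q\cap\mathcal O_\nu=0$.
\item[$\diamond$] Choose a valuation ring of $\mathrm{Frac}(B/Q)$ dominating $(B/Q)_{P/Q}$. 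It restricts to $\nu$, since valuation rings are maximal for domination; hence it is trivial on $M$, and it is centred at $\omega$.
\item[$\diamond$] Choose $P$ through an amalgam of $(\kappa(\nu),\abs{\cdot}')$ with $\kappa(\omega)$, the latter endowed with a refinement of the valuation of $y$ extending $\abs{\cdot}'|_{\kappa(\theta)}$. This lets you extend $\abs{\cdot}'$ to the new residue field and obtain $q$.
\end{itemize}
The paper lifts $\nu$ differently. Flatness makes $Y\times_X Z$ the blow-up of $Y$ along $Y\times_X C$ for every blow-up $Z$ of $X$ along a proper closed $C$, and one passes to the limit over all such $Z$ by constructible compactness.

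Your fallback does not rescue the general statement either. A flat morphism is not locally quasi-finite flat over a smooth one unless its fibres are Cohen--Macaulay.
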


\begin{proof}
We can assume that $K$ is a model
of $\acvf$. Let $V$ be a $K$-definable subset of $Y$ such that 
$\widehat V$ is open in $\widehat Y$; set $U=f(V)$. We are going to prove
that $\widehat U$ is open in $\widehat X$. 
Let $u$ be a point of 
$\widehat U(K)$ and let
$p$ be a type over $K$ on $X$ such that $u$ belongs to the $K$-Zariski closure of $p$
and for every pair $(f,g)$ of $K$-regular functions defined on a $K$-Zariski neighborhood
of $u$ such that $\abs{f(u)}<\abs{g(u)}$ one has $\abs{f(p)}<\abs{g(p)}$; we are going to show
that $p$ lies on $U$, which will end the proof.

Since $f(\widehat V)=\widehat U$ there exists some $v\in \widehat V(K)$ lying above $u$. 
From now
on we will see $X$ and $Y$ as $K$-schemes, and use
the corresponding language, which
is more convenient for exploiting flatness; so  \emph{points} will be
\emph{scheme-theoretic points} and we will use contructible
compactness instead of logical compactness. 
We see $u$ and $p$ as elements of the valuation spectrum of $X$; \ie, pairs consisting
of a point of $X$ and a valuation on its residue field extending that of $K$. 
Let $\theta$, \resp $\xi$, be the points
of $X$ supporting $u$, \resp $p$. 
As the type $u$ belongs to
$\widehat U(K)$, it is given by a valuation 
$\abs \cdot$ on $\kappa(\theta)$, extending that of $K$ and 
with value group $\abs{K^\times}$. 
The assumption on the pair $(u,p)$ implies that 
$\xi$ specializes to $\theta$
(since $\abs{f(u)}>0\Rightarrow \abs{f(p)}>0$)
and that $p$ is of the form 
$\abs \cdot '\circ \nu$ where $\nu$ is a valuation on $\kappa(\xi)$ trivial on $K$
and centered at $\theta$
and where $\abs\cdot'$ is a valuation on the residue field
of $\nu$ whose restriction to $\kappa(\theta)$ refines $\abs \cdot$
(as $\abs{f(u)}<1\Rightarrow \abs{f(p)}<1$); but
note that $\abs \cdot '$ and $\abs \cdot$ coincide on $K$. 

For proving the result, we can replace the scheme $X$ with the reduced closure of $\xi$
(and all other data by the corresponding fiber products) so we
can assume that $X$ is integral and $\xi$ is its generic point.
We denote by $\omega$ the point of $Y$
supporting $v$. 

Our purpose is now to show the existence of some $q$ on the valuation spectrum of $Y$
that lies above $p$ and has the property that its support generalizes $\omega$ and that 
$\abs{f(q)}<\abs{g(q)}$ for all pairs $(f,g)$ of $K$-regular functions on $Y$ around
$\omega$ such that  $\abs{f(v)}<\abs{g(v)}$. By openness of $\widehat V$ in $\widehat Y$ this will
ensure that $q$ lies on $V$, so that $p=f(q)$ will lie on $U$ -- we will then be done.

Let $Z$ be any blow-up of $X$ along a proper closed subscheme $C$.  
By flatness $Y\times_X Z$ is the blow-up of $Y$
along $Y\times_X C$. Let $\zeta$ be the center of $\mu$ on $Z$ and let $E_Z$ be the 
set of points of $Y\times_X Z$ lying above $\omega$ on $Y$
and $\zeta$ on $Z$; it is pro-constructible (in the sense of scheme theory) and
non-empty, as $\omega$ and $\zeta$ both lie over $\theta$.
The collection $(E_Z)$ for $Z$ running through all
possible blow-ups of $X$ along a proper closed subscheme is a filtering system
of non-empty compact sets (for the constructible topology), so it has a non-empty limit. 
Let $\nu$ be an element of this limit. For every $(Z,C)$ as above, $Y\times_ZC$
is nowhere dense in $Y$ by flatness of $Y\to Z$, so the set of irreducible
components of $Y\times_X Z$ is in natural bijection with the
set $I$ of irreducible components of $Y$
(to an irreducible component $T$ of $Y\times_XZ$ one associates its image $S$ on $Y$; 
one recovers $T$ from $S$ by taking its strict
transform). Then $\nu$ defines a projective system of 
non-empty subsets of $I$, which necessarily stabilizes. This implies that
there exists an irreducible component $S$ of $Y$ such that $\nu$ induces an element
of the limit of a filtering family of blow-ups of $S$ along proper centers, which 
can be lifted (again by constructible compactness) to an element of the limit
of \emph{all} blow-ups of $S$ along proper centers; we still call
this element $\nu$. 
By construction, $\nu$ induces 
an element of the valuative spectrum $Y$, 
supported
at the generic point of $S$ and centered at $\omega$, which we still
denote by $\nu$. 
The valuation $\abs \cdot$ on $\kappa(\omega)$ defined by $v$ restricts to the valuation 
$\abs \cdot$ of $\kappa(\theta)$ defined by $u$; then the refinement $\abs \cdot '$ of
$\abs \cdot$ on $\kappa(\theta)$ extends to a refinement $\abs \cdot '$ of $\abs \cdot$
on $\kappa(\omega)$, and in turn to a valuation $\abs \cdot '$ on the residue field
of $\nu$. Now the valuation $q=\abs \cdot '\circ \nu$
satisfies the required properties. 
\end{proof}
\begin{defi}
Let $K$ be a valued field, 
let $X$ be a $K$-variety
and let $U$ be
a $K$-definable subset of $X$.
A \emph{nice $K$-definable exhaustion} of $U$ is 
a non-decreasing
$K$-definable family $(U_t)_{0\leq t<a}$ of subsets of $U$, where $a$ is an element of $\abs{K^\times}^\Q$, 
such that $U$ is the union of the $U_t$ and each 
$\widehat{U_t}$ is compact. 
\end{defi}

\begin{exem}
Let $D\subset \A^n$ be an open polydisc of polyradius $r$. For every $t\in [0,1)$, let $D_t$
be the closed polydisc of polyradius $tr$. Then $(D_t)$ is a nice
$K$-definable exhaustion of $D$. 
\end{exem}


\begin{rema}\label{exhaustion-scalar-extension}
Let $X$ be a $K$-algebraic variety and let 
$U$
be a $K$-definable subset of $X$.
Assume that there exists a finite extension $L$ of $K$ such that $U$ has a nice
$L$-definable exhaustion $(U_t)_t$. 
Then $U$ has nice $K$-definable exhaustion.

Indeed, for seeing this we can replace $K$ by $K\h$
and $L$ by $L\h$, so
we can assume that $K$ is henselian. 
Then we can enlarge $L$ and assume it is normal
over $K$, and now
$(\bigcup_{\sigma \in \mathrm{Aut}(L/K)}\sigma(U_t))$ is a nice
$K$-definable exhaustion of $U$. 

More generally, assume that there exists a valued extension $M$ 
of $K$ such that $U$ admits a nice $M$-definable
exhaustion \emph{and that $K$ is 
non-trivially valued}; then $U$ admits a nice $K$-definable exhaustion. 
Indeed, we can enlarge $M$ and assume that it is a model of $\acvf$. 
The algebraic closure $\overline K$ of $K$ in $M$ is then a model of $\acvf$ (as
$K$ is non-trivially valued), so $U$ has a nice $\overline K$-definable exhaustion, 
necessarily defined over some finite extension of $K$. Then by the above $U$ has 
a nice $K$-definable exhaustion. Note that one cannot drop the assumption 
that $K$ is non-trivially valued: a positive dimensional open unit disc over a trivially
valued fields does not admit any nice $K$-definable exhaustion, but of course it admits
one over any non-trivially valued extension of $K$.

\end{rema}

\begin{lemm}\label{definable-distance}
Let $K$ be a valued field, let $X$ be a $K$-variety and let $Y$ be a Zariski-closed subset of $X$. 
There exists a $K$-definable function $\delta \colon X\to \Gamma_0$
inducing a continuous map $\widehat X\to \Gamma_0$ and whose vanishing locus is $Y$.
\end{lemm}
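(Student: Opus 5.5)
We first treat the affine case and then glue. Cover $X$ by finitely many $K$-affine Zariski open subsets $X_1,\ldots, X_m$. On each $X_i$ choose $K$-regular functions $g_{i1},\ldots, g_{ir_i}$ generating the ideal of $Y\cap X_i$ in $K[X_i]$, and set
\[\delta_i(x)=\max_j \abs{g_{ij}(x)}\quad\text{on } X_i.\]
This is $K$-definable, its vanishing locus is exactly $Y\cap X_i$, and it induces a continuous map $\widehat{X_i}\to\Gamma_0$. Continuity holds because each $\abs{g_{ij}}$ extends continuously to $\widehat{X_i}$: this is how the topology on stable completions is defined, with the sets $\{\abs g<\gamma\}$ and $\{\abs g>\gamma\}$ open. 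A maximum of finitely many continuous maps is again continuous.

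The difficulty is gluing, since the $\delta_i$ need not agree on overlaps. We would use a partition into compact pieces adapted to the cover, in the spirit of the usual construction of norms on schemes. Embed each $X_i$ in an affine space via $K$-coordinates $(x_{i1},\ldots,x_{in_i})$, and let $B_i\subset X_i$ be the $K$-definable set where all $\abs{x_{ij}}\leq 1$ and all $\abs{g_{ij}}\le1$. Since $K$ may be trivially valued or non-henselian, we shall not rely on exhaustions. Instead we would refine the cover so that every point of $X$ lies in some $B_i$. This is achieved by the standard trick of adding inverted coordinates: take the affine opens $X_{i,h}=\{h\neq0\}$ for the coordinate functions $h$ and the functions $x_{ij}/x_{ik}$ obtained from a projective compactification. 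A cleaner way is to choose a projective embedding $\overline X\subset\P^N$ together with a closed $\overline Y\supset Y$ with $\overline Y\cap X=Y$, and to work with homogeneous generators $G_1,\ldots,G_s$ of degree $d$ of the ideal of $\overline Y$. We then set
\[\delta(x)=\frac{\max_k\abs{G_k(x)}}{\max_l\abs{x_l}^d}\]
on $\overline X$, using homogeneous coordinates. This expression is well defined because it is invariant under scaling, it is $K$-definable, and it is continuous on $\widehat{\overline X}$ as a quotient of continuous functions whose denominator is nowhere zero. Its vanishing locus is $\overline Y$, hence its restriction to $X$ vanishes exactly on $Y$. This global formula removes the gluing issue entirely, so we would adopt it instead of the affine patching.

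The main points to check are the following. First, a closed $\overline Y$ with $\overline Y\cap X=Y$ exists: the Zariski closure of $Y$ in $\overline X$ works, since $Y$ is closed in $X$. Second, the quotient of continuous $\Gamma_0$-valued maps with nonvanishing denominator is continuous on $\widehat{\overline X}$. This reduces to the chart $\{x_l\neq 0\}$, where the function equals $\max_k\abs{G_k/x_l^d}\big/\max_{l'}\abs{x_{l'}/x_l}^d$ and the denominator is $\geq 1$, so the argument is local and elementary. I expect the only genuine subtlety to be this continuity at points of $\widehat{\overline X}$ lying over different charts. It is handled because on each chart the formula is a ratio of finitely many absolute values of regular functions, and these charts are open and cover $\widehat{\overline X}$. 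Restricting $\delta$ to $\widehat X\subset\widehat{\overline X}$ then gives the lemma.
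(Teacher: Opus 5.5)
Your computation is correct when $X$ is quasi-projective, in particular for every curve. But the step everything rests on, choosing a projective embedding $\overline X\subset\P^N$, is not available in the generality of the statement. In this paper a $K$-variety is an arbitrary separated $K$-scheme of finite type, and such a scheme need not be quasi-projective. There are proper varieties that are not projective (e.g.\ Nagata's and Hironaka's examples), and since a proper quasi-projective variety is projective, these admit no locally closed embedding in any $\P^N$. The lemma is needed in that generality: it feeds Lemma~\ref{lemm-exhaustion} and hence \ref{exhaustion-rvku}, which are applied to the arbitrary varieties occurring in $\mathscr E_K(U)$, in any dimension.

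Nor can you fall back on the affine patching you sketched. On $X_i\cap X_j$ the two families of generators differ by a matrix with entries in $K[X_i\cap X_j]$. So $\delta_i$ and $\delta_j$ agree only up to factors $\max\abs{a_{kl}}$, which are unbounded on the overlap, and you never construct the pieces $B_i$ that would control them. Chow's lemma does not rescue the reduction either, since a function on a quasi-projective $X'\to X$ need not be constant on fibres away from $Y$.

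The missing ingredient is a \emph{proper} model rather than a projective one. The paper takes a proper model $\mathscr X$ of $X$ over $\mathscr O_K$ (Nagata) and blows it up along the schematic closure of $Y$, so that $Y$ becomes the generic fibre of an effective Cartier divisor $\mathscr Y$. It then sets $\delta=\abs{s}$ for the canonical section $s$ of $\mathscr O(\mathscr Y)$ with its model metric. This descends to $X$ because the blow-up is an isomorphism off $Y$, $\delta$ vanishes on the whole preimage of $Y$, and $\widehat{X'}\to\widehat X$ is topologically proper.

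Your formula can also be repaired directly on such a model:
\begin{itemize}
\item Cover $\mathscr X$ by finitely many affine opens $\mathscr X_i$ and pick generators $g_{ik}$ of the ideal of the closure of $Y$ on each.
\item Evaluate $\max_k\abs{g_{ik}(x)}$ using a chart that contains the reduction of $x$, which exists by the valuative criterion.
\item The transition coefficients are then integral, hence of absolute value $\leq 1$ at such $x$, so the local values agree exactly.
\item The sets of points reducing into $\mathscr X_i$ have compact stable completions covering $\widehat X$, so continuity follows by pasting.
\end{itemize}
Your projective formula is exactly the special case of the model $\P^N_{\mathscr O_K}$ with the standard metric on $\mathscr O(d)$. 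There, taking a maximum over several sections spares you the blow-up.
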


\begin{proof}
Let $\mathscr X$ be any proper model of $X$ over the ring of integers $\mathscr O_K$ 
of
$K$ (one can build such a model  by applying Nagata's compactification over some
finitely generated $\Z$-subalgbera of $\mathscr O_K$ over which $X$ is defined). 
Let $\mathscr Y$ be the schematic closure of $Y$ in $\mathscr X$, let $\mathscr X'$ be the blow-up of
$\mathscr X$ along $\mathscr Y$, and let $\mathscr Y'$ be the exceptional divisor of the blow-up; let
$Y'$ and $X'$ denote the generic fibers of $\mathscr X'$ and $\mathscr Y'$ respectively. As $X'\to X$ is an isomorphism 
outside of $Z$ and as $\widehat{X'}\to \widehat X$ is topologically proper, we can replace $(X,Y)$ by $(X',Y')$;
hence we can assume that $X$ has a proper model 
$\mathscr X$ over $\mathscr O_K$ and that $Y$ is the generic fiber of an effective Cartier divisor $\mathscr Y$
of $\mathscr X$. Let $\mathscr L$ be the line bundle on $\mathscr X$ associated to $\mathscr Y$; it comes equipped with a 
canonical section $s$ having $\mathscr Y$ as its zero-locus. Let $L$ be the generic fiber of $\mathscr L$; it
is endowed with a natural model metric $\abs \cdot $ induced by $\mathscr L$, and $\delta:=x\mapsto \abs 
{s(x)}$ is a $K$-definable map taking values in $\Gamma_0$ that vanishes exactly on 
$Z$; being locally given by norms of regular functions, it induces 
a continuous definable map on $\widehat X$. 
\end{proof}

\begin{lemm}\label{lemm-exhaustion}
Let $K$ be a valued field, 
let $X$ be a $K$-variety
and let $U$ be
a $K$-definable subset of $X$
admitting a nice 
$K$-definable exhaustion.
Then $\widehat U$ has compact closure in $\widehat X$. 
\end{lemm}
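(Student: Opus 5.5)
The plan is to reduce compactness of the closure to a uniform positive lower bound on a ``distance to infinity'' function over $\widehat U$. Such a bound will follow from compactness of each $\widehat{U_t}$ together with the tameness of $K$-definable functions of one variable in $\Gamma_0$.

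\emph{Setting up.} Choose an open immersion of $X$ into a proper $K$-variety $\overline X$ (Nagata), and set $Y=\overline X\setminus X$, a Zariski-closed subset. By Lemma \ref{definable-distance} there is a $K$-definable function $\delta\colon \overline X\to \Gamma_0$ that induces a continuous map $\widehat{\overline X}\to\Gamma_0$ and whose vanishing locus is $Y$. In particular, on $\widehat{\overline X}$ the function $\delta$ is positive exactly on $\widehat X$. Since $\overline X$ is proper, $\widehat{\overline X}$ is (definably) compact.

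\emph{Bounding $\delta$ on each $\widehat{U_t}$.} For $t\in[0,a)$ with $U_t\neq\emptyset$, set $m(t)=\inf_{U_t}\delta$; this is a $K$-definable function of $t$ with values in $\Gamma_0$. Since $\widehat{U_t}$ is compact and $\delta$ is continuous and positive on it, the infimum over $\widehat{U_t}$ is attained and is therefore $>0$. Simple points are dense in $\widehat{U_t}$, so this infimum coincides with $m(t)$. Hence $m(t)>0$. Because the family $(U_t)$ is non-decreasing, $m$ is non-increasing.

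\emph{The key step: $m$ does not tend to $0$.} This is where I expect the real work to be, and where definability is essential. Without it the statement fails: a punctured open disc is a non-definable increasing union of compact annuli whose inner radii tend to $0$. I would use o-minimality of the divisible ordered group $\Gamma$ (really $\acvf$'s induced structure on $\Gamma$) to see that near $a$ the $K$-definable function $m$ has the form $t\mapsto c\,t^{q}$ on some interval $(b,a)$, with $c\in \Gamma$ and $q\in\Q$. Since $a\in\abs{K^\times}^{\Q}$ is a nonzero element of $\Gamma$, the limit $c\,a^q$ is an element $\mu>0$ of $\Gamma$. Combined with monotonicity, this gives $\delta\geq\mu$ on all of $U$, hence on $\widehat U$ by continuity and density of simple points. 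One has to be careful that $m$ is not identically $0$ on a final segment, but that case is excluded because $m(t)>0$ for every $t$.

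\emph{Conclusion.} The set $F=\{\delta\geq\mu\}\subset\widehat{\overline X}$ is closed in the compact space $\widehat{\overline X}$, so it is compact. It contains $\widehat U$ and is contained in $\widehat X$. Therefore the closure of $\widehat U$ in $\widehat{\overline X}$ lies in $F\subset\widehat X$, and so it coincides with the closure of $\widehat U$ in $\widehat X$. Being a closed subset of the compact set $F$, this closure is compact, as required.
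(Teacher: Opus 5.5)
Your proposal is correct and follows essentially the same route as the paper: embed $X$ in a proper variety, take the distance-to-the-boundary function $\delta$ from Lemma \ref{definable-distance}, and use o-minimality of $\Gamma$ to bound the non-increasing function $t\mapsto\inf_{U_t}\delta$ uniformly from below by its value near $a$. Your reason for $m(t)>0$ (compactness of $\widehat{U_t}$ plus continuity of $\delta$) is the correct one, and it is stated more carefully than the paper's one-line justification at that step.
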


\begin{proof}
Fix a nice exhaustion 
$(U_t)_{0\leq t<a}$ of 
$U$. As $X$ is separated (by our very definition of a variety)
there exists a
proper $K$-variety $Y$ equipped with a dense open immersion 
$i\colon X\hookrightarrow Y$ (defined over $K$). 
We use $i$ to identify $X$ with an open subset of $Y$.

Let $Z$ be the complement of $X$ in $Y$.
By the preceding lemma there exists a $K$-definable
function $\delta\colon Y\to \Gamma_0$
that vanishes exactly on 
$Z$ and extends to a 
continuous definable from $\widehat Y$
to $\Gamma_0$. 

Our purpose is to prove that $\delta$ does not vanish on the closure of $\widehat U$ in $\widehat Y$; this will ensure that the latter 
closure, which is compact by properness of $Y$, 
is contained in $\widehat X$, and therefore that it coincides with the closure of $\widehat U$ in $\widehat X$, which will end the proof. 

Let $\psi \colon [0,a)\to \Gamma_0$ be the map that sends $t$ to the infimum of $\delta$ on $U_t$. 
It is $K$-definable, non-increasing, and it does not vanish: indeed if one had $\psi(t)=0$
from some $t$ then $\delta$ would vanish on $U$, contradicting the fact that $U\cap Z=\emptyset$
since $U\subset X$. Then by o-minimality there exists some $\epsilon< 1$, some 
$b\in \abs{K^\times}^\Q$ and some non-positive rational number $r$  such that $\psi(t)=bt^r$ for all $t\in (\epsilon, 1)$. 
But then $\psi \geq ba^r$ on the whole of $U$, which implies  that $\delta \geq ba^r$ on the whole of $U$, and this also holds
on the whole of $\widehat U$ (just by realizing types); by continuity of $\delta$ one still has 
$\delta \geq ba^r$ on the closure of $\widehat U$ in 
$\widehat X$, and we are done. 
\end{proof}

\begin{lemm}\label{lemm-same-henselization}
Let $f\colon Y\to  X$ be a quasi-finite
morphism of $K$-varieties. Assume that
for every $y\in Y$, the field $K(y)$ is separable over 
$K(f(y))$ (\eg, $f$ is étale, or merely unramified). Let $D$ be a $K$ definable subset of $Y$ such that 
$f|_D$ is injective. One has then $K(y)\h=K(f(y))\h$ for every $y\in D$.
\end{lemm}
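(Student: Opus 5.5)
The plan is to argue by contradiction, using the homogeneity of algebraically closed valued fields and quantifier elimination in $\acvf$ to produce a second point of $D$ in the same fiber as $y$.

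Fix a model $M$ of $\acvf$ containing $K$ with $y\in D(M)$, and set $x=f(y)$. We view $K(x)\subset K(y)\subset M$ as valued subfields of $M$, and we take the henselizations $K(x)\h\subset K(y)\h$ inside $M$. This is legitimate because $M$ is algebraically closed, hence henselian, so each henselization embeds uniquely into $M$.

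The first step is to check that $K(y)\h$ is a finite separable extension of $K(x)\h$.
\begin{itemize}
\item Since $f$ is quasi-finite, $y$ is isolated in its fiber, so $K(y)$ is finite over $K(x)$.
\item By hypothesis $K(y)$ is separable over $K(x)$.
\item We have $K(y)\h=K(y)\cdot K(x)\h$, a compositum of henselian fields with a finite extension. Hence $K(y)\h$ is finite separable over $K(x)\h$, say of degree $d$.
\end{itemize}
It remains to show that $d=1$.

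Assume $d>1$. Since $M$ is algebraically closed and the extension is separable, there are exactly $d$ distinct $K(x)\h$-embeddings of $K(y)\h$ into $M$. Pick one, $\tau$, different from the inclusion. Because $K(x)\h$ is henselian, its valuation extends uniquely to the finite extension $K(y)\h$. Therefore $\tau$ is automatically an isometric embedding of valued fields over $K(x)\h$, and in particular over $K$.

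Now let $y'=\tau(y)\in Y(M)$, that is, the point whose coordinates in an affine chart are the images under $\tau$ of the coordinates of $y$. We check three properties of $y'$.
\begin{itemize}
\item Since $\tau$ fixes $K(x)$, we have $f(y')=x$.
\item Since $K(y)\h$ is the henselization of $K(y)$, it is determined by $K(y)$. So an embedding of $K(y)\h$ that fixes $K(y)$ pointwise is the identity, by uniqueness of the embedding of a henselization. As $\tau$ is not the inclusion, it moves some coordinate of $y$, and so $y'\neq y$.
\item By quantifier elimination in $\acvf$, the type of $y$ over $K$ is determined by the isomorphism class over $K$ of the valued field $K(y)$ together with its distinguished generators. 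Since $\tau\colon K(y)\to K(y')$ is such an isomorphism, $y$ and $y'$ have the same type over $K$. As $D$ is $K$-definable and $y\in D$, we get $y'\in D$.
\end{itemize}
This contradicts the injectivity of $f|_D$. Hence $d=1$, that is, $K(y)\h=K(f(y))\h$.

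The main point requiring care is the second bullet: that a non-trivial embedding of the henselization genuinely moves $y$. This rests on the universal property of the henselization, which forces an embedding of $K(y)\h$ fixing $K(y)$ to be the identity. A secondary point is the use of separability, which guarantees that $d>1$ actually yields a second embedding $\tau$.
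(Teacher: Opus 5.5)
Your proof is correct and is essentially the paper's argument. The paper observes that $y=\sigma(f(y))$ for the $K$-definable inverse $\sigma$ of $f|_D$, so $y$ is definable over $K(f(y))$, and then invokes the standard fact that an element which is separable algebraic over a valued field $F$ and definable over $F$ lies in $F\h$. Your conjugate argument proves that fact inline: a non-trivial $K(f(y))\h$-embedding $\tau$ is automatically isometric, so by quantifier elimination $\tau(y)$ is a second point of $D$ in the same fiber, contradicting injectivity.
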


\begin{proof}
Set $E=f(D)$. This is a $K$-definable subset of $X$, and $f$ induces a $K$-definable bijection $D\simeq E$. 
Let $\sigma$ be the inverse bijection. This is a $K$-definable map. By construction one has $y=\sigma(f(y))$ for every
$y\in D$, so that $y$ is definable over $K(f(y))$. As $K(y)$ is a finite separable extension of $K(f(y))$ by assumption, this
means that $K(y)\subset K(f(y))\h$, so $K(y)\h=K(f(y))\h$. 
\end{proof}

\begin{defi}\label{defi-nash-iso}
Let $f\colon Y\to X$ be a morphism between algebraic varieties over some valued 
field $K$, let $V$ be a $K$-definable subset of $Y$ and let $U$
be a $K$-definable subset of $X$. We shall say that $f$ induces a 
($K$-definable) \emph{Nash isomorphism} between $V$ and $U$ if $V$
is contained in the étale locus of $f$
and $f$ induces
a $K$-definable isomorphism between $V$ and $U$ inducing in turn
a \emph{homeomorphism} between $\widehat V$ and $\widehat U$. 
\end{defi}

\begin{exem}\label{example-nash-iso}
Assume that $f$ is a map from 
$Y$ to a Zariski-open subset $\Omega$ of 
$X$ containing $U$, and that $V$
is contained in the étale locus of $f$. 
Moreover we assume to be given a $K$-regular function 
$\phi$ on $\Omega$ and $K$-regular function $\psi$ on $X$
such that the following hold: 

\begin{itemize}[label=$\diamond$] 
\item $\abs \phi<1$ on $\Omega$ ; 
\item for every $z\in U$ there exists a unique 
element $\sigma(z)$ in the fiber
$f^{-1}(z)$ with $\abs{\psi(\sigma(z))}=\abs{\phi(z)}$ ; 
\item for every $z\in U$ and every $t\neq \sigma(z)$ in 
in $f^{-1}(z)$ one has $\abs{\psi(t)}=1$ ; 
\item $\sigma(U)=V$. 
\end{itemize}

Then $f$ induces a
Nash isomorphism between 
$V$ and $U$. Indeed, 
by replacing $Y$ with the étale locus
of $f$ (which 
contains $V$)
we can assume that $f$ is étale.

By construction, 
$f$ induces a definable bijection $V\simeq U$
(with inverse bijection $\sigma$), and in turn a 
continuous bijection $\widehat V\to \widehat U$. 

By proposition \ref{flat-open}, $f$ induces an open map 
from $\widehat {Y'}$ to $\widehat X$; by base-change, 
$f^{-1}(\widehat U)\to \widehat V$ is open. As 
$V$ is by definition the subset of $f^{-1}(U)$
defined by the condition $\abs \psi <1$, its stable 
completion $\widehat V$ is open in $f^{-1}(\widehat U)$, 
so $f$ induces an open map from $\widehat V$ to $\widehat U$, hence
a homeomorphism $\widehat V\simeq \widehat U$. 
%
\end{exem}

%

\begin{lemm}\label{lemm-nash-topology}
Let $f\colon Y\to X$ a morphism between $K$-algebraic varieties, 
and let $U$ and $V$ be $K$-definable subsets of $X$ and $Y$ respectively, 
such that $f$ induces a Nash isomorphism between 
$V$ and $U$. 

\begin{enumerate}[1]
\item The set $U$ admits a nice $K$-definable exhaustion
if and only if so does $V$. 

\item The set $\widehat V$ is open and closed in 
$f^{-1}(\widehat U)$. 

\item The set $\widehat U$ is open in $\widehat X$
if and only if $\widehat V$ is open in $\widehat Y$. 
\end{enumerate}
\end{lemm}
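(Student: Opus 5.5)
We treat the three assertions separately, using throughout that $f$ induces a $K$-definable bijection $V\simeq U$ and a homeomorphism $\widehat V\simeq\widehat U$.

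For (1), the plan is to transport exhaustions along the bijection. If $(U_t)_{0\leq t<a}$ is a nice $K$-definable exhaustion of $U$, set $V_t=V\cap f^{-1}(U_t)$. This is a non-decreasing $K$-definable family with union $V$. Moreover $\widehat{V_t}$ is the preimage of $\widehat{U_t}$ under the homeomorphism $\widehat V\simeq\widehat U$. Since $\widehat{U_t}$ is compact, so is $\widehat{V_t}$: a homeomorphism of pro-definable spaces preserves definable compactness, because definable types on $\widehat{V_t}$ are pushed forward and limits are transported. Conversely, given a nice exhaustion $(V_t)$ of $V$, take $U_t=f(V_t)$. Its stable completion $\widehat{U_t}=f(\widehat{V_t})$ is the continuous image of a compact set, hence compact. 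Both directions are thus formal.

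For (2), closedness comes first. Because $V\subset f^{-1}(U)$, we have $\widehat V\subset f^{-1}(\widehat U)$. The key claim is that the inverse $\sigma\colon\widehat U\to\widehat V$ gives a continuous section of $f$ over $\widehat U$ whose image is $\widehat V$. Since $Y$ is separated, the image of a continuous section of a map $f^{-1}(\widehat U)\to\widehat U$ is closed in $f^{-1}(\widehat U)$. The argument is the usual one: $\widehat V$ is the preimage of the diagonal of $\widehat Y\times_{\widehat X}\widehat Y$ under $y\mapsto(y,\sigma(f(y)))$. Equivalently, one checks directly that a definable type in $\widehat V$ converging to $y\in f^{-1}(\widehat U)$ also converges to $\sigma(f(y))\in\widehat V$, and then uses Hausdorffness of $\widehat Y$ to get $y=\sigma(f(y))$.

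Openness is the main point of (2). Here I would use étaleness. Replace $Y$ by the étale locus $Y'$ of $f$, which is a Zariski-open set containing $V$, so that $\widehat{Y'}$ is open in $\widehat Y$. By Proposition \ref{flat-open}, $f$ restricted to $\widehat{Y'}$ is open. The expected approach is local: pick $v\in\widehat V$ and $u=f(v)$. Since $f$ is étale and hence quasi-finite, one can find an open neighbourhood $W$ of $v$ in $\widehat{Y'}$ such that $f|_W$ is injective. I would try to get this from the fact that the fibre is finite and discrete, together with Lemma \ref{lemm-same-henselization}-type separation: choose definable neighbourhoods separating $v$ from the other points of $f^{-1}(u)$ and use properness-type arguments. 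Then $\sigma(f(W)\cap\widehat U)$ and $W\cap f^{-1}(\widehat U)$ are both sections near $v$ over the open set $f(W)\cap\widehat U$ of $\widehat U$ (openness by Proposition \ref{flat-open}), and they agree at $v$. By injectivity of $f$ on $W$ they coincide, so $W\cap f^{-1}(\widehat U)\subset\widehat V$, which gives openness. I expect the local injectivity of étale maps on stable completions to be the main obstacle. If it proves hard, a fallback is to reduce to an affine étale presentation (standard étale: $Y$ locally $\{P(t)=0,\ P'(t)\neq 0\}$) and use a Hensel/Newton-type continuity of roots to separate the fibre points.

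For (3), both directions follow from (2). If $\widehat U$ is open in $\widehat X$, then $f^{-1}(\widehat U)$ is open in $\widehat Y$ by continuity, and $\widehat V$ is open in $f^{-1}(\widehat U)$ by (2), hence open in $\widehat Y$. Conversely, if $\widehat V$ is open in $\widehat Y$, then it is open in $\widehat{Y'}$, and $\widehat U=f(\widehat V)$ is open by Proposition \ref{flat-open} applied to the étale, hence flat, map $Y'\to X$.
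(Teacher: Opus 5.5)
Your arguments for (1), for the closedness half of (2), and for (3) (given (2)) are fine and coincide with the paper's. The gap is in the openness half of (2), which is the only non-formal point of the lemma.

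You want an open neighbourhood $W$ of $v$ in $\widehat{Y'}$ on which $f$ is injective. You propose to obtain it from étaleness, finiteness of fibres and separation of $v$ from the other points of $f^{-1}(u)$. This mechanism cannot work, because étale maps are not locally injective on stable completions. Take $f\colon\gm\to\gm$, $z\mapsto z^2$, in residue characteristic $\neq 2$. It is étale, and $\eta_{0,1}$ is the only point above $\eta_{0,1}$. Yet every neighbourhood of $\eta_{0,1}$ contains points $\eta_{a,r}\neq\eta_{-a,r}$ (with $\abs{a}=1$, $r<1$) that have the same image $\eta_{a^2,r}$.

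Separation plus properness only tells you which points of a nearby fibre lie close to $v$, not how many. Showing there is only one is a local-degree-one statement. It must use the hypothesis that $f$ has a continuous section $\sigma$ over $\widehat U$ with image $\widehat V$ (or the equality of henselizations), and your sketch never brings this in at this step. Even granting such a $W$, you would also have to shrink it, using continuity of $\sigma$, so that $\sigma$ maps a neighbourhood of $u$ in $\widehat U$ into $W$. Otherwise the ``two sections'' need not coincide.

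The paper sidesteps local injectivity and uses $\sigma$ directly.
\begin{enumerate}[1]
\item Replace $Y$ by the étale locus and cover it by standard étale pieces. One may then assume $X=\spec A$ and $Y=D(P')\subset\spec A[T]/P$, with $P=T^n+\sum_i a_iT^{n-i}$ monic.
\item Set $\Phi=\max_i\abs{a_i}^{1/i}$, which bounds the absolute values of all roots of the fibre polynomials.
\item Set $\Psi=\abs{T-T\circ\sigma\circ f}$ on $f^{-1}(U)$. Its extension to $f^{-1}(\widehat U)$ is continuous thanks to the continuity of $\sigma$.
\item From $P'(\beta)=\prod_{\beta'\neq\beta}(\beta-\beta')$ one gets, for a simple root $\beta$, an explicit positive lower bound in terms of $\abs{P'(\beta)}$ and $\Phi$ for its distance to every other root.
\item Let $\Omega\subset f^{-1}(U)$ be the set where $\Psi$ is strictly smaller than this bound. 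It meets each fibre $f^{-1}(x)$, $x\in U$, exactly in $\sigma(x)$.
\end{enumerate}
Hence $\widehat V$ is cut out of $f^{-1}(\widehat U)$ by a strict inequality between continuous functions, so it is open.

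Your fallback (standard étale presentation plus root estimates) points this way, but it is still aimed at separating all fibre points near $v$. The missing idea is to measure the distance to the reference point $\sigma(f(y))$ in the same fibre.
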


\begin{proof}
By definition $f$ induces a homeomorphism between 
$\widehat V$ and $\widehat U$. 
So if $(U_t)_t$ is a nice $K$-definable exhaustion of $U$
then $(f^{-1}(U_t)\cap V)_t$ is a nice $K$-deifnable
exhaustion of $\widehat V$, and if $(V_t)_t$ is a nice
$K$-definable exhaustion of $V$ then $(f(V_t))_t$
is a nice $K$-definable exhaustion of $U$, whence (1).

Let $\sigma \colon U\to V$
be the converse bijection of $f|_V\colon V\to U$. As our varieties are separated by definition,
their  stable completions are Hausdorff, so
the continuous section 
$\sigma$ of $f^{-1}(\widehat U)\to \widehat U$
has closed image; this means that $\widehat V$ is closed
in $f^{-1}(\widehat U)$. 

It remains to prove that $\widehat V$
is open in $f^{-1}(\widehat U)$ as well as (3).
For both properties we can replace 
$Y$ by the étale locus of $f$ (which contains $V$)
and assume that $f$ is étale. Then by the local
structure of an étale map there exists
a finite cover $(Y_i)$
of $Y$ by $K$-Zariski open affine subsets and, for each $i$, 
an affine $K$-Zariski open subset $X_i$ of $X$ such that $f$
induces an elementary étale map $Y_i\to X_i$; \ie, 
$X_i=\spec A_i$ and $Y_i$ is the open subset $D(P_i(T))$ of $\spec A_i[T]/P_i$ for some
monic polynomial $P_i\in A_i[T]$. 

Set $U_i=U\cap X_i\cap \sigma^{-1}(Y_i)$ and $V_i=\sigma(U_i)=V\cap Y_i$. 
It suffices to prove that 
$\widehat{V_i}$ is open in $f^{-1}(\widehat{U_i})\cap \widehat{Y_i}$
for all $i$, 
and that each $\widehat{V_i}$ is open in $\widehat{Y_i}$ if and only if 
$\widehat{U_i}$ is open in $\widehat{X_i}$. So we can
assume that $X=\spec A$ and $Y$ is the open subset $D(P'(T))$ of
$\overline Y:=\spec A[T]/P$ for some monic polynomial $P=T^n+\sum_{1\leq i\leq n}
 a_i T^{n-i}\in A[T]$; we still denote by $f$ the structure map from
 $\overline Y$ to $X$ (that extends the original $f$). 

Let $\Phi\colon X\to \Gamma$ the $K$-definable map $\max \abs{a_i}^{1/i}$
from $X$ to $\Gamma$, and let $\Psi$ be the map
$\abs{T-T\circ \sigma\circ f}$ from $\overline Y$ to $\Gamma$. 
Denote by $\Omega$ the $K$-definable subset
of $Y$ defined by the inequality 
$\Psi< \abs{P'(T)}\cdot \Phi^{1-n}$. 
By a standard computation, for all models
$M$ of $\acvf$ containing $K$ and all 
$x\in U(M)$ such that $\sigma(x)\in \Omega$, there are no roots
in $M$ of the polynomial
$T^n+\sum_i a_i(x)T^{n-i}$ different from $T(\sigma(x))$ and
whose distance to the latter is 
strictly bounded by $P'(T(\sigma(x))(\max_i \abs{a_i(x)}^{1/i})^{1-n}$. 
Otherwise said $\sigma(x)$ is the unique point
of $f^{-1}(x)$ inside $\Omega$. 
As a consequence one has
$\Omega\cap f^{-1}(U)=V$, so 
$\widehat \Omega \cap f^{-1}(\widehat U)=\widehat V$, hence
$\widehat V$ is open in $f^{-1}(\widehat U)$ inside $\widehat{\overline Y}$, 
and a fortiori inside $\widehat Y$. 
It clearly follows that if $\widehat U$ is open in $\widehat X$ then $\widehat V$
is open in $\widehat Y$. 

Conversely if $\widehat V$ is open in $\widehat Y$
then $\widehat U=f(\widehat V)$ is open in $\widehat X$
since étale maps are flat and thus
open (Proposition \ref{flat-open}).
\end{proof}

\subsection{}\label{notation-nash}
Let $K$ be a valued field, let $X$ be a $K$-algebraic variety and let $U$ be a $K$-definable subset of $X$.

We will use the notation $\mathscr Z_K(U)$ (where $X$ is implicitly understood) 
for 
the 
ordered and filtered set of all $K$-Zariski open subsets of
$X$ that contain $U$, and $\mathscr E_K(U)$ for the ordered
collection of
triples 
$(Y, V,f)$ where $Y$ is a $K$-variety, 
$V$ is a $K$-definable subset of $Y$
and 
$f\colon Y\to X$
is a map inducing a Nash isomorphism $V\simeq U$. 
We observe that $\mathscr E_K(U)$ is filtered: 
if $(Y,V,f)$ and $(Z,W,g)$ belong to
$\mathscr E_K(U)$ then $(Y\times_X Z, W\times_U V, h)$, 
where $h\colon Y\times_X Z\to X$ is the structure map, 
belongs to $\mathscr E_K(U)$ and refine both
$(Y,V,f)$ and $(Z,W,g)$. 
We also remark that one 
can see $\mathscr Z_K(U)$ as contained in $\mathscr E_K(U)$. 

We then set 
\[K[U]=\colim_{Y\in \mathscr Z_K(U)}K[Y]
\;\text{and}\;\nash  KU=\colim_{(Y,V,f)\in \mathscr E_K(U)}
K[Y].\]
By construction $\nash KU$ is a $K$-algebra, and by flatness of étale maps
$K[U]$ is a sub-algebra of $\nash K U$. It
follows from the 
construction that every $\phi\in \nash KU$
induces $K$-definable maps $\phi\colon 
U\to \A^1$ and $\abs \phi \colon U\to \Gamma_0$. 

Let $(Y,V,f)\in \mathscr E_K(U)$. 
By Lemma \ref{lemm-same-henselization} $K(f(v))\h=K(v)\h$ for
every $v$ in $V$; it follows that $\phi(u)\in K(u)\h$
 for all $u \in U$ and all $\phi\in \nash KU$; of course if $\phi\in K[U]$ then 
$\phi(u)\in K(u)$.

Note that for every $(Y,V,f)\in \mathscr E_K(U)$ there is a natural map
from $\nash KV$ to $\nash KU$ which is an isomorphism; it is compatible
with the identifications between $K(f(v))\h$ and $K(v)\h$ for every $v\in V$. 

\subsubsection{}\label{exhaustion-rvku}
Assume
that $U$ admits a nice $L$-definable exhaustion
for some valued extension $L$ of $K$ (this amounts to requiring
that it admits such an exhaustion over $K$ except in the trivially valued case, see
Remark \ref{exhaustion-scalar-extension}). 
If $(Y,V,f)$ belongs to $\mathscr E_U(K)$, it follows from 
Lemma \ref{lemm-nash-topology}
that $V$ also admits a nice $L$-definable exhaustion, so $\widehat V$ has compact closure in $\widehat Y$ 
in view of Lemma \ref{lemm-exhaustion}. Hence $\phi|_{\widehat V}$ is bounded for every $\phi \in K[Y]$;
as a consequence, the function $\abs \phi\colon U\to \Gamma$ is bounded
for every $\phi\in \nash KU$.

We then set
\[\redgrad KU=
\coprod_{r\in \abs{K^\times}^\Q}\{\phi\in  K[U],
\,\|\phi\|_{\infty, U}\leq r\}/
\{\phi\in K[U], \,\|\phi\|_{\infty, U}< r\};\] 
and
\[\rednash KU =\coprod_{r\in \abs{K^\times}^\Q}\{\phi\in \nash KU,
\,\|\phi\|_{\infty, U}\leq r\}/
\{\phi\in \nash KU, \,\|\phi\|_{\infty, U}< r\}. \] 
These are generalized reduced $\RV(K)$-algebras,
$\redgrad KU$ embeds naturally in $\rednash KU$, 
and $\rednash KU$ is the colimit of the $\redgrad KV$, 
taken on all triples $(Y,V,f)\in \mathscr E_K(U)$. 
If $(Y,V,f)\in \mathscr E_K(U)$ then 
$\redgrad KU$ embeds into $\redgrad KV$, and
$\rednash KU\simeq \rednash KV$. 

 If $\phi\in \nash KU$
and $\|\phi\|_{U,\infty}\neq 0$, we shall often denote by $\rv_U(\phi)$
the image of $\phi$ in the
$\|\phi\|_{U,\infty}$-summand of $\Lambda_K(U)$.

\subsubsection{}\label{nash-functions}
Assume that $U(M)$ is open in $X(M)$ for the valuative topology for some 
model $M$ of $\acvf$ containing $K$ (this is then the case for all such models), and that $U$ is contained in the reduced locus on $X$
(but we do not assume anymore that $U$ admits a nice
exhaustion over some extension of $K$). 
We observe then that if $(Y,V,f)$ belongs to 
$\mathscr E_K(U)$ then $V$ is contained in the reduced locus of $Y$ (by étaleness of $f$)
and that $V(M)$ is open in $Y(M)$ for any model $M$ of $\acvf$
containing $K$ (as $V(M)$ is open in $f^{-1}(U(M))$, for $\widehat V$ is open in
$f^{-1}(\widehat U)$ by Lemma \ref{lemm-nash-topology}). 

Then $\nash KU$ embeds into the set of $K$-definable
functions on $U$. Indeed, let $\phi \in \nash KU$ inducing the zero map on $U$. 
We want to prove that $\phi=0$. For proving this 
we can replace $(X,U)$ by $(Y,V)$ for any $(Y,V,f)$
in $\mathscr E_K(U)$; so we can assume
that $\phi$ comes from some function in $K[X]$, which we still denote by $\phi$; 
and we can also assume by shrinking it if needed that
$X$ is reduced. 
As $U$ is open for the valuative topology, 
its intersection with every irreducible component $Z$ of $X$ contains some point $z$ whose $K$-Zariski-closure 
is $Z$ \cite[Lemme 1.1]{ducros2012b},
It follows that the pointwise vanishing locus of $\phi$, which
contains $U$ by assumption,
contains a $K$-Zariski neighborhood $X'$ of $U$ in $X$; 
as $X$ is reduced $\phi|_{X'}=0$; a fortiori $\phi=0$ when viewed in $\nash KU$. 

In this setting we may think of $\nash KU$ as a model-theoretic
substitute for the ring of analytic functions on $U$, 
which is an $\acvf$ analog of the 
ring of Nash functions in $\mathrm{RCF}$. We will therefore call
the elements of $\nash KU$ \emph{Nash functions}
on $U$. 

Be aware that we do not pretend to have defined a \emph{sheaf} (for whatever Grothendieck topology) of Nash functions; 
since we will morally only deal with ``Stein" spaces, global rings defined as above will be sufficient for our purposes.

\begin{lemm}\label{cofinality}
Let $K$ be a
henselian 
valued field, let $X$ be a $K$-algebraic variety and let $U$ be a $K$-definable subset of $X$. 
Let $L$ be a finite extension of $K$. 

\begin{enumerate}[1]
\item Zariski-open subsets defined over $K$ are cofinal in $\mathscr Z_L(U)$. 
\item 
Triples defined over $K$ are cofinal in 
$\mathscr E_L(U)$. 
\item The natural maps 
\[K[U]\otimes_K L\to L[U]\;\;\;\text{and}\;\;\;\nash KU\otimes_K L\to
\nash LU\] are isomorphisms. 
\end{enumerate}
\end{lemm}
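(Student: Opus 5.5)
The plan is a Galois descent argument driven by henselianity. First reduce to $L$ normal over $K$: if $L\subset L'$ with $L'$ finite normal, then cofinality in $\mathscr Z_{L'}(U)$ and $\mathscr E_{L'}(U)$ gives it for $L$, since $L$-objects are $L'$-objects. For (3), $L'=L\otimes_L L'$ lets us descend once the statement holds for $L'$, provided we check it for both $L\subset L'$ and $K\subset L'$. Since $K$ is henselian, the valuation extends uniquely to $L$. Hence every $\sigma\in\mathrm{Aut}(L/K)$ is an isometry, and it acts on $L$-definable objects. Because $U$ is $K$-definable, $\sigma(U)=U$ for every such $\sigma$. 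This is the key use of henselianity.

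For (1), let $\Omega\in\mathscr Z_L(U)$. Then each $\sigma(\Omega)$ is an $L$-Zariski open subset containing $\sigma(U)=U$. The intersection $\bigcap_\sigma\sigma(\Omega)$ is $\mathrm{Aut}(L/K)$-stable and contains $U$. If $L/K$ is inseparable, first pass to the purely inseparable part, where Zariski open subsets of $X_{L}$ and $X$ coincide topologically. Separable Galois descent for open subschemes then shows this intersection is defined over $K$.

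For (2), given $(Y,V,f)\in\mathscr E_L(U)$, I would replace $Y$ by a $K$-variety, namely the Weil restriction-type construction $Z=$ the fiber product over $X_L$ of the conjugates $\sigma^*Y$. Restricting to the purely inseparable case is harmless, because étale maps and definable sets do not see purely inseparable extensions. Each $(\sigma^*Y,\sigma(V),\sigma^*f)$ again lies in $\mathscr E_L(U)$, since $\sigma$ preserves $U$. By the filteredness observation in \ref{notation-nash}, the fiber product $W=\prod_{U}\sigma(V)$ gives a triple $(Z,W,h)$ refining $(Y,V,f)$. This triple is $\mathrm{Aut}(L/K)$-stable, with descent datum given by permuting the factors. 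Galois descent for quasi-projective varieties, after shrinking to affine pieces, yields a $K$-variety $Z_0$ with $Z_0\otimes_KL=Z$. The set $W$ is $L$-definable and Galois-invariant, hence $K$-definable because $K$ is henselian: $\mathrm{dcl}$ in ACVF of $K$ meets in the $\mathrm{Aut}$-fixed part. So $W$ descends to a $K$-definable $W_0\subset Z_0$. The composite $Z_0\otimes L\to Z_0$ is finite étale, and I expect the triple $(Z_0,W_0,h_0)$ to still induce a Nash isomorphism onto $U$. This is the main obstacle. The point of $W$ over $u$ is $L$-rational relative to $K(u)\h$, by Lemma \ref{lemm-same-henselization}, and the map $Z\to Z_0$ is injective on $W$ because the points of $W$ are fixed by the Galois action. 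Openness and homeomorphism then follow from Proposition \ref{flat-open} and Lemma \ref{lemm-nash-topology}.

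For (3), injectivity follows from flatness of $K\to L$ together with the colimit description. For surjectivity, use the cofinality results (1) and (2): any element of $L[U]$ or $\nash LU$ comes from $L[Y]=K[Y]\otimes_K L$ for some $K$-object $Y$. Hence it lies in the image of $K[Y]\otimes_K L$, which gives surjectivity.
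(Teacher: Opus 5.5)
This is essentially the paper's proof: for (1) you intersect the Galois conjugates of $\Omega$, for (2) you take the fiber product over $X$ of the conjugates of $(Y,V,f)$ and descend that étale $X$-scheme to $K$, and for (3) you pass to the colimit. The only differences are that the paper indexes the conjugates by the $K$-embeddings of $L$ into a model $M$ rather than reducing to normal $L$, and that it checks the homeomorphism directly rather than quoting filteredness.

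The step you call the main obstacle is actually automatic. Once $Z$ and $h$ are defined over $K$ and $W$ is $K$-definable, $W_0$ is the same definable set as $W$. Étaleness along it, bijectivity onto $U$ and the homeomorphism $\widehat W\simeq\widehat U$ do not change when the parameters are enlarged, because every model containing $K$ receives an isometric copy of $L$ ($K$ being henselian). So your argument via points ``fixed by the Galois action'' is not needed, and it is not correct as stated: $W$ is only stable under that action, not pointwise fixed.
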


\begin{proof}
 Let $M$
be a model of $\acvf$ containing $L$ and let $E$ be the set of $K$-embeddings
from $L$ into $M$; we denote by $\iota$
the inclusion $L\subset M$. 

We start with (1). Let $Y\in \mathscr Z_L(U)$. 
For every $\sigma\in E$, let $Y^\sigma$ be the $M$-Zariski open subset of $X$ 
deduced from $Y$ through $\sigma$. Then $\bigcap_{\sigma \in E}Y^\sigma$ 
is a $K$-definable Zariski-open subset of $X$, containing $U$ and contained in $Y$, 
whence (1).

Now let us prove (2); the idea is basically the same, except
that intersections has to be replaced by fiber products. So 
let $(Y,V,f)\in \mathscr E_L(U)$. Our purpose is to exhibit a
 triple $(Z,W,g)$ in $\mathscr E_K(U)$ which refines $(Y,V,f)$. 
We first  note that as $f$ is étale at every point of $V$ we can shrink $Y$ so that 
$f$ is étale on the whole of $Y$. For every $\sigma\in E$, let $(Y^\sigma, V^\sigma, f^\sigma)$ be the
element of $\mathscr E_M(U)$ deduced from $(Y,V,f)$ through $\sigma$, and set 
$Z=\prod_X Y^\sigma$; note that 
$(Y^\iota, V^\iota, f^\iota)=(Y,V,f)$. Then $Z\to X$ is an étale map. As $Y$ is in fact defined over the separable 
closure of $K$ in $L$ by topological invariance of the étale site, étale descent shows that the $X$-étale scheme $Z$
is defined over $K$; let $g$ denote the structure map from $Z$ to $X$. 
Let $W$ be the $K$-definable subset $\prod_X V^\sigma$ of $Z$. We are going to show that 
$(Z,W,g)$ fulfills our requirements. One has by definition $g(W)\subset f(V)=U$. 

First we note that the projection $p\colon Z\to Y$ (corresponding to $\sigma=\iota$) is an $X$-morphism, so
is étale;  moreover $f\circ p=g$, and $p$ is defined over $L$. Hence it suffices to prove that $g$ induces an  
$L$-definable Nash isomorphism $W\simeq U$. 

Let $u\in U(M)$. As $f$ induces an $L$-definable Nash isomorphism $V\to U$, the map $f^\sigma$ induces for every 
$\sigma$ an  $M$-definable Nash isomorphism   $V^\sigma \to U$; for every $\sigma$, let $v^\sigma$ be the unique
pre-image of $u$ in $V^\sigma(M)$. Then $(v_\sigma)_\sigma$ is clearly the unique pre-image of $u$ in $W$, 
so $g$ induces a $K$-definable isomorphism $W\simeq U$.

The map $\widehat W\to \widehat U$ induced by $g$ is then a continuous bijection. 
Let us prove that this map is closed -- it will then be a homeomorphism.  
Let $(U_t)$ be a $K$-definable nice exhaustion of $U$; for each $t$, set
$V_t=f^{-1}(U_t)$. 
Using the definition of (definably) closedness and compactness through limits of
definable types, we see that it suffices to prove that $g^{-1}(\widehat{U_t})$ is
compact for every $t$. But $g^{-1}(\widehat {U_t})$ is equal to 
the pre-image of $\prod _{\widehat X} (f^\sigma)^{-1}(U_t)$ under the canonical 
map $\widehat Z\to \prod_{\widehat X}\widehat{Y^\sigma}$, which is proper (one can check it using
the characterization of compactness through limits of definable types).
As each of the $(f^\sigma)^{-1}(U_t)$
is compact, $g^{-1}(\widehat {U_t})$
is compact and (2) is proved.

Now (3) just follows from (1) and (2) by a limit argument, in view of
the fact that 
$L[Z]=K[Z]\otimes_K L$ for every $K$-variety $Z$. 
\end{proof}

\subsection{Reminders on tame extensions}
Let $K$ be a henselian valued field and let $K^{\mathrm{sep}}$ be a separable closure of $K$; 
set $G=\mathrm{Gal}(K^{\mathrm{sep}}/K)$. 

\subsubsection{}
Let us first remind the classical formulation of the theory. 

\paragraph{}
One defines the \emph{inertia}
subgroup of $G$ as the kernel $I$ of the natural action of $G$
on the residuel field $\k(K^{\mathrm{sep}})$, and an algebraic extension $L$
of
$K$ is called \emph{unramified} if $I$ acts trivially on the corresponding 
$G$-module $\Hom_K(L,K^{\mathrm{sep}})$. If $[L:K]<\infty$ this amounts to 
requiring that $\k(L)$ be  separable and of degree $[L:K]$ over $\k(K)$, and also 
that the valuation ring of $L$ be étale over that of $K$. 

\paragraph{}
One defines the \emph{ramification} subgroup $W$ of $G$ as follows: it is trivial 
if $\k(K)$ is of characteristic $0$; if $\k(K)$ is of characteristic $p>0$ then $W$
is the unique pro-$p$-Sylow of $I$ (proving the uniqueness is part of the theory). 
An algebraic extension $L$
of
$K$ is called \emph{tamely ramified} if $W$ acts trivially on the corresponding 
$G$-module $\Hom_K(L,K^{\mathrm{sep}})$. If $[L:K]<\infty$ this amounts to 
requiring that $\k(L)$ be  separable over $\k(K)$, that $(\abs{L^\times}:\abs{K^\times})$
be prime to the residue characteristic exponent, and that 
\[|\k(L):\k(K)]\cdot(\abs{L^\times}:\abs{K^\times})=[L:K].\]

If $K$ is of residue characteristic zero every algebraic extension of $K$ is tame.
In general, every finite extension of $K$
whose degree is prime to the residue characteristic exponent of $K$ 
is tame.

\subsubsection{}
There is an alternative handling of tame ramification, 
carried out by the first author in the 
first section of \cite{ducros2013b}, which uses $\RV$ and the theory of generalized ring 
extensions, and make the theory fully analogous to that of unramified extensions. 

In this approach the group $W$ is directly defined as the kernel of the natural 
action of $G$ on the generalized residue field $\RV(K^{\mathrm{sep}})$, and one proves that
a finite extension $L$ of $K$ is tamely ramified if and only if 
$\RV(L)$ is separable and of degree $[L:K]$ over $\RV(K)$.

\begin{coro}\label{coro-tame-reduction}
Let $K$ be a
henselian
valued field, let $X$ be a $K$-algebraic variety and let $U$ be a $K$-definable subset of $X$
admitting a nice $K$-definable exhaustion. 
Let $L$ be a tamely ramified finite extension of $K$. 
The natural maps
\[\redgrad KU \otimes_{\RV(K)}\RV(L)
\to \redgrad LU\;\;\;\text{and}\;\;\;
\rednash KU \otimes_{\RV(K)}\RV(L)
\to \rednash LU\]
are isomorphisms. 
\end{coro}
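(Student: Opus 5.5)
The plan is to reduce the statement to the $\RV$-characterization of tame ramification, applied to the valued field extensions $K\subset L$. Lemma \ref{cofinality} already identifies $L[U]$ with $K[U]\otimes_K L$ and $\nash LU$ with $\nash KU\otimes_K L$. By cofinality of $K$-triples, $\rednash KU$ and $\rednash LU$ are the colimits of the $\redgrad KV$ and $\redgrad LV$ over the same system of $(Y,V,f)\in\mathscr E_K(U)$. Since tensor products commute with filtered colimits, the second isomorphism follows from the first. So I will concentrate on $\redgrad KU\otimes_{\RV(K)}\RV(L)\to\redgrad LU$, for $U$ with a nice exhaustion. Note that $\redgrad KU$ is built from the colimit of the $K[Y]$ for $Y\in\mathscr Z_K(U)$, and by Lemma \ref{cofinality} (1) the same $Y$'s are cofinal for $L$.

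Fix an $\RV(K)$-basis of $\RV(L)$. Since $L$ is tame, $\RV(L)$ is separable of degree $n=[L:K]$ over $\RV(K)$, so such a basis is given by $\rv(\ell_1),\dots,\rv(\ell_n)$ with $\ell_i\in L^\times$. By the reminders of §1, the $\ell_i$ then form a $K$-basis of $L$, and the basis is orthogonal: $\abs{\sum\lambda_i\ell_i}=\max_i\abs{\lambda_i}\abs{\ell_i}$ for $\lambda_i\in K$. The key step is to prove the \emph{uniform} orthogonality
\[\Big\|\sum_i\phi_i\ell_i\Big\|_{\infty,U}=\max_i\abs{\ell_i}\cdot\|\phi_i\|_{\infty,U}\qquad(\phi_i\in K[U]).\]
With this in hand, every $\psi\in L[U]$ can be written uniquely as $\sum\phi_i\ell_i$. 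The reduction $\rv_U(\psi)$ is then $\sum_{i\in I}\rv_U(\phi_i)\rv(\ell_i)$, where $I$ is the set of indices achieving the maximum. This gives surjectivity, and the tensor map is bijective degree by degree. Injectivity uses the same formula: a nonzero element of $\redgrad KU\otimes\RV(L)$, written as $\sum_i\rv_U(\phi_i)\rv(\ell_i)$ in a single degree, lifts to $\sum\phi_i\ell_i$, whose norm equals that degree, so its image is nonzero. One has to be a little careful with the bookkeeping of degrees in generalized rings, following \cite{ducros-2021a}.

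The uniform orthogonality is the main obstacle. Pointwise, for $u\in U$ (or in $\widehat U$, which is all right since sup norms are computed on types), I would use that $\rv(\ell_i)$ remain linearly independent over $\RV(K(u)\h)$, or at least over $\RV$ of the relevant field. The issue is that tameness of $L/K$ should transfer to $L\otimes_K F$ for valued extensions $F$ of $K$. This requires $\RV(L)$ and $\RV(F)$ to be linearly disjoint over $\RV(K)$, which is not automatic.

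To circumvent this, I would instead use compactness. The norm $\|\phi\|_{\infty,U}$ is attained or approached at points of the compact closures $\widehat{U_t}$. Choose a model $M$ containing everything and generic elements realizing the sup. Then use that the map $\RV(L)\otimes_{\RV(K)}\RV(M)\to\RV(L\otimes M)$ is compatible with the étale (separable) nature of $\RV(L)/\RV(K)$, which holds for separable generalized extensions as in \cite{ducros2013b}.

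Concretely, the plan for this step is as follows. Reduce to a Galois tame $L$. Express $\phi_i$ through Galois traces, $\phi_i=\mathrm{Tr}(\psi\ell_i^\vee)$ with a dual basis $\ell_i^\vee$ that is again orthogonal with $\abs{\ell_i^\vee}=\abs{\ell_i}^{-1}$ up to the tame discriminant, which is a unit up to these scalings. This yields $\abs{\ell_i}\|\phi_i\|\le\|\psi\|$, while the reverse inequality is the ultrametric inequality. Tameness enters exactly in the claim that the trace form is "unimodular" with respect to the $\rv(\ell_i)$, which is the $\RV$-separability of $\RV(L)/\RV(K)$.
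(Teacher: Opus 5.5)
Your architecture matches the paper's. You reduce the statement on $\rednash KU$ to the one on $\redgrad KU$ via the colimit over $\mathscr E_K(U)$ and Lemma \ref{cofinality}. You choose $\ell_i\in L^\times$ with $(\rv(\ell_i))$ an $\RV(K)$-basis of $\RV(L)$. You then deduce everything from the uniform orthogonality $\|\sum_i\phi_i\ell_i\|_{U,\infty}=\max_i\abs{\ell_i}\cdot\|\phi_i\|_{U,\infty}$. The difference is in how this identity is proved.

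The paper quotes from \cite{ducros2013b} that, for every henselian valued extension $F$ of $K$, the spectral norm on $L\otimes_K F$ satisfies $\|\sum_i\lambda_i\otimes\ell_i\|_\infty=\max_i\abs{\lambda_i}\cdot\abs{\ell_i}$. It applies this to $F=K(p)\h$ for each type $p$ on $U$, uses $L\otimes_K K(p)\h\simeq\prod_j L(p_j)\h$, and takes suprema.

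Your trace argument reproves that ingredient directly, and it works. From $\phi_i=\mathrm{Tr}(\psi\ell_i^\vee)$ you get $\abs{\phi_i(u)}\leq\abs{\ell_i^\vee}\cdot\max_\sigma\abs{\sigma(\psi)(u)}$. Moreover $\|\sigma(\psi)\|_{U,\infty}=\|\psi\|_{U,\infty}$, because every $K$-embedding $\sigma$ of $L$ is isometric ($K$ being henselian). It therefore extends to an automorphism of the monster model, which preserves the $K$-definable set $U$. You should state this explicitly, since it is exactly where the Galois-stability of $U$ enters.

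What remains is $\abs{\ell_i^\vee}=\abs{\ell_i}^{-1}$, and this holds exactly, with no discriminant correction. Write multiplication by $x$ in the orthogonal basis. This shows that the graded reduction of the Gram matrix $(\mathrm{Tr}_{L/K}(\ell_i\ell_j))$ is the Gram matrix of $\mathrm{Tr}_{\RV(L)/\RV(K)}$ in the basis $(\rv(\ell_i))$. That matrix is invertible by separability of $\RV(L)/\RV(K)$. The isometry argument from the proof of Theorem \ref{reduction-abstract-disc} (4) then bounds the inverse matrix.

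Your route is more self-contained (no tensor-product theorem is needed) and makes the role of tameness transparent. The paper's route is shorter once \cite{ducros2013b} is available.

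You can drop the intermediate worry about linear disjointness and the compactness detour. Pointwise orthogonality does genuinely fail: take $L=K(\sqrt c)$ unramified with $c$ a unit, $U$ the closed unit disc, $\ell_1=1$, $\ell_2=\sqrt c$. Then $\psi=T-\sqrt c$ vanishes at $u=\sqrt c$, while $\max(\abs{\phi_1(u)},\abs{\phi_2(u)}\cdot\abs{\sqrt c})=1$. Neither argument needs pointwise orthogonality, though: only the maximum over conjugates matters, and the supremum over $U$ absorbs it.
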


\begin{proof}

The generalized ring $\rednash KU$ is the colimit
of the $\redgrad KV$ for  $(Y,V,f)$ running through $\mathscr E_K(U)$, 
and by \ref{cofinality} (2) $\rednash LU$ is the colimit
of the $\redgrad LV$ for $(Y,V,f)$ also running through $\mathscr E_K(U)$, 
so it suffices to prove that 
the canonical map
$\redgrad KU \otimes_{\RV(K)}\RV(L)
\to \redgrad LU$ is an isomorphism. 

Let $a_1,\ldots, a_r$ be elements of $L^\times$ such that 
$(\rv(a_i))_i$ is a basis of $\RV(L)$ over $\RV(K)$. 
As $L$ is tamely ramified over $K$ it follows from 
\cite[2.21 and Lemme 2.3]{ducros2013b}
that $(a_i)$ is a basis of $L$ over $K$ and that for
every henselian valued
extension $F$ of $K$ and every element
$\lambda =\sum \lambda_i \otimes a_i$ of 
the tensor product $L\otimes_K F$ 
one has $\|\lambda\|_\infty=\max_i \abs{\lambda_i}\cdot \abs{a_i}$.  
Here $\|\cdot\|_\infty$ denotes the spectral norm of
$L\otimes_K F$; the latter ring is a product  $\prod F_j$ of finitely
many finite separable extensions of $F$, and $\|\lambda\|_\infty$
is then equal to the maximum of the absolute values of the 
components of $\lambda$ for the decomposition $L\otimes_k F=\prod F_j$. 

By Lemma\ref{cofinality} (3) one has 
$L[U]=L\otimes_K K[U]$, so $L[U]$
is a free module over $K[U]$ with basis $(a_i)$. 
Let $f$ be an element of $L[U]$. 
Let us write $f=\sum a_i f_i$ with each $f_i$ in $K[U]$. 
Let $p$ be a quantifier-free type over $K$ lying on $U$
and let 
$\{p_j\}$ be the set of
quantifier-free types over $L$ lying above $p$; there is 
a natural isomorphism
$L\otimes_K K(p)\h\simeq \prod L(p_j)\h$, which maps
$\sum_i a_i \otimes f_i(p)$ to $(f(p_j))_j$. 
As a consequence, 
\[\max_j \abs{f(p_j)}=\max_i \abs{a_i}\cdot \abs{f_i(p)}.\]
It follows that 
\[ \|f\|_{U,\infty}=\max_i \abs {a_i}\cdot \|f_i\|_{U,\infty}.\]
But this immediately implies that
$(\rv(a_i))_i$ is a basis of 
$\redgrad LU$ over $\redgrad KU$, which ends the proof. 
\end{proof}

\section{Abstract open
polydiscs and their Nash functions}\label{abstract-polydiscs}

\subsection{}\label{taylor-expansion}
We fix from now on a henselian
valued field $K$, 
an algebraic variety $X$ over $K$, a $K$-definable subset 
$U$ of $X$, and an algebraic map
$f\colon X\to \A^n_K$ inducing
a Nash isomorphism
between $U$ and a $K$-definable open polydisc $D$ centered at the
origin; since 
$D$ is $K$-definable, its polyradius $(r_1,\ldots, r_n)$ has components 
in
$\abs{K^\times}^\Q$. 
The variety $\A^n$ is reduced, $\widehat D$ is open 
in $\widehat{\A^n}$ and $D$ admits a nice definable exhaustion 
over any non-trivially valued extension of $K$. 

We can thus apply \ref{exhaustion-rvku} and \ref{nash-functions}. 
It follows that elements of $\nash KU$ are actual functions on $U$
(which are called Nash functions), that they are bounded, and that 
$K[U]$ and $
\nash KU$ give rise
to two generalized rings $\redgrad KU$ and $\rednash KU$. 
The map $f$ induces
isomorphisms $\nash KU\simeq\nash KD$ and $\rednash KU\simeq \rednash KD$.

We denote by $u$ the unique pre-image of $0$ on $X$. 
The point $u$ is $K$-definable, and
$K(u)$ is  finite and separable over $K$ as $f$ is étale at $u$;
as we assume here that $K$ is henselian, 
this implies that $u$ is a $K$-point.

\begin{enonce}[remark]{Conventions}
For simplicity we shall often write elements
of $\mathscr E_K(U)$ as pairs $(Y,V)$, the map $g\colon Y\to X$ being implicit; 
and the image of $f$ in $K[Y]$, which is rigorously equal to $f\circ g$, will still be denoted by $f$; 
note that with this convention $f$ induces a
Nash isomorphism between $V$ and $D$.

For studying functions of $\nash KU$ it will often be useful to replace $(X,U)$
by some suitable $(Y,V)$ in $\mathscr E_K(U)$ fulfiling some property in which case
we will say ``up to refining $(X,U)$ we can assume that this or that property holds...".

Typically, we will often 
refine $(X,U)$ so that: 
\begin{itemize}[label=$\diamond$] 
\item $f$ is étale, hence $X$ is smooth -- this can be achieved by replacing $X$
by the étale locus of $f$; 

\item $X$ is connected -- as $\widehat U$ is connected, being homeomorphic to the stable
completion of an open polydisc, this can be achieved by replacing $X$ by its unique connected
component containing $U$, which is $K$-definable; 
\item a given element $\phi$ of
$\nash KU$ comes from $K[X]$; note that under the assumption that $X$
is smooth and connected $K[X]$ embeds into $K[U]\subset \nash KU$, for it embeds in 
$K[X']$ for every $K$-Zariski neighborhood $X'$ of $X$, so in this situation we will
consider $K[X]$ as a subring of $\nash KU$ and simply say that $\phi$ belongs to 
$K[X]$;  
\end{itemize}
If  $n\geq 1$ and if $f$ is assumed to be étale and $X$ connected, then $X':=\{f_n=0\}$
is a smooth hypersurface of $X$, equipped with an étale map $(f_1,\ldots, f_{n-1})$
to $\A^{n-1}$, and it will also be possible to also assume that $X'$ is connected: 
indeed, if $U'$ denotes the intersection $X'\cap U$ then $(f_1,\ldots, f_{n-1})$
induces a homeomorphism between $\widehat {U'}$ and the stable completion of an 
$(n-1)$-dimensional open polydisc, so $\widehat {U'}$ is connected, hence $U$ intersects
only one irreducible component of $X'$, so one can remove the other components from $X$.

When we perform such a replacement of $(X,U)$ by $(Y,V)$, we of course replace
$u$ by its unique pre-image $v$ on $V$. This operation does not preserve the $K$-algebraic
local ring $\mathscr O_{X,u}$ in general, 
but it preserves its henselization, and a fortiori
its (formal) completion: indeed, by étaleness
at $v$ the map $Y\to X$ induces an isomorphism
$\mathscr O_{X,u}\h\simeq \mathscr O_{Y,v}\h$; it therefore also preserves its
(formal) completion. 

Of course we will also proceed to such replacements for studying $K[U]$, in which case they are much simpler: one only
has to replace $X$ with a $K$-Zariski open neighborhood of $U$; for instance, the connected component of the étale locus of $f$ 
that contains $U$. 

We shall say that a $n$-uple $g=(g_1,\ldots, g_n)$ of elements
of $\nash KU$ induces a Nash isomorphism between 
$U$ and some $K$-definable subset $\Delta$ of $\A^n$ (in practice, $\Delta$ will be an open polydisc)
if there is some $(Y,V)\in \mathscr E_K(U)$ such that $g_i$
is induced for every $i$ by some element of $K[Y]$ (still
denoted by $g_i$) and $g$ induces a  $K$-definable Nash
isomorphism between $V$ and $\Delta$. This will then be the case for every $(Y,V)\in \mathscr E_K(U)$ such that $g_i\in K[Y]$
for all $i$ (by étale descent of étaleness). If $g$ induces a
Nash isomorphism between 
$U$ and $\Delta$ it induces a
homeomorphism 
between $\widehat U$ and $\widehat \Delta$.

\end{enonce}

\subsubsection{}
Let $\phi \in \nash KU$. We will associate to it a ``Taylor expansion at $u$" as follows. 
First, by refining $(X,U)$ we can assume that $\phi \in K[U]$. 
Now as $f$ induces an isomorphism 
$\widehat{\mathscr O_{\A^n, 0}}\simeq \widehat{\mathscr O_{X,u}}$
the image of $\phi$ in  $\widehat{\mathscr O_{X,u}}$
can be naturally written as a formal power series $\sum a_I f^I$.

Let $\phi$ be an element 
of $\nash KU$ whose Taylor expansion at $u$ is zero, 
and let us prove that $\phi=0$. 
By refining $(X,U)$ we can assume that
$f$ is étale, $X$ is connected and $\phi\in K[X]$. 
As $X$ is smooth and connected, 
taking the Taylor expansion at $u$ defines an embedding
$K[X]\hookrightarrow K[\![f]\!]$, so $\phi=0$. 

As a consequence, taking the Taylor expansion at $u$ defines an embedding
from
$\nash KU$
into $K[\![f]\!]$
( in particular, $\nash KU$
is a domain), and we shall often use it to see Nash functions on $U$
as power series. 

\begin{lemm}\label{nash-divisibility}
Let $\phi=\sum a_I f^I$ be a function 
in $\nash KU$.
Assume $n\geq 1$ and
write formally the power series
$\phi$ as $\sum_i A_if_1^i$, where each $A_i$
belongs to $K[\![f_1,\ldots, f_{n-1}]\!]$. 
Let $X'$ be the zero-locus of $f_n$ on $X$
and set $U'=X'\cap U$. 

\begin{enumerate}[1]
\item Each $A_i$ is the Taylor expansion 
at $u$ of some element $\phi_i$ of 
$\nash K{U'}$, and one has $\phi_0=\phi|_{U'}$; if moreover
$f$ is étale, $X$ and $X'$
are connected and $\phi\in K[X]$, then each $\phi_i$ 
belongs to $K[X']$. 

\item The following are equivalent: 
\begin{enumerate}[j]
\item $\phi \in f_n\nash KU$; 
\item $A_0=0$; 
\item $\phi|_{U'}=0$. 
\end{enumerate}
If these conditions are fulfilled the
Taylor expansion of the function 
$\phi/f_n$ of $\nash KU$ at $u$ is equal to
$\sum_{i\geq 1}A_if_n^{i-1}$. If
we assume moreover that $f$ is étale, $X$
and $X'$ are
connected and
$\phi\in K[X]$,
then $\phi\in f_nK[X]$. 
\end{enumerate}
\end{lemm}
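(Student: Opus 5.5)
The plan is to reduce everything to the case where $f$ is étale, $X$ and $X'$ are connected and $\phi\in K[X]$, using the refinement conventions of \ref{taylor-expansion}. (I read the formal decomposition as $\phi=\sum_i A_i f_n^i$, which is what the rest of the statement uses.) Such refinements preserve $\widehat{\mathscr O_{X,u}}$ and $\widehat{\mathscr O_{X',u}}$, and they replace $U'$ by a set Nash-isomorphic to it via $(f_1,\ldots,f_{n-1})$. So the Taylor expansions are unchanged, and the statements about $\nash K{U'}$ follow from the $K[X']$-statements.

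In the reduced situation, $X'=\{f_n=0\}$ is a smooth connected hypersurface. Its ideal sheaf is $f_n\mathscr O_X$, because $f$ is étale and hence $f_n$ is part of a system of local coordinates everywhere. The completions satisfy $\widehat{\mathscr O_{X,u}}=K[\![f_1,\ldots,f_n]\!]$ and $\widehat{\mathscr O_{X',u}}=K[\![f_1,\ldots,f_{n-1}]\!]$, and restriction to $X'$ corresponds to setting $f_n=0$. This gives $\phi_0=\phi|_{X'}$ immediately.

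For the higher $A_i$ the difficulty is that $\phi_0$ need not extend to a regular function on $X$, so one cannot simply subtract it and divide by $f_n$. Instead I will use a Hasse--Schmidt type construction. The $K$-algebra map $K[T_1,\ldots,T_n]\to K[X][\![t]\!]$ sending $T_j\mapsto f_j$ for $j<n$ and $T_n\mapsto f_n+t$ agrees modulo $t$ with the structure map. Since $K[T]\to K[X]$ is étale and $K[X][\![t]\!]$ is $t$-adically complete, formal étaleness gives a unique lift to a $K[T]$-algebra map $D\colon K[X]\to K[X][\![t]\!]$ with $D\equiv\mathrm{id}$ modulo $t$. Write $D(\phi)=\sum_i D_i(\phi)t^i$ and set $\phi_i=D_i(\phi)|_{X'}\in K[X']$. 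Passing to the completion at $u$, uniqueness of lifts forces $D$ to be the substitution $f_n\mapsto f_n+t$ on $K[\![f]\!]$. Hence the coefficient of $t^i$, restricted to $f_n=0$, is exactly $A_i$. This proves (1), and I expect it to be the main obstacle. The point to check carefully is that the lifting really is global on $X$ (one can argue by successive lifting modulo $t^m$ using the vanishing of $\Omega_{X/\A^n}$) and that it is compatible with completion. This is also where characteristic $p$ rules out naive derivatives $\frac{1}{i!}\partial^i/\partial f_n^i$.

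For (2), (i)$\Rightarrow$(iii) is clear. (iii)$\Rightarrow$(ii) holds because $\phi_0=\phi|_{U'}=0$ has Taylor expansion $A_0=0$. Conversely, (ii)$\Rightarrow$(iii) because Taylor expansion embeds $\nash K{U'}$ into $K[\![f_1,\ldots,f_{n-1}]\!]$, by \ref{taylor-expansion} applied to $U'$. For (iii)$\Rightarrow$(i), refine so that $\phi\in K[X]$. The set $U'$ is valuatively open in the smooth connected $X'$, hence meets it in a Zariski-dense way, as in \ref{nash-functions}. So $\phi|_{X'}=0$, and since the ideal of $X'$ is $f_nK[X]$ we get $\phi\in f_nK[X]\subset f_n\nash KU$. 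The formula for the expansion of $\phi/f_n$ then follows by dividing $\sum_{i\geq1}A_if_n^i$ by $f_n$ in the domain $K[\![f]\!]$.
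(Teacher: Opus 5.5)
Your proof is correct and follows essentially the paper's route. The paper identifies the formal completion of $X$ along $X'=\{f_n=0\}$ with $X'\times_K\spf K[\![f_n]\!]$ and reads off the $\phi_i$ as the coefficients of $\phi$ there. Your map $\phi\mapsto D(\phi)|_{X'}\in K[X'][\![t]\!]$, obtained from the infinitesimal lifting property of the étale map $f$, is just an explicit construction of that same identification.

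In (2) the difference is only cosmetic. The paper deduces $\phi|_{X'}=0$ from the injectivity of $K[X']\to\widehat{\mathscr O_{X',u}}$, whereas you use that $U'$ contains a Zariski-generic point of the irreducible curve-free smooth $X'$; both arguments work.
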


\begin{enonce}[remark]{Comments}
By construction $(f_1,\ldots, f_{n-1})$ induces 
a Nash isomorphism between 
$U'$ and the open polydisc of polyradius 
$(r_1,\ldots, r_{n-1})$, hence
$\nash K{U'}$
embeds into $K[\!f_1,\ldots, f_{n-1}]\!]$ trough
Taylor expansion at $u$ and the lemma's statement
makes sense. 
\end{enonce}

\begin{proof}[Proof of Lemma \ref{nash-divisibility}]
By refining $(X,U)$ we can assume that 
$f$ is étale, $X$ is connected and $\phi\in K[X]$; 
then $X$ is smooth and as the pre-image of
a coordinate hyperplane under an étale map, $X'$ is smooth
as well. 
The formal
completion of $X$ along its closed subscheme
$X'$ can be identified to $X'\times_K \spf K[\![f_n]\!]$. 
Let $\sum \phi_i f_n^i$ be 
the image of $\phi$ in $\mathscr O(X'\times_K \spf K[\![f_n]\!])$, 
where each $\phi_i$ is a regular function on $Y$. 
One can
see each $\phi_i$ as an element of $K[X']$, and by construction its
expansion in $\widehat {\mathscr O_{X',u}}\simeq K[\![f_1,\ldots, f_{n-1}]\!]$
is precisely $A_i$ ; moreover, $\phi_0=\phi|_{X'}$. This ends the proof of (1). 

Let us prove (2). 
As the Taylor expansion map $\nash K{U'}\to
K[\![f_1,\ldots, f_{n-1}]\!]$ is injective, (ii)$\iff$(iii). 
It is clear that (i)$\Rightarrow$(iii).

Now assume that (iii) holds. Up to refining $(X,U)$ we can assume that 
$X'$ is connected. Then
$K[X']$ embeds into $\widehat {\mathscr O_{X',u}}$, so the
equality $A_0=0$ implies that $\phi_0=0$, hence that $\phi$
belongs to $f_n K[X]$
and that  
the Taylor expansion of $\phi/f_n$ at $u$
is equal to $\sum_{i\geq 1}A_if_n^{i-1}$.
\end{proof}

\subsection{}
Assume that $n\geq 1$. As in the proof
of the Lemma above, let
us denote by $X'$ the closed subscheme $\{f_n=0\}$, 
of $X$, and
by $U'$ the intersection $X'\cap U$. Set $f'=(f_1,\ldots,f_{n-1})$. 
The map $f'\colon X'\to \A^{n-1}$
induces a
Nash isomorphism
between $U'$ and the open polydisc $D'$ of polyradius $r':=(r_1,\ldots, r_{n-1})$.
Let $\sigma\colon D'\to U'$ be the converse bijection. 
Then $\rho:=\sigma\circ f'\colon U\to Z$ is a $K$-definable retraction 
of the inclusion $Z\hookrightarrow U$, which induces a continuous retraction 
of $\widehat Z\hookrightarrow \widehat U$. 
Moreover, since $f'|_{U'}$ preserves henselizations by Lemma \ref{lemm-same-henselization}, 
so does $\sigma$, whence a natural embedding of valued
fields
$K(\rho(x))\h\hookrightarrow K(x)\h$ for every $x\in U$. 

Let $\psi$ be an element of $\nash K{U'}$. For every 
$x\in U$ we have a well-defined element 
$\psi(\rho(x))$ of 
$K(\rho(x))\h\subset K(x)\h$. 
We note that $(\psi\circ \rho)|_Z=\psi$.

\begin{prop}\label{restriction-nash-functions}
We keep the notation above. 

\begin{enumerate}[1]

\item For every $\psi \in \nash K{U'}$ the function 
$\psi \circ \rho$ belongs to $\nash KU$. 

\item Let $\phi=\sum a_If^I$ be an element of $\nash KU$.
The following are equivalent: 

\begin{enumerate}[j]
\item there exists $\psi \in \nash K{U'}$ such that 
$\phi=\psi \circ \rho$; 
\item the function $f_n$ does not actually appear 
in the expansion $\sum_I a_I f^I$ (otherwise said $a_I=0$
as soon as the $n$-th component of $I$ is non-zero).

\end{enumerate}
Moreover if these conditions are satisfied then
$\psi$ is necessarily equal to $\phi|_{U'}$ and
its Taylor expansion at $u$ is also $\sum_I a_I f^I$, viewed as a power series
in $(f_1,\ldots,f_{n-1})$. 
\end{enumerate}
\end{prop}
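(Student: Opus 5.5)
For (1), I will realize $\psi\circ\rho$ as a regular function on a suitable étale refinement of $X$. Refine $(X',U')$ so that $\psi$ comes from $K[Y']$ for some $(Y',V')\in\mathscr E_K(U')$, with $g'\colon Y'\to X'$ étale. The composite $f'\circ g'\colon Y'\to\A^{n-1}$ is étale and induces a Nash isomorphism $V'\simeq D'$.

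Next I form $Z=X\times_{\A^{n-1}}Y'$, using $f'\colon X\to\A^{n-1}$ on the first factor. The projection $Z\to X$ is étale, being a base change of the étale map $f'\circ g'$. I then set
\[W=\{(x,y)\,;\,x\in U,\ y\in V',\ f'(x)=f'(y)\},\]
where $y$ is the unique point of $V'$ above $f'(x)\in D'$. By construction $W\to U$ is a definable bijection, and its inverse is $x\mapsto (x,\sigma_{V'}(f'(x)))$. To show that $\widehat W\to\widehat U$ is a homeomorphism, I observe that $\widehat W\simeq \widehat U\times_{\widehat{D'}}\widehat{V'}$, using the homeomorphism $\widehat{V'}\simeq\widehat{D'}$ and continuity of the section. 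Concretely, the inverse is the continuous map $x\mapsto(x,\widehat\sigma(\widehat{f'}(x)))$, and continuity can be checked via realizations of definable types. On $W$ the pullback of $\psi\in K[Y']$ evaluates at $x$ to $\psi(\sigma(f'(x)))=\psi(\rho(x))$. This proves (1), after checking compatibility with the henselization identifications of Lemma \ref{lemm-same-henselization}.

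For (2), (i)$\Rightarrow$(ii) is a computation of Taylor expansions. The map $\rho$ corresponds formally to $f_n\mapsto 0$ with $f_1,\dots,f_{n-1}$ fixed, since $f'\circ\rho=f'$ and $f_n\circ\rho=0$. Working in $\widehat{\mathscr O_{Z,u}}\simeq K[\![f]\!]$, the function $\psi\circ\rho$ is the pullback of $\psi$ through $Z\to Y'$, which factors through $f'$ at the level of completions. Hence its expansion is the expansion of $\psi$ in $f_1,\dots,f_{n-1}$, with no $f_n$. This also shows that if (i) holds then $\psi=\phi|_{U'}$, by restricting to $U'$ where $\rho=\mathrm{id}$, and it gives the claimed expansion.

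For (ii)$\Rightarrow$(i), set $\psi=\phi|_{U'}\in\nash K{U'}$. By Lemma \ref{nash-divisibility}(1) its expansion is $A_0$, which under (ii) equals the full series $\sum a_If^I$. By (1), $\psi\circ\rho\in\nash KU$, and its expansion is again $A_0$ by the computation above. So $\phi-\psi\circ\rho$ has zero Taylor expansion, and injectivity of the Taylor map on $\nash KU$ gives $\phi=\psi\circ\rho$.

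The main obstacle is the topological part of (1), namely that $\widehat W\to\widehat U$ is a homeomorphism. I would argue by viewing $\widehat W$ as the graph of a continuous map $\widehat U\to\widehat{Y'}$, and verifying that $\widehat W$ carries the subspace topology of the fiber product inside $\widehat Z$, using Lemma \ref{lemm-nash-topology}(2).
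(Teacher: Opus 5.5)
Your argument is correct in outline. For (1) it is the paper's construction: the same fibre product $Z=X\times_{\A^{n-1}}Y'$ with étale projection $h\colon Z\to X$, the same set $W=U\times_{\A^{n-1}}V'$, the same inverse $\beta\colon x\mapsto (x,\sigma_{V'}(f'(x)))$, and $\psi\circ\rho$ realized by $p^*\psi\in K[Z]$.

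The one step you assert rather than prove is that $\widehat W\to\widehat U$ is a homeomorphism. Stable completion does not commute with fibre products. So the fact that the continuous bijection $\widehat W\to\widehat U\times_{\widehat{D'}}\widehat{V'}$ is a homeomorphism is not an ``observation'': it is the whole content. Equivalently, you must show that $\widehat\beta$ is continuous into $\widehat Z$, knowing only that its two components are continuous. The paper obtains this from the topological properness of $\widehat Z\to\widehat X\times_{\widehat{\A^{n-1}}}\widehat{Y'}$.

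Your suggested route also works, but Lemma~\ref{lemm-nash-topology}(2) alone does not finish it. Applied to $f'\circ g'$, it shows that $\widehat{V'}$ is open in $(f'\circ g')^{-1}(\widehat{D'})$. Hence $\widehat W=\widehat h^{-1}(\widehat U)\cap\widehat p^{-1}(\widehat{V'})$ is open in $\widehat h^{-1}(\widehat U)$. You then need Proposition~\ref{flat-open} (étale maps are open on stable completions) to conclude that the continuous bijection $\widehat W\to\widehat U$ is open. This is exactly the argument of Example~\ref{example-nash-iso}.

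For (2) your route genuinely differs from the paper's. The paper continues the construction: it finds an irreducible component common to $\tau(Y')$ and $\{f_n\circ h=0\}$ and passes to the connected component containing it. This refines $(X,U)$ until $X$ is connected, $\phi\in K[X]$, $\psi\in K[X']$, and $\rho$ is induced by an algebraic retraction $X\to X'$. Both implications are then read off from regular functions.

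You skip this bookkeeping. First, you compute the expansion of $p^*\psi$ at the $K$-point $w=\beta(u)$, where $h$ is étale; strictly this is in $\widehat{\mathscr O_{Z,w}}$, not $\widehat{\mathscr O_{Z,u}}$. You use that $p^*\colon\widehat{\mathscr O_{Y',v'}}\to\widehat{\mathscr O_{Z,w}}$ is the inclusion $K[\![f']\!]\hookrightarrow K[\![f]\!]$, because $f'\circ h=f'\circ g'\circ p$. Second, for (ii)$\Rightarrow$(i), you combine Lemma~\ref{nash-divisibility}(1) (the expansion of $\phi|_{U'}$ is $A_0$) with the already established injectivity of the Taylor map on $\nash KU$. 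This is correct and shorter, because the connectedness argument has been absorbed once and for all into that injectivity. What the paper's version gives in addition is a geometric fact: after a suitable étale refinement, $\rho$ itself is an algebraic retraction of a connected étale neighbourhood onto $\{f_n=0\}$.
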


\begin{proof}
By refining $(X,U)$ we can assume that $f$
is étale (then $X$ is smooth).

\subsubsection{A general construction}\label{refinement-x'-u'-f'}
Let $(Y',V',g')$ be an element of 
$\mathscr E_K(U')$, and suppose
that $g'$ is étale. 
Set $Z=X\times_{\A^{n-1}}Y'$ where the fiber product is understood along
$f'\colon X\to \A^{n-1}$ and the étale map $f'\circ g'\colon Y'\to \A^{n-1}$. 
The $K$-variety $Z$ is equipped with two natural
morphisms $h\colon Z\to X$
(which is étale)
and $p\colon Z\to Y'$. 
The
composition $Y'\to X'\hookrightarrow X$ furnishes a section $\tau$ of $p$; 
let $Z'$ denote its image. 
The algebraic map $\tau\circ p$ is then a retraction of
the closed embedding $Z'\hookrightarrow Z$. 
Let $V$ be the $K$-definable subset $U\times_{\A^{n-1}}V'$ of $Z=X\times_{\A^{n-1}}Y'$. 

A point of $Z$ is a pair $(x,y')$
with $x\in X$ and $y \in Y'$
such that $f'(x)=f'(g'(y))$. 
A point $(x,y')$ of $Z$ belongs to $Z'$ 
if and only if and only if $x=g'(y')$, 
and 
the algebraic retraction $\tau \circ \rho$ maps
an arbirary point $(x,y')$ of $Z$ to $(g'(y'), y')$. 
The natural isomorphism between $Y'$ and $Z'$
is $y'\mapsto (g'(y'), y')$; hence modulo this isomorphism 
the map $h$ corresponds to $g'$. 

A point $(x,y')$ of $Z$ 
belongs to $V$ if and only if 
$x\in U$ and $y'\in V'$; 
a point of $V$ can therefore be described
as a pair $(u,v')$ with $u\in U, v'\in V'$ such that 
$f'(g'(v))=f'(u)$; this exactly means that $g'(v')=\rho(u)$. 
A point $(u,v')$ of $V$ belongs to $Z'$ if and only if 
$u=g'(v')$, which amounts to saying that 
$u=\rho(u)$, or also that $u\in U'$. 
As a consequence $Z'\cap V$ is exactly 
the pre-image of $U$' under the first projection, which is also the zero-locus
of $f_n\circ h$ on $V$. 

The algebraic retraction $\tau \circ p$ maps
an arbitrary point $(u,v')$ of $V$ to
the pair $(g'(v'), v')=(\rho(u), v')$.

By assumption $g'$ induces a
Nash isomorphism
between $V'$ and $U'$; let 
$\sigma'$ be the converse bijection. 
The map $h$ then induces a 
$K$-definable bijection $V\simeq U$ with 
converse bijection $\beta \colon 
u\mapsto (u, \sigma'(\rho(u))$. Both components of 
$\beta$
induce a contiuous map at the level of stable completions, 
so 
$\beta$ itself induces
a continuous map $\widehat U\to \widehat V$
(because the natural map $\widehat Z\to \widehat {X}\times_{\widehat {\A^{n-1}}}
\widehat {Y'}$ is topologically proper, as one can check by using the definition 
of definable compactness through definable types); therefore 
$h$ induces a Nash isomorphism between $V$ and $U$. 

The Zariski-closed subsets $Z'$ and $\{f_n \circ h=0\}$ of $Z$ have the same intersection 
with $V$. This intersection identifies through $h$ with $U'$, which itself identifies through
$f'$ with an $(n-1)$-dimensional open polydisc; it thus admits a point $v$ (in some model)
such that $K(v)$ has transcendence degree $n-1$ over $K$. Hence the smooth $K$-varieties
$Z'$ and $\{h_n\circ f=0\}$ have at least a common irreducible (and connected) component
$Z''$ which meets $V$. As the stable completion of $Z'\cap V=\{f_n\circ h=0\}\cap V$ is connected (for it is 
definably homeomorphic to the stable completion of an open polydisc), 
no other connected connected component of  $Z'$ or
$\{f_n\circ h=0\}$ can meet $V$. Hence if we denote by $\Omega$ the 
connected component of 
\[(Z\setminus (\{f_n\circ h=0\}\cup Z'))\cup Z''\]
that contains $Z''$, 
the following holds: $(\Omega, V,h)$ is an objet of $\mathscr E_K(U)$, the
map $h$ is étale, $\Omega$ is connected, and the natural retraction from $V$ to
$\{f_n\circ h=0\}\cap V$ is induced by an algebraic retraction 
from $\Omega$ to its 
Zariski-closed subset $\Omega':\{f_n\circ h=0\}$. 
By construction $\Omega'$ can be identified with an open subset
of $Y'$ containing $V'$, and $h|_{\Omega'}$ can be identified with $g'$.

\subsubsection{Proof of (1)}
One can prove (1) after refining $(X,U)$. 
So starting from some $(Y',V',g')$ in $\mathscr E_K(U')$ such that
$g'$ is étale and $\psi\in K[Y']$, and using the construction of
$(\Omega, V, h)$ carried out in
\ref{refinement-x'-u'-f'}, we can then assume that 
$X$ is connected, 
that the retraction $\rho$ is induced by a $K$-algebraic retraction from $
X\to X'$ (so $X'$ is connected) 
and that 
$\psi\in K[X']$. 
But then $\psi \circ \rho\in K[X]$ and we are done.

\subsubsection{Proof of (2)}
By refining suitably 
$(X,U)$ with the help of
 \ref{refinement-x'-u'-f'},
we can assume for proving both implications
that $X$ is connected, that $\phi\in K[X]$, 
and that the retraction $\rho$ is induced by a $K$-algebraic retraction from $X\to X'$
(so $X'$ is connected), 
and for (i)$\Rightarrow$(ii) we can moreover assume that 
$\psi\in K[X']$. 
But with these new assumptions both implications are obvious, as well as the final 
statement. 
\end{proof}

\begin{lemm}\label{lemm-generation-ideal}
Let $\phi\in \nash KU$ and let 
$d\geq 0$. Assume that the Taylor expansion of $\phi$ at $u$ belongs to 
$(f_1,\ldots, f_n)^dK[\![f_1,\ldots, f_n]\!]$. Let $\mathscr I_d$ be the set of
multi-indices $I$ with $\abs I=d$.

\begin{enumerate}[1]

\item The function $\phi$ can be written
$\sum_{I\in \mathscr I_d} f^I\phi_I$ where each $\phi_I$ belongs
to $\nash KU$ and where the Taylor expansions of
the functions $f^I\phi_I$ at $u$ have pairwise disjoint supports. 
\item Assume that $n=1$, so $\mathscr I_d=\{d\}$, 
and that
$\phi \in K[U]$. Then $\phi_d$ belongs to $K[U]$. 
\end{enumerate}
\end{lemm}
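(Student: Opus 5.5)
We will prove (1) by induction on $n$, dividing out one variable at a time with Lemma \ref{nash-divisibility} and Proposition \ref{restriction-nash-functions}. The idea is to peel off the part of $\phi$ not involving $f_n$.

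For $n=0$ there is nothing to prove: $d=0$ forces $\mathscr I_0=\{0\}$ and we take $\phi_0=\phi$. (If $n=0$ and $d>0$, the hypothesis forces $\phi=0$.)

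Assume $n\geq 1$ and write the Taylor expansion of $\phi$ as $\sum_i A_i f_n^i$ with $A_i\in K[\![f_1,\ldots,f_{n-1}]\!]$. By Lemma \ref{nash-divisibility}(1), $A_0$ is the expansion of $\phi|_{U'}\in\nash K{U'}$. Since $\phi\in (f)^d$, we get $A_0\in (f_1,\ldots,f_{n-1})^d$. By the induction hypothesis applied to $U'$ (which is Nash-isomorphic via $f'$ to an $(n-1)$-dimensional open polydisc), we can write
\[\phi|_{U'}=\sum_{I'\in\mathscr I'_d} f'^{I'}\psi_{I'}\]
with $\psi_{I'}\in\nash K{U'}$ and disjoint supports. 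By Proposition \ref{restriction-nash-functions}(1), each $\psi_{I'}\circ\rho$ lies in $\nash KU$, and by part (2) its expansion is that of $\psi_{I'}$, with no $f_n$ occurring. Hence $\chi:=\phi-\sum_{I'}f'^{I'}(\psi_{I'}\circ\rho)$ has expansion $\sum_{i\geq1}A_if_n^i$. By Lemma \ref{nash-divisibility}(2), $\chi=f_n\chi_1$ with $\chi_1\in\nash KU$ and expansion $\sum_{i\geq1}A_if_n^{i-1}$, which lies in $(f)^{d-1}$ when $d\geq1$, because each $A_i$ is a homogeneous-degree truncation compatible with the filtration. We now iterate on $\chi_1$ with $d-1$: this is an inner induction on $d$, with the case $d=0$ trivial. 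We get $\chi_1=\sum_{J\in\mathscr I_{d-1}}f^J\chi_J$ with disjoint supports. Setting $\phi_I=\psi_{I'}\circ\rho$ for $I=(I',0)$, and $\phi_I=\chi_J$ for $I=J+e_n$, gives the decomposition.

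The support condition is the main bookkeeping obstacle. The terms $f'^{I'}(\psi_{I'}\circ\rho)$ have supports among monomials with no $f_n$, and these supports are pairwise disjoint by induction. The terms $f_nf^J\chi_J$ have supports among monomials divisible by $f_n$, pairwise disjoint since multiplication by $f_n$ is injective on monomials. So the two families do not meet.

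For (2), with $n=1$ and $\phi\in K[U]$: refine only Zariski-locally, taking $X$ to be the connected component of the étale locus of $f$ containing $U$, so that $\phi\in K[X]$ and $X'=\{f_1=0\}$ is finite étale over a point. Replace $X$ by a Zariski neighbourhood of $U$ in which $u$ is the only zero of $f_1$, which is possible since $u$ is the unique zero on $U$. The last assertion of Lemma \ref{nash-divisibility}(2) then gives $\phi\in f_1K[X]$, and repeating this $d$ times yields $\phi_d\in K[X]\subset K[U]$.
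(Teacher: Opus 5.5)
Your proposal is correct and follows the paper's proof: a double induction on $n$ and $d$ that splits off $\phi|_{U'}$, lifts its decomposition to $U$ via $\psi\mapsto\psi\circ\rho$ (Proposition \ref{restriction-nash-functions}), and divides the remainder by $f_n$ using Lemma \ref{nash-divisibility}(2). The only differences are cosmetic. The paper treats $n=1$ as a separate base case, observing directly that $\phi/f^d$ is regular on a $K$-Zariski neighbourhood of $U$ (which gives (2) at once), whereas you fold $n=1$ into the general step starting from $n=0$ and obtain (2) by applying the last assertion of Lemma \ref{nash-divisibility}(2) $d$ times.
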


\begin{enonce}[remark]{Comments}
In assertion (2) the equality $\phi=f^d\phi_d$ determines uniquely $\phi_d$ since $\nash KU$ is a domain, 
so there is no ambiguity. 
\end{enonce}

\begin{proof}[Proof of Lemma \ref{lemm-generation-ideal}]
We argue by induction on $n$. The case $n=0$
is trivial. Let us prove the case $n=1$, and write $f$ instead of $f_1$. 
By refining $(X,U)$ we can
assume that $f$ is étale, $X$ is connected and $\phi\in K[X]$. 
Then $X$ is smooth and $f$ is a local parameter at $u$ that does not vanish elsewhere 
on $U$. As a consequence the rational function 
$\phi/{f^d}$ is regular in a $K$-Zariski neighborhood of $U$, 
so it belongs to $K[U]$. This proves (2) as well as (1) when $n=1$. 
Now we assume that $n>1$ and that the Lemma
holds in dimension $n-1$. We now proceed with a second induction here, on the degree $d$. If $d=0$ there is nothing to prove
(the required decomposition is $\phi=\phi$). So we know assume that $d>0$ and that the Lemma holds in dimension $n-1$ for any degree, 
and in dimension $n$ for $d-1$.

Let $\sum a_I f^I$ be the Taylor expansion of $\phi$ at $u$.

Let $\mathscr J$ be the set of multi-indices $(i_1,\ldots, i_n)$ with $i_n=0$; 
one then has formally
\[\sum a_I f^I=\sum_{I\in\mathscr  J}a_I f^I+f_n\sum_{I\notin \mathscr  J}a_I f^{I-(0,\ldots, 0,1)}.\]

Set $\psi=\phi|_{U'}$. 
Then $\psi \in \nash K{U'}$ and its Taylor expansion at $u$
is $\sum_{I\in J}a_I f^I$. As $\psi$ belongs
to $(f_1,\ldots, f_{n-1})^dK[\![f_1,\ldots, f_{n-1}]\!]$
the induction hypothesis
ensures that $\psi$ can be written $\sum_{I\in \mathscr J_d}f^I\psi_I$ where 
$\mathscr J_d=\mathscr J\cap \mathscr I_d$, where
the $\psi_I$ 
belong to $\nash K{U'}$ and where the Tayor expansion of the $f^I\psi_I$ have pairwise disjoint
support.

For every $I\in \mathscr J_d$ set $\phi_I =\psi_I\circ \rho$. 
By proposition \ref{restriction-nash-functions} each $\phi_I$ belongs to $\nash KU$
and has the same Taylor expansion at $u$ than $\psi_I$, so 
$\sum_{I\in \mathscr J_d} f^I\phi_I$ is an element of $\nash KU$ whose Taylor expansion at
$u$ is $\sum_{I\in \mathscr J_d}a_I f^I$, and the Taylor expansions of the 
$f^I\phi_I$ for $I\in \mathscr J_d$ are pairwise disjoint. 
Set $\chi=\phi-\sum_{I\in \mathscr J_d}f^I\phi_I$. 
By construction, $\chi$'s Taylor expansion is 
a multiple of $f_n$, which implies that $f_n$ divides $\chi$ in $\nash KU$
(Lemma \ref{nash-divisibility} (2)). 

Let $\phi_n\in \nash KU$ such that $\chi=f_n\phi_n$. 
The Taylor development of $\phi_n$
at $u$ belongs to $(f_1,\ldots, f_n)_{d-1}K[\!f_1,\ldots, f_n]\!]$, 
so by induction we can 
write 
$\chi=\sum_{I\in \mathscr I^{d-1}}f^I\chi_I$ where the $\chi_I$ belong
to $\nash KU$ and where the Taylor expansions of the $f^I\chi_I$ 
have pairwise disjoint supports.
Then we have 
\[\phi=\sum_{I\in \mathscr J_d}f^I\phi_I+\sum_{I\in \mathscr I_{d-1}}f^{I+(0,\ldots, 0,1)}\chi_I, \]
and this writing fulfills the requirement of the Lemma. 
\end{proof}

\begin{rema}\label{etale-unavoidable}
If $n\geq 2$ and $\phi\in k[U]$ it seems unlikely 
that it be
possible in general to
achieve (1) with the $\phi_I\in K[U]$; at least 
the functions $\phi_I$ provided by our proof
definitely do not belong to $K[U]$ in general. 

Indeed, let us consider the following example. 
We assume that the residue characteristic of $K$ is not $2$, we take
for $X$ the quadratic cover
\[\spec K[T_1,T_2][S]/(S^2-1-T_1-T_2)\]
of $\A^2$, 
and we still use the notation $T_1$ and $T_2$ for their pull-back on $X$
(so they play the role of $f_1$ and $f_2$ above). 
Let $U$ be the $K$-definable subset $\{\abs {S-1}<1\}$ of $X$. 
Then $(T_1,T_2)$ induces a Nash isomorphism between $U$ and the open unit bi-disc;
the unique pre-image $u$ of the origin is characterized by the equalites
$T_1(u)=T_2(u)=0$ and $S(u)=1$. 

The function $S-1$ vanishes at $u$, so the Lemma above furnishes a decomposition
$S-1=T_1\phi_1+T_2\phi_2$ with $\phi_1$ and $\phi_2$ in $\nash KU$ (and where the Taylor
expansions of the $T_i\phi_i$ are disjoint). 
Looking at its proof, we see that the function $\phi_2$ it actually 
exhibits is such that the Taylor expansion of $T_2\phi_2$
at $u$ is equal to $\sqrt {1+T_2}-1$. Therefore $\phi_2$ does not
belong to $K[U]$: if one wants to realize it as a regular function, 
one has to go to the double cover of $X$ obtained by adjoining a square
root of $1+T_2$ (such a square root does not exist in $K(X)$, as
$(1+T_2)/(1+T_1+T_2)$ is not a square in $K(T_1,T_2)$). 

\end{rema}

If $s=(s_1,\ldots, s_n)$ is a polyradius $< r$ consisting
of elements of $\abs{M^\times}$ for some model $M$
of $\acvf$ containing $K$ we shall denote
by $\eta_s$ the point of $\widehat D(M)$ given 
by the valuation $\sum a_I T^I\mapsto \max \abs{a_i}\cdot s^I$
on $M[T_1,\ldots, T_n]$. 

\begin{theo}\label{theo-norme-gauss}
Let $s$ be as above. 
Let  $U_s$ be the pre-image on $U$ of the closed polydisc with polyradius
$s$, and let $u_s$ be the unique pre-image of
the Gauss point $\eta_s$ in $\widehat U$. 
Let  $\phi=\sum a_If^I$ be an element of $\nash KU$.

One has then
\[\|\phi\|_{U_s,\infty}=\abs{\phi(u_s)}=\max_I\, \abs{a_I}\cdot s^I,\]
and the maximum is achieved for only finitely many coefficients $a_I$. 
\end{theo}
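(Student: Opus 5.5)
The plan is to prove the statement by induction on $n$. All the real content is in the case $n=1$, which uses the topology of curves from Sections \ref{topology-curve} and \ref{topology-curve-complement}. The higher-dimensional case is then obtained by fibering over the hyperplane $\{f_n=0\}$ using Lemma \ref{nash-divisibility} and Proposition \ref{restriction-nash-functions}.

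\emph{Dimension one.} By refining $(X,U)$ we may assume $f$ étale, $X$ a connected smooth curve and $\phi\in K[X]$; we may also work over a model $M$ of $\acvf$ containing $s$. Since $f$ induces a homeomorphism $\widehat U\simeq\widehat D$, the closed subsets $\widehat{U_t}$ for $t\leq s$ form a tree. Its segment $\{u_t\}_{0\leq t\leq s}$ joins $u=u_0$ to $u_s$, and every connected component of $\widehat{U_s}\setminus\{u_s\}$ is the stable completion of an abstract open disc mapped by $f$ onto a residue disc of radius $s$ (Lemma \ref{skeleton-disc}, \ref{complement-of-skeleton}).

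\emph{First equality (maximum principle).} I would show that $\abs\phi$ is non-increasing along every path leaving $u_s$ into such a component. The germs of paths at a non-simple point $x$ correspond to points of the residue curve $C_x$ (\S\ref{topology-curve-complement}), and $\abs\phi$ increases along a germ exactly when $\rv(\phi(x))$ has a pole at the corresponding point. Since $\phi$ is regular on $X$, the generalized rational function $\rv(\phi(x))$ on $C_x$ has no poles at the points corresponding to directions going away from the boundary inside $\widehat U_s$. Hence $\sup_{U_s}\abs\phi=\abs{\phi(u_s)}$.

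\emph{Lower bound $\abs{\phi(u_t)}\geq\abs{a_i}t^i$.} Write $\phi-\sum_{j<i}a_jf^j=f^i\phi_i$ with $\phi_i\in K[U]$ by Lemma \ref{lemm-generation-ideal}(2). Then $\phi_i(u)=a_i$, so the maximum principle gives $\abs{\phi_i(u_t)}\geq\abs{a_i}$, because $u\in U_t$. Now suppose $\abs{a_i}t^i$ strictly exceeded both $\abs{\phi(u_t)}$ and $\max_{j<i}\abs{a_j}t^j$. Since $\abs{f(u_t)}=t$ and $u_t$ is a Gauss point, we have $\abs{\sum_{j<i}a_jf^j(u_t)}=\max_{j<i}\abs{a_j}t^j$, and the ultrametric inequality applied to $f^i\phi_i=\phi-\sum_{j<i}a_jf^j$ then contradicts $\abs{f^i\phi_i(u_t)}\geq\abs{a_i}t^i$. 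An induction on $i$ therefore gives $\abs{\phi(u_t)}\geq\sup_i\abs{a_i}t^i$.

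\emph{Upper bound and finiteness (main obstacle).} A naive tail estimate of the form $(t/t')^N\to0$ fails here, because $\Gamma$ may have arbitrary height. I would instead use the piecewise-monomial structure of $t\mapsto\abs{\phi(u_t)}$ on $[0,s]$, which holds by o-minimality of $\Gamma$ and definability. Near $0$ this function equals $\abs{a_m}t^m$, where $m$ is the order of $\phi$ at $u$. Its right slope at any $t$ is the degree in $\rv(f)$ of $\rv(\phi(u_t))$, viewed as a generalized polynomial on the residue curve $\P^t_\RV$ at $u_t$ (Example \ref{generalized-residue-curve}). That degree equals the number of zeros of $\phi$ in $U_t$ counted with multiplicity, which is finite because $\phi$ is regular on a curve; so there are finitely many breaks with non-decreasing integer slopes. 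At each break $t_0$, write $\phi=P+f^N\phi_N$ as above, with $N$ exceeding the slope just after $t_0$. Comparing leading terms in $\RV$ at $u_{t_0}$ should then show that the value $\abs{\phi(u_{t_0})}$ is attained by some $\abs{a_i}t_0^i$. This gives equality with $\max_i\abs{a_i}t^i$ on all of $[0,s]$, with the maximum attained exactly for the indices between the left and right slopes, hence only finitely many. Making this leading-term comparison rigorous is the step I expect to need the most care.

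\emph{Induction on $n$.} Let $U'=\{f_n=0\}\cap U$, let $\rho$ be the retraction of Proposition \ref{restriction-nash-functions}, and write $\phi=\sum_iA_if_n^i$ with $A_i\in\nash K{U'}$ by Lemma \ref{nash-divisibility}. Let $u'_{s'}$ be the Gauss point of polyradius $s'=(s_1,\ldots,s_{n-1})$ in $\widehat{U'}$, and pass to the fiber of $\rho$ over it, i.e.\ base-change to $F=K(u'_{s'})\h$. There $f_n$ induces a one-dimensional abstract open disc of radius $r_n$ over $F$, and the $A_i$ become scalars in $F$ of absolute value $\abs{A_i(u'_{s'})}=\max_J\abs{a_{(J,i)}}s'^J$ by the induction hypothesis. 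The one-dimensional case over $F$ then gives
\[\abs{\phi(u_s)}=\max_i\abs{A_i(u'_{s'})}s_n^i=\max_I\abs{a_I}s^I,\]
with the maximum attained for finitely many $I$, since each step attains it finitely often. The sup equality over $U_s$ follows in the same way: apply the one-dimensional maximum principle in each $\rho$-fiber, and the induction hypothesis on $U'_{s'}$.
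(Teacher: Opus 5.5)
\textbf{Verdict.} There is a genuine gap: the maximum principle in dimension one is not actually proved, and the upper bound and finiteness are left unfinished. Your lower bound and your induction on $n$ are fine.

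\textbf{The maximum principle.} Regularity of $\phi$ on $X$ only rules out poles of $\phi$ at points of $X$. It does not by itself prevent $\rv(\phi(x))$ from having a pole at a direction of $C_x$ pointing into $U$. For instance, $\phi=1/T$ is regular on $X=\gm$, yet at $x=\eta_{0,1}$ the function $\rv(\phi(x))$ has a pole at the direction pointing towards $0$, even though the corresponding component $\{0<\abs T<1\}$ lies in $X$. The no-pole property you need, for directions pointing into the residue discs of $\widehat{U_s}$, is true. But it holds only because those components are discs, and it is essentially the maximum principle itself, so your argument is circular as it stands.

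The paper proves the maximum principle globally instead. Suppose $\abs{\phi(u_s)}<\|\phi\|_{U_s,\infty}$. The locus of $\widehat{U_s}$ where $\abs\phi=\|\phi\|_{U_s,\infty}$ is non-empty by compactness and closed. It is also open: each of its points $v$ lies in $\widehat{U_s}\setminus\{u_s\}$, which is open in $\widehat X$.
\begin{itemize}
\item At a simple such $v$, $\abs\phi$ is locally constant because $\phi(v)\neq0$.
\item At a non-simple such $v$, $\abs\phi$ is non-increasing along every germ. So $\rv(\phi(v))$ has no pole on the whole complete curve $C_v$, hence equals some $\rv(\mu)$, and $\abs\phi=\abs\mu$ near $v$.
\end{itemize}
Connectedness of $\widehat{U_s}$ then gives a contradiction. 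The point is that ``no pole'' is used on a complete residue curve at an interior local maximum, where it forces constancy, rather than direction by direction.

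\textbf{The upper bound and finiteness.} Granting the maximum principle, your lower bound is correct; it is the paper's argument, reorganized. Your slope analysis, however, relies on two unproved facts:
\begin{itemize}
\item that $\rv(\phi(u_t))$ is a polynomial in $\rv(f(u_t))$;
\item that its degree counts the zeros of $\phi$ in $U_t$, which is a Weierstrass-type statement not available here.
\end{itemize}
The decisive comparison of leading terms at the breaks is also left open.

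The missing idea is short and uses only what you already have. Since $\abs{\phi(u_s)}=\|\phi\|_{U_s,\infty}$, the function $\abs\phi$ does not increase along any germ at $u_s$ pointing into $\widehat{U_s}$. The map $f$ identifies $C_{u_s}$ with the generalized projective line with coordinate $\rv(f(u_s))$. On it, $\rv(\phi(u_s))$ has no pole except at $\infty$, so it has the form $\sum_{j\in J}\rv(b_j)\rv(f(u_s))^j$ with $J$ finite, $b_j\in K$ and $\abs{b_j}s^j=\|\phi\|_{U_s,\infty}$. This means $\abs{(\phi-\sum_{j\in J}b_jf^j)(u_s)}<\|\phi\|_{U_s,\infty}$.

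Now apply your lower bound to $\phi-\sum_{j\in J}b_jf^j$, whose coefficients are $a_i$ for $i\notin J$ and $a_j-b_j$ for $j\in J$. This gives at once $\abs{a_i}s^i<\|\phi\|_{U_s,\infty}$ for $i\notin J$ and $\abs{a_j}s^j=\|\phi\|_{U_s,\infty}$ for $j\in J$. That is the upper bound and the finiteness together, with no slope analysis and no issue with the height of $\Gamma$.

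Your induction on $n$ through the fibers of $\rho$ is essentially the paper's, and it goes through once the case $n=1$ is secured.
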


\begin{proof}
The statement is insensitive 
to ground field extension, so
we can assume that $K$ is a model of
$\acvf$
and that each $s_i$ belongs to $\abs{K^\times}$; 
then $\eta_s$ and $u_s$ belong respectively
to $\widehat D(K)$ and $\widehat U(K)$. 
We argue by induction on $n$; there is nothing to prove if $n=0$.
The key point for the induction step will be the case $n=1$ which we handle now.

\subsubsection{The case $n=1$}\label{gaussformula-series-dim1}
We assume that $n=1$, and we write $T,f, r,s$ instead of $T_1, f_1, r_1,s_1$. 
If $a_i=0$ for every $i$ then $\phi$ is zero in the connected component of the smooth locus of $X$
that contains $U$; in particular $\phi|_U=0$ and the result is obvious. So we assume from now on that 
$\{i, a_i\neq 0\}$ is non-empty.

\paragraph{}\label{taylor-expansion-minoration}
Let us fix $i$ with $a_i\neq 0$ and first prove that
$\abs{\phi(u_s)}=\|\phi\|_{U_s,\infty}\geq \abs {a_i}\cdot s^i$. 
If there is some index $j<i$ such that
$ \abs {a_j}\cdot s^j\geq \abs{a_i}\cdot s^i$
it suffices to prove that $\abs{\phi(u_s)}=\|\phi\|_{U_s,\infty}\geq \abs {a_j}\cdot s^j$, so we can 
replace $i$ by $j$; hence we reduce to the case
where $\abs{a_j}\cdot s^j<\abs{a_i}s^i$ for all $j<i$. 
Then $\abs{\sum_{j<i}a_j f^j}$ is everywhere $<\abs{a_i}\cdot s^i$ 
on $\widehat U_s$, so it suffices to prove that $\|\phi-\sum_{j<i}a_jf^j\|_{U_s,\infty}$ 
is achieved at $u_s$ and that it is 
greater or equal than $\abs{a_i}\cdot s^i$. 
Otherwise said, we can assume that $a_if^i$ is the first term of $\phi$'s expansion. 
Then by Lemma \ref{nash-divisibility} (2) the quotient $\phi/f^i$
is well-defined in $\nash KU$.

By construction $U$ is contained in  the smooth locus 
of $X$, the function $f|_U$ vanishes only at $u$, and generates the maximal
ideal of the discrete valuation ring $\mathscr O_{X,u}$; therefore the rational function $(g/f^i)$ is defined on a $K$-Zariski
neighborhood of $U$, and can thus be seen as an element of $\nash KU$. 
As $\abs{f^i(u_s)}=s^i$ and as $s^i=\|f\|_{U_s,\infty}$, it suffices to show that $\|(\phi/f^i)\|_{U_s,\infty}=\abs{(\phi/f^i)(u_s)}\geq \abs{a_i}$.
Hence we can replace
$\phi$ by $\phi/f^i$; that is, we can assume that $i=0$. 
By refining $(X,U)$, we can assume that
$X$ is smooth and connected and that
$\phi\in K[X]$. 

If we prove that $\|\phi\|_{U_s,\infty}$ is achieved at $u_s$ we shall be done
because of the obvious
minoration $\|\phi\|_{U_s,\infty}\geq \abs{\phi(u)}=\abs{a_0}$. 
We argue by contradiction, so we assume that 
$\abs{\phi(u_s)}<\|\phi\|_{U_s,\infty}$. 
As $\widehat {U_s}$ is definably homeomorphic 
to the stable completion of the closed polydisc $D_s$ of radius $s$, it is compact, so $\abs \phi$ achieves its maximum $\|\phi\|_{U_s,\infty}$ 
on $\widehat {U_s}$. 
The set of points of $\widehat {U_s}$ at which $\abs \phi=\|\phi\|_{s,\infty}$ is closed. We are going to prove
that it is also open; as $\widehat U_s$ is connected (it is indeed homeomorphic to $\widehat {D_s}$), this will force
$\abs \phi$ to be constant of $\widehat {U_s}$, and imply in particular that $\abs {\phi(u_s)}=\abs {\phi(u)}=\abs{a_0}$, contradiction. 

So let $v$ be a point of $\widehat {U_s}$ such that $\abs {\phi(v)}=\|\phi\|_{U_s,\infty}$. 
By our assumption 
$v\neq u_s$. As $\eta_s$ is the topological boundary of $\widehat {D_s}$ in $\widehat D$, the point
$u_s$ is the topological boundary of $\widehat {U_s}$ in $\widehat U$, so $\widehat {U_s}\setminus \{u_s\}$ is open in 
$\widehat U$, hence in $\widehat X$. 
As a consequence $\abs \phi$ has a local maximum at $v$ in $\widehat X$
(which is non-zero, as $a_0=\phi(u)\neq 0$, so $\|\phi\|_{U_s,\infty}\neq 0$). 
If $v$ is a simple point, then $\abs \phi$ is constant around $v$ as $\phi(v)\neq 0$. If 
$v$ is not simple, then for every germ of interval $I$ emanating from $v$
on $\widehat X$, 
the function $\abs \phi$ is non-increasing along $I$.
Then the generalized rational function 
$\rv(\phi(v))$ on $C_v$ has no poles, so is constant, which means that it is equal to $\rv(\mu)$ for some $\mu\neq 0 $ in $K$. 
But $\phi$ is
then equal to
$\mu+h$ where $h$ is such that $\abs{h(v)}<\abs \mu$. As a consequence $\abs \phi=\abs \mu$ around $v$ and we are
done. 

\paragraph{}
The map $f$ induces a homeomorphism between $\widehat U$ and $\widehat D$. Moreover it follows from 
Lemma \ref{lemm-same-henselization} (applied to any realization of $v$) that $f$ induces an isomorphism between the residue
curve at $u_s$ and that at $\eta_s$, which is isomorphic to the projective line
over $\k(K)$.
A more natural way to see these curves is as generalized
varieties over
the generalized residue field $\RV(K)$, 
so that $C_{\eta_s}$ is the generalized projective line 
over $\RV(K)$ with coordinate $\rv(T(\eta_s))$; under this identification a germ of interval 
emanating from $\eta_s$ lies in $\widehat {D_s}$ if and only if the corresponding closed point of
$C_{\eta_s}$ is not the point at infinity.
It follows that $C_{u_s}$ is the generalized projective line
over $\RV(K)$ 
with coordinate $\rv(f(v_s))$, and that a germ of interval 
emanating from $v_s$ lies in $\widehat {U_s}$ if and only if the corresponding closed point of
$C_{u_s}$ is not the point at infinity.

We have seen that $\abs{\phi(v_s)}=\|\phi \|_{U_s,\infty}$. This implies that on every
germ of interval 
emanating from $u_s$ and drawn on $\widehat {U_s}$ the function $\abs \phi$ is less than  or equal to
$\abs {\phi(u_s)}$. As a consequence, $\rv(\phi(u_s))$ cannot have any pole on $C_{u_s}$ except at the point
at infinity; it follows that $\rv(\phi(u_s))$ is a polynomial in $f(u_s)$. This means that there exists a
finite set of integers $J$ and a finite non-empty
family $(b_j)_{j\in J}$ of elements of $K$ such that 
$\abs{b_j}s^j=\|\phi\|_{U_s,\infty}$ for all $j$ and $\rv(\phi(u_s))
=\sum \rv(b_j)\rv(f(u_s))^j$; one can rephrase the latter equality 
by saying that $\abs{\phi(u_s)-\sum_j b_jf(u_s)^j}<\|\phi\|_{U_s,\infty}$. 
The Taylor expansion at $u$ of
$\phi -\sum_j b_j f^j$ is $\sum_i c_i f^i$ with
$c_i=a_i$ for $i\notin J$ and $c_j=a_j-b_j$ for every $j\in J$. 
In view of \ref{taylor-expansion-minoration} the inequality
$\abs{\phi(u_s)-\sum_j b_jf(u_s)^j}<\|\phi\|_{U_s,\infty}$ implies that $\abs{c_i}s^i<\|\phi\|_{U_s,\infty}$
for all $i$. Otherwise said, $\abs{a_i}s^i<\|\phi\|_{U_s,\infty}$
for all $i\notin J$ and  $\abs{a_j}s^j=\|\phi\|_{U_s,\infty}$ for all $j\in J$, 
which ends the proof when $n=1$.

\subsubsection{}\label{fn-zero}
Now we handle the general case, 
assuming that the result holds in dimension 
$n-1$. We write formally $\phi =\sum_i A_if_n^i$, so that the coefficient
$A_i$ is (again, formally)
equal to $\sum_{J\in \Z^{n-1}}a_{(J,i)}(f')^J$ where
$f'=(f_1,\ldots, f_{n-1})$.  

We denote by $D_{s'}$
the closed polydisc of polyradius $s':=(s_1,\ldots, s_{n-1})$, 
by $X'$ the closed subscheme of $X$ defined by the equation $f_n=0$, and by
$U'$
be the intersection $X'\cap U$. Then $f'$ induces a  $K$-definable Nash isomorphism 
between $U'$ and $D'$, and by Lemma \ref{nash-divisibility} (1) there is for each $i$ some
$\phi_i$ in $\nash K{U'}$ whose Taylor expansion at $u$ is equal to $A_i$, and we can moreover assume, 
up to refining $(X,U)$, that $f$ is étale, $X$
is connected and  every $\phi_i$ belongs to
$K[X]$.

Let $v$ be any point of $U'$ and let $Z_v$ be the closed
$K$-subscheme of $X$ defined by the equations $f_j=f_j(v)$
for $1\leq j\leq n-1$. Then $f$ induces an étale map from $Z_v$ to
the subscheme $\{T_j=f_j(v)\}_{1\leq j\leq n-1}$ of $\A^n_K$, which we identify to
the affine line with coordinate function $T_n$. The map $f_n$ then induces
a definable bijection $Z_v\cap U\simeq \Delta$, where $\Delta$ is the one-dimensional
open disc with radius $r_n$. Let $w\in Z\times U$ be the unique
pre-image of the 
origin of $\Delta$ (otherwise said, the unique point of $Z\cap U$
at which $f_n$ vanishes, or the unique point of $Z\cap U'$); the expansion of $\phi|_Z$ in $\widehat {\mathscr O_{Z,w}}\simeq
K[\![f_n]\!]$ is then
by construction equal to $\sum_i \phi_i(v)f_n^i$. 

\paragraph{}
Let $M$ be a model of $\acvf$ containing $K$ and let $v\in U'(M)$. 
Let us denote by $\rho(v)$ the maximum of $\abs \phi$ on $Z_v$, 
and let $t_v$ be any point of $Z_v$ 
such that 
the type of $f_n(t_v)$ over $M$ is equal to $\eta_{s_n}$. 
By the above and by \ref{gaussformula-series-dim1}
one has
\[\rho(v)=\abs{\phi(t_v)}=\max_i \abs {\phi_i(v)}s_n^i,\]
and this maximum
is achieved for only finitely many indices $i$. 

Now let $\omega$ be a point of $U'$ whose
image under $(f_1,\ldots, f_{n-1})$ induces the type
$\eta_{s_1,\ldots, s_{n-1}}$ on $M$. The following facts follow from the induction
hypothesis: 
\begin{itemize}[label=$\diamond$]
\item for all $i$ the maximum $\theta_i$ of the function $\abs{\phi_i}$ on 
$U'$ is achieved at $\omega$; 

\item for all $i$ we have $\theta_i=\max_{J} \abs{a_{J,i}}(s')^J$ 
where $s'=(s_1,\ldots, s_{n-1})$, and this maximum is achieved for
only finitely many multi-indices $J$. 
\end{itemize}

As a consequence, $\|\phi\|_{U_s,\infty}$  is achieved at
$t_\omega$, it is equal to $\max_I \abs{a_I}\cdot s^I$, and this
maximum is achieved for only finitely many multi-indices $I$. 
Since the type of $f(t_\omega)$ over $M$ is equal to $\eta_s$ by construction, 
this ends the proof.
\end{proof}

\begin{coro}\label{gauss-norm-open-disc}
Let $\phi=\sum a_If^I$ be an element of $\nash KU$. 
Then $\max_I \,\abs{a_i}\cdot r^I$ exists and
$\|\phi\|_{U,\infty}=\max_I\, \abs {a_I}\cdot r^I$.

\end{coro}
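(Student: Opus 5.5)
We deduce the corollary from Theorem \ref{theo-norme-gauss} by letting the polyradius $s$ tend to $r$ along the nice exhaustion $(U_s)_{s<r}$ of $U$. First, since $U$ is the increasing union of the $U_s$ for $s<r$ (pre-images under $f$ of the closed polydiscs of polyradius $s$), we have
\[\|\phi\|_{U,\infty}=\sup_{s<r}\|\phi\|_{U_s,\infty}=\sup_{s<r}\max_I \abs{a_I}s^I .\]
By \ref{exhaustion-rvku}, $\|\phi\|_{U,\infty}$ is finite, and it is attained: it is the supremum of a $K$-definable bounded function $U\to\Gamma_0$, which exists in $\abs{K^\times}^\Q\cup\{0\}$ by o-minimality of $\Gamma$ after passing to a model. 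Since $\abs{a_I}s^I\leq\abs{a_I}r^I$ for all $s<r$, the inequality $\|\phi\|_{U,\infty}\leq\sup_I\abs{a_I}r^I$ is immediate.

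For the reverse inequality we fix a multi-index $I$ and show $\abs{a_I}r^I\leq\|\phi\|_{U,\infty}$. For every $s<r$, Theorem \ref{theo-norme-gauss} gives $\abs{a_I}s^I\leq\|\phi\|_{U_s,\infty}\leq\|\phi\|_{U,\infty}$. Working in a model $M$ of $\acvf$ containing $K$ whose value group is divisible, we can take $s$ in $\abs{M^\times}^n$ arbitrarily close to $r$ componentwise, and then
\[\abs{a_I}r^I=\sup_{s<r}\abs{a_I}s^I\leq\|\phi\|_{U,\infty}.\]
The point is only that $\sup_{s<r}s^I=r^I$ in a divisible ordered group, which holds because $s\mapsto s^I$ is monotone and continuous. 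This gives equality of suprema.

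It remains to show that the supremum over $I$ is a maximum, which is the one non-formal step. We argue by contradiction: if no index achieves it, then the definable function $\psi\colon t\mapsto\|\phi\|_{U_{t r},\infty}$ (for $t<1$, taking $s=tr$ along a diagonal path) satisfies $\psi(t)=\max_I\abs{a_I}t^{\abs I}r^I$. By o-minimality, $\psi$ is of the form $bt^q$ near $1$ for some $q\in\Q$, and $\psi(1^-)=\|\phi\|_{U,\infty}=:N$. For $t$ close to $1$ the maximum is attained at some $I_t$, and $\abs{a_{I_t}}t^{\abs{I_t}}r^{I_t}=bt^q$. If $\abs{a_{I_t}}r^{I_t}<N$ for all such $t$, then $\abs{I_t}$ must tend to infinity as $t\to1^-$. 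But the exponent $\abs{I_t}$ is determined as the slope $q$ of the piecewise-monomial function $\psi$ near $1$, and $q$ is a fixed rational. Hence $\abs{I_t}=q$ is constant for $t$ near $1$. The finitely many $I$ with $\abs I=q$ then give $b=\max_{\abs I=q}\abs{a_I}r^I$, and this finite maximum is attained at some $I_0$ with $\abs{a_{I_0}}r^{I_0}=N$. If the argument via the slope becomes delicate (for instance if the maximum at $tr$ is attained at several $I$ of different lengths), we compare two values $t<t'$ near $1$ and use that the maximum defining $\psi$ is attained at only finitely many indices, as stated in Theorem \ref{theo-norme-gauss}.
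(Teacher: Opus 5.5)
\paragraph{Verdict.} Your first half is fine: you correctly show that $\|\phi\|_{U,\infty}$ is the least upper bound of the $\abs{a_I}r^I$. (One slip: $\|\phi\|_{U,\infty}$ exists but is in general not \emph{attained} on $U$; take $\phi=f_1$.) The gap is in the step you yourself flag as the non-formal one, namely that the supremum is a maximum. Two of your inferences there are not justified.

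\paragraph{The two unjustified inferences.} First, you have $\psi(t)=bt^q$ with $b=N$, and the hypothesis $\abs{a_{I_t}}r^{I_t}<N$. From these what actually follows is $t^{\abs{I_t}-q}=N/(\abs{a_{I_t}}r^{I_t})>1$, hence $\abs{I_t}<q$. It does not follow that $\abs{I_t}\to\infty$, and that notion has no clear meaning in a non-archimedean value group anyway. Second, the claim that $\abs{I_t}$ ``is determined as the slope $q$'' presupposes that one index is a maximizer on a whole interval of values of $t$. A single equation $\abs{a_I}t^{\abs I}r^I=bt^q$ at a single $t$ only says $\abs{a_I}r^I/b=t^{q-\abs I}$, which does not force $\abs I=q$. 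Your fallback does not repair this. Finiteness of the set of maximizers at each $t$ does not produce a common maximizer at two distinct points $t<t'$, and that common maximizer is exactly what the slope comparison needs.

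\paragraph{The paper's argument.} The missing idea is to evaluate at a radius that is generic over $\abs{K^\times}^\Q$. The paper takes $\lambda$ with $\abs{\lambda}<1$ and $(\abs{\lambda},1)\cap\abs{K^\times}^\Q=\emptyset$, and sets $s=r\abs{\lambda}$. It takes $J$ realizing $\max_I\abs{a_I}s^I$, which exists by Theorem \ref{theo-norme-gauss}. It then reduces the non-strict inequalities $\abs{a_I}s^I\leq\abs{a_J}s^J$ modulo the convex subgroup of elements infinitesimally close to $1$. This gives $\abs{a_I}r^I\leq\abs{a_J}r^J$ for all $I$, with no o-minimality and no contradiction argument; the norm identity then follows by letting $s$ tend to $r$.

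\paragraph{Completing your own setup.} Your setup can also be finished directly. Suppose no $I$ satisfies $\abs{a_I}r^I=b$. Every $I$ with $\abs I\geq q$ then gives $\abs{a_I}t^{\abs I}r^I\leq\abs{a_I}r^I t^q<bt^q$. There are only finitely many $I$ with $\abs I<q$, and each has $\abs{a_I}r^I<b$, so they are all beaten by $bt^q$ once $t$ is close enough to $1$. At such a $t$ no index attains $\psi(t)=bt^q$, which contradicts Theorem \ref{theo-norme-gauss}.
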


\begin{enonce}[remark]{Comments}
One cannot in general expect the maximum
of the $\abs{a_I}\cdot r^I$ 
to be achieved for only finitely many indices. For instance, on the one-dimensional open
unit disc, $(1-T)^{-1}$ has Taylor expansion $\sum_i T^i$. 
\end{enonce}

\begin{proof} [Proof of Corollary \ref{gauss-norm-open-disc}]

Let $M$ be a model of $\acvf$ containing $K$ and 
some $\lambda$
with $\abs{\lambda}<1$ and $(\lambda, 1)\cap \abs{K^\times}^\Q=\emptyset$. 

Set $s_i=r_i\abs \lambda$ for all $i$. 
It follows from Theorem \ref{theo-norme-gauss}
that $\max_I\, \abs{a_I}\cdot s^I$ exists (and is achieved for only finitely many indices, 
but we shall not need it here). Let $J$ such that $\abs{a_J}\cdot s^J$
realizes this maximum. Then for all $I$ one has $\abs{a_I}\cdot s^I\leq \abs{a_J}
\cdot s^J$, and since
non-strict inequalities are preserved when one mods out by a convex subgroup, this 
implies that $\abs{a_I}\cdot r^I\leq \abs{a_J}\cdot r^J$
for all $I$, and we are done.

It follows then again from Theorem \ref{theo-norme-gauss}, by letting $s$ tend to
$r$ in any model of $\acvf$
containing $K$,  
that $\|\phi\|_{U,\infty}=\max_I \abs {a_I}\cdot r^I$. 
\end{proof}

\begin{coro}\label{invertible-constant}
Let $\phi$ be an invertible element in $\nash KU$. Then 
$\abs \phi$ is constant on $U$.
\end{coro}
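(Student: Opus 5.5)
Write $\phi=\sum_I a_If^I$ and $\phi^{-1}=\psi=\sum_I b_If^I$, both in $\nash KU$. The plan is to show that $\abs{\phi}$ is constant on every closed sub-polydisc $U_s$, with value $\abs{a_0}=\abs{\phi(u)}$; since $U$ is the union of the $U_s$ (in suitable models), this gives the statement.

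First, the Taylor expansion at $u$ is a ring embedding $\nash KU\hookrightarrow K[\![f]\!]$, so $\phi\psi=1$ holds in $K[\![f]\!]$. Comparing constant terms gives $a_0b_0=1$; in particular $a_0\neq 0$.

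Next, fix a polyradius $s<r$ with coordinates in $\abs{M^\times}$ for some model $M$. By Theorem \ref{theo-norme-gauss}, $\|\phi\|_{U_s,\infty}=\abs{\phi(u_s)}$, and similarly for $\psi$. Since $u_s$ is given by a valuation (a multiplicative norm), we get
\[\abs{\phi(u_s)}\cdot\abs{\psi(u_s)}=\abs{(\phi\psi)(u_s)}=1.\]
For every $x\in U_s$ we also have $\abs{\phi(x)}\abs{\psi(x)}=1$, together with $\abs{\phi(x)}\leq\abs{\phi(u_s)}$ and $\abs{\psi(x)}\leq\abs{\psi(u_s)}$. The product of the two sides is $1$ in both cases, so both inequalities must be equalities. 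Hence $\abs\phi$ is constant on $U_s$, equal to $\abs{\phi(u_s)}$. Since $u\in U_s$, this constant is $\abs{\phi(u)}=\abs{a_0}$.

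Finally, let $x\in U$, say $x\in U(M)$ for a model $M$. Then $\abs{f_i(x)}<r_i$ for each $i$. We can choose $s_i\in\abs{M^\times}$ with $\abs{f_i(x)}\leq s_i<r_i$: take $s_i=\abs{f_i(x)}$ when this is nonzero, and otherwise any element of $\abs{M^\times}$ below $r_i$, which exists because $M$ is non-trivially valued. Then $x\in U_s$, so $\abs{\phi(x)}=\abs{a_0}$.

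The only point needing care is that $\abs{\cdot(u_s)}$ is multiplicative, i.e.\ that $u_s$ is a genuine valuation-type point and the Nash functions are evaluated compatibly there. This holds because evaluation at $u_s$ comes from the valued field $K(u_s)\h$ through a common étale refinement on which both $\phi$ and $\psi$ are regular.
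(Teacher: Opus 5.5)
Your argument is correct and is the paper's proof: Theorem \ref{theo-norme-gauss} applied to both $\phi$ and $\phi^{-1}$ shows that their sup norms on $U_s$ are attained at the same point $u_s$. Multiplicativity at $u_s$ then forces $\abs{\phi}$ to be constant on each $U_s$, and hence on $U=\bigcup_s U_s$. You spell out what the paper leaves implicit, namely the squeeze $1=\abs{\phi(x)}\abs{\psi(x)}\leq\abs{\phi(u_s)}\abs{\psi(u_s)}=1$ and the choice of $s$ covering a given point; the computation $a_0b_0=1$ is harmless but not needed.
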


\begin{proof}
For all $s<r$ it follows from Theorem
\ref{theo-norme-gauss}
that 
both $\abs{\phi}$ and $\abs{\phi^{-1}}$ achieve
their maximum on $\widehat {U_s}$ at the same point, namely $u_s$. 
This implies that $\abs \phi$ is constant on $U_s$ for all $s$, so 
it is constant on the whole of $U$. 
\end{proof}

\section{Tame descent of abstract open
polydiscs}
\label{tame-descent}

\begin{theo}\label{reduction-abstract-disc}
Let $X$ be an algebraic variety over a henselian valued
field $K$, and let $U$ be a $K$-definable subset of  $X$.  
Let $f=(f_1,\ldots,f_n)$ be an algebraic map from
$X$ to $\A^n_K$ inducing a
Nash isomorphism
between $U$ and an open polydisc $D$.  

\begin{enumerate}[1]
\item The elements $\rv_U(f_1),\ldots, \rv_U(f_n)$ 
of $\redgrad KU$
are 
algebraically independent over $\RV(K)$.

\item Let $A$
be the completion of $\RV(K)[\rv_U(f_1),\ldots, \rv_U(f_n)]$
along its  ideal generated by the $\rv(f_i)$,
and let $\mathfrak n$ be
its maximal ideal. The generalized ring $\rednash KU$ 
embeds into  $A$. 
It is local, its
maximal ideal $\mathfrak m$ is generated by 
$(\rv_U(f_1),\ldots, \rv_U(f_n))$, 
a system of generators of $\mathfrak m$ is minimal if and only 
if it has cardinality $n$, and $\mathfrak m^d=\mathfrak n^d\cap \rednash KU$ for all $d$. 

\item The generalized ring $\redgrad KU$
is local as well, and the inclusion
of $\redgrad KU$
in $\rednash KU$ is local. 
If $n=1$
the maximal ideal $\mathfrak p$ of
$\redgrad KU$
is generated by $\rv_U(f_1)$, a system of generators of
$\mathfrak p$ is minimal if and only 
if it has cardinality $1$, and $\mathfrak p^d=\mathfrak n^d\cap \redgrad KU$
for all $d$. 

\item Let $g=(g_j)_{1\leq j\leq n}$ be a family of 
non-zero
elements of $\nash KU$, with cardinality $n$. 
The following are equivalent: 
\begin{enumerate}[j]
\item the family $(g_j)$
induces a Nash isomorphism between $U$ and an open 
polydisc containing the origin; 
\item the family $(\rv_U(g_j))$ is a system of generators of $\mathfrak m$;
\item the family $(\rv_U(g_j))$ is a system of generators of $\mathfrak n$.
\end{enumerate}
\item Assume that $n=1$
and let $g$ be an element of
$K[U]$. The following are equivalent: 

\begin{enumerate}[j]
\item $g$
induces a Nash isomorphism between $U$ and an open 
disc containing the origin; 
\item $\rv_U(g)$ generates $\mathfrak p$;
\item $\rv_U(g)$ generates $\mathfrak n$. 
\end{enumerate}

\end{enumerate}

\end{theo}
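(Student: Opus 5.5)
The plan is to turn the Gauss-norm formula of Corollary \ref{gauss-norm-open-disc} into an explicit \emph{initial form} map, and read (1)--(3) off it. Write $\tau_i$ for $\rv_U(f_i)$, a generalized element of degree $r_i$. For $\phi=\sum a_If^I\in\nash KU$ with $\phi\neq 0$, Corollary \ref{gauss-norm-open-disc} gives $\|\phi\|_{U,\infty}=\max_I\abs{a_I}r^I$, and the maximum is attained. I would send $\rv_U(\phi)$ to
\[\mathrm{in}(\phi)=\sum_{\abs{a_I}r^I=\|\phi\|_{U,\infty}}\rv(a_I)\tau^I .\]
The sum may be infinite, which is why the target has to be the completion $A$.

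This map is well defined and additive on each graded piece by construction. For multiplicativity I would pass to a polyradius $s<r$, as in the proof of Corollary \ref{gauss-norm-open-disc}. By Theorem \ref{theo-norme-gauss}, $\|\cdot\|_{U_s,\infty}=\abs{\cdot(u_s)}$ is multiplicative and has finitely supported initial forms. At level $s$ the initial forms are polynomials, so the multiplicativity follows from the Gauss-valuation example of section \ref{reminders}. One then specializes $s\to r$ by modding out the convex subgroup.

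Injectivity is immediate. Applied to polynomials in the $f_i$, the formula gives (1). The same argument restricted to $K[U]$ gives the corresponding embedding of $\redgrad KU$.

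For locality in (2) I would show that $\rv_U(\phi)$ is a unit if and only if the constant coefficient $a_0=\phi(u)$ realizes $\|\phi\|_{U,\infty}$. For the nontrivial direction, write $\phi=a_0(1+h)$ with $\|h\|_{U,\infty}<1$. Then $1+h$ vanishes nowhere on $V$ for any $(Y,V)$ on which $h$ is regular, so $(1+h)^{-1}$ is regular on a Zariski neighbourhood of $V$ and is a Nash function of norm $1$. The maximal ideal is therefore $\mathfrak m=\{\mathrm{in}(\phi):\abs{\phi(u)}<\|\phi\|_{U,\infty}\}$. Replacing $\phi$ by $\phi-\phi(u)$ does not change its reduction, so we may assume $\phi(u)=0$. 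Lemma \ref{lemm-generation-ideal} then writes $\phi=\sum_{\abs I=d}f^I\phi_I$ with disjoint supports. Disjointness together with the Gauss formula gives $\|f^I\phi_I\|\leq\|\phi\|$ and $\mathrm{in}(\phi)=\sum\tau^I\,\rv_U(\phi_I)$, where the sum runs over those $I$ reaching the norm. This shows that the $\tau_i$ generate $\mathfrak m$ and that $\mathfrak m^d=\mathfrak n^d\cap\rednash KU$. Minimality of generating systems of cardinality $n$ follows because $\mathfrak m/\mathfrak m^2\simeq\mathfrak n/\mathfrak n^2$ is free of rank $n$ over $\RV(K)$, with basis the $\tau_i$ (a graded Nakayama argument). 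Assertion (3) is the same argument inside $K[U]$, using part (2) of Lemma \ref{lemm-generation-ideal} when $n=1$; the locality of the inclusion is clear, since the unit criterion only involves $a_0$.

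For (4) and (5), the implications (i)$\Rightarrow$(ii) follow by applying (2) to the disc given by $g$ itself. Note that the constant term of $g_j$ is then strictly dominated by its norm. The equivalence (ii)$\Leftrightarrow$(iii) comes from $\mathfrak m^d=\mathfrak n^d\cap\rednash KU$ together with the rank count above.

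The main obstacle is (ii)$\Rightarrow$(i). Set $\rho_j=\|g_j\|_{U,\infty}$ and $c=g(u)$, so that $\abs{c_j}<\rho_j$. I would first show that $g$ is étale at $u$: the linear parts of the $g_j$, modulo higher terms and after rescaling, form an $\RV$-matrix that is invertible because the $\mathrm{in}(g_j)$ generate $\mathfrak n/\mathfrak n^2$. Hence the Jacobian of $g$ is invertible on the whole of $U$, since $\abs{\cdot}$ of an invertible Nash function is constant by Corollary \ref{invertible-constant}.

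Next I would prove the key estimate
\[\max_j\abs{g_j(x)-g_j(x')}/\rho_j=\max_i\abs{f_i(x)-f_i(x')}/r_i\]
for $x,x'\in U$. To do this I would recentre the abstract polydisc at $x'$ over $K(x')\h$ and apply Theorem \ref{theo-norme-gauss} to $g-g(x')$. This yields injectivity of $g$ on $U$ and shows that $g(U)$ lies in the polydisc $\Delta$ of polyradius $\rho$ centred at $c$, which contains the origin.

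Surjectivity onto $\Delta$ is where a multivariate Hensel/Newton iteration is needed. Given $y\in\Delta$, I would solve $g(x)=y$ over the henselian field $K(y)\h$, starting from $u$ and using the invertible reduced linear part. This step is the one I expect to be hardest, because no completeness is available. I would try to get around this by reducing to an étale-cover statement: the fibre $g^{-1}(y)$ is finite étale over $K(y)$, and the estimate above forces a unique point of it to lie in $V$, as in Example \ref{example-nash-iso}.

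Finally, the homeomorphism property follows from a bijective continuous map that is closed. Closedness comes from the compactness of $g^{-1}(\widehat{\Delta_t})\cap\widehat V=\widehat{U_{t'}}$ along nice exhaustions. For (5) the same argument works with $g\in K[U]$ and no étale refinement, since when $n=1$ Lemma \ref{lemm-generation-ideal} (2) keeps everything inside $K[U]$.
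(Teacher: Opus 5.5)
Your treatment of (1)--(3) is essentially the paper's: the initial-form embedding into $A$, the unit criterion via the constant coefficient, and Lemma \ref{lemm-generation-ideal} for generating $\mathfrak m^d$. The implications (i)$\Rightarrow$(ii)$\Leftrightarrow$(iii) in (4) are also fine.

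The genuine gap is in (ii)$\Rightarrow$(i) of (4): you never prove that $g$ maps $U$ \emph{onto} the polydisc $\Delta$. Your key estimate does follow from Theorem \ref{theo-norme-gauss} once the linear part is controlled. It gives injectivity, $g(U)\subset\Delta$, and $g^{-1}(\Delta_t)\cap U=U_{tr}$, which is a cleaner route to properness than the paper's decomposition $g_1=f_1\phi+\psi$. But it says nothing about the existence of preimages. The Newton iteration is unavailable without completeness, as you note. The étale-cover argument only shows that \emph{at most} one point of the finite fibre $g^{-1}(y)$ lies in $V$. In Example \ref{example-nash-iso} the existence of $\sigma(z)$ is a hypothesis, not a conclusion. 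So surjectivity, and with it the Nash isomorphism, is unproved.

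There are two ways to close the gap.

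\emph{The paper's way.} The paper avoids any multivariate existence statement. After normalising so that $\rv_U(g_j)\equiv\rv_U(f_j)$ modulo $\mathfrak n^2$, it replaces the $f_j$ by the $g_j$ one coordinate at a time. For $(d_1,\ldots,d_n)\in D$ it works on the one-dimensional abstract disc $\{f_2=d_2,\ldots,f_n=d_n\}\cap U$, where $\abs{g_1-d_1}$ is not constant. By Corollary \ref{invertible-constant}, $g_1-d_1$ is therefore not invertible. It must then have a zero, since a nowhere-vanishing Nash function is invertible (as in the proof of (2)).

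\emph{Inside your framework.} Once $g$ is known to be étale on $V$, the following topological argument works.
\begin{enumerate}
\item $\widehat V$ is open in $\widehat Y$ by Lemma \ref{lemm-nash-topology}, so $\widehat g(\widehat V)$ is open by Proposition \ref{flat-open}.
\item Your estimate makes $\widehat g(\widehat V)\cap\widehat{\Delta_t}$ compact for every $t<1$. Hence $\widehat g(\widehat V)$ is also closed in the connected space $\widehat\Delta$, so it equals $\widehat\Delta$.
\item A definable type on $V$ lying over a simple point $y$ concentrates on the finite set $g^{-1}(y)\cap V$, so it is realized.
\end{enumerate}

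Two smaller points need fixing.
\begin{enumerate}
\item In the unit criterion, $\|h\|_{U,\infty}$ can equal $1$; take $\phi=1+f$ on the open unit disc. What you actually need, and have, is $\abs h<1$ pointwise, via Theorem \ref{theo-norme-gauss} on each $U_s$.
\item $J(u)\neq 0$ does not make the Jacobian $J$ a unit of $\nash KU$, so Corollary \ref{invertible-constant} cannot be applied yet. You must first check $\abs{J(u)}=\|J\|_{U,\infty}$, which is the unit criterion of (2). Alternatively, recentre at each point as the paper does.
\end{enumerate}
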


\begin{proof}
Let $(r_1,\ldots, r_n)$ be the polyradius of $D$; otherwise said, 
$r_i=\|f_i\|_{U,\infty}$ ; let $u$ be the unique pre-image of the origin under
$f$ on $D$.

\subsubsection{Proof of (1)}
For every polynomial $\sum a_I T^I$ in $K[T]$ one has
\[\left \|\sum a_I f^I\right \|_{U,\infty}=\left \|\sum a_I T^I\right \|_{D,\infty}=
\max \abs{a_I}\cdot r^I\]
(as one deals with polynomials here, one does not have to use
Corollary \ref{gauss-norm-open-disc}); this shows that the functions $\rv_U(f_i)$ are
algebraically independent over $\RV(K)$. 

\subsubsection{Proof of (2) and (3)}
Let $\phi=\sum a_I f^I$ be a non-zero 
element of $\nash KU$
at $u$.
By Corollary  \ref{gauss-norm-open-disc}, 
one has
$\left \|\phi\right\|_{U,\infty}=\max_I \abs{a_I}\cdot r^I$; let us 
denote by $N(\phi)$ the set of those $I$ such that $\abs{a_I}\cdot r^I$. 
If $\psi=\sum b_I f^I$ is another non-zero element of 
$\nash KU$ it is immediate that 
$\rv_U(\phi)=\rv_U(\psi)$ if and only if 
\[\sum_{I\in N(\phi)}\rv(a_I) \rv_U(f)^I=\sum_{I\in N(\psi)}\rv(b_I) \rv_U(f)^I,\]
so $\phi=\sum_I a_I f^I\mapsto \sum_{I\in N(\phi)}\rv(a_I) \rv_U(f)^I$ induces an embedding
from $\rednash KU$ into $A$ which we shall use from now on to see $\rednash KU$ 
as a subring of $A$; with this identification one has
$\rv_U(\phi)=\sum_{I\in N(\phi)}\rv(a_I) \rv_U(f)^I$.

\paragraph{}
Let us prove that an element of 
the generalized ring $\rednash KU$, \resp $\redgrad KU$, is invertible if and only
if it is invertible in $A$, \ie,
if and only if its constant coefficient is non-zero. 
The ``only if" part is obvious. Let us now consider a non-zero $\phi=\sum a_I f^I$
in $\nash KU$, \resp $K[U]$, such that the constant term of $\rv_U(\phi)$ is non-zero, 
which exactly means that $0\in N(\phi)$, that is, that $\abs{a_0}=\|\phi\|_{U,\infty}$. 
One then has $\abs{a_I}r^I\leq \abs{a_0}$ for all $I\neq 0$, and thus  $\abs{a_I}s^I< \abs{a_0}$
for all $I\neq 0$ and every $s=(s_1,\ldots, s_n)$ with $\abs{s_i}<r_i$ for all 
$i$. 
Set $\psi=\phi-a_0$; then $\psi=\sum_{I\neq 0} a_I f^I$, so by
the above and Theorem \ref{theo-norme-gauss} one has
for every $s=(s_1,\ldots, s_n)$ with $\abs{s_i}<r_i$ for all 
$i$ the majoration $\|\psi\|_{V_s,\infty}<\abs {a_0}$, where $V_s=\{\abs f \leq s\}$. 
It follows that $\phi=a_0+\psi$ does not vanish on any $V_s$ as above, so $\phi$ does not
vanish on $U$. Then
$\phi^{-1}$ is an element of $\nash KU$, \resp $K[U]$: indeed
in both cases we can refine $(X,U)$ so that $f$ is
étale, $X$ is connected and $\phi\in K[X]$.
Then $U\subset D(\phi)$ 
so $\phi^{-1}\in K[U]\subset  \nash KU$. 
Then $\rv_U(\phi^{-1})$ is an inverse
of $\rv_U(\phi)$ in $\redgrad KU$ and in $\rednash KU$.

It follows from the above that $\rednash KU$ and $\redgrad KU$ are local, 
and that in both cases, the maximal ideal is that of series with zero
constant term; one has therefore 
$\mathfrak m=\mathfrak n\cap \rednash KU$, and
the inclusion map $\redgrad KU\hookrightarrow 
\rednash KU$ is local.

\paragraph{}\label{nd-md}
Let us fix an integer $d$, and denote
by $\mathscr I_d$ the set of multi-indices $I$
with 
$\abs I=d$. It is obvious that
$\mathfrak m^d$ contains each of the $\rv_U(f)^I$ for
$I\in \mathscr I_d$, and that $\mathfrak m^d$ is contained
in $\mathfrak n^d\cap \nash KU$. 
We shall prove that $\mathfrak n^d\cap \rednash KU$ 
is contained in the ideal 
$(\rv_U(f)^I))_{I\in \mathscr I_d}$
of $\rednash KU$,  which will prove
that $\mathfrak m^d=\mathfrak n^d\cap \nash KU$ 
and that it is generated by the $\rv_U(f)^I$;  this will imply
(by taking $d=1$) that $\mathfrak m$ is  generated by the $\rv_U(f_i)$. 

Let $\phi=\sum a_I f^I$ be a non-zero element of $\nash KU$ whose reduction 
$\rv_U(\phi)$ belongs to $\mathfrak n^d\cap \nash KU$. This means that 
$N(\phi)$ does nt contain any multi-index $I$ with $\abs I<d$. But one has then
$\rv_U(\phi)=\rv_U(\phi-\sum_{\abs I<d}a_If^I)$, which allows to assume that $a_I=0$
for all $I$ with $\abs I<d$. 

Then by Lemma \ref{lemm-generation-ideal} one can write 
$\phi=\sum_{I\in \mathscr I_d} f^I\phi_I$ where each $\phi_I$ belongs to $\nash KU$ and where the Taylor expansions
of the $f^I\phi_I$ have pairwise disjoint support. 
But one then has in view of Corollary  \ref{gauss-norm-open-disc}
$\|\phi|\|_{U,\infty}=\max_{I\in \mathscr I_d} \|f^I\phi_I\|_{U,\infty}$ and 
\[\rv_U(\phi)=\sum_{I\in \mathscr I'}\rv_U(f)^I\rv_U(\phi_I),\]
where $\mathscr I'$ is the set of $I\in \mathscr I_d$ such that $\|f^I\phi_I\|_{U,\infty}
=\|\phi\|_{U,\infty}$; this the kind of writing we were looking for.

\paragraph{}As $(\rv_U(f_i))_i$ is a minimal generating system of the maximal ideal of $A$, it is 
a fortiori minimal as a generating system of $\mathfrak m$. And by the generalized Nakayama Lemma, 
all minimal generating systems of $\mathfrak m$ have the same cardinality, namely
the dimension of $\mathfrak m/\mathfrak m^2$ over $\RV(K)$; hence they all have cardinality $n$.

\paragraph{}
Let us assume in this paragraph that $n=1$, write $f$ instead of $f_1$, and let 
$\mathfrak p$ be the maximal ideal of $\redgrad KU$. It is clear that $\rv_U(f)$ belongs to $\mathfrak p$. We shall prove that $\mathfrak n^d\cap \redgrad KU$
is contained in the ideal 
$(\rv_U(f))$
of $\redgrad KU$, which will end the proof of (4) (reasoning like in 
\ref{nd-md}). 

Let $\phi=\sum a_i f^i$ be a non-zero element of $K[U]$ whose reduction 
$\rv_U(\phi)$ belongs to  $\mathfrak n^d\cap \redgrad KU$. 
This means that $N(\phi)$
does not contained any integer $<d$ or, 
otherwise said, that $\abs {a_i}<\|\phi\|_{U,\infty}$ for all $i<d$. But one has then
$\rv_U(\phi)=\rv_U(\phi-\sum_{i<d}a_if^i)$, which allows to assume that $a_i=0$
for $i<d$. 

Then by Lemma \ref{lemm-generation-ideal} one can write 
$\phi=f^d\psi$ for some $\psi\in K[U]$. Thus $\rv_U(\phi)$
is equal to $\rv_U(f)^d\rv_U(\psi)$, so $\rv_U(\phi)$ belongs to the ideal 
$(\rv_U(f)^d)$.

\subsubsection{Proof of (4)}
It follows from (2) that (i)$\Rightarrow$(ii)
and that (ii)$\iff$(iii). So let us now assume that (iii)
holds and prove (i). 
For every index $j$
write $g_j=\sum_I a_{I,j}f^I$; 
one has then
$\rv_U(g_j)=\sum_{I}\alpha_{I,j}\rv_U(f)^I$
where 
$\alpha_{I,j}=0$ if $I\notin N(g_j)$ and
$\alpha_{I,j}=\rv(a_{I,j})$ otherwise. 
Set $\gamma_j=\|g_j\|_{U,\infty}$. 
Asssumption (iii) amounts to saying that 
$\alpha_{0, j}=0$ for all $j$ and
(by Nakayama) that the matrix $(\alpha_{i,j})_{i,j}$ is invertible. 
Let $M$, \resp $N$, denote
the $K$-vector space $K^n$ equipped
with the norm
$(\lambda_1,\ldots, \lambda_n)\mapsto \max \abs{\lambda_i}\cdot 
\gamma_i$, \resp $\max \abs{\lambda_i}r_i$. 

By construction $\abs{a_{ij}}\leq \gamma_jr_i^{-1}$ for all $i$; hence
the matrix $(a_{ij})$ 
induces a linear
map from $M$ to $N$
that does not increase norms. 
This map in turn induces a $\RV(K)$-linar map 
between the graded reductions $\RV(M)$ and $\RV(N)$, 
precisely given by the matrix $(\alpha_{ij})$ in the canonical 
bases of both $\RV(K)$-vector spaces. 
The fact that $(\alpha_{ij})$ is bijective can then be rephrased by saying
that $(a_{ij})$ induces an isometry $\iota$ between $M$ and $N$. 
The isometries $\iota$ and $\iota^{-1}$, viewed as $K$-endomorphisms 
of $\A^n_K$, map open balls to open balls; hence for proving (i) 
one can replace $g$ by $\iota^{-1}\circ g$, which 
amounts to assume 
that $(\alpha_{ij})$ is the identity matrix
(in fact we have achieved more, since $(a_{ij})$
itself is the identity matrix; but we shall need to modify the $f_j$
later in the proof, and $(a_{ij})$ might be modified through these changes,
while $(\alpha_{ij})$ will remain unchanged). In other words, one has 
$\rv_U(g_j)=\rv_U(f_j)$ modulo $\mathfrak n^2$
for all $j$ (this implies that $\gamma_j=r_j$ for all $j$).

\paragraph{}\label{first-computations}
Our first aim is now to prove 
that $g$ induces a Nash
isomorphism between $U$
and $D$. The ground field does not matter for this purpose, so we can assume that $K$
is a model of $\acvf$. 
The assumption that $\rv(g_j)=\rv(f_j)$
modulo $\mathfrak m^2$ can be rephrased by saying that the Taylor expansion 
$\sum a_{I,j}f^I$ of $g_j$ at $u$ fulfills the following conditions, if we denote by $\delta_i$
the $n$-uple whose $i$-th component is $1$ and all of whose other components are zero. 

\begin{itemize}[label=$\diamond$] 
\item $\abs{a_{I,j}}\cdot r^I\leq r_j$ for all $I$;
\item $a_{\delta_j,j}$ if of the form $1+\epsilon_j$ with $\abs{\epsilon_j}<1$; 
\item $\abs {a_{0, j}}<r_j$ and $\abs{a_{\delta_i,j}}\cdot
r_i<r_j$ for all $i\neq j$.
\end{itemize}

It follows from the above
that for every $s<r$ and every $j$ one has $\max_I \abs{a_{I,j}}s^I<r_j$, 
which implies in view of Theorem \ref{theo-norme-gauss}
that $\abs{g_j}$ is strictly bounded by $r_j$ on 
$\{\abs f\leq s\}$; as this holds for all $s<r$, 
we see that the supremum norm $\|g_j\|_{U,\infty}=r_j$ is not achieved on $U$. 
As a consequence, $g(U)\subset D$.

Let $m$ be the number of indices $j$ such that $g_j=f_j$. We argue by descending induction on $m$. 
There is nothing to prove if $m=n$, so we assume $m<n$ and that the result holds for $m+1$. 
Since $m<n$ there exists $j$ such that $g_j\neq f_j$. Up to renumbering the $g_j$ and the $f_j$
we can assume that $g_1\neq f_1$. 
We are then going to prove that $(g_1,f_2,\ldots, f_n)$ induces a
Nash isomorphism between $U$
and $D$. Setting $h_1=g_1$ and $h_j=f_j$ if $j>1$ 
we will have $\rv(g_j)=\rv(h_j)$ for all $j$, and there will be $m+1$ indices $j$ with 
$g_j=h_j$; so the induction hypothesis (applied with $h$
instead of $f$) will then ensure that $(g_1,g_2,\ldots, g_n)$
induces a  $K$-definable Nash  isomorphism between $U$ and $D$. 

Let $Y$ be the zero-locus of $(f_2,\ldots, f_n)$ and set $V=Y\cap U$. 
Then $f_1|_Y\colon Y\to \A^1$ induces a  $K$-definable Nash isomorphism 
between $V$ and the open one-dimensional disc $\Delta$ of radius $r_1$. 
The point $u$ belongs to $V$, and the Taylor expansion in $f_1$
of $g_1|_Y$
at $u$
is the pure $f_1$-part of that of $g_1$ at $u$; \ie, this
is $\sum_i b_if_1^i$ where $b_i=a_{(i,0,\ldots, 0), 1}$. 
One has then $\abs {b_0}<r_1$, $b_1=1+\epsilon_1$ (with $\abs{\epsilon_1}<1$), 
and $\abs{b_i}r_1^i\leq r_1$ for all $i$. 
This implies that 
$\rv_V(g_1)=\rv_V(f_1)$ (in particular $\|g_1\|_{V,\infty}=r_1$).

\paragraph{Let us prove that $g_1$ has a unique zero on $V$}
By Corollary \ref{gauss-norm-open-disc} one 
has $\|g_1\|_{V,\infty}=\max \abs{b_i}r_1^i$, and the latter is equal to 
$r_1$ by the above. As $\abs{g_1(u)}=\abs {b_0}<r_1$ we see that 
$\abs{g_1}$ is not constant on $V$, so it is not invertible
on $V$ by Corollary \ref{invertible-constant}; hence 
$g_1$ vanishes at some point of $V$. 
Let us prove that this point is unique. Let $v$ be a zero of
$g_1$ and set $h=f_1-f_1(v)$. Then $h$ induces a 
Nash isomorphism between $V$ and $\Delta$, and since
$\abs{f_1(v)}<r_1$ one has $\rv_V(h)=\rv_V(f_1)$, so 
$\rv_V(g_1)=\rv_V(h)$. This means that the Taylor expansion 
of $g_1$ at $v$ in $h$ is of the form 
$\sum c_i h^i$ with $c_0=0$ (since $g_1$ vanishes at $v$), 
with $c_1=1+\epsilon'$ with $\abs{\epsilon '}<1$, and with 
$\abs{c_i}\cdot r_1^i\leq r_1$ for all $i$. 
Let $s\in [0,1)$ and let 
$E(v,s)$ be the pre-image under $h=f_1-v$ of the closed disc
of radius $s$. Then for all
$i\geq 2$ one has
\[\abs {c_i}\cdot s^i=(s/r_1)^i\abs{c_i}\cdot r_1^i\leq (s/r_1)^ir_1<s,\]
so by Theorem 
\ref{theo-norme-gauss} one has
\[\|g_1-h\|_{E(v,s),\infty}=\|g_1-f_1+f_1(v)\|_{E(V,s),\infty}=
\left\|\sum_{i\geq 2}c_i f_1^i\right\|_{E(v,s),\infty}<s.\]
Now let $w$ be another zero of $g_1$ and set $s=\abs{f_1(v)-f_1(w)}$. 
Then $E(v,s)=E(w,s)$ and 
by the above one has
\[\|g_1-f_1+f_1(v)\|_{E(V,s),\infty}<s\;\text{and}\;\|g_1-f_1+f_1(w)\|<s,\]
whence the inequality $\abs{f_1(v)-f_1(w)}<s$, contradiction. 

\paragraph{The map $(g_1,f_2,\ldots, f_n)$ induces a bijection $U\simeq D$}
We have already seen  (\ref{first-computations})
that 
$g(U)\subset D$. 
By the paragraph above, the origin has a unique pre-image 
in $U$ under $(g_1,f_2,\ldots, f_n)$.

Now let $(d_1,\ldots, d_n)$ be any point of $D(K)$. 
Then $(f_j-d_j)_j$ induces a
Nash isomorphism
between $U$ and $D$, and $\rv(g_1-d_1)=\rv(g_1)=\rv(f_1)\rv(f_1-d_1)$. 
So by the above applied to $(f_j-d_j)_j$ instead of $(f_j)_j$ we
see that there is a unique $K$-point of $U$ at which 
$(g_1-d_1,f_2-d_2,\ldots, f_n-d_n)$ vanishes; this proves that $(d_1,\ldots, d_n)$
has a unique pre-image on $U$ under $(g_1,f_2,\ldots, f_n)$.

\paragraph{The map $(g_1,f_2,\ldots, f_n)$ induces a
homeomorphism $\widehat U\simeq \widehat D$}
We already know
that
it  induces a continuous bijection 
$\widehat U\simeq \widehat D$; it remains to show that the latter is topologically
proper. 
So fix $s<r$ and let us prove that the pre-image
$V$ under $(g_1,f_2,\ldots, f_n)$
of the the closed polydisc of polyradius $s$ has a compact stable completion. 
It suffices to prove that $\widehat V$
is contained
in some compact subset of $\widehat U$
or, equivalently, that it is contained in $\{\abs f \leq t\}$ for some $t<r$; and this
can be checked at the level of the definable sets themselves, without refereeing to 
their stable completions. 

So let $v\in V$. Our purpose it to bound for every $j$ the element 
$\abs{f_j(v)}$ of $\Gamma$ by some quantity $<r_j$ independent of $v$. 
As $\abs{f_j(v)}\leq s_j$ as soon as $j\geq 2$, it remains to bound
$\abs{f_1(v)}$.

One has $g_1=\sum_I a_{I,1}f^I$. We can split formally this sum and write
\[g_1=\sum_{I\in \mathscr J}a_{I,1}f^I+\sum_{I\notin \mathscr J}a_{I,1}f^I,\]
where $\mathscr J$ denotes the set of indices $I$ whose first component is non-zero. 
It now follows from Lemma \ref{nash-divisibility} and Proposition \ref{restriction-nash-functions}
that this formal splitting actually underlies a genuine decomposition 
$g_1=f_1\phi+\psi$ where $\phi$ and $\psi$ belong to $\nash KU$ and have 
\[\sum_{I\in \mathscr J} a_{I,1}f^{I-\delta_1}\;\text{and}\;\sum_{I\notin \mathscr J} a_{I,1} f^I\]
as respective Taylor expansions. 

We then have $f_1\phi=g_1-\psi$. As the Taylor expansion of $\psi$ does not involve
$f_1$, it follows from Theorem \ref{theo-norme-gauss} that 
\[\|\psi\|_{V,\infty}=\max_{I\notin \mathscr J}\abs{a_I}\cdot s^I<r_1,\]
where the latter inequality comes from the fact that $r_1=\|g_1\|_{U,\infty}=\max_I \abs{a_{I,1}}f^I$, 
again by Theorem  \ref{theo-norme-gauss}. As $\|g_1\|_{V,\infty}=s_1<r_1$, it now suffices to show
that $\abs \phi$ is identically equal to $1$ on $V$. 

The Taylor expansion of 
the function $\phi$ is equal to $a_{\delta_1,1}+\sum_{I\in \mathscr J, \abs I\geq 2}a_{I,1}f^{I-\delta_1}$.
One has $a_{\delta_1,1}=1+\epsilon_1$ with $\abs \epsilon_1<1$, so $\abs {a_{\delta_1,1}}=1$. 
Set $\chi=\phi-a_{\delta_1,1}$. 
For all $I\in \mathscr J$ with $\abs I\geq 2$ one has $\abs{a_{I,1}}\cdot r^I\leq r_1$, so 
$\abs{a_{I,1}}\cdot r^{I-\delta_1}\leq 1$. Let $t_1$ be any element 
of $\Gamma$ with $t<r_1$, and set $t_j=s_j$ if $j\geq 2$. 
Then as $\abs I\geq 2$ one has $\abs{a_{I,1}}\cdot t^{I-\delta_1}<1$. 
By Theorem \ref{theo-norme-gauss} it follows that 
the maximum of $\chi$ on $\{\abs f\leq t\}$ is $<1$. Since $t_1$
was chosen arbitrary, $\abs \chi<1$ at every point of $\{\abs {f_2}\leq s_2,\ldots, \abs{f_n}\leq s_n\}$, 
and in particular at every point of $V$; 
as a consequence, $\abs \phi=\abs{\chi+a_{\delta_1,1}}$ is identically equal to $1$ on $V$, which we wanted
to show. 

\paragraph{}
We have shown that $g$ induces a $K$-definable bijection $U\simeq D$ and 
even a $K$-definable homeomorphism $\widehat U\simeq \widehat D$. 
It remains to ensure that $g$ is étale at every point of $U$. 
So let $v=(v_1,\ldots, v_n)$ be a point of $U$. 
For all $j$ one has $\abs{g_j(v)}<r_j$ so that we have $\rv_U(g_j-g_j(v))=\rv_U(g_j)=\rv_U(f_j)$; 
as the translation by $(g(v_1),\ldots, g(v_n))$ is an automorphism of $\A^n$, it suffices to show
that $(g-g(v_1),\ldots, g-g(v_n))$ is étale at $v$. So we can assume that $g(v)=0$. 

Now $(f-f(v_1), \ldots, f-f(v_n))$ also induces a
Nash isomorphism 
between $U$ and $D$, and $\rv_U(f_j-f_j(v))=\rv_U(f_j)$ for all $j$. 
So we can replace $f$ by $f-f(v)$, which amounts to assuming that $f(v)=0$, that is, 
that $v=u$. 

As $g(u)=0$ the Taylor expansion of every $g_j$ at $u$ has no constant term. 
And as $\abs{a_{\delta_j, j}}=1$ and $\abs{a_{\delta_j,i}}<1$ for all $j$ and all $i\neq j$, 
$\abs{\det((a_{\delta_j,i})_{i,j})}=1$, and in particular the matrix 
$(a_{\delta_j,i})_{i,j})$ is invertible. By Nakayama's Lemma this implies that $(g_j)$ is a regular system of
parameters of $X$ at $u$, so $g$ is étale at $u$.

\subsubsection{Proof of (5)}
The equivalence between (ii) and (iii) comes from (3), and the equivalence between (i) and (iii) 
comes from (4).
\end{proof}

\begin{theo}[tame descent for
abstract open polydiscs]\label{theo-tame-descent}
Let $X$ be a $K$-algebraic variety and let $U$ be a $K$-definable 
subset of $X$. Assume that there exist a tamely ramified finite extension $L$
of $K$ and a tuple $f = (f_1, \ldots, f_n)$ of functions in $\nash LU$ inducing a
Nash isomorphism
between $U$ and an open polydisc. Then there exists a tuple $g = (g_1, \ldots, g_n)$ of functions in $\nash KU$ inducing a  
Nash
isomorphism between $U$ and an open polydisc.
Moreover, if $n=1$
and $f = f_1\in L[U]$ then $g=g_1$ can be chosen in $K[U]$. 
\end{theo}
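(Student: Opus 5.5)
The plan is to reduce everything to the algebraic characterization of abstract open polydiscs in Theorem \ref{reduction-abstract-disc}, and to descend that algebraic data from $L$ to $K$ using Corollary \ref{coro-tame-reduction}. As everywhere in this section, I assume $K$ is henselian. Applying Theorem \ref{reduction-abstract-disc} over $L$ to the tuple $f$ (after refining, $f$ comes from a triple which by Lemma \ref{cofinality} (2) may be taken defined over $K$), we get:
\begin{itemize}
\item $\rednash LU$ is a local generalized ring with maximal ideal $\mathfrak m_L$ generated by the $\rv_U(f_i)$;
\item $\mathfrak m_L/\mathfrak m_L^2$ has dimension $n$ over $\RV(L)$;
\item the constant-term map $\rednash LU\to \RV(L)$, coming from the embedding into the completed polynomial ring $A_L$, has kernel $\mathfrak m_L$.
\end{itemize}
Since $U$ is Nash-isomorphic (over $L$) to an open polydisc, it has a nice $L$-definable exhaustion, hence a nice $K$-definable one by Remark \ref{exhaustion-scalar-extension}. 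Corollary \ref{coro-tame-reduction} therefore applies and gives $\rednash LU=\RV(L)\otimes_{\RV(K)}\rednash KU$. Choose $a_1=1,a_2,\ldots,a_r\in L^\times$ whose $\rv(a_i)$ form a basis of $\RV(L)$ over $\RV(K)$. Then $\rednash LU$ is free over $\rednash KU$ on the $\rv(a_i)$, and $\rednash KU$ embeds into it.

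The next step is to show that $\rednash KU$ is local with maximal ideal $\mathfrak m_K:=\mathfrak m_L\cap\rednash KU$, and that $\mathfrak m_L=\RV(L)\otimes\mathfrak m_K$.
\begin{itemize}
\item \emph{Units descend.} If $x\in\rednash KU$ and $xy=1$ with $y=\sum\rv(a_i)y_i$, comparing coordinates on the basis element $\rv(a_1)=1$ gives $xy_1=1$.
\item \emph{Description of $\mathfrak m_K$.} The non-units of $\rednash KU$ are thus exactly $\mathfrak m_K$. This is the kernel of the constant-term map $\rednash KU\to\RV(K)$, which is the restriction of the one over $L$, because the constant term of the expansion of a $K$-Nash function lies in $K$.
\item \emph{Compatibility with $L$.} An element $\sum\rv(a_i)x_i$ with $x_i\in\rednash KU$ has constant term $\sum\rv(a_i)c(x_i)$, with the $c(x_i)$ in $\RV(K)$. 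By freeness this vanishes iff every $c(x_i)=0$, i.e.\ iff every $x_i\in\mathfrak m_K$. Hence $\mathfrak m_L=\RV(L)\otimes\mathfrak m_K$, and consequently $\mathfrak m_L/\mathfrak m_L^2=\RV(L)\otimes_{\RV(K)}\mathfrak m_K/\mathfrak m_K^2$.
\end{itemize}

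Now choose $g_1,\ldots,g_n\in\nash KU$ whose classes $\rv_U(g_j)$ form a basis of $\mathfrak m_K/\mathfrak m_K^2$. This is possible because that space has dimension $n$, by the tensor identity above. Their images then form a basis of $\mathfrak m_L/\mathfrak m_L^2$, so by the generalized Nakayama lemma they generate $\mathfrak m_L$. By Theorem \ref{reduction-abstract-disc} (4), (ii)$\Rightarrow$(i) over $L$, the tuple $g$ induces an $L$-definable Nash isomorphism between $U$ and an open polydisc containing the origin.

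It remains to see that this is a $K$-definable Nash isomorphism. The $g_j$ come from $K[Y]$ for some $K$-triple $(Y,V)\in\mathscr E_K(U)$. Being étale on $V$, being a bijection onto the $K$-definable set $g(V)$, and inducing a homeomorphism on stable completions are all properties insensitive to the field of definability. Hence $g(U)$ is a $K$-definable open polydisc and $g$ gives the required Nash isomorphism.

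For $n=1$ with $f\in L[U]$, I would run the same argument with $\redgrad{\cdot}U$ in place of $\rednash{\cdot}U$. This uses the first isomorphism of Corollary \ref{coro-tame-reduction}, and uses Theorem \ref{reduction-abstract-disc} (3) and (5) instead of (2) and (4). The output is $g\in K[U]$ with $\rv_U(g)$ generating $\mathfrak p_L$, which by (5) induces the Nash isomorphism.

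The main obstacle I expect is the bookkeeping in generalized (graded) rings. Coordinates with respect to the basis $\rv(a_i)$ must be taken degree by degree, since sums only exist within a fixed degree. Nakayama and the dimension count for $\mathfrak m/\mathfrak m^2$ must also be used in their graded versions from \cite{ducros-2021a}. The point to check carefully is that the constant-term maps are compatible with the tensor decomposition.
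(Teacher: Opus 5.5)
There is a genuine gap in your steps ``Description of $\mathfrak m_K$'' and ``Compatibility with $L$''. You justify that the constant-term map sends $\rednash KU$ into $\RV(K)$ by saying that the constant term of the expansion of a $K$-Nash function lies in $K$. But the expansion is taken at $u$, the unique zero of $f$ on $U$. Since $f$ is only defined over $L$, the point $u$ is only an $L$-point, so the constant term of the expansion of $\phi\in\nash KU$ is $\phi(u)\in L$, which is not in $K$ in general. For instance, take for $U$ the open unit disc over $K$, $L$ unramified over $K$ and $f=T-a$ with $a\in L\setminus K$, $\abs a<1$; the expansion of $T$ is $a+f$.

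What you actually need is that the residue field $k:=\rednash KU/\mathfrak m_K$, which embeds into $\RV(L)$ through $c$, is equal to $\RV(K)$. Equivalently, you need $\mathfrak m_L=\mathfrak m_K\rednash LU$. Without this, the vanishing of $\sum_i\rv(a_i)c(x_i)$ does not force the $c(x_i)$ to vanish, because the $\rv(a_i)$ are only independent over $\RV(K)$. Nothing in your argument gives it. The tensor decomposition and the locality of both rings only provide a surjection $\RV(L)\otimes_{\RV(K)}k\to\RV(L)$, which is injective if and only if $k=\RV(K)$. To conclude you need $\RV(L)\otimes_{\RV(K)}k$ to be a field, and this is exactly where separability of $\RV(L)$ over $\RV(K)$ must enter. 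Your proof never uses separability beyond Corollary~\ref{coro-tame-reduction}.

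The repair is the paper's argument. The ring $\rednash LU/\mathfrak m_K\rednash LU\simeq\RV(L)\otimes_{\RV(K)}k$ is local, as a quotient of the local ring $\rednash LU$. It is étale over the field $k$ because $\RV(L)$ is separable over $\RV(K)$ (tameness). Hence it is a field, so $\mathfrak m_K\rednash LU=\mathfrak m_L$, and comparing dimensions over $k$ then gives $k=\RV(K)$. Alternatively, if $\rv_U(\phi)$ is a unit, the proof of Theorem~\ref{reduction-abstract-disc} shows that $\rv\circ\phi$ is constant on $U$ with value $c(\rv_U(\phi))$. This value is therefore a $K$-definable element of $\RV(L)$, and for tame $L$ such elements lie in $\RV(K)$, again by separability. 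This fails for wild extensions, e.g.\ $\rv(p^{1/p})$ over $\mathbf Q_p$.

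Once this is in place, the rest of your proposal works. Your descent of units via the coordinate on $\rv(a_1)=1$ is a neat substitute for the going-up lemma. Your cotangent-space count plus Nakayama then replaces the paper's shortcut, which notes that any generating family of $\mathfrak m_K$ generates $\mathfrak m_L$ and extracts a subfamily of cardinality $n$. The same correction is needed verbatim for $\redgrad{\cdot}U$ in the case $n=1$.
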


\begin{proof}
As $L$ is a tamely ramified
extension of
$K$, it
follows from Corollary \ref{coro-tame-reduction}
that
\[\rednash LU=\rednash KU\otimes_{\RV(K)}\RV(L)\;\;\;
\text{and}\;\;\;\redgrad LU=\redgrad KU\otimes_{\RV(K)}\RV(L).\]

By Theorem \ref{reduction-abstract-disc}
(2), 
the generalized ring $\rednash LU$ is local, hence $\rednash KU$
is local as well by the generalized going-up lemma.
Let $(\alpha_i)$ be a family of generators of the maximal 
ideal of $\rednash KU$. The
$\rednash KU/(\alpha_i)$-algebra
$\rednash LU/(\alpha_i)$ 
is (again by the going-up lemma)
local artinian, and also
étale since $\RV(L)$ is separable over $\RV(K)$ by one of the possible definitions of 
a tamely ramified extension. 
Hence $\rednash LU/(\alpha_i)$ is a field, so $(\alpha_i)$ generates
the maximal ideal of $\rednash LU$. By Theorem \ref{reduction-abstract-disc} 
(2) there is a sub-family of
$(\alpha_i)$ of cardinality $n$ that already generates the maximal ideal of $\rednash LU$.
If $g$ denotes any lifting of this subfamily in $\nash KU$, Theorem \ref{reduction-abstract-disc}
(4)
ensures that $g$ induces a  
Nash isomorphism between 
$U$ and an open polydisc. This ends the proof of the first statement.

Assume now that $n=1$
and that $f\in L[U]$. 
Reasoning as above and using assertion (3) (instead of (2)) 
of Theorem \ref{reduction-abstract-disc}, we get the existence of 
$g\in K[U]$ such that $\rv(g)$ generates the maximal ideal of 
$\redgrad KU$, and we
conclude by using Theorem \ref{reduction-abstract-disc}
(5).
\end{proof}

\section{Abhyankar points lie on atomic abstract open polydiscs}\label{atomic-polydiscs}

\begin{lemm}\label{etale-isomorphism}
Let $K$ be a valued field and let $f\colon Y\to X$ be a morphism between
$K$-varieties. Let $x\in X$ and let $y$ be a pre-image of $x$. Assume that $f$
is quasi-finite at $y$, that the valued field $K(x)$ is
defectless and that $K(y)$ is an immediate extension of $K(x)$. Then there exists a $K$-subvariety  $X'$ of $X$ containing $x$, 
a $K$-subvariety $Y'$ of $Y\times_X X'$ containing $y$ which is
finite
and étale
over $X'$, a $K$-regular function $\psi$ on $Y'$, 
a $K$-regular function $\phi$ on 
$X'$ with $\abs{\psi(y)}=\abs{\phi(x)}<1$
and a $K$-definable subset $D$ of $X'$ containing $x$ such that
the following hold for every $z\in D$: 

 \begin{itemize}[label=$\diamond$]
\item 
$\abs{\phi(z)}<1$ and
there exists one and only one element $\sigma(z)$ of $Y'$ such that $f(\sigma(z))=z$
and $\abs{\psi(\sigma(z))}=\abs{\phi(z)}$;

\item $\abs {\psi(t)}=1$ for every $t\neq \sigma(z)$ 
in the fiber $f^{-1}(z)$; 
\item 
$K(\sigma(z))\h=K(z)\h$. 
\end{itemize}
\end{lemm}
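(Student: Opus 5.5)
The plan is to reduce everything to a valuation-theoretic statement about the finite extension $K(y)/K(x)$ at the generic level, and then spread it out to a definable neighbourhood of $x$ by simply defining $D$ as the locus where the required conditions hold.

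First I set up the varieties. Let $X'$ be the $K$-Zariski closure of $x$ in $X$ (reduced, so integral with function field $K(x)$), and let $Y_0$ be the $K$-Zariski closure of $y$ in $Y\times_X X'$. Since $f$ is quasi-finite at $y$, the extension $K(y)/K(x)$ is finite.

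The key algebraic step is to show that $K(y)\subset K(x)\h$, so in particular that $K(y)/K(x)$ is separable. Put $L=K(y)$, and let $L_1=(L,v),L_2,\ldots,L_r$ be the valued fields with underlying field $L$ extending the valuation of $K(x)$, where $v$ is the valuation coming from $y$. As recalled in Section~\ref{reminders},
\[L\otimes_{K(x)}K(x)\h\simeq \prod_i L_i\h .\]
Defectlessness of $K(x)$ gives $[L_i\h:K(x)\h]=[\RV(L_i):\RV(K(x))]$ for every $i$, because the sum of the left-hand sides equals the sum of the right-hand sides and each term satisfies $\leq$. Since $L_1$ is immediate over $K(x)$, this forces $L_1\h=K(x)\h$. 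Hence $K(y)\subset K(x)\h$, and $K(y)/K(x)$ is separable. By generic étaleness, after shrinking $X'$ to a dense open subset containing $x$, I may take $Y'\subset Y_0$ open, containing $y$, with $Y'\to X'$ finite étale.

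Next I construct $\phi$ and $\psi$. Work in a model $M$ containing $K(x)$. The points of the fiber $f^{-1}(x)\cap Y'$ correspond to the $K(x)$-embeddings of $L$ into $M$, and each such embedding induces one of the $L_i$. Because $L\otimes K(x)\h$ has exactly one factor of degree $1$ (the one corresponding to $L_1$), exactly one point of the fiber, namely $y$, induces the valuation $v$. Choose $\phi\in K[X']$, after shrinking $X'$ if needed, with $0<\abs{\phi(x)}<1$; I take such a $\phi$ from $K$ when $K$ is non-trivially valued, and otherwise use the valuation on $K(x)$. By the approximation theorem for the finitely many inequivalent valuations $L_1,\ldots,L_r$ on $L$, choose $\psi\in L$ with
\[\abs{\psi-\phi}_{L_1}<\abs{\phi(x)}\quad\text{and}\quad\abs{\psi-1}_{L_i}<1\ \text{for } i\geq 2 .\]
The values of $\psi$ at the various points of the fiber over $x$ are its images under the embeddings, whose absolute values are governed by the corresponding $L_i$. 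Shrinking $X'$ further makes $\psi$ regular on $Y'$. Then $\abs{\psi(y)}=\abs{\phi(x)}<1$, and $\abs{\psi(t)}=1$ for every other $t$ in $f^{-1}(x)$.

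Finally I define $D$ as the $K$-definable set of $z\in X'$ such that $\abs{\phi(z)}<1$, exactly one $t\in f^{-1}(z)\cap Y'$ satisfies $\abs{\psi(t)}=\abs{\phi(z)}$, and all other points of the fiber satisfy $\abs{\psi}=1$. This is first-order, since the fibers are finite of bounded size, and by the previous step $x\in D$. The map $\sigma$ is then the $K$-definable section picking the distinguished point, so $f$ is injective on $\sigma(D)$. Since $Y'\to X'$ is étale, Lemma~\ref{lemm-same-henselization} applied to $\sigma(D)$ gives $K(\sigma(z))\h=K(z)\h$ for all $z\in D$.

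I expect the main obstacle to be the precise bookkeeping between fiber points over $x$ and the valuations $L_i$. In particular one must check that no second embedding induces $v$; this is exactly where the degree-one factor $L_1\h=K(x)\h$ is used, and it is also what guarantees that the approximation with $\psi$ separates $y$ from every other point of the fiber.
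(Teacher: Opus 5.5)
Your overall architecture matches the paper's, and the gap is in the construction of $\psi$. The shared steps are: pass to the Zariski closures of $x$ and $y$; use defectlessness plus immediacy to get $K(y)\h=K(x)\h$, hence separability and generic étaleness; show $y$ is the only fiber point inducing $v$; separate it by a function; take $D$ to be the definable locus where the separation persists; conclude with Lemma \ref{lemm-same-henselization}.

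You fix $\phi$ first, e.g.\ a constant $\lambda\in K$ with $0<\abs\lambda<1$. You then ask for $\psi$ with $\abs{\psi-\phi}_{L_1}<\abs{\phi(x)}$ and $\abs{\psi-1}_{L_i}<1$ for $i\geq 2$, citing the approximation theorem. That theorem requires the $L_i$ to be independent. Distinct extensions of a valuation of height $>1$ to a finite extension are only pairwise incomparable, and they may share a nontrivial coarsening $R_0$.

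If $\phi\in\mathfrak m_{R_0}$, then $\abs{\psi}_{L_1}=\abs{\phi}_{L_1}$ gives $\psi/\phi\in R_1^\times\subset R_0^\times$. Hence $\psi\in\mathfrak m_{R_0}\subset\mathfrak m_{R_i}$, so $\abs{\psi}_{L_i}<1$, and no admissible $\psi$ exists.

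Here is a concrete instance. Take $K$ algebraically closed of height one with residue characteristic $\neq 2$, and $X=\A^1$. Let $x$ satisfy $\abs{T(x)}<1$ with $\abs{T(x)}$ infinitesimally close to $1$ (type $\eta_{0,1^-}$). Let $Y=\{S^2=1+T\}$ and $y=(x,s)$ with $\abs{s-1}<1$.
\begin{itemize}
\item All hypotheses hold: $K(x)$ is Abhyankar over $K$, hence defectless, and $K(y)$ embeds in $K(x)\h$.
\item Both fiber points $(x,\pm s)$ induce immediate extensions of the valuation of $K(x)$. So $L\otimes_{K(x)}K(x)\h$ has two factors of degree $1$, contrary to your parenthetical claim. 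Only the degree of the $L_1$-factor matters for uniqueness, so that step survives.
\item Both valuations coarsen to the unique extension of the Gauss norm $\eta_{0,1}$. It is unique because $1+\tau$ is not a square in $\k(K)(\tau)$.
\end{itemize}
With $\phi=\lambda$ the required $\psi$ therefore cannot exist. The lemma is stated for valued fields of arbitrary height, and such infinitesimal radii can occur where the paper applies it (e.g.\ the group-extension case of Theorem \ref{theo-core}). So this is not a technicality.

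The repair is the paper's ordering. First choose $\psi\in K(y)$ with $\abs{\psi}_{L_1}<1$ and $\abs{\psi}_{L_i}=1$ for $i\geq 2$. This needs only pairwise incomparability: apply the Chinese remainder theorem in the semilocal ring $\bigcap_i R_i$, whose maximal ideals are the traces of the $\mathfrak m_{R_i}$. Only then pick $\phi\in K(x)$ with $\abs{\phi(x)}=\abs{\psi(y)}$, which is possible because immediacy gives $\abs{K(y)^\times}=\abs{K(x)^\times}$. Finally shrink $X'$ and $Y'$ so that both functions are regular.

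For the uniqueness of the fiber point inducing $v$, the paper argues that $y$ is definable over $K(x)$, rather than counting embeddings of $L_1\h$. Either argument works.
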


\begin{proof}
Up to replacing $X$ by the reduced $K$-Zariski closure of $x$
and $Y$ by the reduced $K$-Zariski-closure of $y$
we can assume that $X$ and $Y$ are integral and
$x$ and $y$ are $K$-Zariski dense. 
And then up to replacing $Y$ by the quasi-finite locus of $f$ we can assume that
$f$ is quasi-finite. 

The field $K(y)$ is a finite extension of $K(x)$. Thus $K(y)\h$ is a finite
extension of $K(x)\h$. By assumption this finite extension is immediate and since $K(x)$ is defectless, so is $K(x)\h$; 
we therefore have $K(y)\h=K(x)\h$. As a by-product $K(y)\subset K(x)\h$, so $K(y)$
is separable over $K(x)$. As $Y$ is reduced and $x$ is $K$-Zariski-generic, 
this implies that $f$ is étale at $y$; so by replacing $X$ and $Y$ by suitable
dense $K$-Zariski open subsets we can assume that $f$ is finite étale. 

By the above, $y$ appears as a $K(x)\h$-rational point of the $K(x)$-variety $f^{-1}(x)$.  
As $K(x)\h$ is contained in the definable closure of $K(x)$, the point $y$
is definable over $K(x)$, so it is equal to $\sigma(x)$ for some suitable $K$-definable
map. Then every point $t$ of $f^{-1}(x)$ having the same
type than $y$ over $K$ satisfies the equation $\sigma((f(t))=t$, so that $t=y$. 
Therefore $y$ is, as a point of the fiber $f^{-1}(x)$, entirely determined by its type over $K$; otherwise
said, it is entirely determined by the induced valuation $v$ on $K(Y)$. As $Y\to X$ is finite, the set
$\mathscr V$ 
of valuations on $K(Y)$ extending the valuation $u$ on $K(X)$ induced by $x$
is finite, and is a system of independent
valuations. 
Therefore there exists $\psi \in K(Y)$ such that $v(\psi)<1$ and $w(\psi)=1$ for every 
valuation $w\in \mathscr V\setminus \{v\}$. 
Up to replacing $\psi$ by $\psi^N$ for
$N$ large enough we can assume that $v(\psi)\in u(K(X))^\times$,
so  we can find a rational function $\phi$ on $X$ with 
$u(\phi))<1$ and $v(\psi)=u(\phi)$.
Then $\abs {\phi(x)}<1$, the point $y$ is the only point of $f^{-1}(x)$ at which 
$\abs \psi=\abs \phi$, and
$\abs{\psi(t)}\geq 1$ for all $t\neq y$
in $f^{-1}(x)$. Let us shrink $X$ and $Y$ so that $\phi$ is a regular function on $X$ and $\psi$ is a regular function 
on $Y$. 
Then we can take for $D$ the set of those points $z\in X$ such that $\abs{ \phi(z)}<1$, such that there
exists one and only one point
$\sigma(z)$ in the fiber $f^{-1}(z)$ at which $\abs \psi=\abs \phi$ and such that $\abs {\psi(t)} =1$
for all $t\neq \sigma(z)$ in $f^{-1}(z)$. 

And since $f$ is étale at every point of $Y$, 
Lemma \ref{lemm-same-henselization} above then ensures that $K(\sigma(z))\h=K(z)\h$ for all $z\in D$. 
\end{proof}

\begin{coro}
Let $K$ be a defectless valued field and let $X$ be an $n$-dimensional 
$K$-variety. Let $f_1,\ldots, f_n$ be $n$ functions belonging to $K[X]$; let
$f$ denote the $K$-morphism $(f_1,\ldots, f_n)\colon X\to \A^n_K$. Let $x\in X$ be
such that the $f_i$ are invertible at $x$, the $\rv(f_i(x))$ are algebraically independent over $\RV(K)$,  and $\RV(K(x))
=\RV(K)(\rv(f_1(x)), \ldots, \rv(f_n(x)))$. 
Then there exist a $K$-Zariski open subset $X'$ of $X$ containing $x$, 
a $K$-Zariski open subset $U$ of $\A^n_K$
such that $f$ induces a finite étale map from $X'$ to $U$, 
a
$K$-regular function $\phi$ on $U$ and a $K$-regular function $\psi$ on 
$X'$ with $\abs{\psi(x)}=\abs{\phi(f(x))}<1$, 
and a $K$-definable subset $D$ of $U$ containing $f(x)$ 
satisfying the following: 

\begin{itemize}[label=$\diamond$] 
\item for every $z\in D$ one has $\abs{\phi(z)}<1$ and
there exists one and only one element $\sigma(z)$ of $X'$ such that $f(\sigma(z))=z$
and $\abs{\psi(\sigma(z))}=\abs{\phi(z)}$ ; 
\item for every $z\in D$ 
and for every $t\neq \sigma(z)$ in $f^{-1}(z)$ one has 
$\abs{\psi(t)}=1$;
\item 
for all $z\in D$ one has $K(\sigma(z))\h=K(z)\h$. 
\end{itemize}
\end{coro}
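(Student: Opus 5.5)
We apply Lemma \ref{etale-isomorphism} to the morphism $f\colon X\to \A^n_K$ at the point $x$, with $y=x$ and the point of the base being $f(x)$. This requires three checks: $f$ is quasi-finite at $x$, the valued field $K(f(x))$ is defectless, and $K(x)$ is an immediate extension of $K(f(x))$. After that the Lemma gives almost everything in the statement, and only the shape of the subvarieties has to be adjusted.

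\emph{Immediacy and quasi-finiteness.} Set $z=f(x)$. The elements $\rv(f_i(x))=\rv(T_i(z))$ are algebraically independent over $\RV(K)$. By the characterization of algebraic independence through the sort $\RV$, the valuation on $K(T_1(z),\ldots,T_n(z))$ is therefore the Gauss valuation with parameter $(\abs{f_i(x)})_i$, and the $T_i(z)$ are algebraically independent over $K$. Hence $z$ is the $K$-Zariski generic point of $\A^n_K$, $K(z)=K(T)$, and $\RV(K(z))=\RV(K)(\rv(T_i(z)))_i$. Since $K(x)\supset K(z)$ and $\td(K(x)/K)\leq n$, the extension $K(x)/K(z)$ is finite. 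So $x$ is a generic point of some irreducible component of $X$ on which $f$ is dominant and generically finite, hence quasi-finite at $x$. The hypothesis $\RV(K(x))=\RV(K)(\rv(f_i(x)))_i=\RV(K(z))$ says exactly that $K(x)$ is immediate over $K(z)$.

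\emph{Defectlessness.} The field $K(z)=K(T)$ with the Gauss valuation is an Abhyankar extension of $K$ by the Example on Gauss valuations. Since $K$ is defectless, the theorem recalled in Section 1 (Kuhlmann, or the Appendix of \cite{ducros-h-l-y2024}) shows that $K(z)$ is defectless. I expect this to be the only substantive input; everything else is bookkeeping.

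\emph{Applying the Lemma and adjusting the varieties.} The Lemma supplies a subvariety $X'_0\ni z$ of $\A^n_K$, a subvariety $Y'\ni x$ of $X\times_{\A^n}X'_0$ which is finite étale over $X'_0$, regular functions $\phi$ on $X'_0$ and $\psi$ on $Y'$ with $\abs{\psi(x)}=\abs{\phi(z)}<1$, and a $K$-definable $D\ni z$ with the three listed properties. Because $z$ is $K$-Zariski generic, the reduced closure of $z$ is all of $\A^n_K$, so in the proof of the Lemma $X'_0$ is a dense $K$-Zariski open subset $U$ of $\A^n_K$. Likewise $Y'$ is an open subset of the reduced closure of $x$, which is an irreducible component of $X$. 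Shrinking, we take $X'$ to be a $K$-Zariski open subset of $X$ containing $x$ that meets no other component, so that $Y'=X'$ and $f\colon X'\to U$ is finite étale. The precise form of this last step is the one point I am unsure of. If finiteness is lost when passing to an open subset of $X$, I would shrink $U$ and replace $X'$ by $f^{-1}(U)\cap X'$ using generic finiteness. A non-reduced structure of $X$ is harmless here, since the whole question concerns points and henselizations, and one can pass to $X_{\mathrm{red}}$. With these choices, the three properties of $D$ are exactly those given by the Lemma.
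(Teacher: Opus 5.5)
Your proposal is correct and follows the paper's own proof. The paper checks that $f$ is quasi-finite at $x$ (since $f(x)$ is $K$-Zariski generic), that $K(x)$ is immediate over $K(f(x))$ (the Gauss valuation computation), and that $K(f(x))$ is defectless (the Abhyankar-extension theorem applied over the defectless field $K$), and then invokes Lemma \ref{etale-isomorphism}; your extra step of shrinking $U$ and taking preimages to obtain a genuine Zariski open $X'\subset X$, finite étale over $U$, is bookkeeping the paper leaves implicit, and you handle it correctly.
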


\begin{proof}
As $\dim X=n$, the transcendence degree of $K(x)$ over $K$ is bounded by $n$. 
But since $\td(\RV(K(x))/\RV(K))=n$ by our assumptions, $\td(K(x)/K)$ is actually equal to $n$, the valued field
$K(x)$ is an Abhyankar extension of $K$, and $(f_1(x),\ldots, f_n(x))$ is an Abhyankar basis of $K(x)$
over $K$. The point $f(x)$ is therefore $K$-Zariski generic  on $\A^n_K$, which implies
that $f$ is quasi-finite at $x$.  The field $K(f(x))$ is the rational 
function field generated by the algebraically independent 
elements $f_1(x),\ldots, f_n(x)$ and its valuation is the Gauss valuation with
parameters $\abs{f_1(x)},\ldots, \abs{f_n(x)}$. Therefore $\RV(f(x))$ is the rational function field over $\RV(K)$
generated by the algebraically independent 
elements $(\rv(f_i(x)))_i$, so $\RV(K(f(x))$
is equal to $\RV(K(x))$; otherwise said, $K(x)$ is an immediate
extension of $K(f(x))$. And as $K$ is defectless, so is its Abhyankar extension $K(f(x))$. The corollary then follows directly from
the lemma above. 
\end{proof}

\begin{theo}\label{maintheo-balls}
Let $K$ be 
an algebraically closed valued field
and let $X$ be a $K$-variety. 
Let $x$ be a point of $X$ such that $K(x)$ is an Abhyankar extension
of $K$, let $n$ be the 
transcendence degree of $K(x)$ over $K$ 
and $e$ be the rational rank of $\abs{K(x)^\times}$ over $\abs{K^\times}$; 
let $\gamma=(\gamma_1,\ldots, \gamma_e)$ in $(\abs{K(x)^\times})^e$ be a basis of the
free abelian group $\abs{K(x)^\times}/\abs{K^\times}$, and
let $f_1,\ldots f_m$ be regular invertible functions on a $K$-Zariski
neighborhood of $x$ such that the 
$\rv(f_i(x))$ generate $\RV(K(x))$ over $\RV(K)$. Set $d=\rv(f(x))\in \RV(K(x))^m$ and
 $\lambda=\rv(f(\cdot)) \colon X\to \RV^m$.  
 There exist: 

\begin{itemize}[label=$\diamond$]
\item an $n$-dimensional integral $K$-subvariety $X'$ of $X$
containing $x$ on which all $f_i$ 
are defined; 
\item a finite étale $K$-morphism of schemes
$\pi\colon X'\to \Omega$ where $\Omega$ is a dense
affine $K$-Zariski open subset of $\A^n$;
\item a $K$-regular function $\phi$ on $\Omega$ and a $K$-regular function $\psi$ on 
$X'$ satisfying the condition $\abs{\psi(x)}=\abs{\phi(\pi(x))}<1$; 
\item a $Kd$-definable and $\RV(K)$-Zariski dense point $c$ of $\A^\gamma_\RV\times \A^{n-e}_\k$
such that the open $Kd$-definable ball $D:=\rv^{-1}(c)\subset \Omega$ contains $\pi(x)$ and is
$Kd$-atomic
(see \ref{generalized-variety}
for the meaning of $\A^\gamma_\RV$);
\item a finite $Kc$-definable subset $A$ of $\RV^m$ containing $d$, 
\end{itemize}
such that 
\begin{itemize}[label=$\bullet$]
\item $\pi^{-1}(D)=\lambda^{-1}(A)$; 

\item for all $z \in D$ one has $\abs {\phi(z)}<1$, 
there exists one and only one element $\sigma(z)$ of
$\pi^{-1}(z)\cap \lambda^{-1}(d)$ at which $\abs \psi=\abs \phi$ and for every
$t\neq \sigma(z)$ in $\pi^{-1}(z)\cap \lambda^{-1}(d)$ one has $\abs{\psi(t)}=1$.
\end{itemize}
\end{theo}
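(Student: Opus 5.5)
We will choose $n$ functions among the $f_i$ (together with suitable scalings) forming an Abhyankar basis, apply the Corollary preceding the theorem, and then shrink the definable set $D$ it produces to an atomic ball $\rv^{-1}(c)$.

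\emph{Step 1: choice of the map $\pi$.} Since $\RV(K(x))$ is generated over $\RV(K)$ by the $\rv(f_i(x))$ and has transcendence degree $n$, we can extract $n$ indices, say $1,\ldots,n$, such that $\rv(f_1(x)),\ldots,\rv(f_n(x))$ form a transcendence basis. As $K$ is algebraically closed, $\abs{K^\times}$ is divisible and $\k(K)$ is algebraically closed, so every finite extension of $\RV(K)$ is purely inseparable. We will arrange more precisely that $\abs{f_1(x)},\ldots,\abs{f_e(x)}$ are the $\gamma_j$ (up to multiplying by elements of $K$ and by monomials in the $f_i$, which only enlarges the list and keeps $\lambda$ inter-definable). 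We also arrange that $\abs{f_{e+1}(x)}=\dots=\abs{f_n(x)}=1$, with residues algebraically independent over $\k(K)$. We then need $\RV(K(x))=\RV(K)(\rv(f_1(x)),\ldots,\rv(f_n(x)))$. This need not hold for an arbitrary choice. I would get it by replacing $K(f(x))$ by a suitable finite separable subextension: the residue extension of $K(x)/K(f(x))$ is finite, and after choosing a separating transcendence basis at the level of $\k$ (possible since $\k(K)$ is perfect, being algebraically closed) and for $\Gamma$, it becomes separable. Then a primitive element can be adjoined by a further generic linear combination. This is the step I expect to be the main obstacle: one must produce $\pi$ étale at $x$ with $\RV(K(x))$ generated by $\rv(\pi(x))$. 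The cleanest route I would try first is to use that $K$ is algebraically closed, so $\RV(K)$ is perfect; hence the finitely generated extension $\RV(K(x))$ admits a separating transcendence basis, which we may take among monomials in $\rv(f_i(x))$. We then take $\pi$ to be those monomials, after clearing a possible inseparable defect by the Abhyankar/defectless theorem recalled in \S1 ($K(x)$ is defectless, being Abhyankar over the defectless field $K$).

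\emph{Step 2: the Corollary.} With $\pi=(f_1,\ldots,f_n)$ (or the monomials above), $K(\pi(x))\subset K(x)$ is immediate, so the Corollary gives $X'$, $\Omega$, $\phi$, $\psi$, a $K$-definable $D_0\ni\pi(x)$, and the section $\sigma$. Here $X'$ is integral after taking the irreducible component through $x$.

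\emph{Step 3: the atomic ball and $A$.} Let $c=\rv(\pi(x))$, seen in $\A^\gamma_\RV\times\A^{n-e}_\k$ via the rescaling of \ref{generalized-variety}. It is $\RV(K)$-Zariski generic by the transcendence assumption, and it is $Kd$-definable because it is a monomial in $d$. By the Haskell--Hrushovski--Macpherson result recalled in \ref{comments-atomic}, $\rv^{-1}(c)$ is $Kc$-atomic, even over $\acl(Kc)$, and $d\in\acl(Kc)$; hence it is $Kd$-atomic. Atomicity gives $\rv^{-1}(c)\subset D_0$, since $D_0\cap\rv^{-1}(c)$ is a nonempty $K$-definable subset containing $\pi(x)$. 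So we set $D=\rv^{-1}(c)$. The set $A:=\lambda(\pi^{-1}(D))$ is $Kc$-definable. It is finite, being an $\RV$-valued image of a set whose fibres over the atomic $D$ vary uniformly: for $z\in D$ the finitely many values $\lambda(\pi^{-1}(z))$ have constant type over $Kc$, so they lie in $\acl(Kc)$ by atomicity and a dimension count. Moreover $\pi^{-1}(D)\subset\lambda^{-1}(A)$. For the reverse inclusion, note that $\lambda$ determines $\rv(\pi)=c$ since the $\pi$-components are among the $f_i$.

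\emph{Step 4: the uniqueness statement.} It follows by restricting the Corollary's first two bullets to the fibres $\pi^{-1}(z)\cap\lambda^{-1}(d)$. We only need that $\sigma(z)\in\lambda^{-1}(d)$, and this holds by atomicity of $D$, since it holds at $z=\pi(x)$ and $\{z\in D: \lambda(\sigma(z))=d\}$ is $Kd$-definable.
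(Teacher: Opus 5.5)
Your Step~1 cannot be carried out in general, and Steps~2--4 depend on it. Take any $\pi$ as in the statement. Then $\pi(x)\in D=\rv^{-1}(c)$ with $c$ $\RV(K)$-Zariski dense, so the $\rv(\pi_j(x))$ are algebraically independent over $\RV(K)$. Hence $K(\pi(x))$ carries a Gauss valuation, and $\RV(K(\pi(x)))=\RV(K)(c)$ is purely transcendental over $\RV(K)$. The Corollary you want to invoke needs $K(x)$ to be immediate over $K(\pi(x))$, so it would force $\RV(K(x))$, and in particular $\k(K(x))$, to be purely transcendental.

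This fails whenever the residue field is not rational. For example, let $x$ realize the generic type of the Gauss point of an elliptic curve with good reduction: then $n=1$, $e=0$, and $\k(K(x))$ is the function field of the reduced elliptic curve. Such positive-genus residue curves are exactly what the theorem must handle in the ``residue extension'' case of Theorem~\ref{theo-core}.

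None of your proposed repairs works:
\begin{itemize}
\item a separating transcendence basis $t$ only makes $\RV(K(x))$ finite separable over $\RV(K)(t)$, not equal to it;
\item defectlessness only says $[K(x)\h:K(\pi(x))\h]$ equals the residue degree; it does not make that degree $1$;
\item adjoining a primitive element as an extra coordinate sends $X$ to $\A^{n+1}$, so $\pi$ is no longer quasi-finite and dominant onto an open subset of $\A^n$.
\end{itemize}
The features of the statement you treat as automatic are there because $\RV(K)(d)/\RV(K)(c)$ is in general a nontrivial finite extension. These are the finite set $A$, the restriction to $\lambda^{-1}(d)$, and the function $\psi$ that selects one point among possibly several in $\pi^{-1}(z)\cap\lambda^{-1}(d)$.

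The paper instead sets $\pi=(f_1,\dots,f_e,H(f_{e+1},\dots,f_m))$, where $H$ lifts a generically étale map $h$ from the residual variety $Z$ to $\A^{n-e}_\k$. Then $\RV(K)(d)$ is only finite \emph{separable} over $\RV(K)(c)$. Defectlessness of $K(\pi(x))$ then gives that $\pi$ is étale at $x$, but it gives no uniqueness of the section.

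Uniqueness comes from a different mechanism:
\begin{itemize}
\item Pick a $Kd$-atomic $G\subset\lambda^{-1}(d)$ through $x$ with minimal fibre cardinality over $D$. Let $v$ be the valuation on $K(X)$ given by its constant type, and choose $\psi,\phi$ separating $v$ from the other extensions of the Gauss valuation.
\item By $Kd$-atomicity of $D$, it then suffices to check uniqueness at one well-chosen point $\xi\in D(N)$. This point is defined over a field $M$ with $\abs{M^\times}=\abs{K^\times}$ in which the residual point $z$ becomes rational, and its valuation is the composite $u\circ\eta_{M,(\gamma,1)}$, with $u$ centred at $h(z)$.
\item Since $h$ is étale at the $\k(M)$-rational point $z$, each relevant preimage $\omega$ has $M(\omega)$ immediate over $M(\xi)$. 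Hence $M(\omega)\h=M(\xi)\h$, and all such $\omega$ coincide.
\end{itemize}
The missing idea is this passage to a base over which the residue extension becomes trivial. Your Steps~2--4 only cover the special case where $\RV(K(x))$ is purely transcendental over $\RV(K)$ on $n$ suitable monomials in the $\rv(f_i(x))$.
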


\subsubsection*{Comments}
As $c$ is Zariski-generic, the Abhyankar inequality ensures that $\rv^{-1}(c)$ consists
itself of $K$-Zariski dense points of $\A^n_K$. This is the reason why $\rv^{-1}(c)$ 
is automatically contained in $\Omega$ without having to enforce this explicitly.

Also note that one has necessarily $\sigma(\pi(x))=x$.

\begin{proof}
For proving the Theorem, we can replace $X$ by any
of its
$K$-subschemes containing $x$. 
So we can assume that $X$ is integral of dimension $n$ and that $x$ is $K$-Zariski generic. 
Note that by the Abhyankar inequality every point of $\lambda^{-1}(d)$ is $K$-Zariski  generic on $X$, 
so shrinking the $K$-scheme $X$ when needed will be harmless. 

\subsubsection{}
By the general theory of finitely generated abelian goupes (applied to the 
free group $\abs{K(X)^\times}/\abs{K^\times}$) there exists a matrix $M$ in 
$\GL_m(\Z)$ such that if one denotes by $f'_i$ (for $1\leq i\leq m$)
the $i$-th
component of $M(f_1,\ldots, f_m)$ for $M$ acting monomially, then $\abs{f'_i(x)}=\gamma_i;
\text{mod}\;\abs{K^\times}$
for $1\leq i\leq e$ and $\abs{f'_i(x)}\in \abs{K^\times}$ for $e+1\leq i\leq m$.
Choose elements $\ell_1,\ldots, \ell_m$ of $K^\times$ such that 
$\abs{f'_i(x)/\ell_i}=\gamma_i$ for all 
$1\leq i\leq e$ and 
$\abs{f'_i(x)/\ell_i}=1$
for some $e+1\leq i\leq m$.
Set $g_i=f'_i/\ell_i$ and $\mu=\rv(g(\cdot))\colon X\to \RV^m$. 
It follows from our constructions than $(g_i)$ can be obtained from $(f_i)$ by a
bijective monomial
transformation with coefficients in $\abs{K^\times}$ and integral exponents whose inverse is of the same form. 
As a consequence the $\rv(g_i(x))$ also generate $\RV(K(x))$ over $\RV(K)$, 
the element $\mu(x)$ of $\RV^m$ is inter-definable with $d$,
and $\lambda^{-1}(d)=\mu^{-1}(\mu(x))$. We thus can replace the $f_i$ by the $g_i$, which amounts to
assuming that $\abs{f_i(x)}=\gamma_i$ for $i\leq e$ and that $\abs{f_i(x)}=1$ for $i>e$. 
We then have $d=(\delta,z)$ for some $\delta=(\delta_i)_{1\leq i\leq e}$ with each 
$\delta_i$ non-zero of degree $\gamma_i$ and with  $z\in \k^{m-e}$, and the $\RV(K)$-Zariski-
closure of $d$ is equal to $\A^\gamma_\RV\times Z$ for some irreducible $\k(K)$-algebraic
subvariety $Z$ of $\A_\k^{n-e}$. As $\RV(K)(d)=\RV(K(x))$ and as $K(x)$ is Abhyankar
over $K$, the dimension of $\A^\gamma_\RV\times Z$ is equal to $n$, so $\dim(Z)=n-e$.

\subsubsection{}
By generic smoothness there exists a family  $h=(h_{e+1},\ldots, h_n)$ of polynomials
in $\k(K)[T_{e+1},\ldots, T_m]$
that induces
a
generically étale map from
$Z$ to $\A^{n-e}_\k$. 
Choose $H_{e+1},\ldots, H_n\in K[T_{r+1},\ldots, T_m]$ with integral coefficients that lift $h_{r+1},\ldots, h_n$, and
set 
\[\pi=(f_1,\ldots, f_r, H_{e+1}(f_{e+1},\ldots, f_m),\ldots, H_n(f_{e+1},\ldots, f_m));\]
this is a map from $X$ to $\A^n$. 
Set $c=(\mathrm{Id}, h)(d)=(\delta, h(z))$; this is a $Kd$-definable element of $\A^\gamma_\RV\times \A^{n-e}_\k$, 
and $\rv(\pi(x))=c$. 
As the map $h|_Z\colon Z \to \A^{n-e}_\k$ is generically étale, $h(z)$ is $\k(K)$-Zariski dense
in $\A^{n-e}_\k$, so $c$ is 
$\RV(K)$-Zariski dense in $\A^\gamma_\RV\times \A^{n-e}_\k$. 
Therefore by the Abhyankar inequality $\pi(x)$ is Zariski-generic, and
$\pi$ is generically finite; by construction, it induces the 
valuation $\eta_{K,(\gamma,1)}$ on $K(T_1,\ldots, T_n)$ 
(here the $1$ in the index stands for $n-e$
terms all equal to $1$, and we keep track
of the ground field $K$ 
in the notation 
since we will also have to consider this Gauss valuation with parameter $(\gamma,1)$ on a rational function field
over some valued extension of $K$), thus $\RV(K(\pi(x))=\RV(K)(\rv(\pi(x)))=\RV(K)(c)$.

%


Since $K(\pi(x))$ is an Abhyankar extension of $K$ (which is itself algebraically closed) $K(\pi(x))$ is defectless, so that
the degree of $K(x)\h$ over $K(\pi(x))\h$ is equal to the residue degree, namely that of $\RV(K)(d)$ over $\RV(K)(c)$. 
As $\RV(K)(d)$ is by construction separable over $\RV(K)(c)$, it follows that $K(x)\h$ is separable (and even tamely
ramified) over $K(\pi(x))\h$, so $K(x)$ is separable over $K(\pi(x))$.  Moreover it follows from the fact that 
$X$ is integral and $\pi(x)$ is $K$-Zariski generic that the scheme-theoretic fiber $\pi^{-1}(\pi(x))$ is reduced. Hence
$\pi$ is étale at $x$ and up to shrinking $X$
we can thus assume that $\pi$ is finite étale over some dense $K$-Zariski
affine open subset $\Omega$ of $\A^n_K$.

\subsubsection{}\label{comments-atomic}
Set $A=(\mathrm {Id},h)^{-1}(c)$.  
By the Abhyankar inequality, the $\RV(K)$-definable
subset $\lambda(X)$ is of dimension $\leq n$, so its fiber over $c$ is necessarily finite
by $\RV(K)$-genericity of $c$; therefore $A$ is finite. 

Set $D=\rv^{-1}(c)$. 
Then $D$ is a $Kc$-definable $n$-dimensional open polydisc in $\A^n$, 
which is contained in $\Omega$ since each of its point is $K$-Zariski-generic by Abhyankar
inequality, and 
$\pi^{-1}(D)=\lambda^{-1}(A)$. 
Moreover the polydisc $D$ is $\acl(Kc)$-atomic: using the fact that $c$ does not belong to $\acl(K)$ (unless
$n=0$ in which case $c$ is empty, $D$ is a point and the statement is obvious!), one sees it through
a straightforward adaptation of the proof of \cite[Lemma 3.8]{hrushovski-k2006} (be aware that in \loccit, the word
\textit{transitive} is used instead of \textit{atomic}). As $d\in \acl(Kc)$ the polydisc $D$ is particular $Kd$-atomic. 

Set $F=\lambda^{-1}(d)$.
The set $F$ is $Kd$-definable, and 
$\pi(F)\subset D$ by definition. 
By $Kd$-atomicity
$\pi(F)$ is either empty, either equal to the whole of $D$. But as $F\neq \emptyset$ (it contains $x$), we have $\pi(F)=D$. 
If $G$ is any $Kd$-definable subset of $F$, it follows once again from
atomicity that all fibers of $\pi|_G$ have the same cardinality $\nu(G)$; let us choose such a $G$ with $x\in G$ and
with $\nu(G)$ minimal; note that $\nu(G)>0$ since $G$ is non-empty (it contains $x$).
If $G'$ is any $Kd$-definable subset of $G$ then either $G'$ or its complement
$G''$ contains $x$, so that 
$\nu(G')=\nu(G)$ or $\nu(G'')=\nu(G)$ by minimality of $\nu(G)$; it follows that $G'=G$ or
$G''=G$, so $G$ is $Kd$-atomic. 
Therefore the type over $Kd$ of any point $t\in G$ does not depend on $t$; 
a fortiori, its type over $K$ does not depend on $t$ and we denote it by $p$.
We denote by $v$ the corresponding valuation on $K(X)$. It is an extension 
of the valuation $\eta_{K,(\gamma,1)}$ along the finite extension $K(T_1,\ldots, T_n)\hookrightarrow K(X)$. 
Since the extensions of $\eta_{K,(\gamma,1)}$ to $K(X)$ form a system of independent valuations
there exist $\psi \in K(X)$ 
such that $v(\psi)<1$ and $w(\psi)=1$ for every extension $w\neq v$ of 
$\eta_{K,(\gamma,1)}$ to $K(X)$. Up to replacing $\psi$ by $\psi^N$
for $N$ large enough, we may assume that there exists $\phi\in K(T_1,\ldots, T_n)$
such that $v(\psi)=\eta_{K,(\gamma,1)}(\phi)$. 
Up to shrinking $\Omega$ (and $X$) we can assume that 
$\phi$ is a $K$-regular function on
$\Omega$ and $\psi$ is a $K$-regular function on $X$. By construction $\abs \phi<1$ 
at every point of $D$ and $\abs \psi=\abs \phi$ identically on $G$. 
As $\nu(G)>0$ every point $z$ of $D$ has a pre-image $s$ on $G$, and $\abs{\psi(s)}$ is then equal to $\abs{\phi(z)}$.

Our purpose is now to show that for every $z\in D$ there is a unique element $s$ of 
$\pi^{-1}(z)$
with
$\lambda(s)=d$ and $\abs {\psi(s)}=\abs{\phi(z)}$; 
by $Kd$-atomicity of $D$ it suffices
to exhibit one model $N$ of $\acvf$ containing $Kd$ and one point $z\in D(N)$
for which this property holds. 

\subsubsection{}
Let $M$ be an algebraically closed
valued field containing $K$
such  that $\abs{M^\times}=\abs{K^\times}$
and $z\in \k(M)$. 
Let $N$ be a model of $\acvf$ containing $M$
and an $M$-Zariski generic $n$-uple $\xi=(\xi_1,\ldots, \xi_n)$ such that 
the valuation of $M(T_1,\ldots, T_n)$ induced by 
$\xi$ is the composition $u\circ \eta_{M,(\gamma, 1)}$
where $u$ is the lexicographic valuation centered at the $\k(M)$-rational
point $h(z)$ of $\A_{\k(M)}^{n-r}$, with respect to the regular system of parameters
$(T_i-h_i(z))_{r+1\leq i\leq n}$. The point $\xi$ belongs to $D(N)$
and we are going to show that 
that there is a unique $\omega\in \pi^{-1}(\xi)$ 
with $\lambda(\omega)=d$
and $\abs{\psi(\omega)}=\abs{\phi(\xi)}$. We know by the above that there is at least one such
$\omega$, so we fix one. 

\subsubsection{}\label{v-w}
The point $\omega$ is $M$-Zariski generic on $X$, and the induced valuation 
on $M(X)$ is of the form $\theta\circ w$, where $w$ is an extension of $\eta_{\gamma,1}$
to $M(X)$ and $\theta$ is some valuation on $\k((M(X),w))$ lying above $u$. 
Its restriction to $K(T_1,\ldots, T_n)$ is thus equal to $u'\circ \eta_{K,(\gamma,1)}$, 
where $u'$ is the restriction of $u$ to $\k(K)(T_{r+1},\ldots, T_n)$. 
But by construction the center of $u$ on $\A^{n-r}_{\k(M)}$ lies above the generic
point of $\A^{n-r}_{\k(K)}$, so the valuation $u'$ is trivial, and the valuation on 
$K(T_1,\ldots, T_n)$ induced by $\omega$ is finally equal to 
$\eta_{K,(\gamma,1)}$. Hence the valuation induced
by $\omega$ on $K(X)$ is an extension of $\eta_{K,(\gamma,1)}$, 
and the equality $\abs{\psi(\omega)}=\abs{\pi(\xi)}$ ensures (by the choice of 
$\psi$)
that this extension is precisely
$v$. Otherwise said, the type of $\omega$ over $K$ is equal to $p$. 

Now as $v$ is a valuation extending the Gauss valuation 
$\eta_{K,(\gamma, 1)}$ to $K(X)$, and $w$
is an valuation extending the Gauss valuation 
$\eta_{M,(\gamma,1)}$ to $M(X)$, it follows from
\cite[\S A.1.3]{ducros-h-l-y2024} that $w$ is the unique extension of
$\eta_{M,(\gamma, 1)}$ to $M(X)$
restricting to $v$, and that $\RV(M(X),w)$ is the (generalized) fraction field of
the tensor product
$\RV(K(X), v)\otimes_{\RV(K)}\RV(M)$. 
As $\abs{M\gpm}=\abs{K\gpm}$ this means that 
$w(M(X)\gpm)$ is equal to $v(K(X)\gpm)$ and that 
$\k(M(X),w)$ is the usual fraction field of
$\k(K(X),v)\otimes_{\k(K)}\k(M)$. 

But $v$ is the valuation 
on $K(X)$ induced by $x$, 
for the latter lies on $G$. 
So one has 
$\k(K(X), v)=\k(K)(Z')$, 
$\k(M(X),w)=\k(M)(Z')$ and 
\[w(M(X)\gpm)=v(K(X)\gpm)=\abs{K\gpm}\gamma^\Z.\]

The valuation induced on $M(X)$ by
$\omega$ is equal to $\theta\circ w$. 
As $\lambda(\omega)=d=(\gamma,z)$, it follows that $\rv(f_i(\omega))=z_i$ for all $i\geq r+1$, so that 
the valuation $\theta$ of $\k(M)(Z')$ is necessarily centered at the $\k(M)$-rational point $z$ of the
scheme
$Z'_{\k(M)}$. 

\subsubsection{}\label{valuation-unique}
The map $h\colon Z_{\k(M)}'\to \A^{n-r}_{\k(M)}$ is étale. The valuation $\theta$ is therefore the only valuation on 
$\k(M)(Z')$ centered at $z$ and lying above $u$, and 
one has moreover the equality
$(\k(M)(Z'),\theta)\h=(\k(M)(T_{r+1},\ldots, T_n),u)\h$. 

Indeed, let $S$ denote the spectrum of 
the ring of integers of $(\k(M)(T_{r+1},\ldots, T_n),u)\h$, 
and let $s$ be its closed point.  
The special fiber of
the $S$-scheme $Z'_{\k(M)}\times_{\A^n_{\k(M)}}S$ has only one pre-image $t$
over $z$, and $t$ and $s$ have the same residue field
since $z$ and $h(z)$ have the same residue field $\k(M)$. As
$S$ is henselian the connected
component $S'$ of $Z'_{\k(M)}\times_{\A^n_{\k(M)}}S$ that contains $t$ is the spectrum of 
a finite local $S$-scheme; since $Z'_{\k(M)}\times_{\A^n_{\k(M)}}S$ is étale over $S$
and since $s$ and $t$ have the same residue field, $S'\simeq S$. Now is $\varpi$ is any
valuation on $\k(M)(Z')$ centered at $z$ and lying above $u$ and if $S''$ denotes the spectrum
of the ring of integers of $(\k(M)(Z'),\varpi)\h$, then $S''$ dominates $S'$, so is equal to $S'$
as the latter is already the spectrum of a henselian valuation ring (because $S'\simeq S$). 
This proves at the same time the uniqueness of $\varpi$ and the fact that its henselization 
coincides with that of $u$. 

\subsubsection{}\label{immediate-extension}
By the above, the valuations $w$ and $\theta$ are uniquely determined; \ie, they do not depend
on the choice of $\omega$; therefore
the type of $\omega$ over $M$ (which is entirely encoded
in the valuation $\theta\circ w$) also does not depend on $\omega$. 
And since $\theta$ and $u$ have the same henselization, the former is an immediate 
extension of the latter. We have also seen at the end of \ref{v-w}
that
$w(M(X))\gpm)=\abs{K\gpm}\gamma^\Z$, which implies that $w(M(X))\gpm)
=\eta_{M,(\gamma,1)}(M(T_1,\ldots, T_n)\gpm)$; thus $\theta \circ w$ is an immediate extension 
of $u\circ \eta_{M,(\gamma,1)}$.

\subsubsection{}
Now consider all pairwise distinct
pre-images $\omega=\omega_1,\ldots, \omega_\ell$ of $\xi$ on $\Delta(N)$ at
which $\abs \psi=1$. 
Fix $j$. 
It follows from \ref{immediate-extension} 
that $M(\omega)$ is an immediate
extension of $M(\xi)$. As the latter is Abhyankar over $M$, which is itself
algebraically closed, it is defectless. Lemma \ref{etale-isomorphism}
then ensures that $M(\omega)\h=M(\xi)\h$. 
As a consequence
$\omega$ is definable over $M(\xi)$, so there exists an $M$-definable map $\tau$
from $\A^n$ to $X$ such that $\tau(\xi)=\omega$, so 
$\tau(\pi(\omega))=\omega$. But by \ref{immediate-extension}, all $\omega_j$ have the same type over $M$, 
so $\tau(\pi(\omega_j))=\omega_j$ for all $j$, which means that $\omega_j=\omega$. 

\subsubsection{Conclusion}
We have thus proved that for every $z\in D$ there is a unique element
$s$ of 
$\pi^{-1}(z)$ such that $\lambda(s)=d$ and  $\abs{\psi(s)}=\abs{\phi(z)}$, which we denote by $\sigma(z)$. 
Now let $t$ be an element of $\pi^{-1}(z)\cap \lambda^{-1}(d)$ different from 
$\sigma(z)$. The valuation $w$ induced by $t$ on $K(X)$ is an extension of
$\eta_{K,(\gamma,1)}$, and it is not equal  to $v$ since
$\abs{\psi(t)}\neq 1$. Then $w(\psi)=1 $, which
exactly means that $\abs{\psi(t)}=1$, and we are done (there is no explicit $X'$
because we have shrunken $X$ several times during the proof). 
\end{proof}

\section{Nash structure of curves: covering by nice charts}\label{nash-structure-coverings}
Our purpose is now
to elucidate the structure of the stable completion of a curve beyond its topology. 
The genuine structure theorem we have in mind will in fact be stated and proved only in the next section, 
but it will rest on a first structure theorem 
(with a less tractable formulation) which is the object of the current section. 

We will use freely the results of sections \ref{topology-curve} and \ref{topology-curve-complement}
on the topology of curves, and especially the notion of an admissible skeleton
(Definition \ref{defi-admissible-skeleton}) and the associated maps
$\rho,\widehat \rho, \rr$ and $\wrr$ (\ref{rho}, \ref{construction-retraction-huber}).

\begin{defi}\label{defi-nice-charts}
Let $K$ be a  valued field
and let 
$X$ be an algebraic $K$-curve. A \textit{nice $K$-chart}
on $X$ is a pair $(U,f)$ where $U$ is a $K$-definable subset of $X$
and where
$f\colon X'\to \Omega$ is an étale map defined over $K$ whose source $X'$ is a $K$-Zariski open subset of $X$ containing $U$
and whose target $\Omega$ is a dense $K$-Zariski open subset of $\A^1_K$; moreover
we require that
there exist a $K$-regular map $\psi$ on $X'$, a $K$-regular map
$\phi$ on $\Omega$,  and that one of the following hold:  

\begin{itemize}[label=$\diamond$]
\item (Charts of type 1) The set 
$\Omega$ contains an open disc $D$ centered at the origin
and:
\begin{itemize}[label=$\bullet$]
\item for all $z\in D$ one has
$\abs{\phi(z)}<1$ and 
there exists a unique element $\sigma(z)$ in 
$f^{-1}(z)$ 
such that $\abs{\psi(\sigma(z)}=\abs{\phi(z)}$; 
\item for every $z\in D$ and every $t\neq \sigma(z)$ in $f^{-1}(z)$
one has $\abs{\psi(t)}=1$;
\item $\sigma(D)=U$. 

\end{itemize}

\item (Charts of type 2)
The map $f$ is finite, the chart $(U,f)$ comes
implicitly with an extra datum consisting of a $K$-definable
point $s$ of $\widehat U$ called the \emph{vertex} of the chart
such that $\{s\}$ is an admissible skeleton
of the stable completion $\widehat U$, 
the image $f(s)$ is equal to $\eta_{0,\abs{f(s)}}$ 
and $s$ is the unique pre-image of $\eta_{0,\abs{f(s)}}$ 
on $\widehat U$, 
and there exist a
family $h=(h_1,\ldots, h_m)$ of regular functions
on $X'$,  and for every $\zeta\in C_{s,\widehat U}$ an open disc
$D_\zeta$ of radius
$\abs {f(s)}$, a finite subset $A_\zeta$ of $\k^m$ and an element
$a_\zeta$ of $A$, satisfying the following, where
$\lambda$ denotes
the map $\rv\circ h\colon X'\to \RV^m$: 

\begin{itemize}[label=$\bullet$] 
\item $D_\zeta$, $A_\zeta$ and $a_\zeta$ depend $K$-definably 
on $\zeta$; 
\item for every $\zeta$
one has $f^{-1}(D_\zeta)=\coprod_{a\in A_\zeta}\lambda^{-1}(a)$; 
\item for every $\zeta\in C_{s,\widehat U}$ and every $z\in D_\zeta$ one has 
$\abs{\phi(z)}<1$; 
\item for every $\zeta\in C_{s,\widehat U}$ and every $z\in D_\zeta$
there is a unique point $\sigma(z)$
in $f^{-1}(z)\cap \lambda^{-1}(a_\zeta)$ such that 
$\abs{\psi(\sigma(z))}=\abs{\phi(z)}$; 
\item for every $\zeta\in C_{s,\widehat U}$, every $z\in D_\zeta$
and every $t\neq \sigma(z)$ in $f^{-1}(z)\cap
\lambda^{-1}(a_\zeta)$
one has $\abs{\psi(t)}=1$;
\item for every $\zeta\in C_{s,\widehat U}$ one has
$\sigma(D_\zeta)=\rr_{U,\{s\}}^{-1}(\zeta)$. 
\end{itemize}

\item (Charts of type 3)
The map $f$ is finite, the
set $\Omega$ contains an open annulus
$A=\{R_1<\abs T<R_2\}$ of $\A^1$
with $R_1$ and $R_2$ in $\abs K \cup\{\infty\}$
such that: 
\begin{itemize}[label=$\bullet$]
\item $\abs{\phi(z)}<1$ for all $z\in A$; 
\item for all $z\in A$ there exists a unique element $\sigma(z)$ in 
$f^{-1}(z)$ 
such that $\abs{\psi((\sigma(z))}=\abs{\phi(z)}$; 
\item for every $z\in A$ and every $t\neq \sigma(z)$ in $f^{-1}(z)$
one has $\abs{\psi(t)}=1$; 

\item $\sigma(A)=U$. 

\end{itemize}

\end{itemize}

\end{defi}

\subsection{}Let
$K$ be a valued field and let $(U,f)$ be a nice
$K$-chart. 
In what follows,
we will distinguish three cases according to the type
of the chart. For each type we will use the notation of the corresponding definition above. 

\subsubsection{The case where $(U,f)$ is of type 1}
It follows then from Example \ref{example-nash-iso}
that $f$ induces a Nash isomorphism between $U$ and $D$, 
and from Lemma \ref{lemm-nash-topology}
that $\widehat U$ is open in $\widehat {X'}$, hence
in $\widehat X$, and that it is closed in $f^{-1}(\widehat D)$.

\subsubsection{The case where
$(U,f)$ is of type 2}\label{nice-charts2-definable-homeo}
Let $\zeta\in C_{s,\widehat U}$. The pre-image
$f^{-1}(\widehat{D_\zeta})$ is an open $K\zeta$-definable subset of $\widehat {X'}$
(so it is open in 
$\widehat X$ too). 
By assumption
one has $f^{-1}(\widehat{D_\zeta})=\coprod_{a\in A_\zeta}\widehat{\lambda^{-1}(a)}$; each of the 
$\widehat{\lambda^{-1}(a)}$ is open in $\widehat{X'}$ (it is described by strict inequalities), so in particular
$\widehat{\lambda^{-1}(a_\zeta)}$ is open and closed in $f^{-1}(\widehat{D_\zeta})$. 

As $\wrr^{-1}(\zeta)$ is by definition of 
a
nice $K$-chart of type 2 the intersection of $\widehat{\lambda^{-1}(a_\zeta)}$
with $\{\abs \psi<1\}$, it is open in $f^{-1}(\widehat{D_\zeta})$. 
As a consequence,
and since the étale map $X'\to \Omega$ induces
an open map at the level of stable completions
(Proposition \ref{flat-open}), the definable bijection $f|_{\rr^{-1}_{U,s}(\zeta)}\colon 
\rr^{-1}(\zeta)\to D_\zeta$
induces a definable homeomorphism $\wrr_{U,s}^{-1}(\zeta)\simeq \widehat{D_\zeta}$, 
so $f$ induces a Nash ismorphism 
$\rr^{-1}_{U,s}(\zeta)\simeq D_\zeta$. 
In view of Lemma \ref{lemm-nash-topology}, 
$\wrr_{U,s}^{-1}(\zeta)$ is open in $\widehat{X'}$, 
hence in $\widehat X$, and it is closed
in $f^{-1}(\widehat D_\zeta)$.

\subsubsection{The case where $(U,f)$ is of type 3} 
It follows then from Example \ref{example-nash-iso}
that $f$ induces a Nash isomorphism between $U$ and $A$, 
and from Lemma \ref{lemm-nash-topology}
that $\widehat U$ is open in $\widehat {X'}$, hence
in $\widehat X$,
and that it is closed in $f^{-1}(\widehat A)$.

\begin{rema}\label{common-remark}
Let $(U,f)$ be a nice $K$-chart of type (1) or (3). 
It follows from \ref{lemm-same-henselization} that for every model $M$ of $\acvf$ containing $K$
and every $M$-point of $x$ of $U$ the valued fields $M(u)$ and $M(f(u))$
have the same henselization; in particular they have the same value group and the same 
residue field. It follows that if $x$ is a non-simple point of $\widehat U$, the map
$f$ induces an isomorphism between the residue curves $C_x$ and $C_{f(x)}$. 
\end{rema}

\begin{rema}
If $a\in K^\times$ and $(U,f)$ is a nice chart of type 1, \resp 2, \resp 3, then 
so is $(U,f/a)$ (with the same vertex in the case of type 2). Therefore if $\abs {K^\times}$ is
divisible, so that every $K$-definable element of $\Gamma$ belongs to $\abs{K^\times}$, 
every nice chart of type 1 (\resp 2) can be rescaled so that $D$ is of radius $1$
(\resp so that the vertex lies above $\eta_{0,1}$). 
\end{rema}

\begin{theo}\label{theo-core}
Let $K$ be an algebraically closed valued field, 
let $X$ be a
smooth algebraic $K$-curve and let $U$ be a 
$K$-definable subset of $X$ such that $\widehat U$ has no
isolated simple point. There exist finitely many nice $K$-charts $(U_i,f_i)$
on 
the curve $X$
such that $\widehat U=\bigcup \widehat {U_i}$. 
\end{theo}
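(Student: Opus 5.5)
A compactness argument will reduce the statement to a pointwise one. First I will show that every point $x\in\widehat U$ lies on some nice $K'$-chart, over some parameter set $K'$, whose stable completion is a neighbourhood of $x$ in $\widehat U$. Then I will use compactness and definability to extract finitely many $K$-definable charts. The family of pairs $(U_i,f_i)$ is definably parametrized, since the defining conditions in Definition \ref{defi-nice-charts} are first order. So for the final step I will cover $\widehat U$ by the charts constructed over algebraically closed extensions $M$ and apply the usual compactness principle for definable types on the strictly pro-definable $\widehat U$. The covering charts can then be taken $K$-definable, using $K=\overline K$ and the fact that definable families have $K$-points. I will organize the cover through an admissible $K$-skeleton $\Sigma$ of $\widehat U$ containing all $U$-branches at its simple points (Lemma \ref{lemm-u-branches}), chosen adapted to the relevant sets (Lemma \ref{exist-adapted}) and with only non-simple vertices of valence $\neq 2$ after enlarging.

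The points are then handled in four groups. Near the finitely many vertices $s$ of $\Sigma$ (type 2 points, possibly with positive genus residue curve), I want a chart of type 2 with vertex $s$. Pick a $K$-rational function $f$ whose reduction $\rv(f(s))$ is a separable generalized coordinate on $C_s$, giving a finite map $X'\to\Omega$ with $s$ the only preimage of $\eta_{0,\abs{f(s)}}$ near $s$. Then apply Theorem \ref{maintheo-balls}, in the one-dimensional case with $e=0$, to a point $x$ in a generic residue class $\zeta$; this uses that $K(x)$ is Abhyankar, being of residual transcendence degree one. Since that theorem provides the data $\phi,\psi,h,A,a$ uniformly in the Zariski-dense parameter $c$, the atomicity of $\rv^{-1}(c)$ gives the uniform behaviour over all $\zeta$. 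For the finitely many non-generic $\zeta$, and for the open edges of $\Sigma$, I will first shrink: $\widehat U$ minus small open stars of the vertices is covered by the closures of finitely many annuli, namely the edges with their fibres, and by discs hanging off them. On an open edge, a point $x$ of type 2 with residue curve of genus $0$ lies on an annulus containing the edge germ. Applying Theorem \ref{maintheo-balls} with $e=1$ at a point realizing the generic type of the edge gives a type 3 chart, using the Abhyankar basis $(\gamma)$ and an annulus $\rv^{-1}$-preimage. Points outside $\Sigma$ lie in components $\wrr^{-1}(\zeta)$ which are trees (\ref{complement-of-skeleton}). Here I again use Theorem \ref{maintheo-balls} at generic points to get type 1 charts on discs. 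I will deal with the remaining possibly non-generic (simple) points by enlarging $\Sigma$ by a finite set, which requires that $\widehat U$ have no isolated simple points. Smooth simple points then lie on open discs centred at them via a uniformizer, and Example \ref{example-nash-iso} shows these are type 1 charts.

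I expect the main obstacle to be passing from the pointwise, generic charts of Theorem \ref{maintheo-balls} to finitely many charts covering everything. This means showing that each chart covers an actual neighbourhood of its point and not just a type, and controlling the non-generic residue classes at vertices. I would first prove openness of each chart's stable completion in $\widehat X$, using the statements in \ref{nice-charts2-definable-homeo} and the type 1 and type 3 analogues. Then I would reduce covering to covering each of finitely many definable pieces $\Sigma$-vertex stars, edges and fibre families, invoking definable compactness of $\widehat{U}$'s closure piece by piece and $K$-atomicity of the balls to spread genericity.
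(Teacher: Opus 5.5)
There is a genuine gap, and the first place it appears is the globalization step. You cannot replace charts defined over varying parameters by $K$-definable ones in the way you propose. Take $x\in\widehat U(M)$ and a $K$-definable family of charts $(V_b)_b$. The set of $b$ with $x\in\widehat{V_b}$ is defined over the parameters of $x$, not over $K$, so the fact that nonempty $K$-definable sets have $K$-points does not apply. For instance, a type 2 chart whose vertex is a non-$K$-definable point $\eta_{a,r}$ has no $K$-definable substitute with that vertex. Compactness over varying parameters only gives finitely many \emph{families} of charts, not finitely many charts.

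What you actually need is, for each $x$, a $K$-definable chart $V\subset U$ with $x\in\widehat V$. No neighbourhood property is required, and type 2 charts are in general not neighbourhoods of their vertex anyway. The paper first reduces to $K$ maximally complete, which is allowed since being a nice $K$-chart is first order. It then realizes $x$ by a simple point $\alpha$ and chooses the chart according to $\mathrm{tp}(\alpha/K)$. By Abhyankar's inequality there are three cases:
\begin{itemize}
\item $K(\alpha)/K$ is immediate. Then $\alpha\in X(K)$, and the Newton polygon argument gives a type 1 chart; this is where non-isolatedness of simple points is used.
\item The value group grows. Lemma \ref{etale-isomorphism}, together with the fact that $\abs{f(\alpha)}$ is not torsion over the divisible group $\abs{K^\times}$, yields a $K$-definable annulus and hence a type 3 chart.
\item The residue field grows. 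Then $\mathrm{tp}(\alpha/K)$ is itself a $K$-definable non-simple point $s$, and Theorem \ref{maintheo-balls} with atomicity produces a type 2 chart with vertex $s$.
\end{itemize}
In every case $\alpha\in V$ gives $x\in\widehat V$, and compactness then yields finitely many charts.

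The second problem is that organizing the cover through a fixed admissible skeleton $\Sigma$ presupposes what is being proved. For an arbitrary admissible $\Sigma$, nothing tells you that the edges with their fibres are annuli, that the components of $\widehat U\setminus\Sigma$ are discs, or that points with positive genus residue curve are vertices. For example, take an elliptic curve with good reduction and $\Sigma=[p,q]$, with $p$ rational and $q$ a type 2 point between $p$ and the Gauss point. The component of $\widehat X\setminus\Sigma$ containing the Gauss point is a one-ended tree containing a point with genus $1$ residue curve, so it is not Nash-isomorphic to a disc. Statements of this kind are Theorems \ref{properties-triangulation} and \ref{definability}, which the paper deduces from the present theorem.

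Concretely, your plan does not cover the infinitely many $K$-definable type 2 points lying on edges or off $\Sigma$. Nothing guarantees that the annuli or discs built at $K$-generic points reach them; in the paper they are covered by type 2 charts having them as vertices. Note also that Theorem \ref{maintheo-balls} with $e=1$ yields the $Kd$-definable disc $\rv^{-1}(c)$ of radius $\gamma$, not a $K$-definable annulus. Finally, charts centred at $K$-points do not handle simple points $\alpha$ with $K(\alpha)/K$ immediate but $\alpha\notin X(K)$, which is why the paper first makes $K$ maximally complete.
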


\begin{proof}
It is quite long,  with several steps.  Its core relies on Theorem
\ref{maintheo-balls} that exhibits atomic balls. 

\subsubsection{Preliminaries}
We can assume that $K$ is maximally complete.  Indeed, 
if $K$ is trivially valued there is nothing to do.  If not, $K$
is a model of $\acvf$ and as
a nice $K$-chart can be described using finitely many parameters in $K$,  and as
being a
nice $K$-chart is a first-order formula in those parameters, we can prove 
the theorem on any model larger than $K$, which allows one
to assume that $K$ is maximally complete. 

Let $x\in \widehat U(M)$
for some model $M$ of $\acvf$ containing
$K$. By compactness it suffices to exhibit a nice $K$-chart $V\subset U$
such that $x\in \widehat V$. 
Let $\alpha\in U(N)$ be a realization of $x$ (which is a type over $M$) for some model $N\supset M$. 
We shall distinguish three cases according to the type of 
$\alpha$ \textit{over $K$}.

\subsubsection{The ``immediate extension" case}\label{implicit-function-theorem}
Assume that $K(\alpha)$ is an immediate extension of $K$. 
Since $K$ is maximally complete this amounts to saying that $\alpha\in X(K)$. 
Our purpose is to build a nice $K$-chart $(V,f)$ of type 1
with $V\subset U$ such that $\alpha\in V$, 
which will ensure that $x\in \widehat V$. 

As the curve $X$ is smooth, there exists some $K$-Zariski neighborhood $X'$ of 
$\alpha$ in $X$ and a $K$-definable étale map $f$ from $X'$ to
$\A^1$ mapping $\alpha$ to the origin.
In view of the local structure of étale maps we can assume
that there exists an affine $K$-Zariski open neighborhood 
$\Omega=\spec R$ of $0$ in $\A^1_K$ and a monic polynomial $P\in R[\tau]$
such that $X'$ can be identified with the invertible locus of $P'$ in $\spec R[\tau]/P$.
Now $\beta=\tau(\alpha)$ is a simple root of $P_0$, 
where $P_0$ denotes the polynomial in $K[\tau]$ obtained by evaluating each coefficient
of $P$ at $0$. By suitably translating $\tau$ we can assume that $\beta=0$, so $P=\sum a_i\tau_i$ with 
$a_0(0)=0$ and $a_1(0)\neq 0$ (because $\beta=0$ is a simple root of $P_0$). 

We handle
first the case where $K$ is trivially valued.
Let $F$ be any model of $\acvf$ containing $K$ and let $z$ be an element of $F$ with
$\abs z<1$. As all $a_i$ are rational functions with coefficients in $K$ and no poles at the origin,
and as $a_1(0)\neq 0$ does not vanish at the origin, we have $\abs{a_0(z)}<1$, $\abs{a_1(z)}=1$ and
$\abs{a_i(z)}\leq 1$ for all $i\geq 2$. Then the basic theory of Newton polygons show that
the monic polynomial $P_z$ has a unique root $\sigma(z)$ with $\abs{\sigma(z)}<1$ and that 
$\abs{\sigma(z)}=\abs{a_0(z)}$, and that $\abs \gamma=1$ for every root
$\gamma\neq \sigma(z)$ of $P$; it is also clear that $\sigma(z)\in X'$. 
Let $D$ be the open unit disc centered at the origin
(note that $D\subset \Omega$ since $K$ is trivially valued) and 
set $V=\sigma(D)$. 
We observe that $\sigma(0)$ is necessarily equal to $\alpha$. 
Then $(V,f)$ is a nice $K$-chart of type $1$ that contains $\alpha$ (it fulfills the definition with 
$\phi=a_0$ and $\psi=1$); moreover $\sigma^{-1}(U)$ is a $K$-definable subset of $D$ containing
the origin, so it is either $\{0\}$, either $D$, and it cannot be  $\{0\}$ since $x$ is not isolated in $\widehat U$; 
hence $V\subset U$ and we are done.

Assume now that the valuation of $K$ is non-trivial, so
$K$ is a model of $\acvf$. Again, the theory of Newton polygons ensures that there exists 
an open $K$-definable disc $D$ centered at the origin and contained in $\Omega$ such that: 

\begin{itemize}[label=$\diamond$]
\item $a_1$ does not vanish on $D$;
\item for every point $z$ of the disc $D$,
the polynomial $R_z$ has a unique root $\sigma(z)$ such that
$\abs {\sigma(z)}=\abs{a_0(z)/a_1(z)}$, and $\sigma(z)$ lies in $X'$; 
\item there exist some $\lambda \in \abs{K^\times}$ such that for every $z\in D$
and every root $\gamma\neq \sigma(z)$ of $P_z$ one has $\abs{\gamma}
> \abs \lambda >\abs{a_0(z)/a_1(z)}$.
\end{itemize}
Now  after shrinking if needed $\Omega$ (and $X'$) so
that $a_1\in \mathscr O(\Omega)^\times$, we set
$\phi=\frac 1 \lambda \cdot \frac {a_0}{a_1}$; this is a $K$-regular function on 
$\Omega$. And after shrinking once again if needed $\Omega$
and $X'$ so that 
for all $z\in \Omega$ the polynomial $P_z$ does not vanish at $\lambda$, we set
$\psi=\frac \tau{\tau-\lambda}$; this is a $K$-regular function on $X'$. By construction
for every $z\in D$ and every $t\in f^{-1}(z)$ either $t=\sigma(z)$ and $\abs{\psi(t)}=\abs {\phi(z)}<1$,
or $t\neq \sigma(z)$ and $\abs{\psi(t)}=1$.
As $x$ is not isolated in $\widehat U$ we can shrink $D$ so that $\sigma(D)\subset U$. 
Set $V=\sigma(D)$. Then $(V,f)$ is a nice $K$-chart of type $1$ 
with $\alpha\in V\subset U$, and we are done.

 \subsubsection{The ``group extension" case}
 Assume now that $\abs{K(\alpha)^\times}\supsetneq \abs{K^\times}$. 
 Using Abhyankar inequality and the fact that 
 $\abs{K^\times}$ is divisible, we see that $\abs{K(\alpha)^\times}$ is equal to $\abs {K^\times} \oplus r^\Z$ for some 
 $r\notin \abs{K^\times}$, and that $\k(K(\alpha))=\k(K)$. 
 Choose a $K$-Zariski open subset $X'$ in $X$ such that $\alpha\in X'$ and a $K$-regular function $f$
 on $X'$ such that $\abs{f(\alpha)}=r$. Then $f(\alpha)$ does not belong to $K$, so $f$ is quasi-finite at $\alpha$; and $K(\alpha)$
 is by construction an immediate extension of $K(f(\alpha))$, which is Abhyankar over $K$ and thus defectless. 
 By Lemma \ref{etale-isomorphism} there exists a $K$-Zariski open
 neighborhood $X'$ of $\alpha$ in $X$
 such that $f$ induces a finite étale map 
 from $X'$ to some $K$-Zariski dense open $\Omega$ of $\A^1_K$, 
 a $K$-regular function $\psi$ on $X'$ such that $\abs {\psi(\alpha)}<1$, 
 a $K$-regular function $\phi$
 on $\Omega$  and a $K$-definable subset $D$
 of $\Omega$ containing $f(\alpha)$ and satisfying the following property: 
 for every $z\in D$ one has $\abs{\phi(z)}<1$, there is a unique element 
 $\sigma(z)$ of $f^{-1}(z)$ such that $\abs{\psi(\sigma(z))}=\abs{\phi(z)}$, 
 and $\abs{\psi(t)}=1$ for every $t\neq \sigma(z)$ in
 $f^{-1}(z)$.

Set $\beta=f(\alpha)$. As $\abs \beta$
is not torsion modulo the divisible group 
$\abs {K^\times}$ one has $\abs{\sum \lambda_i \beta^i}=\max \abs{\lambda_i}\cdot \abs \beta^i$
for every polynomial $\sum \lambda_i T^i$ in $K[T]$; thus if $P\in K[T]$ is
such that $\abs{P(\beta)}\leq 1$ (\resp $<1$) then there exists a $K$-definable open annulus containing
$\beta$ on which
 $\abs P\leq 1$ (\resp $<1$). Consequently, the $K$-definable subset $\sigma^{-1}(U)$ of $D$, which contains
 $\beta$, contains a $K$-definable open annulus $A$ with $\beta \in A$. 
 Set $V=\sigma(A)$. Then by construction $(V,f)$ is a nice $K$-chart of
 type 3 contained in $U$; since $\alpha\in V$, the stable completion of $V$ contains $x$. 
 
 \subsubsection{The ``residue extension" case}
 We finally assume that $\k(K(\alpha))\supsetneq \k(K)$, so by Abhyankar inequality and by divisibility of $\abs{K^\times}$
 the group $\abs{K(\alpha)^\times}$ is equal to $\abs {K^\times}$ and the field $\k(K(\alpha))$ is a function field of transcendence degree 
 $1$ over $\k(K)$. Up to shrinking $X$ (\ie, replacing it by a
 $K$-Zariski open subset containing $\alpha$) we can choose a family
 $\lambda=(f_1,\ldots, f_m)$
 of $K$-regular invertible functions on $X$ such that $\abs{f_i(\alpha)}=1$ for all $i$ and such that the $\rv(\lambda_i(\alpha))$
 generate $\k(K(\alpha))$ over $\k(K)$. We set $\lambda =(\rv \circ f_i)_i\colon X\to\RV^m$ and $d=\lambda(\alpha)$.

\paragraph{}\label{properties-core-case}
In view of Theorem \ref{maintheo-balls} we can shrink $X$ once again to ensure that there
exist: 
 \begin{itemize}[label=$\diamond$]

 \item a $K$-definable finite étale map $f\colon X\to \Omega$ where $\Omega$ is a $K$-Zariski dense
 affine open subset of
 $\A^1_K$, a $K$-regular function $\phi$ on $\Omega$
 and a $K$-regular function $\psi$ on $X$; 
 \item a $K$-transcendental $d$-definable element $c$ of $\k$,
such that the open $Kd$-definable ball $D:=\rv^{-1}(c)\subset \Omega$ contains $f(\alpha)$ and is
$Kd$-atomic; 
 \item a finite $c$-definable subset $A$ of $\k(K)^m$ containing $d$,
\end{itemize}
such that: 
\begin{itemize}[label=$\bullet$]
\item $f^{-1}(D)=\coprod_{a\in A}\lambda^{-1}(a)$; 
\item for every $z\in D$ one has
$\abs{\phi(z)}<1$ and there exists a unique element $\sigma(z)$
of $f^{-1}(z)\cap \lambda^{-1}(d)$ such that $\abs{\psi(\sigma(z))}=\abs{\phi(z)}$; 
\item for every $z\in D$ one has $\abs{\phi(z)}>1$ and 
and for every $t\neq \sigma(z)$  in $f^{-1}(z)\cap \lambda^{-1}(d)$ one has
$\abs{\psi(t)}=1$. 
\end{itemize}

Set $E=\sigma(D)$. This is a $Kd$-definable subset of $X$
definably isomorphic to $D$ via $f$, so it is in particular 
$Kd$-atomic. We also have
$E=f^{-1}(D)\cap \lambda^{-1}(d)\cap \{\abs \psi=\abs \phi\}$
as well as $E=f^{-1}(D)\cap \lambda^{-1}(d)\cap \{\abs \psi<1\}$; 
applying stable completion to both sides of the
latter equality shows
that $\widehat E$ is open in $\widehat X$. 
By openness of étale (in fact, even flat)
maps at the
level of stable completions (Proposition \ref{flat-open}), 
the map $f$ induces a Nash isomorphism between $E$ and
$D$.

 The type of $\alpha$ over $K$ is stably dominated, so it defines a point
 $s$ of $\widehat X(K)$, which lies on $\widehat U(K)$. As $\rv(f(\alpha))=c$, which is $K$-transcendental, the
 image $f(s)$ is the Gauss point $\eta_{0,1}$ of $\widehat{\A^1_K}$. Note that $\eta_{0,1}\notin
 \widehat D$, so $s\notin \widehat E$. 
 The fiber $f^{-1}(\eta_{0,1})$ in $\widehat U$ is finite and $K$-definable. In view of the explicit description 
 of the topology of $\widehat X$ there exists a $K$-definable subset $U'$ of $U$, defined by a conjunction of 
 strict inequalities, such that $s$ is the unique pre-image of $\eta_{0,1}$ on
 $\widehat {U'}$. Then as $s\in \widehat{U'}$ the point $\alpha$ belongs to $U'$, so $U'\cap E$ is 
 a non-empty $Kd$-definable subset of $E$, hence $U'\cap E=E$ by
 $Kd$-atomicity, and $E\subset U'$. Set $U''=U'\cap \{\abs \psi=\abs \phi\}$.
 Then $E\subset U''$ and $s$ is the only pre-image of $\eta_{0,1}$ under $f$ in $\widehat{U''}$.

Let us choose a $K$-definable
 admissible skeleton $\Sigma$ of $\widehat {U''}$ containing $s$
 and set $W=\rr_{U'',\Sigma}^{-1}(s)
\cap U''$. 
Then $W$ is a $K$-definable subset of $U''$, and $\widehat W
 =\wrr_{U'',\Sigma}^{-1}(s)$, so $\{s\}$ is a $K$-definable admissible skeleton of
 $\widehat W$. As $s\in \widehat W$ the simple point $\alpha$ belongs to $W$. The intersection 
 $W\cap E$ is then a non-empty $Kd$-definable subset of $E$, so it is equal to 
 the whole of $E$; hence $E\subset W$.

\paragraph{}
Set $\xi=\rr_{W,\{s\}}(\alpha)$. 
Our purpose is now
to show that $\xi$ is inter-definable with $d$ over $K$
and that $\widehat E=\wrr_{W,s}^{-1}(\xi)$ (and so that
$E=\rr_{W,s}^{-1}(\xi)$). 

Let $o$ be a point of $\widehat E$ and let $I$ be the unique segment on $\widehat W$ with endpoints $o$ and $s$; let $t
\in (o,s]$ be the ``smallest" point of $I$ outside $\widehat E$. 
Since $f$ induces a definable homemorphism
from $\widehat E$ to $\widehat D$, the image $f([o,t))$ is an interval on $\widehat D$ that is closed in  $\widehat D$; since
moreover
$f([o,t])$
is compact, one has necessarily $f(t)=\eta_{0,1}$. As $s$ is the only pre-image of $\eta_{0,1}$ on $\widehat W$, we
see that $t=s$, so $[o,s)\subset \widehat E$ and
$\widehat E\cup\{s\}$ identifies to the canonical compactification of $\widehat E$ (which is a tree with one branch at infinity, 
since it is homeomorphic to $\widehat D$). 
If $ o'$ is another point of $E$, and if $I'$ denotes  the unique segment on $\widehat W$ 
joining $o'$ to $s$, 
then $I'\cap I=[o,s]\cap [o',s]$ and it is of the form $[\tau,s]$ for some $\tau \in \widehat E$; hence the germs of
$I$ and $I'$ at $s$ are the same, so $\wrr_{W,s}(o)=\wrr_{W,s}(o')$. Thus $\wrr_{W,s}$ is constant on
$\widehat E$ with value $\xi$, 
for $\wrr_{W,s}(\alpha)=\xi$. The set $E$ being $Kd$-definable, this already shows that $\xi$ is $Kd$-definable. 

Now let $w$ be a point of $\widehat W$ with $\wrr_{W,s}(w)=\xi$. 
Then the unique interval $[w,s]$ on $\widehat W$ has the same germ
at $s$ than $[\alpha,s]$, so in particular there exists $\tau\in \widehat E$ with $\tau\in [w,s]$. The intersection $\widehat E\cap [w,s)$ is then
a non-empty definable open subset of $[w,s)$ (we use the fact that 
$\widehat E$ is open in $\widehat X$). If it were not equal to the whole of $[w,s)$ the connected
component of $\tau$ in this intersection would be of the form $(\tau',s)$; then $\tau'$ would belong to
$\partial \widehat E=\{s\}$, whence a contradiction. As a consequence $w\in \widehat E$
and $\widehat E=\wrr_{W,s}^{-1}(\xi)$. As $\lambda$ takes the constant value $d$ on $E$ we see that $d$ is $K\xi$-definable, 
as announced. 

\paragraph{Conclusion}
Let $\Theta$ be the subset of $C_{s,\widehat W}$ consisting of points $\zeta$ such that there exist: 

\begin{itemize}
 \item a $K\zeta$-definable element $c_\zeta$ of $\k(K)$; 
 \item a finite $K\zeta$-definable subset $A_\zeta$ of $\k(K)^m$; 
 \item a $K\zeta$-definable element $a_\zeta$ of $A_\zeta$ 
\end{itemize}
such that the following hold, where $D_\zeta$ denotes the open unit ball  $\rv^{-1}(c_\zeta)$: 
\begin{itemize}[label=$\bullet$]
\item $f^{-1}(D_\zeta)=\coprod_{a\in A_\zeta}\lambda^{-1}(a)$;
\item $\abs \phi<1$ everywhere on $D_\zeta$; 
\item $\rr_{W,s}^{-1}(\zeta)=f^{-1}(D_\zeta)\cap \lambda^{-1}(a_\zeta)\cap \{
\abs\psi=\abs \phi\}$;
\item $\rr_{W,s}^{-1}(\zeta)=f^{-1}(D_\zeta)\cap \lambda^{-1}(a_\zeta)\cap \{
\abs\psi<1i\};$
\item the map $\rr_{W,s}^{-1}(\zeta)\to D_\zeta$ 
induced by $f$ 
is bijective. 
\end{itemize}
The set $\Theta$ is ind-$K$-definable, and it contains $\xi$ which is $\k(K))$-Zariski generic (indeed $c$ is $K\zeta$-definable and is
transcendental over $\k(K)$). Thus $\Theta$ contains a $\k(K)$-Zariski-dense open subset $C'$ of $C_{s,\widehat W}$ and we can assume
that $c_\zeta, A_\zeta$ and $a_\zeta$ depend $K$-definably and uniformly on the element $\zeta$ of $C'$. Set $V=\rr_{W,s}^{-1}(C')$. 
It follows from our construction than $(V,f)$ is a nice $K$-chart of type 2 and that $V\subset U$; and as $\xi \in C'$ by $\k(K)$-Zariski genericity, the point $a$
belongs to $V$, so $x\in \widehat V$.
\end{proof}

\section{Nash structure of curves: triangulations}
\label{nash-structure-triangulations}


We fix for the whole section a valued field $K$ and an algebraic closure
$\overline K$ of $K$, equipped with an extension of the valuation of $K$.

What will actually matter to us here is $K$-definability rather than
$K$-rationality, so it would be essentially harmless to 
replace $K$ by its definable closure, \ie,
to assume
that $K$
is perfect Henselian; but this would not make the proofs nor the statements
simpler. We denote by $G$ the automorphism group of
$\overline K$ over $K$ (as valued fields; so this is the decomposition group
of the chosen valuation on $\overline K$). 

\subsection{Split and non-split definable intervals}\label{split-intervals}
Let $I$ be a $K$-definable topological space which 
is $\overline K$-definably homeomorphic to a (generalized) open interval
$J$. As $G$ acts trivially on $\Gamma_0$, it acts trivially
on $J$ and so acts through a finite
quotient on $I$ (namely through $\mathrm{Aut}(L/K)$ where $L$ is any
finite Galois extension of $K$ inside $\overline K$ over which $I$
is homeomorphic to $J$). So if some $\sigma\in G$
preserves the two orientations of $I$ (or said otherwise, 
preserves its two branches at infinity), then $\sigma$ acts on $I$
as a monotonic torsion 
homeomorphism of an interval, so it acts trivially. 
As a consequence either $G$ acts trivially on $I$,
or it acts through
a finite quotient of order two whose non-trivial element acts as 
an order-reversing involution on $I$; such an involution has a unique
fixed point, which is a $K$-definable point of $I$ (and even the unique
$K$-definable point of $I$) and will be called its \emph{center}. 

\begin{defi}
Let $\Sigma$ be a $\Gamma$-internal one-dimensional 
$K$-definable topological space. A $K$-definable \emph{vertex set}
of $\Sigma$ is a finite $K$-definable subset $\mathscr V$ of
$\Sigma$ such that each connected component of
$\Sigma\setminus \mathscr V$ is $\overline K$-definably
homeomorphic to a (generalized) open interval. 
\end{defi}

\subsection{}
Let $\Sigma$ be a $\Gamma$-internal one-dimensional 
$K$-definable topological space and let
$\mathscr V$ be a $K$-definable vertex set of $\Sigma$. 

\subsubsection{}
If $\mathscr W$ is a $K$-definable subset of $\Sigma$ containing 
$\mathscr V$, it is immediate that $\mathscr W$ is also a 
$K$-definable vertex set of $\Sigma$. 

\subsubsection{}
Let $I\in \pi_0(\Sigma\setminus \mathscr V)$. It follows from 
\ref{split-intervals} that we are in one (and only one) of
these two cases: 

\begin{itemize}[label=$\diamond$]
\item the stabilizer of 
$I$ in $G$ acts trivially on $I$; this happens if and
only if $I$ is $L$-definably
homeomorphic to an open interval for every $K\subset L\subset \overline K$
such that $I$ is $L$-definable, or also if and only if $I$ is inter-definable with 
at least one of its two branches at infinity (it will then be inter-definable
with both); 
\item the stabilizer of 
$I$ in $G$ acts on $I$ through a quotient of order two, whose non-trivial element
acts on $I$ as an orientation-reversing involution having exactly one fixed
point $v_I$, inter-definable with $I$, which we call the center of $I$. 
\end{itemize}

\begin{enonce}[theorem]{Lemma-Definition}\label{definably-orientable}
Let $\Sigma$ be a $\Gamma$-internal one-dimensional 
$K$-definable topological space 
and let $\mathscr V$
be a $K$-definable vertex set 
of $\Sigma$. 

\begin{enumerate}[1]
\item The following are equivalent:

\begin{enumerate}[j]
\item there exists a $K$-definable family 
$(o_I)_{I\in \pi_0(\Sigma\setminus \mathscr V)}$
where each $o_I$ is an orientation of $I$;
\item for every $I\in \pi_0(\Sigma\setminus
\mathscr V)$ the two orientations of $I$
are (individually) interdefinable with $I$.
\end{enumerate}

When they are fulfilled we say that $(\Sigma, \mathscr V)$
is \emph{$K$-definably orientable}.

\item If $(\Sigma, \mathscr V)$ is $K$-definably orientable, so is
$(\Sigma, \mathscr W)$ for any finite $K$-definable set
$\mathscr W$ containing $\mathscr V$. 
\item In general, there always exists some finite $K$-definable
set $\mathscr W\supset \mathscr V$ such that 
$(\Sigma, \mathscr W)$ is $K$-definably orientable. 
\end{enumerate}
\end{enonce}

\begin{proof}
Let us prove (1). 
If (i) holds then for each $\sigma\in G$ one has
$\sigma(o_I)=o_{\sigma(I)}$, so if $\sigma(I)=I$ 
then $\sigma$ preserves $o_I$, whence 
(ii). 
Let us now assume that (ii) holds. Pick
a system of reprentatives $\mathscr J$ of the orbits of $\pi_0(\Sigma
\setminus \mathscr V)$ under $G$ and choose an arbitrary
orientation $o_J$ on $J$ for every $J\in \mathscr J$. Now if $I\in \pi_0(\Sigma
\setminus \mathscr V)$ there exist some (unique) $J\in \mathscr J$ and some
$\sigma \in \mathrm{Aut}(\overline K/K)$
such that $I=\sigma(J)$ 
and by (ii) the orientation $\sigma(o_J)$
only depends on $I$; we denote it by $o_I$, and the family $(o_I)$ is $K$-definable,
whence (i). 

Now let us prove (2). Assume
that $(\Sigma,\mathscr V)$ is $K$-definably orientable. 
If $J$ is a connected component of
$\Sigma\setminus \mathscr W$ it is contained in some connected component
$c(J)$ of $\Sigma\setminus \mathscr V$; then
if $(o_I)_I$ is as in (i), the family 
$(o_{c(J)}|_J)_{J\in \pi_0(\Sigma\setminus \mathscr W)}$ 
will be a $K$-definable system of orientations, so $(\Sigma, \mathscr W)$
is $K$-definably orientable.

Let us finally prove (3). Let $\mathscr I$ be the set of connected
components of $\Sigma\setminus \mathscr V$ 
which are not inter-definable with their orientations. 
Let $I\in \mathscr I$; for such an $I$, denote its center by $v_I$. 
Then $I\setminus \{v_I\}$ has two connected components, and
each of them is inter-definable with its two orientations 
(individually), for one of these orientations is the one pointing toward
$v_I$. 
Set $\mathscr W=\mathscr V\cup \{v_I\}_{I\in \mathscr I}$; then 
$\mathscr W$ is $K$-definable and $(\Sigma, \mathscr W)$ is $K$-definably orientable. 
\end{proof}

\begin{defi}\label{defi-strong-adm}
Let $K$ be a valued field
and let $\overline K$ be an algebraic closure of $K$ equipped with an extension 
of the valuation of $K$.  
Let $X$ be an algebraic $K$-curve and let $U$ be a $K$-definable subset of $X$. A
$K$-skeleton $\Sigma$
of $\widehat U$ is called \textit{Nash-admissible} if it is admissible
and if there exists a 
$K$-definable vertex
set $\mathscr V$ of $\Sigma$ containing
all simple points of $\Sigma$
and fulfilling the following properties:

\begin{enumerate}[1] 

\item the pair $(\Sigma, \mathscr V)$ is $K$-definably orientable;

\item for every connected component $I$ of $\Sigma\setminus V$ there exists a 
$\overline K$-regular function $f_I$ defined on a $\overline K$-Zariski
neighborhood of $\rho_{U,\Sigma}^{-1}(I)\cap U$ such that $(\rho_{U,\Sigma}^{-1}(I)\cap U,f_I)$ is a nice
$\overline K$-chart of type 3; 
\item one has $\rho_{U,\Sigma}^{-1}(v)=\{v\}$ for every $v\in \mathscr V\cap U$
(as $\mathscr V$ contains all simple points of $\Sigma$ this means that
$\Sigma$ contains 
all $U$-branches at is simple points, so $\rr_{U,\Sigma}$
and $\wrr_{U,\Sigma}$ make sense); 
\item for every vertex $v\in \mathscr V\setminus U$: 

\begin{enumerate}[b]
\item there exist a finite $\overline K$-definable subset $F_v$ of
$C_{v,
\widehat U\setminus \Sigma}$, 
a 
$\overline K$-regular function $f_v$ defined on a $\overline K$-Zariski
neighborhood of $\rr_{U,\Sigma}^{-1}(C_{v,\widehat U\setminus \Sigma}
\setminus F_v)\cap U$ such that $(\rr_{U,\Sigma}^{-1}(C_{v,\widehat U\setminus \Sigma}
\setminus F_v)
\cap U,f_v)$ is a nice
$\overline K$-chart of type 2 with 
vertex $v$; 
\item for every $\zeta\in F_v$, there exists 
a 
$\overline K$-regular function $f_\zeta$ defined on a $\overline K$-Zariski
neighborhood of $\rr_{U,\Sigma}^{-1}(\zeta)\cap U$ such that $(\rr^{-1}_{U,\Sigma}(\zeta)\cap U,f_\zeta)$ is a nice
$\overline K$-chart of type 1. 
\end{enumerate}
\end{enumerate}
\end{defi}

\begin{enonce}[remark]{Comments}\label{comments-rationality}
Let us insist on the rationality assumptions in the above 
definition. 
The skeleton $\Sigma$ is assumed to be $K$-definable, as well as the set $\mathscr V$, and
$(\Sigma, \mathscr V)$ is assumed to be $K$-definably orientable. 

All individual elements of
$\mathscr V$, all nice charts involved
and all connected components
of $\Sigma \setminus \mathscr V$ are a priori merely
$\overline K$-definable.

If $L$ is a valued extension of $K$, the property for a $K$-skeleton of
being Nash-admissible
as an $L$-skeleton is a priori weaker than being Nash-admissible as an $K$-skeleton; but it turns out
that both are in fact equivalent, see Theorem \ref{structure-theo}(2)
below. 
\end{enonce}

\subsection{}
Let $K$, $X$ and $U$ be as in the definition 
above. Let $\Sigma$
be a Nash-admissible $K$-skeleton of $\widehat U$ and let $\mathscr V$ be 
a $K$-definable vertex set of $\Sigma$ witnessing its Nash-admissibility. 
Let $\Omega$ be a connected component of
$\widehat U\setminus \mathscr V$. Taking into account the fact
that $\rho_{U,\Sigma}^{-1}(v)=\{v\}$ for every $v\in \mathscr V\setminus U$,
we see in view of \ref{complement-of-skeleton} that $\Omega$ can be of one of the two following kinds: 

\begin{enumerate}[a] 
\item $\Omega=\widehat \rho_{U,\Sigma}^{-1}(I)$ for some connected component $I$ of $\Sigma\setminus \mathscr V$; 
\item $\Omega=\wrr_{U,\Sigma}^{-1}(\zeta)$ for some $v\in \mathscr V\setminus U$ and 
$\zeta\in C_{v,\widehat U\setminus \Sigma}$. 
\end{enumerate}

In both cases $\Omega$
is open in $\widehat X$, and of the form
$\widehat V$ for $V$ a definable subset of
$U$ (definable over $\overline K$ in case (a) and over $\overline K\zeta$ in case (b)). 

Moreover in case (a) the function $f_I$ induces a $\overline K$-definable
Nash isomorphism
between $V$ and $A$ for some open annulus $A$ in $\A^1$; in particular 
$f$ identifies $I$ with a closed subset of $\widehat A$ homeomorphic
to an open interval, which is necessarily the only interval 
containing the two branches at infinity of $\widehat A$; therefore $\Omega$ is a tree with exactly two branches
at infinity and $I$ is the unique interval joining them, which we call the canonical skeleton 
of $\Omega$. 

In case (b) either the function $f_v$ or the function $f_\zeta$ (if $\zeta\in F_v$) induces
a definable Nash isomorphism
between $V$ and $D$ for some open $\overline K\zeta$-definable
disc $D$ in $\A^1$.
In particular, $\Omega$ is a tree with
exactly one branch at infinity. 

It follows that $\Sigma$ can be entirely reconstructed from $\mathscr V$: it is the union of
$\mathscr V$ and of the canonical skeletons of the finitely many connected components
of $\widehat U\setminus \mathscr V$ having two branches at infinity.

\begin{defi}
Let $K$ be a valued field. 
A \textit{$K$-triangulation}
of $\widehat U$ is a finite $K$-definable 
vertex set
of a Nash-admissible $K$-skeleton $\Sigma$ of $\widehat U$ that witnesses
the Nash-admissibility of $\Sigma$. By the above $\Sigma$ is then entirely 
determined by $\mathscr V$ and is called the skeleton of the triangulation $\mathscr V$. 

Any family of data $((f_I)_{I\in\pi_0(\Sigma
\setminus \mathscr V)}, (f_v, F_v,(f_\zeta)_{\zeta \in F_v})_{v\in \mathscr V'})$ as in Definition 
\ref{defi-strong-adm} 
(where $\mathscr V'$ denotes the set of non-simple points of
$\mathscr V$) will be called a \emph{$\mathscr V$-atlas} on $\widehat U$. 
\end{defi}

\begin{rema}
Let $\mathscr V$ be a finite $K$-definable subset of $\widehat U$ and let 
$\Sigma$ be a $K$-skeleton of $\widehat U$ containing 
$\mathscr V$. It follows from the definitions
(see also Comments \ref{comments-rationality})
that $\mathscr V$ is a
$K$-triangulation with skeleton $\Sigma$ if and only if 
$\mathscr V$ is a
$\overline K$-triangulation with skeleton $\Sigma$
and $(\Sigma, \mathscr V)$ is $K$-definably orientable. 
\end{rema}

\subsection{Inherited triangulation}\label{inherited-triangulation}
Let $\mathscr V$ be a triangulation of $\widehat U$, let $\Sigma$
be its skeleton, and let $((f_I)_I, (f_v,F_v, (f_\zeta)_\zeta)_v)$ be a $\mathscr V$-atlas
on $\widehat U$. 
Let $E$ be a 
$K$-definable 
subset of $\pi_0(\Sigma\setminus \mathscr V)$ and let 
$\mathscr W$ be a $K$-definable
subset of $\mathscr V$. Let $\Tau$ be the $K$-definable
subset 
$\mathscr W\cup \bigcup_{I\in E}I$ of $\Sigma$, and set $V=\rho_{U,\Sigma}^{-1}(\Tau)$; 
we will implicitly use the properties of $V$ and $\Tau$ stated in 
\ref{subskeletons}. 
It follows from the definitions that
$\pi_0(\Tau\setminus \mathscr W)=E$, 
that $\mathscr W$ is a triangulation of $\widehat V$
with skeleton $\Tau$ 
and that $((f_I)_{I\in E}, (f_v, F_v,(f_\zeta)_{\zeta\in F_v})_{v\in \mathscr W'})$
is a $\mathscr W$-atlas of $\widehat V$. 

\subsection{First examples: around the projective line}\label{example-triangulation-p1}
Let $\Sigma$ be the
$K$-skeleton $[0,\infty]=\{\eta_{0,r}\}_{0\leq r\leq\infty}$ of
$\widehat{\P^1}$. 
It is $K$-definably homeomorphic 
to an interval, is admissible and contains all $\P^1$-branches
at its simple points. 
The map $\rho_{\P^1, \Sigma}$ maps
$0$ to  $0$, $\infty$ to $\infty$, and any non-zero $a$ to
$\eta_{a,\abs a}$. 
As a consequence, for every 
definable interval $I$ contained in $[0,\infty]$
we have $\rho_{\P^1,\Sigma}^{-1}(I)=\{\abs T\in I\}$.

%

\subsubsection{}\label{triangulation-P1}
Let $E$ be a finite (possibly empty) subset
of 
$\abs{\overline K^\times}$. 
It follows from
Example \ref{sigmarv-projective-line}
that $\mathscr V:=\{\eta_{0,r}\}_{r
\in E}
\cup\{0,\infty\}$ is a $K$-triangulation
of $\widehat \P^1$ with skeleton $\Sigma$, and that 
$((T)_{I\in \pi_0(\Sigma\setminus \mathscr V)}, (T, \emptyset)_{r
\in E})$
is a $\mathscr V$-atlas of $\widehat \P^1$. 

\subsubsection{}\label{triangulation-annulus}
Let $R_1$ and $R_2$ be two elements of 
$\abs {\overline K}\cup\{\infty\}$
with $R_1\leq R_2$ and let $\Tau$ be an interval of $\Sigma$
with endpoints $\eta_{0,R_1}$ and $\eta_{0,R_2}$ (each of them
may belong to
$\Tau$ or  not). Let
$E$ be a finite subset of $\abs {\overline K}\cup\{\infty\}$
such that $\eta_{0,r}\in \Tau$ for all 
$r\in E$ and $R_1$ (\resp $R_2$) belongs
to $E$ if $\eta_{0,R_1}$ (\resp $\eta_{0,R_2}$)
belongs to 
$\Tau$. 
Set $\mathscr W=\{\eta_{0,r}\}_{r\in E}$, 
and let $\mathscr V$ 
be the set
$\{0,\infty\}
\cup\{\eta_{0,R_1},\eta_{0,R_2}\}\cup\mathscr W$. 
Then $\mathscr V$
is a $K$-triangulation of $\widehat \P^1$ with skeleton $\Sigma$, and $\Tau$ is the union of 
 $\mathscr W$ and of finitely many connected components of $\Sigma\setminus \mathscr W$. 
 So in view of 
 \ref{triangulation-P1} and of 
 \ref{inherited-triangulation}, $\mathscr W$ is a $K$-triangulation 
 of $\widehat \rho_{\P^1,\Sigma}^{-1}(\Tau)$
 with skeleton $\Tau$; and $((T)_{I\in 
 \pi_0(\Tau\setminus \mathscr W)},
 (T, \emptyset)_{r\in E\cap \abs{\overline K^\times}})$
 is a $\mathscr W$-atlas of $\widehat \rho_{\P^1,\Sigma}^{-1}(J)$. 

\subsubsection{}\label{triangulation-disc}
We keep the notation of \ref{triangulation-annulus}
but we assume that $R_1=0$, that $\eta_{0,0}=0$ belongs to $\Tau$ and that $E$
contains at least one element of $\abs{\overline K^\times}$; we denote by $e$ 
the smallest element of $E\cap \abs{\overline K^\times}$, and by $V$ the pre-image
$\rho_{\P^1,\Sigma}^{-1}(\Tau)$. Not that by our assumptions
$V$ is either
an open or closed disc with radius in $\abs{\overline
K^\times}$ or the affine line or the projective
line. 
Let $\Tau'$ be the intersection of $\Tau$ and of $\{\eta_{0,r}\}_{r\geq e}$. Then $\Tau'$
is a sub-interval of $\Tau$ and there is a $K$-definable strong deformation retraction from
$\Tau$ to $\Tau'$, so $\Tau'$ is a $K$-admissible skeleton of $\widehat V$. Moreover
for every $v\in \widehat V$ we are by construction in one of the following two cases: 
\begin{itemize}[label=$\diamond$]
\item if $\widehat \rho_{V,\Tau}(v)\in \Tau'$ then $\widehat \rho_{V,\Tau'}(v)
=\widehat \rho_{V,\Tau}(v)$; 
\item  if $\widehat \rho_{V,\Tau}(v)\notin \Tau'$ then 
$\widehat \rho_{V,\Tau'}(v)=\eta_{0,e}$ and 
$[v,\eta_{0,e}]=[v,\widehat \rho_{V,T}(v)]\cup[\widehat \rho_{V,T}(v),\eta_{0,e}]$. 
\end{itemize}
We thus see that $\rho_{V,\Tau'}^{-1}(\eta_{0,e})$ is the closed disc of
radius $e$. 

It follows that $\mathscr W\setminus \{0\}$ is still a triangulation of $\widehat V$, 
with skeleton $\Tau'$, and that 
$((T)_{I\in \pi_0(\Tau'\setminus \mathscr W)}, 
(T, \emptyset)_{r\in E\cap \abs{\overline K^\times}})$ is a ($\mathscr W\setminus \{0\}$)-atlas
of $\widehat V$. 

\subsection{Triangulations of charts}\label{triangulation-chart}
Let
$(U,f)$ be a nice
$K$-chart on some $K$-algebraic curve.
We are going to exhibit $K$-triangulations on 
$U$. The simplest case will be that of type (2), where the construction will directly
follow from the definition; the cases of type (1) and (3) will be dealt with
using the Nash isomorphism $f$ to pull-back the triangulations 
on discs and annuli built above. 

\subsubsection{The case where $(U,f)$ is of type 2}\label{triangulation-chart2}
Let $v$ be the vertex
of the chart. It follows directly from the definition that $\{v\}$ is a $K$-triangulation 
of $\widehat U$
with skeleton $\{v\}$
and that $(f,\emptyset)$ is an $\{v\}$-atlas of $\widehat U$.

\subsubsection{The case where $(U,f)$ is a
of type 1}\label{triangulation-chart1}
Set $D=f(U)$. 
Let $r$  be any element of 
$\abs {\overline K}$ strictly smaller than 
the radius $\rho$
of $D$ 
and let $x$ be the unique pre-image
of $\eta_{0,r}$ in $\widehat U$ under $f$. 
It follows from \ref{triangulation-disc}, which we
apply
with
$R_1=0, R_2=1, \eta_{0,\rho}\notin \Tau$
and $E=\{r\}$, that $\{\eta_{0,r}\}$ is a $K$-triangulation of $\widehat D$
with skeleton $[\eta_{0,r}, \eta_{0,1})$, and that $\{T\}$ 
(\resp $(T,(T, \emptyset))$)  
is an $\{\eta_{0,r}\}$-atlas of $\widehat D$ if 
$r=0$ (\resp if $r\neq 0$). So $\{x\}$ is a $K$-triangulation 
of $\widehat U$ with skeleton the pre-image of
$[\eta_{0,r}, \eta_{0,1})$, which is the semi-open interval
of $\widehat U$ starting at $x$ and defining the unique branch at infinity of 
$\widehat U$; if $r=0$ (that is, if $x$
is a simple point) then $\{f\}$ is an $\{x\}$-atlas of $\widehat U$; 
if not, then $(f, (f, \emptyset))$ is an $\{x\}$-atlas of $\widehat U$.

\subsubsection{The case where $(U,f)$ is
of type 3}\label{triangulation-chart3}
Let $E$ be any finite subset (possibly empty) of elements
of $\abs{\overline K^\times}\cap (R_1,R_2)$, where $R_1$
and $R_2$ are the two radii of the annulus $f(A)$. 
It follows from \ref{triangulation-annulus}
(applied assuming that neither $\eta_{0,R_1}$ nor  $\eta_{0,R_2}$ 
lies on $\Tau$) that
$\mathscr V:=\{\eta_{0,r}\}_{r\in E}$ is a $K$-triangulation of $\widehat A$
with skeleton the canonical skeleton $\Sigma$ of $\widehat A$, and that 
$((T)_{I\in \pi_0(\Sigma\setminus \mathscr V)}, (T, \emptyset)_{r\in E})$ is a $\mathscr V$-atlas of
$\widehat A$.
So $\mathscr W:=f^{-1}(\mathscr V)$ is a $K$-triangulation 
of $\widehat U$ with skeleton the canonical skeleton $\Sigma'$ of $\widehat U$, and
$((f)_{I\in \pi_0(\Sigma'\setminus \mathscr W)}, (f,\emptyset)_{w\in \mathscr W})$
is a $\mathscr W$-atlas of $\widehat U$.

\begin{theo}\label{properties-triangulation}
Let $U$ be a $K$-definable subset of a $K$-algebraic curve 
$X$, and let $\mathscr V$ be a $K$-triangulation 
of $\widehat U$ with skeleton $\Sigma$. 
Let  $((f_I)_I, (f_v, F_v, (f_\zeta)_{\zeta \in F_v})_v)$ be a $\mathscr 
V$-atlas on $\widehat U$ and let $E$ be the finite set consisting 
of all $f_I, f_v, f_\zeta$. 

\begin{enumerate}[1]
\item The set $\Sigma_{\RV,U}$ is $\RV$-internal. More precisely there exist a 
$\overline K$-definable topological embedding 
$\Sigma\hookrightarrow [0,\infty]^m$ and a $\overline K$-definable
embedding $\Sigma_{\RV,U}\hookrightarrow \RV^m$ such that the diagram
\[\begin{tikzcd}
\Sigma_{\RV,U}\ar[d,"\sigma"']\ar[r,hook]&\RV^m\ar[d]\\
\Sigma\ar[r,hook]&{[0,\infty]^m}\end{tikzcd}\]
commutes. 
\item Let 
$\zeta\in \Sigma_\RV$. 
There exists 
$f\in E$
defined on a $\overline K$-Zariski
neighborhood of 
the connected component $\wrr^{-1}_{U,\Sigma}(\zeta)$
of $\widehat U\setminus \Sigma$ 
that induces a Nash isomorphism
between $\wrr^{-1}_{U,\Sigma}(\zeta)$
and a
$\overline K\zeta$-definable open disc of $\A^1$.

\item The map $\rho_{U,\Sigma}$ is topologically proper; otherwise said, $\Sigma$
contains all branches at infinity of $\widehat U$.

\item Any finite $K$-definable subset of 
$\Sigma$ containing $\mathscr V$ is still a
$K$-triangulation of $\widehat U$ with skeleton $\Sigma$. 
\item Let $\Tau$ be an
admissible skeleton containing
$\Sigma$ and let $\mathscr W$ be the union
of $\mathscr V$ and of the set of topological
nodes of $\Tau$, \ie, those points that do not have a neighborhood
in $\Tau$ homeomorphic to an open interval.
Then $\Tau$ is Nash-admissible and $\mathscr W$ is a 
$K$-triangulation of $\Tau$.
\end{enumerate}
\end{theo}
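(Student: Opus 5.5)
I will treat the five assertions essentially separately, working over $\overline K$ whenever definability over $K$ is not at stake. The underlying picture is the same for all of them. By the discussion following Definition \ref{defi-strong-adm}, the connected components of $\widehat U\setminus\mathscr V$ come in two kinds. Type (a) components are $\widehat\rho_{U,\Sigma}^{-1}(I)$, Nash-isomorphic via $f_I$ to an open annulus. Type (b) components are $\wrr_{U,\Sigma}^{-1}(\zeta)$ with $v=\varpi(\zeta)\in\mathscr V\setminus U$, Nash-isomorphic via $f_v$ or $f_\zeta$ to an open disc.

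\textbf{Assertion (2).} Let $\zeta\in\Sigma_{\RV,U}$ and set $s=\varpi(\zeta)$. There are two cases.
\begin{itemize}
\item If $s\in\mathscr V$, then $\wrr^{-1}(\zeta)$ is a type (b) component, and $f_s$ or $f_\zeta$ works directly by Definition \ref{defi-strong-adm}(4). For type 2 this uses that the chart restricts to a Nash isomorphism $\rr^{-1}(\zeta)\simeq D_\zeta$ (\ref{nice-charts2-definable-homeo}).
\item If $s\in I$ for some $I\in\pi_0(\Sigma\setminus\mathscr V)$, then $\wrr^{-1}(\zeta)$ is a connected component of $\widehat\rho^{-1}(I)\setminus I$. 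Via $f_I$ it corresponds to a component of $\widehat A\setminus\Sigma_A$, where $A$ is the annulus and $\Sigma_A$ its canonical skeleton. By Example \ref{sigmarv-projective-line}, such a component is the stable completion of an open disc $\rv^{-1}(\rv(a))$, which is $\overline K\zeta$-definable. Since $f_I$ is a Nash isomorphism, its restriction to this component is again one.
\end{itemize}

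\textbf{Assertion (1).} I would embed $\Sigma$ into $[0,\infty]^m$ edge by edge and vertex by vertex.
\begin{itemize}
\item On an edge $I$, use $\abs{f_I}$, which is a homeomorphism onto an interval. Correspondingly, $\Sigma_{\RV,U}\cap\varpi^{-1}(I)$ maps to $\RV$ via $\rv\circ f_I$, which is the same bijection as in Example \ref{sigmarv-projective-line}.
\item At a non-simple vertex $v$, use the type 2 chart $f_v$. The fibre $\varpi^{-1}(v)\cap\Sigma_{\RV,U}\simeq C_{v,\widehat U\setminus\Sigma}$ embeds into $\RV^{m'}$ through the map $\lambda=\rv\circ h$ together with $\rv(f_v)$. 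I expect the residue-curve points to be separated by the $a_\zeta\in A_\zeta$ together with the disc $D_\zeta$, since $\lambda$ and $f_v$ determine $\zeta$ by the chart axioms. The finitely many points of $F_v$ are handled by adding a coordinate.
\item Simple vertices contribute points only.
\end{itemize}
Assembling the coordinates, and using a coordinate equal to $0$ off the relevant piece, gives the commutative square, with the vertical arrows $\varpi$ and $\abs{\cdot}$. The main obstacle is making this gluing into a genuine topological embedding of $\Sigma$ near the vertices. I would try using, for each edge, the function $\abs{f_I}$ extended by continuity to $\overline I$, and then adding $\max$-type coordinates to separate edges meeting at the same vertex. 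It is injectivity across different pieces that needs care.

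\textbf{Assertion (3).} Each component of $\widehat U\setminus\Sigma$ is by (2) homeomorphic to $\widehat D$ for an open disc $D$. Such a component has exactly one branch at infinity, and that branch has limit $s$. Hence every component has compact closure, and \ref{complement-of-skeleton} gives topological properness.

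\textbf{Assertion (4).} Let $\mathscr W\supset\mathscr V$ be finite and $K$-definable. Orientability passes to $\mathscr W$ by Lemma-Definition \ref{definably-orientable}(2). A new vertex $w$ lying in an edge $I$ cuts $I$ into sub-annuli, and the charts $f_I$ restrict to type 3 charts on them. At $w$ itself, the pullback under $f_I$ of the triangulation of \ref{triangulation-annulus} gives the type 2 chart $f_I$ with $F_w=\emptyset$, as in \ref{triangulation-chart3}.

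\textbf{Assertion (5).} By \ref{admissible-loops}, $\Tau$ is obtained from $\Sigma$ by attaching trees inside components of $\widehat U\setminus\Sigma$. Each such component is a disc (by (2)) or lies in an annulus chart, so we can pull back the triangulations of discs and annuli from \ref{triangulation-annulus}, \ref{triangulation-disc} and \ref{triangulation-chart1}. The vertices needed are exactly the nodes of $\Tau$ together with $\mathscr V$. Finally, orientability is restored by (4), after possibly adding centers as in Lemma-Definition \ref{definably-orientable}(3). If the statement is to be read without adding centers, the nodes should already suffice, and this is a point I would check.
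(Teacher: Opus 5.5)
Your arguments for (2), (3) and (4) match the paper's. The problems are in (5) and, to a lesser extent, in (1).

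\textbf{On (5).} Being a $K$-triangulation includes $K$-definable orientability of $(\Tau,\mathscr W)$ for this particular $\mathscr W$, and you leave that open. Adding centres as in Lemma-Definition \ref{definably-orientable}(3) and then invoking (4) over $\overline K$ would only show that $\Tau$ is Nash-admissible with a possibly larger vertex set. It would not show that $\mathscr W$ itself is a $K$-triangulation, which is part of the claim.

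No centres are needed. A component of $\Tau\setminus\mathscr W$ lies in one of two places:
\begin{itemize}
\item in an edge $I$ of $\Sigma\setminus\mathscr V$, and then you restrict the given $K$-definable orientation $o_I$;
\item in $\overline\Upsilon$ for a component $\Upsilon$ of $\Tau\setminus\Sigma$. Then $\overline\Upsilon$ is a compact tree whose boundary is one point $w\in\Sigma$. This $w$ lies in $\mathscr W$, because if $w\notin\mathscr V$ it is a node of $\Tau$. You orient towards $w$.
\end{itemize}
This choice is canonical, hence $K$-definable, and only after it may you pass to $K=\overline K$.

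Over $\overline K$, your ``pull back'' step is not enough as stated, since \ref{triangulation-disc} and \ref{triangulation-chart1} only treat a single radial segment in a disc. The paper builds $\Tau$ from $\Sigma$ one $\overline K$-definable edge $[x,w]$ at a time, each meeting the current skeleton only at $w$.
\begin{itemize}
\item By (4) one may assume $w$ is a vertex.
\item Let $\xi$ be the germ of $[w,x]$. Then $\widehat V=\wrr_{U,\Sigma}^{-1}(\xi)=\widehat\rho_{U,\Tau}^{-1}([x,w))$ by Lemma \ref{skeleton-disc}, and $\widehat V$ is a disc via $g=f_w$ or $f_\xi$.
\item Recentre to $h=g-a$ with $h(x)=\eta_{0,s}$. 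By \ref{triangulation-chart1}, $\{x\}$ is then a triangulation of $\widehat V$ with skeleton $[x,w)$.
\item Finally, remove $\xi$ from the data attached to $w$.
\end{itemize}

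\textbf{On (1).} You name the gluing at vertices as the main obstacle but do not resolve it. Your suggested remedy, extending $\abs{f_I}$ by continuity to $\overline I$, fails in a case that really occurs. Take an edge with $R_1=0$, $R_2=\infty$ whose two ends tend to the same simple vertex, a singular point of $X$ in the indeterminacy locus of $f_I$ (for instance the loop of a nodal cubic). Along it, $\abs{f_I}$ tends to $0$ at one end and to $\infty$ at the other.

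The missing analysis is as follows.
\begin{itemize}
\item At a non-simple endpoint $v$, $f_I$ is regular and invertible at $v$ with $\abs{f_I(v)}\in\{R_1,R_2\}$, so only one end of $I$ tends to $v$.
\item At a simple endpoint, the limit of $\abs{f_I}$ is $0$ or $\infty$.
\item Hence $\abs{f_I}$ identifies $\overline I$ either with an interval or with $[0,\infty]/(0\sim\infty)$.
\item The latter embeds in $[0,\infty]^2$ by $t\mapsto(t,t)$ for $t\leq 1$ and $t\mapsto(t^{-1},t^{-2})$ for $t\geq 1$.
\end{itemize}
The pieces are then placed in separate coordinate blocks and glued at the vertices.

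For the vertex fibres, the paper simply uses that $C_{v,\widehat U\setminus\Sigma}$ is $\k$-internal. Your explicit injection through $(\rv(f_v),\lambda)$ also works, since $(D_\zeta,a_\zeta)$ determines $\rr_{U,\Sigma}^{-1}(\zeta)=\sigma(D_\zeta)$. However, your padding ``$0$ off the relevant piece'' must mean a zero of fixed degree, so that the corresponding bottom coordinate is constant. Otherwise the map $\Sigma\to[0,\infty]^m$ is discontinuous.
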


\begin{proof}
We prove each statement separately.

\subsubsection{Proof of (1)}
Let $I$ be a connected component of $\Sigma\setminus \mathscr V$
and let $R_1$ and $R_2$ be the infimum and the supremum of $\abs {f_I}$ on $I$. 
The
map $f_I$ induces a Nash isomorphism between $\rho_{U,\Sigma}^{-1}(I)$
and the open annulus $R_1<\abs T <R_2$; it follows then from 
Example \ref{sigmarv-projective-line}
that we have a commutative diagram
\[\begin{tikzcd}
\sigma^{-1}(I)\ar[r,"\sim"]\ar[d,"\sigma"']&\{\rv(\lambda)\}_{R_1<\abs \lambda<R_2}\ar[d]\\
I\ar[r,"\sim", "\abs{f_I}"']&(R_1,R_2)\end{tikzcd}\]
with $\overline K$-definable arrows, the bottom map being
a homeomorphism. Let $v$ be an element of $\mathscr V$ belonging to $\partial I$; 
so at least one branch of $I$ has limit $v$. 
If $v$ is not simple then $f_I$ is defined at $v$ and $\abs{f_I(v)}=R_i$ for a necessarily unique $i\in \{1,2\}$
(which shows that only one branch at infinity of $I$ has limit $v$: $\overline I$ is not a circle). If $v$ is simple then 
the limit of $\abs {f_I}$ along any branch at infinity of $I$ having limit $v$ necessarily belongs to
$\{0,\infty\}$ (if not, $\abs{f_I}$ would
be constant along $J$, for it would be defined 
and invertible at the pre-image $v'$ of $v$ on the normalization of $X$ defined by the branch $J$). If follows that 
$\abs {f_I}$ induces a homeomorphism between $\overline I$ and either some interval 
stuck between $(R_1,R_2)$ and $[R_1,R_2]$, 
or the quotient $[0,\infty]/\!\sim$ where $\sim$ is the identification between $0$ and $\infty$; the last case
occurs  if and only if $R_1=0, R_2=\infty$ and $\overline I$ is a circle, the two branches at infinity of $I$ having
the same limit $v\in \mathscr V$, which is necessarily a singular point of $X$,
belonging to the indeterminacy locus of the rational function 
$f_I$. We remark that 
the mapping $t$ to $(t,t)$ if $t\leq 1$ and to $(t^{-1}, t^{-2})$ otherwise defines a $\emptyset$-definable
embedding from $[0,\infty]/\!\sim$ into $[0,\infty]^2$. So (by lifiting if necessary the preceding embedding
to the $\RV$-sort) 
we eventually get a commutative diagram 
\[\begin{tikzcd}
\sigma^{-1}(I)\ar[d,"\sigma"']\ar[r,hook]&\RV^m\ar[d]\\
\overline I\ar[r,hook]&{[0,\infty]^m}\end{tikzcd}\]
for $m\in\{1,2\}$, where all maps are $\overline K$-definable, and the bottom
map is a topological embedding. Since $\abs {f_I(v)}=\abs{f_J(v)}$ modulo $\abs{\overline K^\times}$ for all
non-simple $v\in \mathscr V$ and all pairs $(I,J)$ of connected components of $\Sigma\setminus \mathscr V$
having $v$ as a limit point (because $v$ is $\overline K$-definable) one can glue the above diagrams
for $I\in \pi_0(\Sigma\setminus \mathscr V)$ by making each of them live in some $\overline K$-definable 
``affine coordinate subspace" and get a commutative diagram 
\[\begin{tikzcd}
\Sigma_{\RV, U}\setminus \sigma^{-1}(\mathscr V)\ar[d,"\sigma"']\ar[r,hook]&\RV^m\ar[d]\\
\Sigma\ar[r,hook]&{[0,\infty]^m}\end{tikzcd}\]
where all maps are $\overline K$ definable, where the bottom map is a topological embedding, and where $m$
is an integer. The existence of a commutative diagram like required by statement (1) then follows from the fact that
for each $v\in \mathscr V$ the pre-image $\sigma^{-1}(v)$ is the $\RV$-internal (it is even $\k$-internal) curve
$C_{v,{\widehat U\setminus \Sigma}}$ (one may have to slightly enlarge the integer $m$). 

\subsubsection{Proof of (2) and (3)}
Set $v=\sigma(\zeta)$; this is also the constant value
of $\widehat \rho_{U,\Sigma}$ on the component $\wrr_{U,\Sigma}^{-1}(\zeta)$. 
If $v$ does not belong to $\mathscr V$ then it lies on some connected
component $I$ of $\Sigma \setminus \mathscr V$, and $\wrr_{U,\Sigma}^{-1}(\zeta)$ is a connected component
of $\widehat \rho_{U,\Sigma}^{-1}(I)\setminus I$; it follows then
from Example \ref{sigmarv-projective-line}
that $f_I$ induces a Nash isomorphism between $\wrr_{U,\Sigma}^{-1}(\zeta)$
and a $\overline K\zeta$-definable open disc. 
Assume that $v$ belong to $\mathscr V$. Then 
$\zeta\in C_{v,\widehat U\setminus \Sigma}$. If $\zeta \in F_v$, \resp
if $\zeta \notin F_v$, the map
$f_\zeta$, \resp $f_v$,  induces a $\overline K$-definable Nash isomorphism between $\wrr_{U,\Sigma}^{-1}(\zeta)$ and an
$\overline K\zeta$-definable 
open disc,
ending the proof of (2). 

In particular, each connected component of $\widehat U\setminus \Sigma$ is a tree with exactly one branch at infinity,
which implies (3) (see the discussion
in \ref{complement-of-skeleton}). 

\subsubsection{Proof of (4)}
Let $\mathscr W$ be a finite $K$-definable subset of $\Sigma$ containing
$\mathscr V$. Then $\mathscr W$ is a $K$-definable vertex set of $\Sigma$, and $(\Sigma, \mathscr W)$
is $K$-definably orientable. For showing that $\mathscr W$ is a $K$-triangulation of $\widehat U$, it therefore
suffices to show that it is a
$\overline K$-triangulation of $\widehat U$, so we can assume $K=\overline K$. Then we can argue by induction on the cardinality
of $\mathscr W\setminus \mathscr V$, reducing to the case where $\mathscr W=\mathscr V\cup\{w\}$
for some $w\in \Sigma\setminus \mathscr V$. Let $J$ be the connected component of
$\Sigma\setminus \mathscr V$ that contains $w$. 
It then
follows from \ref{triangulation-chart3}
that $\mathscr W$ is a $K$-triangulation of $\widehat U$ with
skeleton $\Sigma$, which admits an atlas consisting of the family
$((f_I)_{I\neq J}, (f_v, F_v, (f_\zeta)_{\zeta \in F_v})_v)$ together with 
one copy of $f_J$ for each connected component of $J\setminus \{w\}$
and for the vertex $w$, and with the empty
set of points of the curve $C_{w,\widehat U\setminus \Sigma}$.

\subsubsection{Proof of (5)}
The set $\mathscr W$ is $K$-definable.
Let $(o_I)_{I\in \pi_0(\Sigma
\setminus \mathscr V)}$ be a $K$-definable
system of
orientations. 

If $\Upsilon$ is a connected component of $\Tau\setminus \Sigma$
then $\partial \Upsilon$ consists of one point $w\in \Sigma$ and $\overline \Upsilon$ is a (compact) tree;
note that if $w\notin \mathscr V$ then $w$
is necessarily  a topological node
of $\Tau$, so $w\in \mathscr W$ in any case. Then every 
connected component $J$ of $\overline \Upsilon\setminus \mathscr W$ is $\overline K$-definably homeomorphic to
an open interval, and for such a $J$ we denote by $o_J$ its orientation pointing toward $w$
in the tree $\overline \Upsilon$. 

If $J$ is a connected component of $\Tau\setminus \mathscr W$ then either $J$ is
a connected component of $\Upsilon\setminus \mathscr W$
for some connected component 
$\Upsilon$ of $\Tau \setminus \Sigma$, either it is
contained in some connected component $I$ of $\Sigma\setminus \mathscr V$, in which 
case we set $o_J=o_I|_J$. 
By construction the system 
of orientations $(o_J)_{J\in \pi_0(\Tau\setminus
\mathscr W)}$ is $K$-definable,
so $\mathscr W$ is a $K$-definable vertex set of $\Tau$
and $(\Tau, \mathscr W)$ is $K$-definably orientable. It thus
suffices to prove that 
$\mathscr W$ is a $\overline K$-definable triangulation of $\Tau$, so we can now
assume that $K=\overline K$. 

As $\Tau$ is $K$-admissible,
it can be built from $\Sigma$ by adding successively $K$-definable
topological edges
(the assumption that $K$ is algebraically closed is used to 
ensure their $K$-definability). Arguing
by induction on the number of edges
we can assume that $\Tau$ is equal to 
$\Sigma\cup[x,w]$ where $x$ is some $K$-definable point
of $\widehat U\setminus \Sigma$ and $w$ is a $K$-definable point of
$\Sigma$. By (4) $\mathscr V\cup\{w\}$ is still a $K$-triangulation of
$\widehat U$ with skeleton $\Sigma$, so we can assume that $w\in \mathscr V$.
Then $\mathscr W=\mathscr V\cup\{x\}$.

Set
$\xi=\rr_{U,\Sigma}(x)$; we can see it as a 
point of $C_{w,\widehat U\setminus \Sigma}$.
Set $V=\rr_{U,\Sigma}^{-1}(\xi)$; this is a $K$-definable
subset of $U$, and
$\widehat V=\widehat \rho_{U,\Tau}^{-1}([x,w))$ in view of
Lemma \ref{skeleton-disc}. 

Set $g=f_w$ if $\xi\notin F_w$ and
$g=f_\xi$ otherwise. Then $g$ induces a Nash isomorphism
between $V$ and a $K$-definable open disc 
 of $\widehat \A^1$, say of radius $r$. 
 The  point 
$g(x)$
is equal to $\eta_{a,s}$ for some $a\in K$
and some $s\in \abs K\cap [0,r)$. Set $h=g-a$; by construction $(V,h)$
is a nice $K$-chart of type 1 
and 
$h(x)=\eta_{0,s}$
It follows then
from \ref{triangulation-chart1} 
that $\{x\}$ is a $K$-triangulation of $\widehat V$ with skeleton 
$[x,v)$, that $\{h\}$ is an $\{x\}$-atlas
of $\widehat V$ if $x$ is simple, and that $(h,(h,\emptyset))$ is an 
$\{x\}$-atlas of $\widehat V$ if
$x$ is not simple.

We deduce from the above that $\mathscr W=\{x\}\cup \mathscr V$ is a $K$-triangulation of
$\widehat U$ with skeleton $\Tau$, and that
a $\mathscr W$-atlas of $\widehat U$ can be built as follows: 
one concatenates 
the family $((f_I)_{I\in \pi_0(\Sigma\setminus \mathscr V) }, 
(f_v, F_v, (f_\zeta)_{\zeta\in F_v})_{v\in \mathscr V\setminus \{w\}})$, 
with $(f_w, F_w\setminus \{\xi\}, (f_\zeta)_{\zeta\in F_w\setminus \{\xi\}})$
for the vertex $w$, 
the function $h$ for the connected component $(x,w)$
of $\Tau\setminus \mathscr W$ and 

\begin{itemize}[label=$\diamond$] 
\item if $x$ is simple (\ie, $s=0$), nothing else; 
\item if $x$ is not simple (\ie, $s\neq 0$),  the function $h$ for the vertex $x$, 
and the empty set of points of the residue curve $C_{x, \widehat V\setminus \Tau}$.\qedhere
\end{itemize}
\end{proof}

\begin{lemm}\label{glue}
Let $U$ be a $K$-definable subset of an algebraic $K$-curve $X$. 
Let $\Sigma$ be an admissible $K$-skeleton of $\widehat U$, and let $(\Sigma_i)$
be a finite family of
$K$-definable subsets of $\Sigma$ covering $\Sigma$. For each $i$, set $U_i=\rho_{U,\Sigma}^{-1}
(\Sigma_i)$, and assume that $\Sigma_i$ is a 
Nash-admissible $K$-skeleton of $\widehat U_i$. 
Choose for every $i$ a $K$-triangulation $\mathscr V_i$ of $\widehat U_i$ with skeleton $\Sigma_i$.
Let $\mathscr V$ be the union of the $\mathscr V_i$ and of the $\partial \widehat U_i$
(note that $\partial \widehat U_i$ 
coincides with $\partial \Sigma_i\subset \Sigma$ by Corollary
\ref{coro-boundary-uhat}). 
Then $\Sigma$ is 
Nash-admissible, and $\mathscr V$ is a $K$-triangulation of $\widehat U$ with skeleton $\Sigma$.
\end{lemm}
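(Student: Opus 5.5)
The plan is to check directly the clauses of Definition \ref{defi-strong-adm} for $\Sigma$ and $\mathscr V$, transporting the atlases of the $U_i$ to $U$ through the compatibilities of \ref{subskeletons}. First, $\mathscr V$ is finite and $K$-definable: each $\mathscr V_i$ is, and each $\partial\widehat U_i=\partial\Sigma_i$ is finite by Corollary \ref{coro-boundary-uhat}. We may refine the cover to a partition. Set $\Sigma'_i=\Sigma_i\setminus\bigcup_{j<i}\Sigma_j$; these are $K$-definable. Then $\Sigma'_i\setminus\mathscr V$ is open in $\Sigma_i\setminus(\mathscr V_i\cup\partial\Sigma_i)$, and it is a union of subintervals of components of $\Sigma_i\setminus\mathscr V_i$. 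Since $\mathscr V$ contains all the boundaries, every connected component $J$ of $\Sigma\setminus\mathscr V$ lies in the interior of some $\Sigma_i$. Indeed, $J$ meets some $\Sigma_i$, and because $J$ is connected and avoids $\partial\Sigma_i$, it is contained in $\mathrm{int}(\Sigma_i)$. So $J$ is contained in a connected component $I$ of $\Sigma_i\setminus\mathscr V_i$. Hence each such $J$ is $\overline K$-homeomorphic to an open interval, so $\mathscr V$ is a vertex set, and it contains all simple points of $\Sigma$ because those lie in the $\mathscr V_i$.

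Next I verify the chart conditions. For $J\subset I$ as above, restrict $f_I$. We have $\rho_{U,\Sigma}^{-1}(J)=\rho_{U_i,\Sigma_i}^{-1}(J)$ by \ref{subskeletons}, and $f_I$ maps this Nash-isomorphically onto the preimage of $J$ under the annulus chart, which is again an open annulus (\ref{triangulation-annulus}, \ref{triangulation-chart3}). So clause (2) holds. For a vertex $v\in\mathscr V$, choose $i$ with $v\in\Sigma_i$. If $v\in\mathscr V_i$, clauses (3) and (4) transfer verbatim, since $\rho_{U,\Sigma}^{-1}(v)=\rho_{U_i,\Sigma_i}^{-1}(v)$ and $C_{v,\widehat U\setminus\Sigma}=C_{v,\widehat U_i\setminus\Sigma_i}$ by \ref{subskeletons}. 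If $v\notin\mathscr V_i$, then $v$ lies on some component $I$ of $\Sigma_i\setminus\mathscr V_i$, and $v$ is a non-simple point by the above. Here Theorem \ref{properties-triangulation}(4) applied to $\widehat U_i$ says that $\mathscr V_i\cup\{v\}$ is still a triangulation, with atlas given by $f_I$ at $v$ and $F_v=\emptyset$; this gives clause (4) at $v$. For a simple $v$, clause (3) likewise comes from $\Sigma_i$.

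Orientability is the remaining point. Each component $J$ of $\Sigma\setminus\mathscr V$ sits inside a component $I$ of some $\Sigma_i\setminus\mathscr V_i$, and $I$ is interdefinable with its orientations by Lemma-Definition \ref{definably-orientable}. The main obstacle is that $J$ is not canonically attached to a single $i$. To handle this, use criterion (1)(ii) of that Lemma-Definition. If $\sigma\in G$ stabilizes $J$, it permutes the $K$-definable sets $\Sigma_i$ trivially, so it stabilizes the $I$ attached to $J$ for any fixed $i$. It therefore preserves the orientations of $I$, hence those of $J$. This gives $K$-definable orientability.

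Finally, admissibility of $\Sigma$ is given. So $\Sigma$ is Nash-admissible, and $\mathscr V$ is a $K$-triangulation with skeleton $\Sigma$.
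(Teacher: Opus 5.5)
Your proposal is correct and follows essentially the paper's route. The paper first enlarges each $\mathscr V_i$ to $\mathscr V\cap\Sigma_i$ via Theorem \ref{properties-triangulation}(4), whereas you apply that result vertex by vertex, restrict annulus charts to subintervals, and make the orientability argument of Lemma-Definition \ref{definably-orientable}(2) explicit through Galois stabilizers; your partition $\Sigma'_i$ is never used and can be dropped. The one real divergence is that the equality $C_{v,\widehat U\setminus\Sigma}=C_{v,\widehat U_i\setminus\Sigma_i}$ for $v\in\Sigma_i$ makes the paper's enlarged set $F'_v$ and its fallback chart $g_v-g_v(u)$ superfluous. That equality is valid, but \ref{subskeletons} presupposes that $\Sigma$ contains all $U$-branches at its simple points, so you should prove this first, via $\rho_{U,\Sigma}^{-1}(v)=\rho_{U_i,\Sigma_i}^{-1}(v)=\{v\}$, before citing it.
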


\begin{proof}
By Theorem \ref{properties-triangulation} (4) the union 
$\mathscr V_i\cup (\mathscr V\cap \Sigma_i)$ is for every $i$ still a $K$-triangulation with skeleton $\Sigma_i$,
so we can assume that $\mathscr V_i=\mathscr V\cap \Sigma_i$ for all $i$. 
Choose for every $i$
 a $\mathscr V_i$-atlas $\mathscr A_i$ of $\widehat U_i$.

Let $I$ be a connected component of $\Sigma \setminus \mathscr V$. As $\mathscr V$ contains 
$\partial  \Sigma_i$ for all $i$
there exists a (non necessarily unique) $i$ with 
$I\subset \Sigma_i$, so $I$ is a connected component of $\Sigma_i\setminus \mathscr V_i$; 
as a consequence $I$ is 
$\overline K$-definably homeomorphic to an open interval, 
and it is inter-definable with each of its orientations. 
Let $f_I$ be the function associated to $I$ by the atlas $\mathscr A_i$. Then 
as $\rho_{U,\Sigma}^{-1}(I)=\rho_{U_i,\Sigma_i}^{-1}(I)$, the pair
$(\rho_{U, \Sigma}^{-1}(I), f_I)$ is a nice $\overline K$-chart of type 3.

Let $v$ be a simple point of $\mathscr V$. Then $v$ belongs to some $\mathscr V_i$, 
and we then have
$\rho_{U,\Sigma}^{-1}(v)=\rho_{U_i,\Sigma_i}^{-1}(v)=\{v\}$. 

Now let $v$ be a non-simple point of $\mathscr V$.
Choose $i$ so that $v$ belongs to $\widehat U_i$. 
Let $(f_v ,F_v, (f_\zeta)_{\zeta \in F_v})$ be the data associated with $v$ 
provided by the atlas $\mathscr A_i$. Let $F'_v$  be the union of $F_v$ and of the
finite set of all $\zeta\in C_{v,\widehat U\setminus \Sigma}$ that do not belong to 
$C_{v,\widehat U_i\setminus \Sigma_i}$. 
Then $\rr_{U,\Sigma}^{-1}(C_{v,\widehat U\setminus \Sigma}\setminus F'_v)
=\rr_{U_i,\Sigma_i}^{-1}(C_{v,\widehat U_i\setminus \Sigma}\setminus F_v)$, 
so $(
\rr_{U,\Sigma}^{-1}(C_{v,\widehat U\setminus \Sigma}\setminus F'_v), f_v)$
is a nice $\overline K$-chart of type 2. Now let $\zeta \in F'_v$. 
Then $\wrr^{-1}_{U,\Sigma}(\zeta)$ does not meet the union of the $\partial \widehat U_j$, 
so there exists $j$ such that $\wrr^{-1}_{U,\Sigma}(\zeta)$ is entirely contained 
in $\widehat U_j$, so it coincides with $\wrr^{-1}_{U_j,\Sigma_j}(\zeta)$. 
Now if $i=j$ then $\zeta \in F_v$ so  $(\wrr^{-1}_{U,\Sigma}(\zeta),f_\zeta))$
is a nice
$\overline K$-chart of type 1. Assume now that $j\neq i$ and let 
$(g_v ,G_v, (g_\xi)_{\xi \in G_v})$ be the data associated with $v$ 
provided by the atlas $\mathscr A_j$. If $\zeta\in G_v$ then
$(\wrr^{-1}_{U,\Sigma}(\zeta),g_\zeta))$ is a 
nice $\overline K$-chart of type 1. 
Assume now that $\zeta\notin G_v$.
As $F'_v$ is finite  $\rr_{U,\Sigma}^{-1}(\zeta)$
is $\overline K$-definable, so it contains a $\overline K$-point $u$
(this is obvious if $K$ is non-tivially valued; if $K$ is trivially valued then as $v$
is $\overline K$-definable the image $g_v(v)$ is necessarily the unique $\overline K$-definable, 
non-simple point of $\A^1$, namely $\eta_{0,1}$, and $g_v(\rr_{U,\Sigma}^{-1}(\zeta))$
is then an open disc of radius 1 centered at $\overline K$-point, whence the claim).
And
now $(\wrr^{-1}_{U,\Sigma}(\zeta), g_v-g_v(u))$ 
is a nice $\overline K$-chart of type 1, and we are done. 
\end{proof}

\begin{theo}[Triangulations of algebraic curves]\label{structure-theo}
Let $K$ be a valued field
and let $X$ be a generically smooth
algebraic $K$-curve. Let $U$ be a $K$-definable subset of $X$.

\begin{enumerate}[1] 
\item 
The stable completion $\widehat U$ possesses a
Nash-admissible $K$-skeleton
(or, equivalently, a $K$-triangulation); moreover this Nash-admissible $K$-skeleton
can be taken  to contain any prescribed
$K$-skeleton of $\widehat U$.

\item Let $L$ be a valued extension of $K$. 
A $K$-skeleton $\Sigma$
of $\widehat U$ is Nash-admissible 
as a $K$-skeleton if and only if it is
Nash-admissible as an $L$-skeleton. 
\end{enumerate}
\end{theo}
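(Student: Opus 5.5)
The plan is to reduce everything to the algebraically closed case, where Theorem \ref{theo-core} supplies a finite cover of $\widehat U$ by nice charts. The charts are triangulated individually (\ref{triangulation-chart}) and then glued with Lemma \ref{glue}. The rationality over $K$ is recovered at the end via the remark that a $K$-definable vertex set is a $K$-triangulation iff it is a $\overline K$-triangulation and $K$-definably orientable, together with Lemma-Definition \ref{definably-orientable}(3).

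For (1), I first make some harmless reductions. Enlarging $K$ to $\overline K$ only costs orientability, so I work over $\overline K$, and I may also pass to a model of $\acvf$ when $K$ is non-trivially valued. Replacing $X$ by its smooth locus and perfecting, I split off the finitely many singular or isolated simple points. Such points are $\overline K$-definable and can be put in the prescribed skeleton $\Upsilon$; each isolated simple point is its own component, with vertex itself. So I may assume $X$ is smooth and $\widehat U$ has no isolated simple points. Theorem \ref{theo-core} then gives nice $\overline K$-charts $(U_j,f_j)$ with $\widehat U=\bigcup\widehat U_j$. By Lemma \ref{exist-adapted} I choose an admissible $\overline K$-skeleton $\Sigma$ of $\widehat U$ containing $\Upsilon$ and adapted to every $U_j$, and I enlarge it (Lemma \ref{lemm-u-branches}) so that it contains all $U$-branches at its simple points. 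I also include the vertices of the type 2 charts, the canonical skeletons of the type 3 charts, and, for type 1 charts, a segment from a chosen point to the branch at infinity. Next I refine the covering of $\Sigma$ into pieces $\Sigma_i$ such that each $U_i=\rho_{U,\Sigma}^{-1}(\Sigma_i)$ lies inside a single chart $U_j$ and $\Sigma_i$ is Nash-admissible for $\widehat U_i$. Here the key input is that restricting a chart-triangulation (\ref{triangulation-chart1}--\ref{triangulation-chart3}) along a sub-skeleton is handled by \ref{inherited-triangulation} and Theorem \ref{properties-triangulation}(4),(5). Adding the remaining edges of $\Sigma$ inside a chart is exactly Theorem \ref{properties-triangulation}(5). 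Lemma \ref{glue} then shows that $\Sigma$ is Nash-admissible over $\overline K$.

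To descend to $K$, I pass first to a finite Galois extension, over which the whole configuration is defined. I replace $\Sigma$ by the union of its $\mathrm{Aut}(\overline K/K)$-conjugates. This union is still a skeleton containing $\Upsilon$. It is admissible by \ref{admissible-loops}, since it is connected within each component and contains all loops, and it remains Nash-admissible over $\overline K$ by Theorem \ref{properties-triangulation}(5). I take as vertex set the union of the conjugates of the vertex set, which is $K$-definable and still a triangulation by Theorem \ref{properties-triangulation}(4). I then add the centers of the non-split intervals (Lemma-Definition \ref{definably-orientable}(3)) to make it $K$-definably orientable. The rationality remark then gives a $K$-triangulation.

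For (2), the direction ``$K$ implies $L$'' is clear, since every chart and every orientation datum is $\overline K$-definable, hence $\overline L$-definable. For the converse, let $\Sigma$ be $K$-definable and Nash-admissible over $L$. Admissibility over $K$ follows from Remark \ref{admissibility-K-L}. By (1), applied over $K$ with $\Upsilon=\Sigma$, I obtain a $K$-triangulation $\mathscr W$ whose skeleton $\Tau$ contains $\Sigma$. I then need to shrink from $\Tau$ back to $\Sigma$. To do so I take $\mathscr V=$ ($\mathscr W\cap\Sigma$) together with the nodes of $\Sigma$ and the points where $\Tau\setminus\Sigma$ attaches. The components of $\widehat U\setminus\Sigma$ hanging off a vertex are discs, as one sees from the $L$-triangulation, so they need to be exhibited as nice $\overline K$-charts of types 1 and 2. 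I expect this to be the main obstacle: the charts witnessing the $L$-structure are defined over $\overline L$, not over $\overline K$. My first attempt would be to take the $\overline K$-charts from the atlas of $\mathscr W$ on the pieces of $\Tau\setminus\Sigma$. I would then use the disc recognition criterion of Theorem \ref{reduction-abstract-disc}(5), which is insensitive to the base field, to find a $\overline K$-function parametrizing each disc component, possibly after translating as in the proof of Lemma \ref{glue}.
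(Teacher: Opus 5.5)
\paragraph{Part (1).} Your plan for (1) is the paper's: Theorem~\ref{theo-core}, the chart triangulations of \ref{triangulation-chart}, an admissible skeleton adapted to all charts, Theorem~\ref{properties-triangulation}(5), Lemma~\ref{glue}, and then Galois descent via conjugates, Lemma-Definition~\ref{definably-orientable}(3) and Theorem~\ref{properties-triangulation}(4). There is one slip in the descent step.

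The union of the conjugates of $\Sigma$ need not meet each component of $\widehat U$ in a connected set, so \ref{admissible-loops} does not make it admissible. For example, in $\widehat{\P^1}$ the Nash-admissible $\overline K$-skeleton $\{\eta_{a,1}\}$ is disjoint from its conjugate $\{\eta_{\sigma(a),1}\}$ when $\abs{a-\sigma(a)}>1$.

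The fix is what the paper does. Enclose $\Upsilon\cup\bigcup_\sigma\sigma(\Sigma)$ in an admissible $K$-skeleton; it contains $\Sigma$, hence is Nash-admissible over $\overline K$ by Theorem~\ref{properties-triangulation}(5). The vertex set you then conjugate must be the $\overline K$-triangulation of this enlarged skeleton produced by (5), not the old one.

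\paragraph{Part (2).} The genuine gap is the converse direction of (2) for an arbitrary valued extension $L$, which you leave open. Your shrinking strategy runs into exactly the hard point: for the finitely many components $\Omega$ of $\widehat U\setminus\Sigma$ that meet $\Tau$, you must exhibit a single $\overline K$-regular function inducing a Nash isomorphism of $\Omega$ with a disc.

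Theorem~\ref{reduction-abstract-disc}(5) cannot supply this function, for two reasons.
\begin{itemize}
\item It is not insensitive to the base field. It presupposes a coordinate, regular over the base, that already induces a Nash isomorphism with a disc, and it only tells you which other functions over the same field also do.
\item Comparing $\redgrad{\overline K}{U}$ with $\redgrad{\overline L}{U}$ is only established for finite tamely ramified extensions (Corollary~\ref{coro-tame-reduction}). It breaks down in general: already $\overline K$-Zariski open sets are not cofinal in $\mathscr Z_{\overline L}(U)$ when $\overline L/\overline K$ is transcendental.
\end{itemize}

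The missing idea is a model-theoretic transfer. First, using the case $L=\overline K$ (which you essentially prove at the end of (1)), reduce to $K$ and $L$ algebraically closed. If $K$ is non-trivially valued, both fields are models of $\acvf$, so $K\preceq L$ by model completeness. The data witnessing Nash-admissibility of the $K$-definable $\Sigma$ (a vertex set together with an atlas of nice charts) range over an ind-$K$-definable family of parameters. Hence a witness over $L$ yields a witness over $K$.

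The trivially valued case has no such elementarity and needs a separate argument. The paper describes $\widehat X$ directly around its Gauss point and characterizes Nash-admissibility by conditions that do not involve the ground field.
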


\begin{proof}
We proceed in several steps. 

\subsubsection{}\label{nash-admissible-algclos}
We first assume that $K=\overline K$ and we are
going to prove that 
there exists a  Nash-admissible $K$-skeleton on $\widehat U$.

\paragraph{Modification of the problem}
Let $X'$ be the normalization of $X$, let $S$ be the singular locus of $X$
(which consists of finitely many points by our generic 
smoothness assumption) and let $U'$
and $S'$ be the pre-images of
 $U$ and $S$ on $X'$. Let $U''$ be the set obtained from $U'$ by removing 
 the finite set $E$ of 
 isolated points of $\widehat{U'}$. 
 Assume that we have proven the existence of 
 Nash-admissible
 $K$-skeleton $\Sigma''$ of $\widehat {U''}$ containing $U''\cap S'$. Then $\Sigma':=\Sigma''\cup E$ is a Nash-admissible $K$-skeleton 
 of $\widehat {U'}$ containing $U'\cap S'$. Since $\Sigma'$ is
 Nash-admissible, any admissible 
 $K$-homotopy homotopy
 of $\widehat {U'}$ with target $\Sigma'$ fixes all simple points of $\Sigma'$, and in particular
 all points of $\Sigma'\cap S'$.
 It follows that this homotopy descends to 
 an admissible $K$-homotopy
 of $\widehat U$ with target the image $\Sigma$ of $\Sigma'$. So $\Sigma$
 is an admissible $K$-skeleton of $\widehat U$
 containing $S$ and as $X'\to X$ is an isomorphism outside $S$
 the Nash-admissibility of $\Sigma'$ implies that of $\Sigma$. 
 
 We can thus assume from now on that $X$ is smooth and
 that $\widehat U$ has no isolated
 simple points, and we want to
 prove the existence of a Nash-admissible $K$-skeleton $\Sigma$ of $\widehat U$ containing
 a prescribed finite set $S$ of simple points.

 \paragraph{}
 By Theorem \ref{theo-core} there exists a finite family $(U_i, f_i)$ of nice $K$-charts of
 $\widehat U$ such that $\widehat U=\bigcup \widehat {U_i}$. 
 By \ref{triangulation-chart} et sq., there exists for every $i$ a Nash-admissible
 $K$-skeleton $\Sigma_i$
 of $\widehat U_i$. Now by \ref{skeleton-adapted}
 there exists an admissible $K$-skeleton $\Tau$ of 
 $\widehat U$ which is adapted to every $U_i$, contains $S$ and contains each $\Sigma_i$.
 Fix an index $i$. As $\Tau$ is adapted to $U_i$, $\Tau\cap \widehat U_i$ is an
 admissible
 $K$-skeleton of $\widehat U_i$; as it contains $\Sigma_i$, it is Nash-admissible by
 Theorem \ref{properties-triangulation} (5). 
 It now follows from Lemma
 \ref{glue}
 that $\Tau$ is a Nash-admissible $K$-skeleton of $\widehat U$.

 \subsubsection{Proof of (2) for $L=\overline K$}
 If $\Sigma$ is Nash admissible as a $K$-skeleton, it is obviously
 Nash-admissible as a $\overline K$-skeleton. 
  Let us assume conversely that $\Sigma$ is Nash-admissible 
 as a $\overline K$-skeleton. Let $\mathscr V$ be a $\overline K$-triangulation of
 $\widehat U$ with skeleton $\Sigma$. Let $\mathscr V'$ denotes the union of all conjugates
 of $\mathscr V$ under $G$. Then $\mathscr V'$ is a finite $K$-definable subset of $\Sigma$; 
 as it contains $\mathscr V$, this is still a vertex set of $\Sigma$. 
 By Lemma-Definition \ref{definably-orientable} there exists a
 finite $K$-definable subset $\mathscr V''$ of $\Sigma$ containing $\mathscr V'$
 such that $(\Sigma, \mathscr V'')$ is $K$-definably orientable.
 As it contains $\mathscr V$, the set
 $\mathscr V''$ is 
 in view of Theorem \ref{properties-triangulation} (3) a $\overline K$-triangulation of $\widehat U$ with skeleton $\Sigma$; 
 together with the fact that
 $(\Sigma, \mathscr V'')$ is $K$-definably orientable, this
 implies that
 $\mathscr V''$ is a $K$-triangulation of $\widehat U$
 with skeleton $\Sigma$. 
 
 \subsubsection{Proof of (1)}
 Let $\Sigma$ be a $K$-skeleton of $\widehat U$. 
 By the particular case of (1) handled in 
 \ref{nash-admissible-algclos}, there exists 
 a Nash-admissible $\overline K$-skeleton $\Tau$ on $\widehat U$. 
 Let $\Tau'$ be the union of all conjugates of $\Tau$ under $G$. Then $\Tau'$
 is a $K$-definable skeleton of $\widehat U$, and so is $\Tau'\cup \Sigma$. 
 The latter is therefore contained in some admissible
 $K$-skeleton $\Upsilon$. As $\Upsilon$ is admissible and contains $\Tau$, it is
 Nash-admissible as a $\overline K$-skeleton by Theorem \ref{properties-triangulation} (5), hence
 it is Nash-admissible as a $K$-skeleton
 by statement (2)
 already proved for $L=\overline K$. 
 
 \subsubsection{Proof of (2) in general}
 As (2) holds for $L=\overline K$, it suffices to prove that $\Sigma$ is Nash-admissible as a
 $\overline K$-skeleton if and only if it is  Nash-admissible an $\overline L$-skeleton (where $\overline L$
 is a valued algebraic closure of $L$ containing $\overline K$), so we can assume that both $K$ and $L$
 are algebraically closed. 
 
 \paragraph{}
 If the valuation of $K$ is non-trivial, $K$ and $L$ are models of $\acvf$, and the result
 follows by model-completeness, since the set of data that define a triangulation is clearly ind-definable (one has to choose 
 the set $\mathscr V$ and then the collection of nice charts). 
 
 \paragraph{}It remains to handle the case where $K$ is trivially valued and we will only sketch it; details
 are left to the reader. First of all, 
 let us assume that $X$ is projective and smooth. Then $\widehat X(K)=X(K)\cup\{\eta\}$ where $\eta$ is its Gauss type --
 corresponding to the trivial valuation on $K(X)$. Using several $K$-definable finite maps $X\to \P^1$ (sufficiently many so that
 each $K$-points of $X$ lies in the étale locus of one of them) one shows essentially by the same kind of arguments as in 
 \ref{implicit-function-theorem} that $\{\eta\}$ is a $K$-triangulation of $\widehat X$ with skeleton $\{\eta\}$ and that each
 $K$-definable connected component of $\widehat X\setminus \{\eta\}$ contains exactly one $K$-point of $X$. 
It easily follows that any closed $K$-skeleton of $\widehat X$ either is
a finite union of $K$-points or contains $\eta$. It also
follows that if $U$ is not a finite subset of $X$, then $\widehat U$ contains $\eta$ and all
connected components of $\widehat X\setminus \eta$ except possibly finitely many of them $\Omega_1,\ldots, \Omega_r$ 
which are $K$-definable; and for each $i$ the intersection $\widehat U\cap \Omega_i$
can only be one of those three sets: $\emptyset$, $\{\omega_i\}$ or $\Omega_i\setminus \{\omega_i\}$, where $\omega_i$
is the unique $K$-point of $X$ lying in $\Omega_i$. 

Now if we do not assume anymore that $X$
is projective nor smooth we can use the above, together with standard arguments involving
a compactification of $X$ and its normalization, that a $K$-skeleton $\Sigma$ of $\widehat U$ is Nash-admissible 
as an $L$-skeleton if and only if the following hold: 
the intersection of $\Sigma$ with every connected component of $\widehat U$ is
non-empty and connected; the skeleton $\Sigma$ contains all singular points of $X$ lying on $U$ as well as all branches at infinity
of $\widehat U$;  the skeleton $\Sigma$ contains all $U$-branches at its simple points
(if these conditons are fulfilled, $\Sigma \cap \widehat X(K)$ is an $L$-triangulation of $\widehat U$ with skeleton $\Sigma$). 
But these conditions do not involve the field $L$, which ends the proof. 
 \end{proof}

\section{Tame henselian rationality and other applications}\label{applications}

%

\begin{theo}\label{definability}
Let $K$ be any valued field. Let 
$U$ be a $K$-definable subset of an algebraic $K$-curve $X$. 

\begin{enumerate}[1]
\item The pro-$K$-definable sets $\breve U$ and $\widetilde U$ are $K$-definable. 
\item For every closed $K$-skeleton $\Sigma$ of $\widehat U$, the 
$K$-definable set of connected components
of $\widehat U\setminus \Sigma$ (Proposition \ref{prop-connected-comp-definable} (2)) 
is $\RV$-internal. 
\item The set of non-simple points of $\widehat U$ whose residue curve has positive genus is finite and $K$-definable.
\end{enumerate}
\end{theo}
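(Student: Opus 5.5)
Throughout I fix a $K$-triangulation $\mathscr V$ of $\widehat U$ with skeleton $\Sigma_0$, given by Theorem \ref{structure-theo}. Using the second part of (1) there, I take $\Sigma_0$ to contain any prescribed $K$-skeleton. The three assertions are then read off from the structure of $\widehat U$ relative to $\Sigma_0$. All constructions below are canonical, hence $G$-invariant, so it suffices to establish definability and internality over $\overline K$.

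For (1), I first use Proposition \ref{prop-connected-comp-definable} and Lemma \ref{lemm-mathfrakr-fibers} to decompose $\widetilde U$. Every point of $\widetilde U$ is either of the form $\varpi^{-1}(x)$ for $x$ in the skeleton, a point of $\Sigma_{0,\RV,U}$, or a point of $\widetilde\Omega$ for some connected component $\Omega=\wrr^{-1}(\zeta)$ of $\widehat U\setminus\Sigma_0$. By Theorem \ref{properties-triangulation}(2), each such $\Omega$ is Nash-isomorphic, via some $f$ in the finite set $E$ of atlas functions, to a $\overline K\zeta$-definable open disc $D_\zeta$. A Nash isomorphism induces bijections on $\breve{}$ and $\widetilde{}$: definable types are transported by the definable bijection of simple points. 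Hence $\widetilde\Omega\simeq\widetilde{D_\zeta}$ and $\breve\Omega\simeq\breve{D_\zeta}$, and these are uniformly definable in $\zeta$ by the explicit description in \ref{projective-line-explicit}. The same argument applies to the components $\rho^{-1}(I)$ for $I\in\pi_0(\Sigma_0\setminus\mathscr V)$, which are annuli. At vertices, the types in $\varpi^{-1}(v)$ are parametrized by $\widehat C_v$, which is definable. The unbounded types in $\breve U\setminus\widetilde U$ correspond, via \ref{interval-type}, to finitely many branches at points of the normal compactification, uniformly over the simple points. Since $\Sigma_{0,\RV,U}$ is definable, gluing these pieces gives definability. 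The main difficulty is checking that the gluing is uniform in $\zeta$, because the disc $D_\zeta$ depends on $\zeta$. I would handle this by choosing $f$ and $D_\zeta$ definably as in Definition \ref{defi-nice-charts}.

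For (2), I take $\Sigma_0\supset\Sigma$, and I enlarge it by Lemma \ref{lemm-u-branches} so that it also contains all $U$-branches at its simple points. By Proposition \ref{prop-connected-comp-definable}(2), the quotient is $\Pi\coprod(\varpi^{-1}(\Sigma)\cap\Sigma_{0,\RV,U})$. Here $\Pi$ is finite, and the second piece is a definable subset of $\Sigma_{0,\RV,U}$, which is $\RV$-internal by Theorem \ref{properties-triangulation}(1). Therefore the quotient is $\RV$-internal.

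For (3), I claim that any non-simple point $x$ lying outside the vertex set has residue curve $C_x\simeq\P^1$. If $x\notin\Sigma_0$, then $x$ lies in a component Nash-isomorphic to a disc. If $x\in\Sigma_0\setminus\mathscr V$, then $x$ lies in a type-3 chart. In either case Remark \ref{common-remark} gives $C_x\simeq C_{f(x)}$, which is a projective line by Example \ref{generalized-residue-curve}. Consequently the set of points with positive genus is contained in the finite set $\mathscr V$. It is moreover defined by a $K$-definable condition on $\mathscr V$, since the genus of $C_v$ is definable from the definable family of residue curves. Hence this set is finite and $K$-definable.
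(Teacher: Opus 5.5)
Your proposal is correct and follows the paper's proof. Parts (2) and (3) are argued exactly as there. Part (1) uses the same ingredients, but the paper decomposes $U$ into the pieces $\rho_{U,\Sigma}^{-1}(I)$ and $\rho_{U,\Sigma}^{-1}(v)$, so the uniformity in $\zeta$ you worry about comes at once from identifying $\widetilde W$ with $\widetilde\Delta\times_{\widehat{C'_{\eta_{0,r}}}}\widehat{C'_v}$ through the type-2 chart $f_v$; this is precisely the fix you suggest.

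One small correction: the triangulation and atlas are not canonical, so your reduction from $\overline K$ to $K$ cannot rest on canonicity. It should rest instead on the fact, used by the paper, that a pro-$K$-definable set which is definable over a larger parameter set is already $K$-definable.
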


\begin{proof}
We prove each statement separately.

\subsubsection{Proof of (1)}
Let $Z$ be the normal projective
completion of the normalization of $X$. 
There is a natural pro-$K$-definable
isomorphism between $Z$ and $\breve Z\setminus \widetilde Z$, wich maps
a point $z$ to the type induced by the discrete valuation on $Z$ centered at $z$. 
Now $\breve U\setminus \widetilde U$ is cut out from $\breve Z\setminus \widetilde Z$
by finitely many definable conditions (the ones that define $U$), so it is $K$-definable. 
Hence it suffices to prove that $\widetilde U$ is $K$-definable.
As it is already pro-$K$-definable, it suffices to prove that $\widetilde U$
is $M$-definable for some model $M$ of $\acvf$ containing $K$. Therefore we can assume that $K$
is a model of $\acvf$.

First remind that 
$\widetilde{\P^1}$ is $K$-definable by 
\ref{projective-line-explicit}. 
By Theorem \ref{structure-theo} we know that $\widehat U$ admits
a Nash-admissible $K$-skeleton $\Sigma$. Let $\mathscr V$ be a $K$-triangulation 
with skeleton $\Sigma$. 
We then have 
\[U=\left(\coprod_{I\in \pi_0(\Sigma 
\setminus \mathscr V)}\rho_{U,\Sigma}^{-1}(I)\right)\coprod\left(\coprod_{v\in \mathscr V}
\rho_{U,\Sigma}^{-1}(v)\right).\]
For proving that $\widetilde U$ is $K$-definable
it suffices to know that for every summand $V$ of the above decomposition, 
$\widetilde V$ is $K$-definable. 
Assume first that $V=\rho_{U,\Sigma}^{-1}(I)$ for some connected component
$I$ of $\Sigma\setminus \mathscr V$. Then $V$ is $K$-definably Nash-isomorphic
to 
an open annulus $A= \{r<\abs T<R\}\subset \P^1$ with $r$ and $R$ in $\abs K$. 
Then $\widetilde A$ is relatively $K$-definable
in the $K$-definable set $\widetilde{\P^1}$, 
hence it is $K$-definable and so is $\widetilde V$.  
Assume now that $V=\rho_{U,\Sigma}^{-1}(v)$ for some vertex $v\in \mathscr V$. 
If $v\in U(K)$ then $V=\{v\}=\widetilde V$ and the result is obvious. Assume that $v$
is not a simple point and use the notation $F_v, f_v, f_\zeta$ of Definition 
\ref{defi-strong-adm} (4a) and (4b). Set 
$r=\abs {f(v)}$ and $C'_v=C_{v,\widehat U\setminus \Sigma}
\setminus F_v$, and let
$C'_{\eta_{0,r}}$ be the image of $C'_v$ in $C_{\eta_{0,r}}$. 
Set $W=\rr_{U,\Sigma}^{-1}(C'_v)$. 
Then $V$ is the disjoint union of $W$ and of the $V_\zeta:=\rr_{U,\Sigma}^{-1}(\zeta)$
for
$\zeta \in F_v$; each $V_\zeta$ is
$K$-definably isomorphic to an open disc through $f_\zeta$, so each
$\widetilde{V_\zeta}$ is $K$-pro-definably 
isomorphic to a relatively $K$-definable subset of $\widetilde{\P^1}$, so it is $K$-definable. 
It remains to show that $\widetilde W$ is $K$-definable.  Set $\Delta=\rr_{\A^1,\eta_{0,r}}^{-1}(C'_v)$. 
It follows once again from the $K$-definability of $\widetilde{\P^1}$
that $\widetilde \Delta$ is $K$-definable. But $\widetilde W$ can be now identified through
$(f_v,\trr_{W,v})$ to $\widetilde \Delta\times_{\widehat{C'_{\eta_{0,r}}}}\widehat {C'_v}$, so it is
$K$-definable. 

\subsubsection{Proof of (2)}
We know by Theorem \ref{structure-theo} that there exists a Nash-admissible $K$-skeleton 
$\Tau$ of $\widehat U$ containing $\Sigma$. It follows from 
Theorem \ref{properties-triangulation}
that $\Tau_{\RV,U}$ is $\RV$-internal. 
Now Proposition \ref{prop-connected-comp-definable} (1) 
show that $\pi_0(\widehat U\setminus \Sigma)$ is
the union of a $K$-definable 
subset of $\Tau_{\RV,U}$ 
and of a finite set, whence (2). 

\subsubsection{Proof of (3)}
By Theorem \ref{structure-theo}
there exists a $K$-triangulation of $\widehat U$; let $\Sigma$ be its skeleton. 
Let $\mathscr V'$ be the subset of $\mathscr V$ consisting of non-simple points whose residue curve has positive genus. 
Then $\mathscr V'$ is Galois-invariant, so it is $K$-definable. Let us show that $\mathscr V'$ is precisely 
the set of non-simple points of points of $\widehat U$ whose residue curve has positive genus, which
will end the proof. 
Let $x$ be a non-simple point of $\widehat U$ that does not belong to $\mathscr V'$; let is prove that $C_x$ 
has genus zero. We distinguish three cases. 

\paragraph{The case where $x\in\mathscr V$}
The genus of $C_x$ is then zero by the very definition of $\mathscr V'$. 

\paragraph{The case where $x\in \Sigma\setminus \mathscr V$}
Then $x$ lies on some connected component $I$ of $\Sigma \setminus \mathscr V$. 
By definition of a triangulation there exists some étale map from a $\overline K$-Zariski neighborhood of
$\rho_{U,\Sigma}^{-1}(I)$ to $\A^1$ inducing a Nash isomorphism between $\rho_{U,\Sigma}^{-1}(I)$ 
and an open annulus. The point $f(x)$ is then a non-simple point of $\A^1$,
so it is fo the form 
$\eta_{a,r}$. The map $f$ induces an isomorphism between $C_x$ and $C_{\eta_{a,r}}$, and the latter
is isomorphic to the projective line, so $C_x$ is of genus zero. 

\paragraph{The case where $x\notin \Sigma$}
The connected component of $\widehat U\setminus \Sigma$ that contains $x$
is equal to $\widehat V$ for some definable subset $V$ of $U$, and
by definition of a triangulation there exists some étale map from a Zariski neighborhood of
$V$ to $\A^1$ inducing a Nash isomorphism between $V$ 
and an open disc. The point $f(x)$ is then a non-simple point of $\A^1$, so it is fo the form 
$\eta_{a,r}$. The map $f$ induces an isomorphism between $C_x$ and $C_{\eta_{a,r}}$, and the latter
is isomorphic to the projective line, so $C_x$ is of genus zero. 
\end{proof}

\begin{rema}The finiteness statement in (3) can also be deduced from the genus inequality proved in \cite[Theorem 3.1]{gmp1}.
\end{rema}

Now we
will provide yet another proof of the semi-stable reduction theorem.
It might seem in some sense unsatisfactory, since we get it only in height 1, and have to leave at some point
the world
of stable completions and use
the genuine Berkovich formalism instead, and then rely on a lot of technical results 
in formal and rigid geometry.

But in a subsequent work we will use the formalism
and the results
of this current
paper for investigating models of
varieties over valuation rings of arbitrary height, and we plan
to build directly
from our triangulations
semi-stable models for curves,
remaining in the stable completion setting. We also intend to explore what can be done 
in higher dimensions using  the methods developed in the present paper.

\begin{theo}\label{semi-stable}
Let $K$ be a henselian 
valued field of height 1 and let $X$ be a smooth projective curve over $K$. 
There exists a finite extension $L$ of $K$ such that $X_L$ admits a semi-stable model
over the ring of integers of $L$. 
\end{theo}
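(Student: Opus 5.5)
The plan is to deduce the semi-stable reduction theorem from the existence of triangulations (Theorem \ref{structure-theo}), transported to the Berkovich setting, and then to invoke the classical dictionary between triangulations of Berkovich curves and semi-stable formal models.

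\emph{Reduction to a complete algebraically closed situation.} Let $\widehat K$ be the completion of $K$; since $K$ is henselian of height 1, every finite extension of $\widehat K$ is of the form $\widehat L$ for a finite extension $L$ of $K$ of the same degree. Semi-stable models over $\mathscr O_{\widehat L}$ descend to $\mathscr O_L$, because the relevant data are finitely presented and $\mathscr O_L$ is henselian (approximation of formal models by algebraic ones). So it suffices to produce a semi-stable model of $X_{\widehat L}$ for some finite $L$.

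\emph{From stable completions to Berkovich spaces.} Apply Theorem \ref{structure-theo} over $\overline K$, or over a model $M$ of $\acvf$ of height 1 containing $K$, for instance $\mathbf C_K$, the completion of $\overline K$. This gives a triangulation $\mathscr V$ of $\widehat X$ together with a $\mathscr V$-atlas consisting of finitely many $\overline K$-regular functions inducing Nash isomorphisms with open discs and annuli. Everything is defined over a finite extension $L$ of $K$. The key step is to show that, for height one, the map $\widehat X(\mathbf C_K)\to X^{\mathrm{an}}_{\mathbf C_K}$ (sending a type to the induced real seminorm) is compatible with these data. The vertices give type 2 points, and each Nash isomorphism $V\simeq D$ (respectively with an annulus) induces, by taking the Berkovich realization, an isomorphism between the corresponding open Berkovich subsets: indeed the étale map is an analytic étale map that is bijective on the closure under completion of $\mathbf C_K$-points and is a homeomorphism. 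One checks this by using the conditions $\abs\psi=\abs\phi<1$ and $\abs\psi=1$ from the definition of nice charts, which are preserved under completion, and by arguing as in Example \ref{example-nash-iso}. Thus one obtains a triangulation of $X^{\mathrm{an}}_{\widehat L}$ in the sense of \cite{ducros2024}, after enlarging $L$ so that all vertices are $\widehat L$-rational type 2 points and all annuli have radii in $\abs{L^\times}$. This transfer is the main obstacle: one must make sure that open Berkovich discs and annuli are not merely homeomorphic images, so the analytic isomorphism must be obtained from the étale map together with bijectivity on rigid points, using \cite{ducros2024}-type criteria (an étale map of curves that is bijective onto an open disc is an isomorphism).

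\emph{Conclusion.} Given a triangulation of $X^{\mathrm{an}}_{\widehat L}$ whose vertices are type 2 points and whose complementary components are open discs and annuli with radii in $\abs{\widehat L^\times}$ (after a further finite extension), the standard correspondence between vertex sets and formal models yields a semi-stable formal model. Concretely, one covers the curve by affinoid neighbourhoods of the vertices (each a vertex with finitely many open discs and half-annuli removed), which have reduced, semistable canonical reductions, and then glues them along annuli; this is \cite[Théorème 5.1.14 and \S 6]{ducros2024} or the Bosch--Lütkebohmert theory. Finally, formal GAGA algebraizes the model (since $X$ is projective), and the descent step above brings it down to $\mathscr O_L$.
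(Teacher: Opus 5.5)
Your overall strategy is the paper's: transfer a triangulation of $\widehat X$ to the Berkovich curve through the étale charts, then invoke the classical dictionary between triangulations and semi-stable models. Your key step is a legitimate variant, but your descent is placed differently and its stated justification does not hold in general.

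\emph{Key step.} You show that each chart is an analytic isomorphism onto a disc or annulus using étaleness, injectivity on rigid points and the local finite structure of étale maps. The paper instead uses Lemma \ref{lemm-same-henselization}: equal henselizations give $\hr{f(z)}\simeq\hr z$ at every Berkovich point, hence a local isomorphism everywhere. It gets bijectivity on \emph{all} Berkovich points by realizing each one in an algebraically closed complete field and applying the definable bijection there. You need that realization trick too, for surjectivity and for the homeomorphism you claim, because $\widehat X(\mathbf C_K)\to X^{\mathrm{an}}_{\mathbf C_K}$ only sees type 1 and type 2 points.

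\emph{Where you descend.} You aim at a semi-stable model over $\widehat L$, which is complete but not algebraically closed. This needs the non-algebraically-closed form of the classical correspondence plus rationality bookkeeping:
vertices whose residue curves are already geometrically correct over $\k(L)$ (which uses stability of $\hr x$ at type 2 points); radii in $\abs{L^\times}$; and disc components that are only virtual discs over $\widehat L$.
You then descend from $\mathscr O_{\widehat L}$ to $\mathscr O_L$.

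The paper uses the classical correspondence only over $\mathbb K=\widehat{\overline K}$, then descends in two formal steps. The model over $\mathscr O_{\mathbb K}$ comes from an $\mathscr O_{\mathbb K}$-point of a finitely presented $\mathscr O_K$-scheme $S$. Since $\overline K\preceq\mathbb K$ by model completeness of $\acvf$, $S$ has an $\mathscr O_{\overline K}$-point, hence an $\mathscr O_L$-point for some finite $L$. This never compares $K$ with its own completion and needs no separability.

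\emph{The gap.} Your reason for the descent from $\mathscr O_{\widehat L}$ to $\mathscr O_L$ (finitely presented data and $\mathscr O_L$ henselian) is not valid as stated. A henselian field of height 1 can have an inseparable completion. In that case $L$ is not existentially closed in $\widehat L$: some $a\in L$ is a $p$-th power in $\widehat L$ but not in $L$, so $\mathscr O_{\widehat L}$-points need not approximate to $\mathscr O_L$-points. For the same reason, your claim that every finite extension of $\widehat K$ is some $\widehat L$ holds only for separable extensions.

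The descent of the model itself remains true, and you can repair it in either of two ways:
glue $X_L$ and the model over $\mathscr O_{\widehat L}$ along $X_{\widehat L}$ (Beauville--Laszlo), using that $\mathscr O_L\to\mathscr O_{\widehat L}$ is flat and $\mathscr O_L/\varpi^n\simeq\mathscr O_{\widehat L}/\varpi^n$ for a pseudo-uniformizer $\varpi$;
or, more simply, switch to the paper's model-theoretic descent.
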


\begin{proof}
Let us first assume that $K$ is complete, non-trivially valued and
algebraically closed. Let $\Sigma$ be a Nash-admissible $K$-skeleton 
of $\widehat X$. By Chapter 14 of \cite{hrushovski-l2016}, this skeleton $\Sigma$
gives rise to a skeleton $\Upsilon$ on the Berkovich analytification $X\an$ of the curve
$X$, and the $K$-definable deformation retraction of $\widehat X$ toward $\Sigma$ induces
a deformation retraction from $X\an$ toward $\Upsilon$. 
Let $U$ be a connected component of $X\an\setminus \Upsilon$. By the analytic Nullstellensatz 
$U$ has a $K$-point $x$, and it follows from the constructions in \cite{hrushovski-l2016}
that $U$ is the Berkovich analytification $V\an$ where $V$ is the $K$-definable
subset $\rr_{X,\Sigma}^{-1}(\rr_{X,\Sigma}(x))\cap X$. 
By definition of a Nash-admissible $K$-skeleton, there exists a $K$-regular étale map $f$
from a dense $K$-Zariski open subset $X'$ of $X$ to $\A^1$ that induces a $K$-definable
bijection between $V$ and an open disc $D\subset \A^1$. 
Then $f$ induces an étale map $V\an\to D\an$ and moreover by Lemma \ref{lemm-same-henselization}
one has $K(f(y))\h=K(y)\h$ for every $y\in V$. It follows that the completion of
$K(y)$ is equal to that of $K(f(y))$ when the valuation of $K(y)$ is of height 1, so 
$f$ induces an isomorphism $\hr{f(z)}\simeq \hr z$ for every Berkovich point
$z\in V\an$. Being étale, $f$ is then a local isomorphism around every point of
$V\an$. Moreover the bijectivity of $f\colon V\to D$ implies the bijectivity 
of $V\an \to D\an$ in view of the preservation of completed residue fields
and of the fact that fibers are finite (so that any pre-image on $V$ of a point
of $D$ with height 1 valuation still has height 1 valuation). 
Then $f\colon V\an \to D\an$ is a bijective local isomorphism, so it is an isomorphism. 

Therefore every connected component of $X\an\setminus \Upsilon$ is isomorphic to an open disc. 
The same kind of transfer argument also ensures that there is a finite family 
$\mathscr V$ of vertices
in $\Upsilon$ such that every connected component $I$ of $\Upsilon
\setminus \mathscr V$ is an open interval, whose pre-image under the retraction
$X\an \to \Sigma$ is isomorphic to an open annulus. 

These facts are classically equivalent to the semi-stable reduction for $X$ over $K$, see
for instance Chapter 6 of \cite{ducros2024} (this relies on highly non-trivial results like
Grauert's finiteness, Bosch's study of formal fibers, the algebraization of one-dimensional proper
formal schemes, \dots). Therefore $X$ has a semi-stable model. 

Now let us handle the general case. There is nothing to do if $K$ is trivially valued. 
If it is not, let $A$ be the ring of integers of $K$ and let
$\mathbb K$ be the completion of an algebraic closure of $K$. 
By the above, $X_{\mathbb K}$ admits a semi-stable model over the ring of integers 
$B$ of $\mathbb K$. 
By approximation there exists a finitely presented integral $A$-scheme $S$ having a $B$-point
and a proper semi-stable $S$-scheme $\mathscr X$ equipped 
with an isomorphism $\mathscr X_{S_K}\simeq X\times_K S_K$.  
By quantifier elimination in $\acvf$, $S$ also has a $B'$-point where
$B'$ is the ring of integers of the algebraic closure of $K$ in $\mathbb K$
(the reader who would like at this stage to see a purely scheme-theoretic proof
of the non-emptyness of $\mathscr X(B')$ 
may also refer to Theorem 7.4.8 of \cite{ducros2018}). So $X$ has a semi-stable model
over the ring of integers of an algebraic closure of $K$, and then over the ring of integers
of a finite extension of $K$ by approximation. 
\end{proof}

Finally we are going to answer positively 
the original question by Franziska Jahnke and Franz-Viktor Kuhlmann,
by providing a 
geometric and model theoretic proof
of 
Kuhlmann's henselian rationality theorem for tame henselian fields
 from
\cite{kuhlmann2019}
(see more comments on this  in the Introduction). 

We recall that a henselian valued field $K$ is called
\emph{tame} if every finite extension of $K$ is tamely ramified. This amounts
to requiring that $K$ be defectless and $\RV(K)$ be perfect (this implies that $K$
itself is perfect); and it is easily seen that $\RV(K)$ is perfect if and only if
$\k(K)$ is perfect and $\abs{K^\times}$ is $p$-divisible, where $p$ is the residue characteristic exponent. 
If the residue characteristic of $K$ is zero, $K$ is tame. 
%
%
%
%

\begin{theo}[Kuhlmann]\label{henselian-rationality}
Let $K$ be a tame henselian valued field and
let $L$ be a 
finitely generated extension of transcendence degree 1 over $K$,
equipped with an immediate extension $v$ of the valuation of $K$. There exists an element $t$ of $L$, transcendental
over $K$, such that the inclusion $(K(t), v|_{K(t)}) \hookrightarrow (L,v)$ induces an isomorphism
\[(K(t), v|_{K(t)})\h \simeq (L,v)\h.\]
\end{theo}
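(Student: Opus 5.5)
We follow the strategy sketched in the Introduction. Write $L=K(x)$ with $x$ a $K$-Zariski generic point of an integral $K$-curve $X$. Since $K$ is tame it is perfect, so we may take $X$ smooth, by replacing it with its smooth locus. The valuation $v$ makes $x$ a simple point of $\widehat X$ over some model $M$ containing $K$. By Theorem \ref{structure-theo} we choose a $K$-triangulation $\mathscr V$ of $\widehat X$ with Nash-admissible skeleton $\Sigma$.

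\textbf{Step 1: $x$ lies off $\Sigma$ in a disc component.} The point $x$ is not in $\Sigma$. Points of $\Sigma$ are either $\overline K$-definable simple points, hence algebraic over $K$, or non-simple points. A simple point of the curve that is generic cannot lie on a $K$-skeleton, because $\Sigma$ is $\Gamma_0$-internal and any simple point on it is $\acl(K)$-definable. So $x$ lies on a connected component $\Omega$ of $\widehat X\setminus\Sigma$. By Theorem \ref{properties-triangulation} (2), $\Omega=\wrr_{X,\Sigma}^{-1}(\zeta)$ with $\zeta=\rr_{X,\Sigma}(x)\in\Sigma_{\RV,X}$, and some $f\in E$, regular over a finite extension $F$ of $K$, induces a Nash isomorphism between $V:=\rr_{X,\Sigma}^{-1}(\zeta)$ and an open disc.

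\textbf{Step 2: $V$ is $K$-definable.} By Theorem \ref{definability} (2), $\pi_0(\widehat X\setminus\Sigma)$ is $\RV$-internal. Hence $\zeta$ is definable over $\overline K\cup\RV(\overline K(x))$, and we need $\RV(\overline K(x))=\RV(\overline K)$. Because $K$ is tame, every finite subextension $K'$ of $\overline K$ is tamely ramified over $K$. In the generalized sense of \cite{ducros2013b}, $\RV(K')$ is then separable of degree $[K':K]$ over $\RV(K)=\RV(L)$. It follows that $\RV(L)\otimes_{\RV(K)}\RV(K')$ is a generalized field of degree $[K':K]$. The defect inequality $\sum_i[\RV(L'_i):\RV(L)]\le[L\otimes_KK':L]$ then forces $L\otimes_KK'$ to be a field, with $v$ extending uniquely to it and the extension immediate over $K'$. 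Consequently $\mathrm{tp}(x/\overline K)$ is $G$-invariant and $\RV(\overline K(x))=\RV(\overline K)$. So $\zeta$ is $\overline K$-definable and $G$-invariant, hence $K$-definable, and therefore $V$ is $K$-definable. \emph{This step is the main obstacle.} One must justify that the Galois-invariance of the type, together with $\RV$-internality, makes $\zeta$ itself $K$-definable and not merely definable over $K\cup\RV(K(x))$. I would handle this by noting that $\zeta$ is $\overline K x$-definable, that $\mathrm{tp}(x/\overline K)$ is $G$-invariant, and that every $\sigma\in G$ fixes $\zeta$ because it fixes $\mathrm{tp}(x/\overline K)$ and $\zeta$ is the image of $x$ under a $K$-definable map.

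\textbf{Step 3: tame descent.} Since $V$ is $K$-definable and $f\in F[V]$ for a finite, hence tamely ramified, extension $F$ of the henselization $K\h$, Theorem \ref{theo-tame-descent} applies with $n=1$. We pass to $K\h$, which changes neither $L\h$ nor the statement since $K$ is already henselian. The theorem gives $g\in K[V]$ inducing a Nash isomorphism between $V$ and an open disc. Here $g$ is regular on a $K$-Zariski neighborhood of $V$, so $t:=g(x)\in L$. It is transcendental over $K$ because $g$ is étale at the generic point $x$.

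\textbf{Step 4: conclusion.} Lemma \ref{lemm-same-henselization}, applied to $g$ restricted to $V$, which is injective and étale, gives $K(x)\h=K(g(x))\h$, that is, $(L,v)\h\simeq(K(t),v|_{K(t)})\h$.
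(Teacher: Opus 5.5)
Your proposal is correct and follows the paper's route. You triangulate $\widehat X$. You then use the $\RV$-internality of the components of $\widehat X\setminus\Sigma$, together with the Galois-invariance of $\mathrm{tp}(x/\overline K)$, to make $\rr_{X,\Sigma}^{-1}(\rr_{X,\Sigma}(x))$ $K$-definable, apply the dimension-one case of tame descent, and conclude with Lemma \ref{lemm-same-henselization}.

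There is one minor difference. You show that $L\otimes_K K'$ is a field, immediate over $K'$, from the fundamental inequality together with $[\RV(K'):\RV(K)]=[K':K]$, whereas the paper cites \cite[2.21]{ducros2013b}. Your argument needs the lower bound stated explicitly: every extension of $v$ restricts to the unique valuation of $K'$, so its $\RV$ contains $\RV(K')$. With that made explicit, your version uses only that $K$ is henselian and defectless at this step.
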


\begin{proof}

We first need to investigate the behavior of $(L,v)$ under finite
ground field extension. 

\subsubsection{}\label{behavior-scalar-extension}
Let $F$ be a finite extension of $K$.  
Write $L\h\otimes_K F=\prod L_i$, where the $L_i$ are finite separable
extensions
of $L\h$. 

Since $K$ is tame, $F$ is tamely ramified over $K$. 
It follows then from \cite[2.21]{ducros2013b}
that the natural map 
$\RV(L)\otimes_{\RV(K)}\RV(F)\to \prod \RV(L_i)$ is an isomorphism. 
But as $L$ is an immediate extension of $K$ one has
$\RV(L)=\RV(K)$, so $\prod \RV(L_i)=\RV(F)$. 
This means that $L\h\otimes_K F$ is a field, and is an immediate extension 
of $F$. Otherwise said $L\otimes_K F$ is a field,  
and the valuation on $L$ has a unique extension to
$L\otimes_K F$, which makes it an immediate extension of $F$. 

Let $\overline K$ be an algebraic closure of $K$. 
By the above and a colimit argument, $L\otimes_K \overline K$
is a field, to which 
$v$
has a unique extension $w$
and which makes $L\otimes_K \overline K$ an immediate extension of $\overline K$.

\subsubsection{}
Let $X$ be an
integral $K$-curve with function field $L$. The valuation $w$ defines a
quantifier-free
type on 
$X$ over $K$; let $x$ be a realization of this type in some model. 
We have seen that
$w$ is the  unique extension of
$v$
to $L\otimes_K \overline K$; it follows
that $\mathrm{tp}(x/\overline K)$ is invariant under $\mathrm{Gal}(\overline K/K)$. 
%
%
As $\Sigma_{\RV,X}$ is $\RV$-internal 
by Theorem \ref{properties-triangulation}, 
its point $\rr_{X,\Sigma}(x)$ is $\overline K\RV(K(x))$-definable, 
so it is $\overline K$-definable since $\RV(K(x))=\RV(K)$ because
$(L,v)$ is an immediate extension of $K$. 
Moreover $\rr_{X,\Sigma}(x)$ is
invariant under $\mathrm{Gal}(\overline K/K)$, 
since this is already the case for $\mathrm{tp}(x/\overline K)$. 
As a consequence $\rr_{X,\Sigma}(x)$ is $K$-definable, 
and so is $U:=\rr_{X,\Sigma}^{-1}(\rr_{X,\Sigma}(x))$. 
By definition of a Nash-admissible $K$-skeleton  
there exist a $\overline
K$-Zariski open subset
$X'$ of $X$ containing $U$
and a $\overline K$-regular étale
map  $f\colon X'\to \A^1$
that induces a Nash
isomorphism between $U$ and an open disc $D$.
Let $F$ be a finite extension of $K$ over which
$f$ and $X'$ are defined. 
Then $f$ is an element of $F[U]$ inducing 
a Nash
isomorphism between $U$ and $D$. 
As $K$ is tame, $F$ is a tamely ramified extension of $K$. 
By tame descent for open (poly)discs
(Theorem \ref{theo-tame-descent}) there exists
$g\in K[U]$ that induces 
a Nash
isomorphism between $U$ and an open
disc $\Delta$.

This implies that $K(g(x))\h=K(x)\h$. 
As $K(x)$ is isomorphic to $L$ as a field and $g$ is étale, 
$g(x)$ is transcendental over $K$. Identifying $K(x)$ with $L$ and setting
$t=g(x)$, we get the required statement. 
\end{proof}

\bibliographystyle{smfalpha}
\bibliography{aducros}

\end{document}